\documentclass[11pt,reqno,a4paper]{article}
\usepackage{mathrsfs}
\usepackage{amsmath}
\usepackage{amsthm}
\usepackage{amsfonts}
\usepackage{amssymb}
\usepackage[dvips]{graphicx}
\usepackage[dvips]{color}
\usepackage{latexsym}
\usepackage{enumerate}
\usepackage{xspace}


\voffset=-17mm \hoffset=-17mm \setlength{\textwidth}{163 true mm}
\setlength{\textheight}{240 true mm}
\setlength{\emergencystretch}{2em}

\newtheorem{thm}{Theorem}[section]
\newtheorem{cor}[thm]{Corollary}
\newtheorem{lemma}[thm]{Lemma}
\newtheorem{prop}[thm]{Proposition}

\newtheorem{defn}[thm]{Definition}
\newtheorem{example}[thm]{Example}

\numberwithin{equation}{section}



\def \and {\, \mbox{\rm and}\, }

\def \span {\,{\rm span}\,}
\def \supp {\,{\rm supp}\,}






\begin{document}

\title{\bf Operator Reproducing Kernel Hilbert Spaces}

\author{Rui Wang\thanks{School of Mathematics, Jilin University, Changchun 130012, P. R. China. E-mail address: {\it rwang11@jlu.edu.cn}. } \quad and \ Yuesheng Xu\thanks{Corresponding author. Guangdong Provincial Key Lab of Computational Science, School of Data
and Computer Science, Sun Yat-sen University, Guangzhou 510275, P. R. China and Department of Mathematics, Syracuse University, Syracuse, NY 13244, USA. E-mail address: {\it yxu06@syr.edu} } }
\date{}

\maketitle{}

\begin{abstract}
Motivated by the need of processing functional-valued data, or more general, operator-valued data, we introduce the notion of the operator reproducing kernel Hilbert space (ORKHS). This space admits a unique operator reproducing kernel which reproduces a family of continuous linear operators on the space. The theory of ORKHSs and the associated operator reproducing kernels are established. A special class of ORKHSs, known as the perfect ORKHSs, are studied, which reproduce the family of the standard point-evaluation operators and at the same time another different family of continuous linear operators. The perfect ORKHSs are characterized in terms of features, especially for those with respect to integral operators. In particular, several specific examples of the perfect ORKHSs  are presented. We apply the theory of ORKHSs to sampling and regularized learning, where operator-valued data are considered. Specifically, a general complete reconstruction formula from linear operators values is established in the framework of ORKHSs. The average sampling and the reconstruction of vector-valued functions are considered in specific ORKHSs. We also investigate in the ORKHSs setting the regularized learning schemes, which learn a target element from operator-valued data. The desired representer theorems of the learning problems are established to demonstrate the key roles played by the ORKHSs and the operator reproducing kernels in machine learning from operator-valued data. We finally point out that the continuity of linear operators, used to obtain the operator-valued data, on an ORKHS is necessary for the stability of the numerical reconstruction algorithm using the resulting data.
\end{abstract}

\textbf{Key words}: Operator reproducing kernel Hilbert space, operator reproducing kernel, average sampling, regularized learning, non-point-evaluation data

\textbf{2010 Mathematics Subject Classification:} 46E22, 68T05, 94A20

\section{Introduction}

The main purpose of this paper is to introduce the notion of the operator reproducing kernel Hilbert space and discuss the essential ideas behind such spaces along with their applications to sampling and machine learning. The introduction of such a notion is motivated by the need of processing various data (not necessarily the point-evaluations of a function) emerging in practical applications.

Many scientific problems come down to understanding a function from a finite set of its sampled data. Commonly used sampled data of a function to be constructed are its function values. Through out this paper, we reserve the term ``function value" for the evaluation of a function at a point in its domain. We call such data the point-evaluation data of the function. The stability of the sampling process requires that the point-evaluation functionals must be continuous. This leads to the notion of reproducing kernel Hilbert spaces (RKHSs) \cite{Ar} which has become commonly used spaces for processing point-evaluation data. There is a bijective correspondence between a RKHS and its reproducing kernel \cite{Mercer}. Moreover, reproducing kernels are used to measure the similarity between elements in an input space. In machine learning, many effective schemes based upon reproducing kernels were developed and proved to be useful \cite{CS,PS,SS,SC,V}. The theory of vector-valued RKHSs \cite{P} received considerable attention in multi-task learning which concerns estimating a vector-valued function from its function values. In the framework of vector-valued RKHSs, kernel methods were proposed to learn multiple related tasks simultaneously, \cite{EMP,MP05}. Recently, to learn a function in a Banach space which has more geometric structures than a Hilbert space, the notion of reproducing kernel Banach spaces (RKBSs) was introduced in \cite{ZXZ} and further developed in \cite{SZH,XQ,ZZ12,ZZ13}. The celebrated Shannon sampling formula can be regarded as an orthonormal expansion of a function in the Paley-Wiener space, which is a typical RKHS. Motivated by this fact, many general sampling theorems were established in RKHSs through frames or Riesz bases composed by reproducing kernels, \cite{H07,H09,HKK,MNS,NW}.

There are many practical occasions where point-evaluation data are not readily available. Sampling is to convert an analog signal into a sequence of numbers, which can then be processed digitally on a computer. These numbers are expected to be the values of the original signal at points in the domain of the signal. However, in most physical circumstances, one cannot measure the value $f(x)$ of a signal $f$ at the location $x$ exactly, due to the non-ideal acquisition device at the sampling location. A more suitable form of the sampled data should be the weighted-average value of $f$ in the neighborhood of the location $x$. The average functions may reflect the characteristic of the sampling devices. Such a sampling process is called an average sampling process \cite{A,S}. In signal analysis, sometimes what is known is the frequency information of a signal. These data, represented mathematically by the Fourier transform or wavelet transform, are clearly not the function values. Another example of non-point-evaluation sampled data concerns EXAFS (extended x-ray absorption fine structure) spectroscopy, which is a useful atomic-scale probe for studying the environment of an atomic species in a chemical system. One key problem in EXAFS is to obtain the atomic radial distribution function, which describes how density varies  as a function of the  distance from a reference particle to the points in the space. The measured quantity used to reconstruct the atomic radial distribution function is the x-ray absorption coefficient as a function of the wave vector modulus for the photoelectron. Theoretically, these observed information is determined by an integral of the product of the atomic radial distribution function and functions reflecting the characteristics of the sample and the physics of the electron scattering process \cite{CNSU,MP04}. In the areas of machine learning and functional data analysis, we are required to process measured information rather
than function values such as curves, surfaces and other geometric shapes \cite{RS}. All above mentioned non-point-evaluation sampled data can be formulated mathematically as linear functional values or linear operator values of a function. The term ``functional value" is reserved in this paper for the value of a functional applied to a function  in its domain. Function values are a special case of functional values when the linear functional is the point-evaluation functional. Likewise, we also reserve  the term ``operator value" for the value of an operator at a function in its domain. Large amount of such sampled data motivate us to consider constructing a function from its linear functional values or its linear operator values other than its function values.

There is considerable amount of work on processing non-point-evaluation sampled data in the literature. The reconstruction of a function in the band-limited space, the finitely generated shift-invariant spaces and more generally the spaces of signals with finite rate of innovation from its local average values were investigated, respectively, in \cite{G,SZ}, \cite{A,AST} and \cite{S}. Sampling and reconstruction of vector-valued functions were considered in \cite{AI08}, where the sampled data were defined as the inner product of the function values and some given vectors. It is clear that these sampled data can also be treated as the linear functional values of the vector-valued function. A number of alternative estimators were introduced for the functional linear regression based upon the functional principal component analysis \cite{CH,RS,YMW} or a regularization method in RKHSs \cite{YC}. Optimal approximation of a function $f$ from a general RKHS by algorithms using only its finitely many linear functional
values was widely studied in the area of information-based complexity \cite{NoWo,TWW}. In machine learning, regularized learning schemes were proposed for functional-valued data. For example, learning a function from local average integration functional values was investigated in \cite{V} in the framework of the supported vector machine. Regularized learning from general linear functional values in Banach spaces was considered in \cite{MP04}. Regularized learning in the context of RKBSs was studied in \cite{ZXZ,ZZ12} from both function-valued data and functional-valued data.

Reconstructing a function in a usual RKHS (where only the point-evaluation functionals are continuous) from its non-point-evaluation data is not appropriate in general since the non-point-evaluation functionals used in the processing may not be continuous in the underlying RKHS. When constructing a function from its linear functional values, the sampled data are generally obtained with noise. To reduce the effects of the noise, we require that the sampling process must be stable. Clearly, a usual RKHS that enjoys only the continuity of point-evaluation functionals is not a suitable candidate for processing a class of non-point-evaluation functionals. This demands the availability of a Hilbert space where the specific non-point-evaluation functionals used in the processing are continuous. Such spaces are presently not available in the literature. Therefore, it is desirable to construct a Hilbert space where a specific class of linear functionals or linear operators on it are continuous.

The goal of this paper is to study the operator reproducing kernel Hilbert space. We shall introduce the notion of the operator reproducing kernel Hilbert space by reviewing examples of data forms available in applications. Such a space ensures the {\it continuity} of a family of prescribed linear operators on it. An operator reproducing kernel Hilbert space is expected to admit a reproducing kernel, which reproduces the linear operators defining the space. We shall show that every operator reproducing kernel Hilbert space is isometrically isomorphic to a usual vector-valued RKHS. Although this does not mean that the study of operator reproducing kernel Hilbert spaces can be trivially reduced to that of vector-valued RKHSs, several important results about vector-valued RKHSs can be transferred to operator reproducing kernel Hilbert spaces by the isometric isomorphism procedure. In the literature, the average sampling problem was usually investigated in spaces which are typical RKHSs. We shall point out that these spaces are in fact special operator reproducing kernel Hilbert spaces. We shall call such a space a {\it perfect} operator reproducing kernel Hilbert space, since it admits two different families of continuous linear operators, among which one is the set of the standard point-evaluation operators. The perfect operator reproducing kernel Hilbert spaces, especially the ones with respect to integral operators, bring us special interest. The existing results about average sampling are special cases of the sampling theorem which we establish in the operator reproducing kernel Hilbert space setting, with additional information on the continuity of the used integral functionals on the space. In machine learning, learning a function from its functional values was also considered in RKHSs or RKBSs. In the framework of the new spaces, we study the regularized learning schemes for learning an element from non-point-evaluation operator values. By establishing the representer theorem, we shall show that the operator reproducing kernel Hilbert spaces are more suitable for learning from non-point-evaluation operator values than the usual RKHSs which are used in the existing literature. The theoretic results and specific examples in this paper all demonstrate that operator reproducing kernel Hilbert spaces provide a right framework for constructing an element from its non-point-evaluation continuous linear functional or operator values.

This paper is organized in nine sections. We review in section 2 several non-point-evaluation data often used in practical applications. Motivated by processing such forms of data, we introduce the notion of the operator reproducing kernel Hilbert space in section 3. We also identify an operator reproducing kernel with each operator reproducing kernel Hilbert space and reveal the relation between the new type of spaces and the usual vector-valued RKHSs. In section 4, we characterize the perfect operator reproducing kernel Hilbert spaces and the corresponding operator reproducing kernels and study the universality of these kernels. Motivated by the local averages of functions, we investigate in section 5 the perfect operator reproducing kernel Hilbert space with respect to a family of integral operators and present two specific examples. Based upon the framework of operator reproducing kernel Hilbert spaces, we establish a general reconstruction formula of an element from its operator values in section 6 and develop the regularized learning algorithm for learning from non-point-evaluation operator values in section 7. In section 8, we comment on stability of a numerical reconstruction algorithm using operator values in an operator reproducing kernel Hilbert space. Finally, we draw a conclusion in section 9.

\section{Non-Point-Evaluation Data}

Point-evaluation function values are often used to construct a function in the classical RKHS setting. We encounter in both theoretical analysis and practical applications huge amount of data which are not point-evaluation function values such as integral values of lines and surfaces, geometric objects, and operator values. The purpose of this section is to review several such forms of data, which serve as motivation of introducing the notion of operator reproducing kernel Hilbert space.

Classical methods to construct a function are based on
its finite samples. The finite empirical data of a function $f$ from points in its domain $\mathcal{X}$ to $\mathbb{R}$ or $\mathbb{C}$ are usually taken as the values of $f$ on the finite set $\{x_j:j\in\mathbb{N}_m\}\subseteq\mathcal{X}$, where $\mathbb{N}_m:=\{1,2,\dots, m\}$. We call them the
point-evaluation data. Various approaches were developed for processing the point-evaluation data in the literature. The classical RKHS is the function space suitable for the point-evaluation data. However, because of a variety of reasons, the available data in practical applications are not always in the form of point evaluations. We shall call data that are not point-evaluation function values the non-point-evaluation data.

The first type of non-point-evaluation data concerns the functional-valued data. In most physical circumstances, although the values of a function $f$ are expected to be sampled and processed on computers, one can not measure the value $f(x)$ at the location $x$ exactly because of the non-ideal acquisition device \cite{A,S}. Instead of the function values at the sample points, one obtains the finite local average values of $f$ in the neighborhood of the sample points. Specifically, we suppose that $\{x_j:j\in\mathbb{N}_m\}$ is a finite set of $\mathbb{R}$ and $\{u_j: j\in\mathbb{N}_m\}$ is a set of nonnegative functions on $\mathbb{R}$, where for each $j\in\mathbb{N}_m$, the function $u_j$ supports in a neighborhood of $x_j$. The sampled data have the form
$\{L_j(f):j\in\mathbb{N}_m\}$, where for each $j\in\mathbb{N}_m$, $L_j$ is defined by
$$
L_{j}(f):=\int_{\mathbb{R}}f(x)u_{j}(x)dx,
$$
which can be viewed as a functional of $f$. We note that when the support of $u_j$ is small, the functional value $L_j(f)$ can be treated as an approximation of the point-evaluation function value of $f$ at the location $x_j$.

There are other practical occasions, where the available information is not point-evaluation data or the local-average-valued data. The frequency information of a signal are such non-point-evaluation data. Specifically, suppose that $f$ is a $2\pi$-periodic signal. The Fourier coefficients of $f$ are defined by
$$
c_j(f):=\frac{1}{\sqrt{2\pi}}\int_{0}^{2\pi}f(x)e^{-ijx}dx,
\ j\in\mathbb{Z}.
$$
These coefficients, taken as the functional-valued data, contain the information of the signal in the frequency domain. Such sampled data are usually used to recover the signal in the time domain.

Another example concerns the x-ray absorption coefficients of an atomic radial distribution function in EXAFS spectroscopy \cite{CNSU,MP04}. Within the EXAFS theory accepted generally, the x-ray absorption coefficient of the atomic radial distribution function $f$ is described by
\begin{equation*}\label{EXAFS}
\chi(\kappa):=\int_{a}^{b}G(\kappa,r)f(r)dr,
\end{equation*}
where $G$ is the kernel defined by
$$
G(\kappa,r):=\displaystyle{4\pi\rho\phi(\kappa) e^{-\frac{2r}{\lambda(\kappa)}}\sin(2\kappa r
+\psi(\kappa))}.
$$
Here, the constant $\rho$ and the functions $\lambda, \phi, \psi$ are used to describe physical parameters of the Radon transform. Specifically, $\rho$ is the sample density, $\lambda$ is the mean-free-path of the photo-electron, $\phi$ is the scattering amplitude of the electron scattering process and $\psi$ is the phase-shifts of the electron scattering process. To determine the atomic radial distribution function $f$, measurements are made giving values $\chi(\kappa_j)$, $j\in\mathbb{N}_m$. There were many methods developed in the literature to learn the atomic radial distribution function from such sampled data. It is clear that for each $\kappa$, the quantity $\chi(\kappa)$ can be viewed as a functional value of the atomic radial distribution function $f$.

The sampled data emerge in the cases discussed above are not the point-evaluation data but the functional-valued ones. This will motivate us to consider estimating the target function $f$ from the finite information $\{L_j(f): j\in\mathbb{N}_m\}$, where $L_j$, $j\in\mathbb{N}_m,$ are linear functionals of $f$.

Operator-valued data often appear in applications. In machine learning, multiple related learning tasks need to be learned simultaneously. For example, multi-modal human computer interface requires modeling of both speech and vision. This can improve performance relative to learning each task independently, which can not capture the relations between these tasks. Empirical studies have shown the benefits of such multi-task learning \cite{BH,Ca,EMP}. This leads us to learning functions taking their range in a finite-dimensional Euclidean space $\mathbb{C}^n$, or more generally, a Hilbert space $\mathcal{Y}$. In this case, the measured information to be processed is the finite function values $f(x_j)$, $j\in\mathbb{N}_m,$ on a discrete set in the domain. However, they are vectors in a Hilbert space $\mathcal{Y}$ rather than scalars in $\mathbb{C}$. The functional data, considered in the area of functional data analysis, consist of functions. Examples of this type range from the shapes of bones excavated by archaeologists, to economic data collected over many years, to the path traced out by a juggler¡¯s finger \cite{RS}. In these occasions, the known information can be seen as the operator-valued data. This will lead us to a study of estimating  a function $f$ from finite sampled data $\{L_j(f): j\in\mathbb{N}_m\}$, where $L_j$, $j\in\mathbb{N}_m,$ are linear operators of $f$, taking values in a Hilbert space $\mathcal{Y}$.

Finally, we describe types of data used in sampling. Motivated by the average sampling, one may consider reconstruction of a function $f$ from functional-valued data $\{L_j(f): j\in\mathbb{J}\}$, where $\mathbb{J}$ is an index set and $L_j$, $j\in\mathbb{J}$, is a sequence of linear functionals of $f$. For vector-valued functions, there are two types of data used in the reconstruction formula. One concerns the point-evaluation data
$\{f(x_j): j\in\mathbb{J}\}$. Another one consists linear combinations of the components of each point-evaluation data $f(x_j)$. Following \cite{AI08}, we shall consider reconstructing a function $f$ from the finite sampled data, defined as the inner product
$$
\langle L_j(f),\xi_j\rangle_{\mathcal{Y}},
\ \ j\in\mathbb{J},
$$
where $\mathbb{J}$ is an index set,
$L_j$, $j\in\mathbb{J},$ are linear operators of $f$ taking values in a Hilbert space $\mathcal{Y}$ and
$\{\xi_j: j\in\mathbb{J}\}$ is a subset in $\mathcal{Y}$. In fact, we shall only consider the sampling and the reconstruction problem with respect to the functional values. This is because if we introduce a family of functionals $\widetilde{L}_{j}, j\in\mathbb{J},$ as
$$
\widetilde{L}_{j}(f)=\langle L_{j}(f), \xi_j
\rangle_{\mathcal{Y}},\ j\in\mathbb{J},
$$
the sampling problem with respect to the operator values can also be taken as reconstructing an element $f$ from its functional values $\widetilde{L}_{j}(f),
\ j\in\mathbb{J}$.

It is known that RKHSs are ideal spaces for processing the point-evaluation data. However, these spaces are no longer suitable for processing non-point-evaluation data. Hence, it is desirable to introduce spaces suitable for processing non-point-evaluation data. This is the focus
of this paper.

\section{The Notion of the Operator Reproducing Kernel Hilbert Space}

We introduce in this section the notion of the operator reproducing kernel Hilbert space. We identify the classical scalar-valued and vector-valued RKHSs as its special examples. We also present nontrivial examples of operator reproducing kernel Hilbert spaces with respect to integral operators. In a way similar to RKHSs, each operator reproducing kernel Hilbert space has exactly one operator reproducing kernel to reproduce the operator values of elements in the space. Based upon the isometric isomorphism between an operator reproducing kernel Hilbert space and a vector-valued RKHS, we characterize an operator reproducing kernel by its features.

Let $\mathcal{H}$ denote a Hilbert space with inner product $\left<\cdot, \cdot\right>_\mathcal{H}$ and norm  $\|\cdot\|_{\mathcal{H}}:=\left<\cdot, \cdot
\right>_\mathcal{H}^{1/2}$. For a Hilbert space $\mathcal{Y}$ and a family $\mathcal{L}$ of linear operators from $\mathcal{H}$ to $\mathcal{Y}$, we say that the norm of $\mathcal{H}$ is compatible with $\mathcal{L}$ if for each $f\in \mathcal{H}$, $L(f)=0$ for all
$L\in\mathcal{L}$ if and only if $\|f\|_{\mathcal{H}}=0$.

\begin{defn}\label{def_ORKHS}
Let $\mathcal{H}$ be a Hilbert space and $\mathcal{L}$ a family of linear operators from $\mathcal{H}$ to a Hilbert space $\mathcal{Y}$. Space $\mathcal{H}$ is called an
{\it operator reproducing kernel Hilbert space} with respect to $\mathcal{L}$ if the norm of $\mathcal{H}$ is compatible with $\mathcal{L}$ and each linear operator in $\mathcal{L}$ is continuous on $\mathcal{H}$.
\end{defn}

In this paper, we shall use the abbreviation ORKHS to represent an operator reproducing kernel Hilbert space. We remark that unlike the typical RKHS, ORKHS is not always composed of functions, but general vectors. We shall see that an ORKHS is associated with a kernel which reproduces the linear operators in $\mathcal{L}$. It is convenient to associate the family $\mathcal{L}$ of linear operators with an index set $\Lambda$, mainly, $\mathcal{L}:=\{L_{\alpha}:\alpha\in \Lambda\}$.

In the scalar case when $\mathcal{Y}=\mathbb{C}$, where $\mathbb{C}$ denotes the set of complex numbers, if a Hilbert space $\mathcal{H}$ and a family $\mathcal{F}$ of linear functionals on $\mathcal{H}$ satisfy Definition \ref{def_ORKHS}, we shall call $\mathcal{H}$ a
{\it functional reproducing kernel Hilbert space} with respect to $\mathcal{F}$. We shall use the abbreviation FRKHS to represent a functional reproducing kernel Hilbert space. We remark that for any Hilbert space $\mathcal{H}$ there always exists a family $\mathcal{F}$ of linear functionals such that $\mathcal{H}$ becomes an FRKHS with respect to $\mathcal{F}$. In fact, we can choose the family $\mathcal{F}$ of linear functionals as the set of all the continuous linear functionals on $\mathcal{H}$. That is, $\mathcal{F}=\mathcal{H}^{*}$, the dual space of $\mathcal{H}$. Since $\mathcal{H}^{*}$ is isometrically anti-isomorphic to $\mathcal{H}$, we can easily obtain that the norm of $\mathcal{H}$ is compatible with $\mathcal{F}$ and then $\mathcal{H}$ is an FRKHS with respect to $\mathcal{F}$. Although any Hilbert space is an FRKHS with respect to some family of linear functionals, we are  interested in finding an FRKHS with respect to a specifically given family of linear functionals. This is nontrivial and useful in practical applications.

We begin with presenting several specific examples of ORKHSs and demonstrating that the notion of ORKHSs is not a trivial extension, but a useful development of the concept of RKHSs. Two typical RKHSs are special examples of ORKHSs. Suppose that $\mathcal{X}$ is a set and $\mathcal{Y}$ is a Hilbert space. The space $\mathcal{H}$ is chosen as a Hilbert space of functions from $\mathcal{X}$ to $\mathcal{Y}$ and the family $\mathcal{L}$ of linear operators from $\mathcal{H}$ to $\mathcal{Y}$ is chosen as the set of the point-evaluation operators defined by
$L_x(f):=f(x),\ x\in \mathcal{X}$. Then it is clear that the norm of $f\in\mathcal{H}$ vanishes if and only if $f$, as a function, vanishes everywhere on $\mathcal{X}$, which indicates that the norm of $\mathcal{H}$ is compatible with $\mathcal{L}$. If each point-evaluation operator in $\mathcal{L}$ is continuous on $\mathcal{H}$, the corresponding ORKHS $\mathcal{H}$ reduces to the vector-valued RKHS. In the special case when $\mathcal{Y}=\mathbb{C}$, we choose $\mathcal{H}$ as a Hilbert space of functions on a set $\mathcal{X}$ and family $\mathcal{F}$ of linear functionals as the collection of the point-evaluation functionals, the corresponding FRKHS $\mathcal{H}$ also reduces to the classical scalar-valued RKHS.

We are particularly interested in ORKHSs with respect to a family of linear operators not restricted to point-evaluation operators. The next three examples concerning integral operators show that the notion of ORKHSs is a useful development of the concept of classical RKHSs. To this end, we recall the Bochner integral of a vector-valued function \cite{DU}. Let $(\mathcal{X},\mathcal{M},\mu)$ be a measure space and $\mathcal{Y}$ a Hilbert space. To define the integral of a function from $\mathcal{X}$ to $\mathcal{Y}$ with respect to $\mu$, we first introduce the integral of a simple function with the form
$f:=\sum_{j\in \mathbb{N}_n}\chi_{E_j}\xi_j,$ where $\xi_j,j\in \mathbb{N}_n,$ are distinct elements of $\mathcal{Y}$, $E_j$, $j\in \mathbb{N}_n$, are pairwise disjoint measurable sets and $\chi_{E_j}$ denotes the characteristic function of $E_j$. If $\mu(E_j)$ is finite whenever $\xi_j\neq0$, the integral of $f$ on a measurable set $E$ is defined by
$$
\int_{E}f(x)d\mu(x):=\sum_{j\in \mathbb{N}_n}
\mu(E_j\cap E)\xi_j.
$$
A function $f:\mathcal{X}\rightarrow \mathcal{Y}$ is called measurable if there exists a sequence of simple functions $f_n:\mathcal{X}\rightarrow \mathcal{Y}$ such that
$$
\lim_{n\rightarrow \infty}\|f_n(x)-f(x)\|_\mathcal{Y}=0,
\ \  \mbox{almost everywhere with respect to} \ \mu\ \mbox{on}\ \mathcal{X}.
$$
A measurable function $f:\mathcal{X}\rightarrow \mathcal{Y}$ is called Bochner integrable if
there exists a sequence of simple functions
$f_n:\mathcal{X}\rightarrow \mathcal{Y}$
such that
$$
\lim_{n\rightarrow \infty}\int_{\mathcal{X}}
\|f_n(x)-f(x)\|_\mathcal{Y}d\mu(x)=0.
$$
One can get by the above equation for any measurable set $E$ that the sequence $\int_{E}f_n(x)d\mu(x)$ is a Cauchy sequence in $\mathcal{Y}$. This together with the completeness of $\mathcal{Y}$ allows us to define the integral of $f$ on $E$ by
$$
\int_{E}f(x)d\mu(x):=\lim_{n\rightarrow\infty}
\int_{E}f_n(x)d\mu(x).
$$
It is known that a measurable function $f:\mathcal{X}\rightarrow \mathcal{Y}$ is Bochner integrable if and only if there holds
$$
\int_{\mathcal{X}}\|f(x)\|_{\mathcal{Y}}d\mu(x)<+\infty.
$$
It follows that
$$
\left|\int_{\mathcal{X}}\langle \xi,f(x)
\rangle_{\mathcal{Y}}d\mu(x)\right|
\leq\|\xi\|_{\mathcal{Y}}\int_{\mathcal{X}}
\|f(x)\|_{\mathcal{Y}}d\mu(x),
\ \mbox{for any}\ \xi\in \mathcal{Y},
$$
yielding that the functional
$\int_{\mathcal{X}}\langle\cdot,f(x)\rangle_{\mathcal{Y}} d\mu(x)$ is continuous on $\mathcal{Y}$. Consequently,
we can comprehend the integral $\int_{\mathcal{X}}f(x)d\mu(x)$
as an element of $\mathcal{Y}$ satisfying
$$
\left\langle \xi, \int_{\mathcal{X}}f(x)d\mu(x)
\right\rangle_\mathcal{Y}=\int_{\mathcal{X}}
\langle \xi,f(x)\rangle_\mathcal{Y} d\mu(x),
\ \mbox{for any}\ \xi\in \mathcal{Y}.
$$
For $1\leq p<+\infty$, we denote by $L^p(\mathcal{X},\mathcal{Y},\mu)$ the space
of all the measurable functions
$f:\mathcal{X}\rightarrow \mathcal{Y}$ such that
$$
\|f\|_{L^p(\mathcal{X},\mathcal{Y},\mu)}:=
\left(\int_{\mathcal{X}}\|f(x)\|_{\mathcal{Y}}^pd\mu(x)
\right)^{1/p}<+\infty.
$$
In the case that $\mu$ is the Lebesgue measure on $\mathbb{R}^d$ and $\mathcal{X}$ is a measurable set of $\mathbb{R}^d$, $L^p(\mathcal{X},\mathcal{Y},\mu)$ will be abbreviated as $L^p(\mathcal{X},\mathcal{Y})$. Moreover, if $\mathcal{Y}=\mathbb{C}$, we also abbreviate $L^p(\mathcal{X},\mathcal{Y})$ as $L^p(\mathcal{X})$. Our interest in integral operators is motivated by a practical scenario that the point-evaluation function values can not be measured exactly in practice due to inevitable measurement errors. Alternatively, it may be of practical advantage to use local averages of $f$ as observed information  \cite{A,MP04,S,SZ,V}.

The first two concrete examples of ORKHSs with respect to integral operators is related to
$L^2(\mathcal{X},\mathbb{C}^{n})$, where $\mathcal{X}$ is a measurable set of $\mathbb{R}^d$ and $n$ is a positive integer. Let $\Lambda$ be an index set. We suppose that a family of functions $\{u_{\alpha}:=[u_{\alpha,j}:j\in\mathbb{N}_{n}]: \alpha
\in\Lambda\}\subset L^2(\mathcal{X},\mathbb{C}^{n})$ has the property that for each $j\in\mathbb{N}_{n}$ the linear span of $\{u_{\alpha,j}: \alpha\in\Lambda\}$ is dense in $L^2(\mathcal{X})$. For each $\alpha\in \Lambda$ we define a linear operator $L_{\alpha}$ for
$f\in L^2(\mathcal{X},\mathbb{C}^{n})$ by
\begin{equation}\label{example_integration_functional}
L_{\alpha}(f):=\int_{\mathcal{X}}f(x)\circ
\overline{u_{\alpha}(x)}dx,
\end{equation}
and set $\mathcal{L}:=\{L_{\alpha}: \alpha\in\Lambda\}$. Here, we denote by $u\circ v$ the Hadamard product of two functions $u$ and $v$ from $\mathcal{X}$ to $\mathbb{C}^{n}$. We first show that for each $\alpha\in \Lambda$, $L_{\alpha}$ is well-defined. Note that $f=[f_j:j\in\mathbb{N}_{n}]\in L^2(\mathcal{X},\mathbb{C}^{n})$ if and only if $f_j\in L^2(\mathcal{X})$ for all $j\in\mathbb{N}_{n}$. For each $\alpha\in\Lambda$ and each
$f\in L^2(\mathcal{X},\mathbb{C}^{n})$ there holds
$$
\int_{\mathcal{X}}\|f(x)\circ\overline{u_{\alpha}(x)}
\|_{\mathbb{C}^{n}}dx=\int_{\mathcal{X}}
\left(\sum_{j\in\mathbb{N}_{n}}|f_j(x)u_{\alpha,j}(x)|^2
\right)^{1/2}dx\leq\sum_{j\in\mathbb{N}_{n}}
\int_{\mathcal{X}}|f_j(x)u_{\alpha,j}(x)|dx.
$$
Since $f_j, u_{\alpha,j}\in L^2(\mathcal{X})$ for each $j\in\mathbb{N}_{n}$,  by the H\"{o}lder's inequality,
we have that
$$
\int_{\mathcal{X}}\|f(x)\circ\overline{u_{\alpha}(x)}
\|_{\mathbb{C}^{n}}dx\leq\sum_{j\in\mathbb{N}_{n}}
\|f_j\|_{L^2(\mathcal{X})}\|u_{\alpha,j}
\|_{L^2(\mathcal{X})}<+\infty.
$$
This shows that $f\circ\overline{u}_{\alpha}$ is Bochner integrable, which guarantees the well-definedness of the operator $L_{\alpha}$. We next verify that $L^2(\mathcal{X},\mathbb{C}^{n})$ is an ORKHS with respect to $\mathcal{L}$. By the definition of the Bochner integral, we have for each $\alpha\in\Lambda$ and $f=[f_j:j\in\mathbb{N}_{n}]\in L^2(\mathcal{X},\mathbb{C}^{n})$ that
$$
L_{\alpha}(f)=\left[\int_{\mathcal{X}}f_j(x)
\overline{u_{\alpha,j}(x)}dx:j\in\mathbb{N}_{n}\right].
$$
This together with the density of the linear span of $u_{\alpha,j}\in L^2(\mathcal{X}),\ \alpha\in\Lambda$, leads to the statement that $L_{\alpha}(f)=0$ for all $\alpha\in\Lambda$ if and only if $\|f\|_{L^2(\mathcal{X},\mathbb{C}^{n})}=0$. Hence, the norm of $L^2(\mathcal{X},\mathbb{C}^{n})$ is compatible with $\mathcal{L}$. Moreover, with the help of the H\"{o}lder's inequality, we have for each $\alpha\in\Lambda$ and
$f\in L^2(\mathcal{X},\mathbb{C}^{n})$ that
$$
\|L_{\alpha}(f)\|_{\mathbb{C}^{n}}
\leq\left(\sum_{j\in\mathbb{N}_{n}}
\|f_j\|^2_{L^2(\mathcal{X})}
\|u_{\alpha,j}\|^2_{L^2(\mathcal{X})}
\right)^{1/2}\leq c_{\alpha}
\|f\|_{L^2(\mathcal{X},\mathbb{C}^{n})},
$$
where $c_{\alpha}:=\max\{\|u_{\alpha,j}
\|_{L^2(\mathcal{X})}:j\in\mathbb{N}_{n}\}$.
This ensures that the operator $L_{\alpha}$ defined by (\ref{example_integration_functional}) is continuous on $L^2(\mathcal{X},\mathbb{C}^{n})$. Consequently, by Definition \ref{def_ORKHS} we conclude the following result.

\begin{prop}\label{ORKHS1}
If the family of functions $\{u_{\alpha}:=[u_{\alpha,j}:j\in\mathbb{N}_{n}]: \alpha
\in\Lambda\}\subset L^2(\mathcal{X},\mathbb{C}^{n})$ has the property that for each $j\in\mathbb{N}_{n}$ the linear span of $\{u_{\alpha,j}: \alpha\in\Lambda\}$ is dense in $L^2(\mathcal{X})$, then the space $L^2(\mathcal{X},\mathbb{C}^{n})$ is an
ORKHS with respect to $\mathcal{L}$ defined
as in \eqref{example_integration_functional}
in terms of $u_{\alpha}$.
\end{prop}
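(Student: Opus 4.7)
The plan is to verify directly the three requirements of Definition \ref{def_ORKHS}: (i) each $L_\alpha$ is well-defined as an operator from $L^2(\mathcal{X},\mathbb{C}^n)$ into $\mathbb{C}^n$; (ii) the norm of $L^2(\mathcal{X},\mathbb{C}^n)$ is compatible with $\mathcal{L}$; and (iii) every $L_\alpha$ is continuous on $L^2(\mathcal{X},\mathbb{C}^n)$. Since $\mathcal{Y}=\mathbb{C}^n$ is finite-dimensional, Bochner integrability of the $\mathbb{C}^n$-valued integrand $f\circ\overline{u_\alpha}$ reduces to the summability of the component integrals, which is the entry point of the argument.

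For well-definedness, I would write any $f=[f_j:j\in\mathbb{N}_n]\in L^2(\mathcal{X},\mathbb{C}^n)$ componentwise, bound the $\mathbb{C}^n$-norm of $f\circ\overline{u_\alpha}$ by the sum of $|f_j u_{\alpha,j}|$, and invoke H\"older's inequality componentwise to get $\int_\mathcal{X}\|f\circ\overline{u_\alpha}\|_{\mathbb{C}^n}\,dx\le\sum_{j\in\mathbb{N}_n}\|f_j\|_{L^2(\mathcal{X})}\|u_{\alpha,j}\|_{L^2(\mathcal{X})}<+\infty$. Bochner integrability then identifies $L_\alpha(f)$ with the vector whose $j$-th entry is $\int_\mathcal{X} f_j\overline{u_{\alpha,j}}\,dx$, a fact that will be used in the next two steps.

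For continuity, the same H\"older estimate applied entry by entry yields $\|L_\alpha(f)\|_{\mathbb{C}^n}\le c_\alpha\|f\|_{L^2(\mathcal{X},\mathbb{C}^n)}$ with $c_\alpha:=\max\{\|u_{\alpha,j}\|_{L^2(\mathcal{X})}:j\in\mathbb{N}_n\}$; this is routine. The only step with any real content is norm compatibility. Here I would argue that if $L_\alpha(f)=0$ for every $\alpha\in\Lambda$, then for each fixed $j\in\mathbb{N}_n$ the component identity forces $\langle f_j,u_{\alpha,j}\rangle_{L^2(\mathcal{X})}=0$ for all $\alpha\in\Lambda$. The hypothesis that $\span\{u_{\alpha,j}:\alpha\in\Lambda\}$ is dense in $L^2(\mathcal{X})$ then yields $f_j=0$ in $L^2(\mathcal{X})$, hence $\|f\|_{L^2(\mathcal{X},\mathbb{C}^n)}=0$; the converse is trivial. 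This density hypothesis is the essential point and is the only part where something beyond a direct estimate is needed; everything else is an application of H\"older's inequality and the definition of the Bochner integral already recalled above. Once these three items are in hand, Definition \ref{def_ORKHS} is satisfied and the proposition follows.
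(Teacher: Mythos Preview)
Your proposal is correct and follows essentially the same approach as the paper: componentwise H\"older for well-definedness and continuity (with the same constant $c_\alpha=\max_j\|u_{\alpha,j}\|_{L^2(\mathcal{X})}$), and the density hypothesis applied coordinate by coordinate for norm compatibility. The paper's argument appears in the paragraphs immediately preceding the proposition and matches yours step for step.
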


Two specific choices of the family of functions $\{u_{\alpha}:\alpha\in\Lambda\}$ lead to two ORKHSs which may be potentially important in practical applications. The first one is motivated by reconstructing a signal from its frequency components, which is one of the main themes in signal analysis. Fourier analysis has been proved to be an efficient approach for providing global energy-frequency distributions of signals. Because of its prowess and simplicity, Fourier analysis is considered as a popular tool in signal analysis and other areas of applications. In this example, we restrict ourselves to the case of periodic functions on $\mathbb{R}$, where the frequency information of such a signal is represented by its Fourier coefficients. Specifically, we consider the Hilbert space $L^2([0,2\pi])$. Set $\Lambda:=\mathbb{Z}$ and choose the functions $u_j,\ j\in\Lambda,$ as the Fourier basis functions. That is,
$$
u_j:=\frac{1}{\sqrt{2\pi}}e^{ij(\cdot)},\ \ j\in\Lambda.
$$
Associated with these functions, we introduce a family of linear functionals on $L^2([0,2\pi])$ as
\begin{equation}\label{integration_functional_Fourier}
L_j(f):=\int_{0}^{2\pi}f(x)\overline{u_j(x)}dx,\ f\in L^2([0,2\pi]),\ j\in\Lambda,
\end{equation}
and set $\mathcal{L}:=\{L_j:j\in\Lambda\}$. It follows from Proposition \ref{ORKHS1} and the density of
the Fourier basis functions that the space $L^2([0,2\pi])$ is an FRKHS with respect to $\mathcal{L}$.

Wavelet analysis is an important mathematical tool in various applications, such as signal processing, communication and numerical analysis. Various wavelet functions have been constructed for practical purpose in the literature \cite{CMX,D,Mallat,M,MX}.
Particularly, wavelet analysis provides an alternative representation of signals. The wavelet transform contains the complete information of a signal in both time and frequency domains at the same time. Our next example comes from the problem of recovering a signal from finite wavelet coefficients, which can be seen as the linear functional values of the signal. Suppose that $\psi\in L^2(\mathbb{R}^d)$ is a wavelet function such that the sequence $\psi_{j,k},\ j\in\mathbb{Z},\ k\in\mathbb{Z}^d$, defined by
$$
\psi_{j,k}(x):=2^{\frac{jd}{2}}\psi(2^jx-k), \ \ x\in\mathbb{R}^d, \ \ j\in\mathbb{Z},\ k\in\mathbb{Z}^d,
$$
constitutes an orthonormal basis for $L^2(\mathbb{R}^d)$.
We consider the space $L^2(\mathbb{R}^d)$. To introduce a family of linear functionals, we let $\Lambda:=\mathbb{Z}\times\mathbb{Z}^{d}$ and for each $(j,k)\in\Lambda$ set $u_{(j,k)}:=\psi_{j,k}$. Accordingly, the linear functionals are defined by
\begin{equation}\label{integration_functional_wavelet}
L_{(j,k)}(f):=\int_{\mathbb{R}^d}f(x)\overline{u_{(j,k)}(x)}
dx,\ f\in L^2(\mathbb{R}^d),\ (j,k)\in\Lambda.
\end{equation}
By Proposition \ref{ORKHS1}, we obtain directly that the space $L^2(\mathbb{R}^d)$ is an FRKHS with respect to the family $\mathcal{L}:=\{L_{(j,k)}:\ (j,k)\in\Lambda\}$ of continuous linear functionals.

As a third concrete example of ORHKSs with respect to integral operators, we consider the Paley-Wiener space
of functions from $\mathbb{R}^d$ to $\mathbb{C}^{n}$. For a set of positive numbers $\Delta:=\{\delta_j:j \in\mathbb{N}_d\}$, we denote by $\mathcal{B}_{\Delta}(\mathbb{R}^d,\mathbb{C}^{n})$ the closed subspaces of functions in $L^2(\mathbb{R}^d,\mathbb{C}^{n})$ which are bandlimited to the region
$$
I_{\Delta}:=\prod_{j\in\mathbb{N}_d}[-\delta_j,\delta_j].
$$
That is,
$$
\mathcal{B}_{\Delta}(\mathbb{R}^d,\mathbb{C}^{n}):=
\left\{f\in L^2(\mathbb{R}^d,\mathbb{C}^{n}): \ {\rm supp}\hat f\subseteq I_{\Delta}\right\},
$$
with the inner product
$$
\langle f,g\rangle_{\mathcal{B}_{\Delta}
(\mathbb{R}^d,\mathbb{C}^{n})}:=
\sum_{j\in\mathbb{N}_{n}}\langle f_j,g_j\rangle_{L^{2}(\mathbb{R}^d)}.
$$
Here, we define the Fourier transform $\hat{h}$ and the inverse Fourier transform $\check{h}$ of $h\in L^1(\mathbb{R}^d,\mathbb{C}^n)$, respectively, by
$$
\hat h(\omega):=\int_{\mathbb{R}^d}
h(t)e^{-i(t,\omega)}dt,\ \omega\in\mathbb{R}^d,
$$
and
$$
\check{h}(\omega):=\frac{1}{(2\pi)^d}\int_{\mathbb{R}^d}
h(t)e^{i(t,\omega)}dt,\ \omega\in\mathbb{R}^d,
$$
with $(\cdot,\cdot)$ being the standard inner product on $\mathbb{R}^d$. By standard approximation process, the Fourier transform and its inverse can be extended to the functions in $L^2(\mathbb{R}^d,\mathbb{C}^n)$. It is well-known that the space $\mathcal{B}_{\Delta}(\mathbb{R}^d,\mathbb{C}^{n})$ is a vector-valued RKHS. In other words, it is an ORKHS with respect to the family of the point-evaluation operators. We shall show that it is also an ORKHS with respect to a family of the integral operators to be defined next. Specifically, we suppose that $u\in L^2(\mathbb{R}^d)$ satisfies the property that
$\hat{u}(\omega)\neq0, \ {\rm a.e.}\ \omega\in I_{\Delta}$. For each $x\in\mathbb{R}^d$, we set $u_x:=u(\cdot-x)$ and introduce the operator
\begin{equation}\label{example_integration_functional1}
L_x(f):=\int_{\mathbb{R}^d}f(t)\overline{u_x(t)}dt
\end{equation}
and define $\mathcal{L}:=\{L_x: x\in \Lambda\}$, with $\Lambda:=\mathbb{R}^d$. We first show that the norm of  $\mathcal{B}_{\Delta}(\mathbb{R}^d,\mathbb{C}^{n})$ is compatible with $\mathcal{L}$. It suffices to verify for each $f\in\mathcal{B}_{\Delta}
(\mathbb{R}^d,\mathbb{C}^{n})$ that if
$L_x(f)=0$ for all $x\in\mathbb{R}^d$, then $\|f\|_{L^2(\mathbb{R}^d, \mathbb{C}^{n})}=0$. By the Plancherel identity we have for any $x\in \mathbb{R}^d$ and any $\xi\in \mathbb{C}^n$ that
\begin{equation*}
\langle L_x(f),\xi\rangle_{\mathbb{C}^{n}}=
\int_{\mathbb{R}^d}\langle f(t),\xi
\rangle_{\mathbb{C}^{n}}\overline{u_x(t)}dt
=\int_{I_{\Delta}}\sum_{j\in\mathbb{N}_{n}}
\overline{\xi_j}\hat{f}_j(\omega)
\overline{\hat{u}(\omega)}e^{i(x,\omega)}d\omega.
\end{equation*}
The density $\overline{\span}\{e^{i(x,\cdot)}:
x\in\mathbb{R}^d\}=L^2(I_{\Delta})$
ensures that the condition $L_x(f)=0$ for all $x\in\mathbb{R}^d$ is equivalent to that
\begin{equation}\label{new_sum}
\sum_{j\in\mathbb{N}_{n}}\overline{\xi_j}
\hat{f}_j(\omega)\overline{\hat{u}(\omega)}=0,\  \mbox{a.e.}  \ \omega\in I_{\Delta},
\ \mbox{for all}\ \ \xi\in\mathbb{C}^{n}.
\end{equation}
By the fact that $\hat{u}(\omega)\neq0, \ {\rm a.e.}\  \omega\in I_{\Delta}$, \eqref{new_sum} holds if and only if $\|f_j\|_{L^2(\mathbb{R}^d)}=0,j\in\mathbb{N}_{n}$, which is equivalent to
$\|f\|_{L^2(\mathbb{R}^d, \mathbb{C}^{n})}=0$.
Thus, we have proved that the norm of  $\mathcal{B}_{\Delta}(\mathbb{R}^d,\mathbb{C}^{n})$ is compatible with $\mathcal{L}$. Moreover, applying the H\"{o}lder's inequality to the definition of operator $L_x$ yields that for each $x\in\mathbb{R}^d$ the operator $L_x$ is continuous on $\mathcal{B}_{\Delta}
(\mathbb{R}^d,\mathbb{C}^{n})$. According to Definition \ref{def_ORKHS}, we obtain the following result regarding space $\mathcal{B}_{\Delta}(\mathbb{R}^d,\mathbb{C}^{n})$.

\begin{prop}\label{Paley}
If $u\in L^2(\mathbb{R}^d)$ satisfies  $\hat{u}(\omega)\neq0, \ {\rm a.e.}\  \omega\in I_{\Delta}$, then $\mathcal{B}_{\Delta}
(\mathbb{R}^d, \mathbb{C}^{n})$ is an ORKHS with respect to $\mathcal{L}$ defined as in \eqref{example_integration_functional1}.
\end{prop}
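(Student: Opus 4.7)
The plan is to verify directly the two requirements in Definition \ref{def_ORKHS}: that $\|\cdot\|_{\mathcal{B}_{\Delta}(\mathbb{R}^d,\mathbb{C}^{n})}$ is compatible with the family $\mathcal{L}$, and that each $L_x$ is continuous on $\mathcal{B}_{\Delta}(\mathbb{R}^d,\mathbb{C}^{n})$. Both reduce to Fourier-analytic computations on the box $I_{\Delta}$, so the strategy is to pair $L_x(f)$ with an arbitrary test vector $\xi\in\mathbb{C}^n$, apply Plancherel to convert the spatial integral into a frequency integral on $I_{\Delta}$, and then exploit the nonvanishing of $\hat u$.

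For continuity, I would first observe that since translation is an $L^2$-isometry, $\|u_x\|_{L^2(\mathbb{R}^d)}=\|u\|_{L^2(\mathbb{R}^d)}$ for every $x$. A componentwise application of H\"older's inequality then gives
\[
\|L_x(f)\|_{\mathbb{C}^{n}}\leq \Bigl(\sum_{j\in\mathbb{N}_{n}}\|f_j\|_{L^2(\mathbb{R}^d)}^2\Bigr)^{1/2}\|u\|_{L^2(\mathbb{R}^d)}=\|u\|_{L^2(\mathbb{R}^d)}\,\|f\|_{\mathcal{B}_{\Delta}(\mathbb{R}^d,\mathbb{C}^{n})},
\]
so the bound $c_x:=\|u\|_{L^2(\mathbb{R}^d)}$ is independent of $x$. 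This part is routine.

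The core of the argument is norm compatibility: one must show that if $L_x(f)=0$ for every $x\in\mathbb{R}^d$, then $f=0$ in $L^2(\mathbb{R}^d,\mathbb{C}^n)$. I would test against an arbitrary $\xi=[\xi_j:j\in\mathbb{N}_n]\in\mathbb{C}^n$, and, using that $\widehat{u_x}(\omega)=e^{-i(x,\omega)}\hat u(\omega)$ together with $\supp\hat f_j\subseteq I_{\Delta}$, rewrite
\[
\langle L_x(f),\xi\rangle_{\mathbb{C}^{n}}=\int_{I_{\Delta}}\Bigl(\sum_{j\in\mathbb{N}_{n}}\overline{\xi_j}\hat f_j(\omega)\overline{\hat u(\omega)}\Bigr)e^{i(x,\omega)}\,d\omega
\]
via the Plancherel identity. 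Since $\{e^{i(x,\cdot)}:x\in\mathbb{R}^d\}$ has dense linear span in $L^2(I_{\Delta})$, vanishing of the above integral for every $x$ forces the integrand to vanish a.e.\ on $I_{\Delta}$. The hypothesis $\hat u(\omega)\neq 0$ a.e.\ on $I_{\Delta}$ lets us divide through, leaving $\sum_{j}\overline{\xi_j}\hat f_j(\omega)=0$ a.e.\ on $I_{\Delta}$ for every $\xi\in\mathbb{C}^{n}$; choosing $\xi$ to be the standard basis vectors yields $\hat f_j=0$ a.e.\ on $I_{\Delta}$, and combined with $\supp\hat f_j\subseteq I_{\Delta}$ this gives $f_j=0$ in $L^2(\mathbb{R}^d)$ for each $j$.

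The main obstacle, if any, is justifying the use of the density of $\{e^{i(x,\cdot)}:x\in\mathbb{R}^d\}$ in $L^2(I_{\Delta})$ and the ability to eliminate $\hat u$; both are essentially standard once the Plancherel reduction is in place, so I expect no genuinely hard step beyond careful bookkeeping with the componentwise structure of $\mathbb{C}^n$-valued functions. Conclude by appealing to Definition \ref{def_ORKHS}.
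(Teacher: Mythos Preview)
Your proposal is correct and follows essentially the same route as the paper: both verify continuity via a componentwise H\"older estimate and establish norm compatibility by pairing $L_x(f)$ with $\xi\in\mathbb{C}^n$, applying Plancherel to reduce to a frequency integral over $I_{\Delta}$, invoking the density of $\{e^{i(x,\cdot)}:x\in\mathbb{R}^d\}$ in $L^2(I_{\Delta})$, and then using $\hat u\neq 0$ a.e.\ to kill each $\hat f_j$. The only cosmetic differences are that you treat continuity first and make the choice of standard basis vectors for $\xi$ explicit.
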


Finally, we describe a finite-dimensional ORKHS. Let $\mathcal{H}$ be a Hilbert space of dimension $n$ and   $\mathcal{L}:=\{L_{\alpha}:\alpha\in\Lambda\}$ a family of linear operators from $\mathcal{H}$ to a Hilbert space $\mathcal{Y}$. We suppose that there exists a finite set  $\{\alpha_j:j\in\mathbb{N}_n\}$ of $\Lambda$ such that $L_{\alpha_j},\ j\in\mathbb{N}_n,$ are linearly independent. That is, if for $\{\xi_j:j\in\mathbb{N}_n\}\subseteq\mathcal{Y}$ there holds
$$
\sum_{j\in\mathbb{N}_n}\langle L_{\alpha_j}(f),\xi_j\rangle_{\mathcal{Y}}=0,\ \mbox{for all} \ f\in\mathcal{H},
$$
then $\xi_j=0, j\in\mathbb{N}_n.$ It is known that any linear operator from a finite-dimensional normed space to a normed space is continuous. Hence, the continuity of the linear operators in $\mathcal{L}$ is clear. It suffices to verify that the norm of $\mathcal{H}$ is compatible with $\mathcal{L}$. Let $\xi$ be a nonzero element in $\mathcal{Y}$. As $L_{\alpha_j}, j\in\mathbb{N}_n,$ are continuous on $\mathcal{H}$, there exists a finite set $\{\phi_j: j\in\mathbb{N}_n\}\subseteq\mathcal{H}$ such that
\begin{equation}\label{finitebasis}
\langle L_{\alpha_j}(f),\xi\rangle_{\mathcal{Y}}=
\langle f,\phi_j\rangle_{\mathcal{H}}, \ \mbox{for all}\ f\in\mathcal{H}.
\end{equation}
We shall show that $\phi_j, j\in\mathbb{N}_n$, are linear independent. Assume that there holds for some scalars $c_j,j\in\mathbb{N}_n$, that $\sum_{ j\in\mathbb{N}_n}
c_j\phi_j=0$. This together with (\ref{finitebasis}) leads to
$$
\sum_{j\in\mathbb{N}_n}\langle L_{\alpha_j}(f), \overline{c_j}\xi\rangle_{\mathcal{Y}}=\left< f,\sum_{ j\in\mathbb{N}_n}c_j\phi_j\right>_{\mathcal{H}}=0, \ \mbox{for all}\ f\in\mathcal{H}.
$$
By the linear independency of $L_{\alpha_j}, j\in\mathbb{N}_n$, we have that $c_j=0$ for all $j\in\mathbb{N}_n.$ That is, $\phi_j, j\in\mathbb{N}_n$, are linear independent, which implies $\span\{\phi_j: j\in\mathbb{N}_n\}=\mathcal{H}$. Suppose that $f\in\mathcal{H}$ satisfies $L_{\alpha_j}(f)=0$ for all $j\in\mathbb{N}_n$. We then get by (\ref{finitebasis}) that $\langle f,\phi_j\rangle_{\mathcal{H}}=0, j\in\mathbb{N}_n$, which yields $\|f\|_{\mathcal{H}}=0$.
Consequently, we have proved that the norm of $\mathcal{H}$ is compatible with $\mathcal{L}$. By Definition \ref{def_ORKHS}, we conclude that $\mathcal{H}$ is an ORKHS with respect to $\mathcal{L}$, which is stated in the following proposition.
\begin{prop}\label{Finitedimension}
Let $\mathcal{H}$ be a Hilbert space of dimension $n$ and   $\mathcal{L}:=\{L_{\alpha}:\alpha\in\Lambda\}$ a family of linear operators from $\mathcal{H}$ to a Hilbert space $\mathcal{Y}$. If there exist $n$ linearly independent linear operators in $\mathcal{L}$, then $\mathcal{H}$ is an ORKHS with respect to $\mathcal{L}$.
\end{prop}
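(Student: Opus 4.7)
The plan is to verify the two conditions of Definition \ref{def_ORKHS} for $\mathcal{H}$ and the family $\mathcal{L}$. The continuity of every $L\in\mathcal{L}$ on $\mathcal{H}$ is immediate, since a linear operator from a finite-dimensional normed space to any normed space is automatically continuous; so the real content is to check that the norm of $\mathcal{H}$ is compatible with $\mathcal{L}$. In fact, it will suffice to show the stronger statement that the $n$ given linearly independent operators $L_{\alpha_1},\dots,L_{\alpha_n}$ already separate points of $\mathcal{H}$.

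To this end, I would first fix any nonzero $\xi\in\mathcal{Y}$ and apply the Riesz representation theorem in $\mathcal{H}$ to the continuous linear functional $f\mapsto \langle L_{\alpha_j}(f),\xi\rangle_{\mathcal{Y}}$ to obtain representers $\phi_j\in\mathcal{H}$, $j\in\mathbb{N}_n$, satisfying
\begin{equation*}
\langle L_{\alpha_j}(f),\xi\rangle_{\mathcal{Y}}=\langle f,\phi_j\rangle_{\mathcal{H}},\qquad f\in\mathcal{H},\ j\in\mathbb{N}_n.
\end{equation*}
The key step is to argue that $\{\phi_j:j\in\mathbb{N}_n\}$ is linearly independent in $\mathcal{H}$. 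Suppose $\sum_{j\in\mathbb{N}_n} c_j\phi_j=0$ for some scalars $c_j$; substituting into the identity above gives $\sum_{j\in\mathbb{N}_n}\langle L_{\alpha_j}(f),\overline{c_j}\xi\rangle_{\mathcal{Y}}=0$ for every $f\in\mathcal{H}$. The hypothesis that $L_{\alpha_1},\dots,L_{\alpha_n}$ are linearly independent as operators into $\mathcal{Y}$ then forces $\overline{c_j}\xi=0$, and since $\xi\neq0$ each $c_j=0$.

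Since $\dim\mathcal{H}=n$, the linear independence of $\{\phi_j:j\in\mathbb{N}_n\}$ means it is a basis of $\mathcal{H}$. Consequently, if $f\in\mathcal{H}$ satisfies $L_{\alpha_j}(f)=0$ for all $j\in\mathbb{N}_n$, then $\langle f,\phi_j\rangle_{\mathcal{H}}=0$ for $j\in\mathbb{N}_n$, which by spanning gives $f=0$ and therefore $\|f\|_{\mathcal{H}}=0$. A fortiori, if $L_\alpha(f)=0$ for all $\alpha\in\Lambda$, then $\|f\|_{\mathcal{H}}=0$, so the norm compatibility with $\mathcal{L}$ is established and $\mathcal{H}$ is an ORKHS with respect to $\mathcal{L}$. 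The only mildly subtle point in this proof is the passage from linear independence of $L_{\alpha_j}$ as operators into $\mathcal{Y}$ to linear independence of the Riesz representers $\phi_j$ in $\mathcal{H}$; everything else is routine.
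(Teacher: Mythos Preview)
Your proof is correct and follows essentially the same route as the paper's: both fix a nonzero $\xi\in\mathcal{Y}$, obtain Riesz representers $\phi_j\in\mathcal{H}$ for the functionals $f\mapsto\langle L_{\alpha_j}(f),\xi\rangle_{\mathcal{Y}}$, use the linear-independence hypothesis on the $L_{\alpha_j}$ to deduce that the $\phi_j$ are linearly independent (hence a basis of the $n$-dimensional space $\mathcal{H}$), and conclude norm compatibility. The continuity argument via finite-dimensionality is also identical.
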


Observing from the examples presented above, we
point out that the spaces $L^2([0,2\pi])$ and $L^2(\mathbb{R}^d)$ are FRKHSs but not classical
RKHSs as they consist of equivalent classes of functions
with respect to the Lebesgue measure. However, the space $\mathcal{B}_{\Delta}(\mathbb{R}^d,\mathbb{C}^n)$
admits two different families of linear operators
from itself to $\mathbb{C}^n$, one being the
standard point-evaluation operators and the other
being the integral operators  having the form of (\ref{example_integration_functional1}). If the
finite-dimensional ORKHS $\mathcal{H}$ consists of functions from  an input set $\mathcal{X}$ to $\mathcal{Y}$, then all the point-evaluation operators are continuous on $\mathcal{H}$. Hence, the finite dimensional space $\mathcal{H}$ also admits the family of point-evaluation operators, besides the family $\mathcal{L}$. The ORKHSs having this property are especially desirable, which we shall study in the next two sections.

We now turn to investigating the properties of general ORKHSs. A core in the theory of RKHSs lies in the celebrated result that there is a bijective correspondence between the reproducing kernel and the corresponding RKHS. We next establish a similar bijection in the framework of ORKHSs.

We first identify the operator reproducing kernel to each ORKHS. Recall that a vector-valued RKHS admits a reproducing kernel and the functional values $\langle f(x), \xi\rangle_{\mathcal{Y}}$, $x\in \mathcal{X}, \xi\in\mathcal{Y}$, of a function $f$ in the vector-valued RKHS can be reproduced via its inner product with the kernel. In a manner similar to the vector-valued RKHS, we show that for a general ORKHS there exists a kernel which can be used to reproduce the functional values $\langle L_{\alpha}(f),\xi\rangle_{\mathcal{Y}}$, $\alpha\in \Lambda,\xi\in\mathcal{Y}$. For two Hilbert spaces $\mathcal{H}_1,\mathcal{H}_2$, we denote by $\mathcal{B}(\mathcal{H}_1,\mathcal{H}_2)$ the set of bounded linear operators from $\mathcal{H}_1$ to $\mathcal{H}_2$. For each
$L\in \mathcal{B}(\mathcal{H}_1,\mathcal{H}_2)$, we let $\|L\|_{\mathcal{B}(\mathcal{H}_1, \mathcal{H}_2)}$
denote the operator norm of $L$ in $\mathcal{B}(\mathcal{H}_1,\mathcal{H}_2)$ and
$L^{*}$ denote the adjoint of $L$.

\begin{thm}\label{property}
Suppose that $\mathcal{H}$ is a Hilbert space and $\mathcal{L}:=\{L_{\alpha}: \alpha\in \Lambda\}$ is a family of linear operators from $\mathcal{H}$ to a Hilbert space $\mathcal{Y}$. If $\mathcal{H}$ is an ORKHS with respect to $\mathcal{L}$, then there exists a unique operator $K:\Lambda\rightarrow
\mathcal{B}(\mathcal{Y},\mathcal{H})$ such that the following statements hold.

\indent (1) For each $\alpha\in \Lambda$ and each $\xi\in\mathcal{Y}$, $K(\alpha)\xi\in \mathcal{H}$ and
\begin{equation}\label{reproducing_property}
\langle L_{\alpha}(f),\xi\rangle_{\mathcal{Y}}
=\langle f,K(\alpha)\xi\rangle_\mathcal{H},
\ \ \mbox{for all}\ \ f\in \mathcal{H}.
\end{equation}

\indent (2) The linear span  $\mathcal{S}_{K}:=\span\{K(\alpha)\xi:
\alpha\in\Lambda,\xi\in\mathcal{Y}\}$
is dense in $\mathcal{H}$.

\indent (3) For each $n\in\mathbb{N}$, each pair of
finite sets $\Lambda_n:=\{\alpha_j:j\in\mathbb{N}_n\}
\subseteq\Lambda$ and $\mathcal{Y}_n:=\{\xi_j:
j\in\mathbb{N}_n\}\subseteq\mathcal{Y}$ there holds
$$
\sum_{j\in\mathbb{N}_n}\sum_{k\in\mathbb{N}_n}
\langle L_{\alpha_k}(K(\alpha_j)\xi_j),\xi_k
\rangle_{\mathcal{Y}}\geq0.
$$
\end{thm}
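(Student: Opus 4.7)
The plan is to obtain the operator $K$ by applying the Riesz representation theorem pointwise in the pair $(\alpha,\xi)$, then extract the three listed properties from the reproducing identity and the compatibility of the norm with $\mathcal{L}$.

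For the existence in part (1), I would fix $\alpha\in\Lambda$ and $\xi\in\mathcal{Y}$ and observe that the map $f\mapsto\langle L_\alpha(f),\xi\rangle_{\mathcal{Y}}$ is a bounded linear functional on $\mathcal{H}$: linearity is immediate from the linearity of $L_\alpha$ and of the inner product in the first slot, and continuity follows from $|\langle L_\alpha(f),\xi\rangle_{\mathcal{Y}}|\leq\|L_\alpha(f)\|_{\mathcal{Y}}\|\xi\|_{\mathcal{Y}}\leq \|L_\alpha\|\,\|f\|_{\mathcal{H}}\,\|\xi\|_{\mathcal{Y}}$, where $\|L_\alpha\|$ is finite by the ORKHS hypothesis. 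The Riesz representation theorem then produces a unique element of $\mathcal{H}$ which I call $K(\alpha)\xi$, giving the identity \eqref{reproducing_property}. Conjugate-linearity of the $\mathcal{Y}$-inner product in the second slot together with the uniqueness of the Riesz representative forces $\xi\mapsto K(\alpha)\xi$ to be linear, and the estimate above transfers to $\|K(\alpha)\xi\|_{\mathcal{H}}\leq\|L_\alpha\|\,\|\xi\|_{\mathcal{Y}}$, so $K(\alpha)\in\mathcal{B}(\mathcal{Y},\mathcal{H})$. Uniqueness of $K$ itself is immediate: if $K_1,K_2$ both satisfied \eqref{reproducing_property}, then $\langle f,K_1(\alpha)\xi-K_2(\alpha)\xi\rangle_{\mathcal{H}}=0$ for every $f\in\mathcal{H}$, forcing $K_1(\alpha)\xi=K_2(\alpha)\xi$.

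For part (2), I would argue by orthogonal complements. If $f\in\mathcal{H}$ is orthogonal to $\mathcal{S}_K$, then $\langle f,K(\alpha)\xi\rangle_{\mathcal{H}}=0$ for every $\alpha\in\Lambda$ and every $\xi\in\mathcal{Y}$. By \eqref{reproducing_property} this says $\langle L_\alpha(f),\xi\rangle_{\mathcal{Y}}=0$ for all such $\xi$, hence $L_\alpha(f)=0$ for every $\alpha\in\Lambda$. The compatibility of the norm of $\mathcal{H}$ with $\mathcal{L}$ then gives $\|f\|_{\mathcal{H}}=0$, i.e.\ $f=0$. This shows $\mathcal{S}_K^\perp=\{0\}$, which in a Hilbert space is equivalent to $\overline{\mathcal{S}_K}=\mathcal{H}$.

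For part (3), the trick is to apply the reproducing identity with $f$ itself chosen from $\mathcal{S}_K$. Taking $f=K(\alpha_j)\xi_j\in\mathcal{H}$ in \eqref{reproducing_property} with the index $\alpha_k$ yields $\langle L_{\alpha_k}(K(\alpha_j)\xi_j),\xi_k\rangle_{\mathcal{Y}}=\langle K(\alpha_j)\xi_j,K(\alpha_k)\xi_k\rangle_{\mathcal{H}}$. Summing over $j,k\in\mathbb{N}_n$ collapses the double sum to a single squared norm,
\begin{equation*}
\sum_{j\in\mathbb{N}_n}\sum_{k\in\mathbb{N}_n}
\langle L_{\alpha_k}(K(\alpha_j)\xi_j),\xi_k\rangle_{\mathcal{Y}}
=\Bigl\|\sum_{j\in\mathbb{N}_n}K(\alpha_j)\xi_j\Bigr\|_{\mathcal{H}}^{2}\geq 0,
\end{equation*}
which is the required positive-semidefiniteness.

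I do not foresee a serious obstacle: the whole argument is a routine application of Riesz, the one subtle point being getting the conjugate-linearity bookkeeping right in (1) so that $K(\alpha)$ turns out linear (rather than conjugate linear) in $\xi$, which I would handle explicitly to avoid sign errors.
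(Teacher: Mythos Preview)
Your proposal is correct and follows essentially the same route as the paper. The only cosmetic difference is in part (1): the paper defines $K(\alpha):=L_\alpha^{*}$ in one stroke via the Hilbert-space adjoint, whereas you unpack this by applying Riesz to each functional $f\mapsto\langle L_\alpha(f),\xi\rangle_{\mathcal{Y}}$ and then verifying linearity and boundedness in $\xi$ by hand; parts (2) and (3) are identical to the paper's arguments.
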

\begin{proof}
We construct the operator
$K:\Lambda\rightarrow\mathcal{B}(\mathcal{Y},\mathcal{H})$
which satisfies (1)-(3). For each $\alpha\in \Lambda$, since $L_{\alpha}\in\mathcal{B}(\mathcal{H},\mathcal{Y})$, the adjoint $L_{\alpha}^{*}$ is the unique operator in $\mathcal{B}(\mathcal{Y},\mathcal{H})$ satisfying
$$
\langle L_{\alpha}(f),\xi\rangle_{\mathcal{Y}}
=\langle f,L_{\alpha}^{*}\xi\rangle_\mathcal{H},\ \mbox{for all} \ f\in \mathcal{H}\ \mbox{and}\  \xi\in\mathcal{Y}.
$$
We define the operator $K$ from $\Lambda$ to $\mathcal{B}(\mathcal{Y},\mathcal{H})$ by
$K(\alpha):=L_{\alpha}^{*}$, $\alpha\in\Lambda$. Then, clearly, $K$ satisfies statement (1).

To prove statement (2), we suppose that $f\in\mathcal{H}$ satisfies $\langle f,K(\alpha)\xi\rangle_{\mathcal{H}}=0$,  for all $\alpha\in \Lambda, \xi\in\mathcal{Y}$. This together with (\ref{reproducing_property})
ensures that $L_{\alpha}(f)=0$ for all
$\alpha\in \Lambda$. By the hypothesis of this theorem, $\mathcal{H}$ is an ORKHS with respect to $\mathcal{L}$. It follows that the norm of $\mathcal{H}$ is compatible with  $\mathcal{L}$. Thus, we have that $f=0$. This implies that $\overline{\mathcal{S}_{K}}=\mathcal{H}.$

We next show statement (3). For each $n\in\mathbb{N}$ and for $\{\alpha_j:j\in\mathbb{N}_n\}\subseteq\Lambda$ and $\{\xi_j:j\in\mathbb{N}_n\}\subseteq\mathcal{Y}$, we let $f_n:=\sum_{j\in\mathbb{N}_n} K(\alpha_j)\xi_j$. We then observe from equation (\ref{reproducing_property}) that
$$
\sum_{j\in\mathbb{N}_n}\sum_{k\in\mathbb{N}_n}
\langle L_{\alpha_k}(K(\alpha_j)\xi_j),
\xi_k\rangle_{\mathcal{Y}}
=\sum_{j\in\mathbb{N}_n}\sum_{k\in\mathbb{N}_n}
\langle K(\alpha_j)\xi_j, K(\alpha_k)\xi_k
\rangle_{\mathcal{H}}
=\left<f_n,f_n\right>_{\mathcal{H}}\geq0,
$$
proving the desired result.
\end{proof}

It is known from equation (\ref{reproducing_property}) that one can use the operator $K$ to reproduce the functional values $\langle L_{\alpha}(f),
\xi\rangle_{\mathcal{Y}},\ \alpha\in\Lambda,
\xi\in\mathcal{Y}$. Hence, we shall call equation (\ref{reproducing_property}) the reproducing property and $K$ the operator reproducing kernel for ORKHS $\mathcal{H}$. We remark on the term ``operator reproducing kernel". The reproducing kernel $\mathcal{K}$ for a vector-valued RKHS, consisting of functions from $\mathcal{X}$ to a Hilbert space $\mathcal{Y}$, is called in the literature an operator-valued reproducing kernel. It is because for each $(x,y)\in\mathcal{X}\times \mathcal{X}$, $\mathcal{K}(x,y)$ is an operator from $\mathcal{Y}$ to itself. However, the term ``operator" in the definition of an operator reproducing kernel for an ORKHS $\mathcal{H}$ is used to demonstrate the family of operators, with respect to which $\mathcal{H}$ is an ORKHS.

As a special case, the conclusion in Theorem \ref{property} in the scalar case when $\mathcal{Y}=\mathbb{C}$ can be stated as the following corollary.

\begin{cor}\label{FRKHS_property}
Suppose that $\mathcal{H}$ is a Hilbert space and $\mathcal{F}:=\{L_{\alpha}: \alpha\in \Lambda\}$ is a family of linear functionals on $\mathcal{H}$. If $\mathcal{H}$ is an FRKHS with respect to $\mathcal{F}$, then there exists a unique operator $K:\Lambda\rightarrow \mathcal{H}$ such that the linear span  $\mathcal{S}_{K}:=\span\{K(\alpha):\alpha\in \Lambda\}$
is dense in $\mathcal{H}$ and for each $\alpha\in \Lambda$, there holds the reproducing property
\begin{equation*}\label{FRKHS_reproducing_property}
L_{\alpha}(f)=\langle f,K(\alpha)\rangle_\mathcal{H},
\ \ \mbox{for all}\ \ f\in \mathcal{H}.
\end{equation*}
Moreover, for each $n\in\mathbb{N}$ and each finite set $\Lambda_n:=\{\alpha_j:j\in \mathbb{N}_n\}
\subseteq\Lambda$ the matrix
${\bf F}_{\Lambda_n}:=[L_{\alpha_k}(K(\alpha_j)):
j,k\in\mathbb{N}_n]$ is hermitian and positive semi-definite.
\end{cor}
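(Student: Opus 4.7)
The plan is to derive this corollary as the scalar specialization $\mathcal{Y}=\mathbb{C}$ of Theorem~\ref{property}. The key observation is the canonical isometric isomorphism $\mathcal{B}(\mathbb{C},\mathcal{H})\cong\mathcal{H}$ given by $T\mapsto T(1)$, which lets me reinterpret the $\mathcal{B}(\mathcal{Y},\mathcal{H})$-valued kernel produced by Theorem~\ref{property} as an $\mathcal{H}$-valued map on $\Lambda$.

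I would first apply Theorem~\ref{property} to obtain a unique $\widetilde K:\Lambda\to\mathcal{B}(\mathbb{C},\mathcal{H})$ satisfying the reproducing property. Set $K(\alpha):=\widetilde K(\alpha)(1)\in\mathcal{H}$. Choosing $\xi=1$ in \eqref{reproducing_property} yields $L_\alpha(f)=\langle f,K(\alpha)\rangle_{\mathcal{H}}$, because $\langle L_\alpha(f),1\rangle_{\mathbb{C}}=L_\alpha(f)$. The uniqueness of $K$ is inherited from that of $\widetilde K$, and the density of $\mathcal{S}_K=\span\{K(\alpha):\alpha\in\Lambda\}$ in $\mathcal{H}$ is immediate from statement~(2) of Theorem~\ref{property}, since $\widetilde K(\alpha)\xi=\xi K(\alpha)$ so the two spans coincide.

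For the matrix claim, the reproducing property just established gives that the $(j,k)$ entry of $\mathbf{F}_{\Lambda_n}$ equals $\langle K(\alpha_j),K(\alpha_k)\rangle_{\mathcal{H}}$, and conjugate symmetry of the inner product yields the Hermitian property at once. For positive semi-definiteness, I would invoke statement~(3) of Theorem~\ref{property} with arbitrary scalars $\xi_j\in\mathbb{C}$: using $\widetilde K(\alpha_j)\xi_j=\xi_j K(\alpha_j)$ and the scalar inner product $\langle a,b\rangle_{\mathbb{C}}=a\overline{b}$, the inequality there becomes $\sum_{j,k}\xi_j\overline{\xi_k}L_{\alpha_k}(K(\alpha_j))\geq 0$; substituting $\xi_j=\overline{c_j}$ for arbitrary $c\in\mathbb{C}^n$ and using the Hermitian symmetry rewrites this as $c^{*}\mathbf{F}_{\Lambda_n}c\geq 0$.

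No real obstacle arises; the argument is essentially careful bookkeeping in the scalar case. The only mild subtlety worth attention is tracking the conjugate-linearity of $\langle\cdot,\cdot\rangle_{\mathbb{C}}$ in its second argument when specializing Theorem~\ref{property}(3), so as to arrive at the standard positive semi-definiteness condition on $\mathbf{F}_{\Lambda_n}$.
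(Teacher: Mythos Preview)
Your proposal is correct and follows exactly the route the paper intends: the paper states this corollary without proof, presenting it simply as the scalar case $\mathcal{Y}=\mathbb{C}$ of Theorem~\ref{property}, and your argument carries out precisely that specialization via the identification $\mathcal{B}(\mathbb{C},\mathcal{H})\cong\mathcal{H}$, $T\mapsto T(1)$. The additional care you take in tracking the conjugate when deducing positive semi-definiteness from statement~(3) is appropriate and the details are sound.
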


We present the operator reproducing kernels for the specific ORKHSs discussed earlier. We first consider the ORKHS identical to the vector-valued RKHS. It is known \cite{MP05} that a vector-valued RKHS $\mathcal{H}$ consisting of functions from $\mathcal{X}$ to $\mathcal{Y}$ has a reproducing kernel $\mathcal{K}$ from $\mathcal{X}\times \mathcal{X}$ to $\mathcal{B}(\mathcal{Y},\mathcal{Y})$ such that
$$
\langle f(x),\xi\rangle_{\mathcal{Y}}
=\langle f,\mathcal{K}(x,\cdot)\xi\rangle_{\mathcal{H}},
\ f\in\mathcal{H},\ x\in \mathcal{X},\ \xi\in\mathcal{Y}.
$$
Then the operator reproducing kernel $K$ may be represented via $\mathcal{K}$ by
$$
K(x)\xi:=\mathcal{K}(x,\cdot)\xi,\ x\in \mathcal{X},
\ \xi\in\mathcal{Y}.
$$
It follows from
$$
\|K(x)\xi\|_{\mathcal{H}}
=\langle \mathcal{K}(x,\cdot)\xi,\mathcal{K}(x,\cdot)
\xi\rangle_{\mathcal{H}}^{1/2}
=\langle \mathcal{K}(x,x)\xi,\xi
\rangle_{\mathcal{Y}}^{1/2}
\leq\|\mathcal{K}(x,x)\|^{1/2}_{\mathcal{B}
(\mathcal{Y},\mathcal{Y})}\|\xi\|_{\mathcal{Y}}
$$
that $K(x)\in \mathcal{B}(\mathcal{Y},\mathcal{H})$. Moreover, when $\mathcal{Y}=\mathbb{C}$, the functional reproducing kernel can be represented via the classical reproducing kernel $\mathcal{K}:\mathcal{X}
\times \mathcal{X}\rightarrow \mathbb{C}$ by $K(x):=\mathcal{K}(x,\cdot),\ x\in \mathcal{X}$. In the remaining part of this paper, we shall use the conventional notation of the reproducing kernels for the classical scalar-valued or vector-valued RKHSs.

The kernel for the ORKHS $L^2(\mathcal{X},\mathbb{C}^{n})$ with respect to the family $\mathcal{L}$ of the integral operators can be represented by the functions
$u_{\alpha},\ \alpha\in\Lambda,$ appearing in (\ref{example_integration_functional}). To see this, we let $\mathcal{L}$ be the family of the integral operators defined as in (\ref{example_integration_functional})
and recall that if for each $j\in\mathbb{N}_{n}$ the linear span of $u_{\alpha,j}, \alpha\in\Lambda$, is dense in $L^2(\mathcal{X})$, then  $L^2(\mathcal{X},\mathbb{C}^{n})$ is an ORKHS with respect to $\mathcal{L}$. Observing from
$$
\langle L_{\alpha}(f),\xi\rangle_{\mathbb{C}^{n}}
=\int_{\mathcal{X}}\langle f(x),u_{\alpha}(x)
\circ\xi\rangle_{\mathbb{C}^{n}}dx
=\langle f,u_{\alpha}\circ\xi
\rangle_{L^2(\mathcal{X},\mathbb{C}^{n})},
$$
we get that the operator reproducing kernel for the ORKHS $L^2(\mathcal{X},\mathbb{C}^{n})$ has the form
$$
K(\alpha)\xi:=u_{\alpha}\circ\xi,
\ \alpha\in\Lambda,\ \xi\in\mathbb{C}^{n}.
$$
Particularly, the functional reproducing kernel for $L^2([0,2\pi])$ with respect to the family
$\mathcal{L}$ of the linear functionals defined by (\ref{integration_functional_Fourier}) is determined by
$$
K(j):=\frac{1}{\sqrt{2\pi}}e^{ij(\cdot)},
\ j\in\mathbb{Z}.
$$
Similarly, the functional reproducing kernel for $L^2(\mathbb{R}^d)$ with respect to the family
$\mathcal{L}$ of the linear functionals defined by (\ref{integration_functional_wavelet}) coincides with the wavelet basis functions. That is,
$$
K(j,k):=2^{\frac{jd}{2}}\psi(2^j(\cdot)-k),
\ j\in\mathbb{Z},k\in\mathbb{Z}^d.
$$

Another example of ORKHSs with respect to the family of the integral operators is the Paley-Wiener space of functions from $\mathbb{R}^d$ to $\mathbb{C}^{n}$. Recall that $\mathcal{B}_{\Delta}(\mathbb{R}^d,\mathbb{C}^{n})$
is a vector-valued RKHS with the translation-invariant reproducing kernel
\begin{equation}\label{sinc}
\mathcal{K}(x,y):={\rm diag}(\widetilde{\mathcal{K}}(x,y):
l\in\mathbb{N}_{n}),
\end{equation}
where
$$
\widetilde{\mathcal{K}}(x,y):=\prod_{j\in\mathbb{N}_d}
\frac{\sin \delta_j(x_j-y_j)}{\pi(x_j-y_j)},\ x,y\in \mathbb{R}^d.
$$
Let $\mathcal{L}$ be the family of the integral operators defined as in (\ref{example_integration_functional1}). We also verified earlier in this section that $\mathcal{B}_{\Delta}(\mathbb{R}^d,\mathbb{C}^{n})$ is an ORKHS with respect to such a set $\mathcal{L}$. Let $\mathscr{P}_{\mathcal{B}_{\Delta}}$ denote the orthogonal projection from $L^2(\mathbb{R}^d,\mathbb{C}^{n})$ onto $\mathcal{B}_{\Delta}(\mathbb{R}^d,\mathbb{C}^{n})$. Then, there holds for each $x\in \mathbb{R}^d$ and each $f\in\mathcal{B}_{\Delta}(\mathbb{R}^d,\mathbb{C}^{n})$
\begin{equation*}
\langle L_x(f),\xi\rangle_{\mathbb{C}^n}=\langle f,u(\cdot-x)\xi\rangle_{L^2(\mathbb{R}^d,\mathbb{C}^{n})}
=\langle f,\mathscr{P}_{\mathcal{B}_{\Delta}}
(u(\cdot-x)\xi)\rangle_{L^2(\mathbb{R}^d,\mathbb{C}^{n})}.
\end{equation*}
This leads to $K(x)\xi=\mathscr{P}_{\mathcal{B}_{\Delta}}
(u(\cdot-x)\xi)$, $x\in\mathbb{R}^d,\xi\in\mathbb{C}^{n}$.

Our last example concerns the finite-dimensional ORKHS. If there exist $n$ linearly independent elements in the family $\mathcal{L}$ of linear operators from $\mathcal{H}$ to a Hilbert space $\mathcal{Y}$, then the finite-dimensional Hilbert space $\mathcal{H}$ was proved to be an ORKHS with respect to such a family $\mathcal{L}$. To present the operator reproducing kernel, we suppose that $\{\phi_j: j\in\mathbb{N}_n\}$ is a linearly independent sequence in $\mathcal{H}$. For each $\alpha\in\Lambda$ and each $\xi\in\mathcal{Y}$, we can represent $K(\alpha)\xi\in\mathcal{H}$ as
$$
K(\alpha)\xi=\sum_{j\in\mathbb{N}_n}c_j\phi_j,\
\ \mbox{for some sequence}\ \  \{c_j:j\in\mathbb{N}_n\}\subseteq\mathbb{C}.
$$
It follows from the reproducing property (\ref{reproducing_property}) that the sequence $\{c_j:j\in\mathbb{N}_n\}$ satisfies the following system
$$
\langle \xi, L_{\alpha}(\phi_k)\rangle_{\mathcal{Y}}
=\sum_{j\in\mathbb{N}_n}c_j
\langle\phi_j,\phi_k\rangle_\mathcal{H},\ k\in\mathbb{N}_n.
$$
We denote by $\mathbf{A}$ the coefficients matrix, that is, $\mathbf{A}:=[\langle\phi_k,\phi_j\rangle_\mathcal{H}:
j,k\in\mathbb{N}_n]$. Due to the linear independence of $\phi_j,\ j\in\mathbb{N}_n,$ matrix $\mathbf{A}$ is positive definite. Denoting by $\mathbf{B}:=[b_{j,k}:
j,k\in\mathbb{N}_n]$ the inverse matrix of $\mathbf{A}$, we obtain the operator reproducing kernel as follows:
$$
K(\alpha)\xi=\sum_{j\in\mathbb{N}_n}
\sum_{k\in\mathbb{N}_n}b_{j,k}\langle \xi, L_{\alpha}(\phi_k)\rangle_{\mathcal{Y}}\phi_j,
\ \alpha\in\Lambda, \xi\in\mathcal{Y}.
$$

Theorem \ref{property} indicates that an ORKHS has a unique operator reproducing kernel. We now turn to the reverse problem. That is, associated with a given operator reproducing kernel, there exists a unique ORKHS. To show this, we introduce an alternative definition for the operator reproducing kernel according to Property (3) in Theorem \ref{property} and prove its equivalence to the original definition. For two vector space $V_1,V_2$, we denote by $\mathcal{L}(V_1,V_2)$ the set of linear operators from $V_1$ to $V_2$.

\begin{defn}\label{Alter-Defn}
Suppose that $\Lambda$ is a set, $\mathcal{Z}$ is a vector space and $\mathcal{Y}$ is a Hilbert space. Let $K$ be an operator from $\Lambda$ to $\mathcal{L}(\mathcal{Y},\mathcal{Z})$ and $\mathcal{L}:=\{L_{\alpha}:\alpha\in \Lambda\}$ be a family of linear operators from the linear span $\mathcal{S}_{K}:=\span\{K(\alpha)\xi:\alpha\in \Lambda,\xi\in\mathcal{Y}\}\subseteq\mathcal{Z}$ to $\mathcal{Y}$. We call $K$ an operator reproducing kernel with respect to $\mathcal{L}$ if for each
$n\in\mathbb{N}$ and each pair of finite sets $\{\alpha_j:j\in \mathbb{N}_n\}\subseteq\Lambda$, $\{\xi_j:j\in \mathbb{N}_n\}\subseteq\mathcal{Y}$ there holds
\begin{equation}\label{Operator-Kernel}
\sum_{j\in\mathbb{N}_n}\sum_{k\in\mathbb{N}_n}
\langle L_{\alpha_k}(K(\alpha_j)\xi_j),
\xi_k\rangle_{\mathcal{Y}}\geq0.
\end{equation}
\end{defn}

In the scalar case that $\mathcal{Y}=\mathbb{C}$, the operator reproducing kernel $K:\Lambda\rightarrow \mathcal{Z}$ becomes the functional reproducing kernel with respect to a family
$\mathcal{F}:=\{L_{\alpha}:\alpha\in \Lambda\}$
of linear functionals on
$\mathcal{S}_{K}:=\span\{K(\alpha):\alpha\in\Lambda\}$. Moreover, in this case, condition (\ref{Operator-Kernel}) reduces to that for each $n\in\mathbb{N}$ and each finite set $\Lambda_n:=\{\alpha_j:j\in \mathbb{N}_n\}
\subseteq\Lambda$, the matrix $\mathbf{F}_{\Lambda_n}:=
[L_{\alpha_k}(K(\alpha_j)):j,k\in\mathbb{N}_n]$ is hermitian and positive semi-definite.

The next theorem shows that corresponding to each operator reproducing kernel, there associates an ORKHS.

\begin{thm}
Suppose that $K$ is an operator from $\Lambda$ to $\mathcal{L}(\mathcal{Y},\mathcal{Z})$.
If $K$ is an operator reproducing kernel with respect to a family $\mathcal{L}:=\{L_{\alpha}:\ \alpha\in \Lambda\}$ of linear operators from the linear span $\mathcal{S}_{K}$ to $\mathcal{Y}$, then there exists a unique ORKHS $\mathcal{H}$ with respect to $\mathcal{L}$ such that $\overline{\mathcal{S}_{K}}=\mathcal{H}$ and for each $f\in\mathcal{H}$ and each
$\alpha\in \Lambda,\xi\in\mathcal{Y}$,
$\langle L_{\alpha}(f),\xi\rangle_{\mathcal{Y}}
=\langle f,K(\alpha)\xi\rangle_\mathcal{H}$.
\end{thm}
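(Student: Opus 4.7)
The plan is to carry out an operator-valued analogue of the Moore--Aronszajn construction. On the algebraic span $\mathcal{S}_K$ define a sesquilinear form
\[
\varphi\Bigl(\sum_{j} K(\alpha_j)\xi_j,\,\sum_{k} K(\beta_k)\eta_k\Bigr):=\sum_{j,k}\langle L_{\beta_k}(K(\alpha_j)\xi_j),\eta_k\rangle_{\mathcal{Y}}.
\]
Rewriting this as $\sum_k \langle L_{\beta_k}(f),\eta_k\rangle_\mathcal{Y}$ for $f=\sum_j K(\alpha_j)\xi_j\in\mathcal{S}_K$ makes clear that $\varphi(f,g)$ is independent of the representation of the first argument. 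Applying the positivity hypothesis \eqref{Operator-Kernel} with $n=2$, $(\alpha_1,\alpha_2)=(\alpha,\beta)$, and $(\xi_1,\xi_2)=(\lambda\xi,\eta)$ for $\lambda\in\{1,i\}$, one finds that the cross terms must combine into a real number; this produces the Hermitian identity
\[
\langle L_\beta K(\alpha)\xi,\eta\rangle_\mathcal{Y}=\langle\xi,L_\alpha K(\beta)\eta\rangle_\mathcal{Y},
\]
from which $\varphi(f,g)=\overline{\varphi(g,f)}$ follows, together with the symmetric conclusion that $\varphi$ is independent of the representation of its second argument. Positivity $\varphi(f,f)\ge 0$ is \eqref{Operator-Kernel} itself, and Cauchy--Schwarz $|\varphi(f,g)|^2\le\varphi(f,f)\varphi(g,g)$ then follows by the usual quadratic-form argument.

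Let $\mathcal{N}:=\{f\in\mathcal{S}_K:\varphi(f,f)=0\}$. Cauchy--Schwarz identifies $\mathcal{N}$ with $\{f:\varphi(f,g)=0\text{ for all }g\in\mathcal{S}_K\}$, so for $f\in\mathcal{N}$ one has $\langle L_\alpha f,\eta\rangle_\mathcal{Y}=\varphi(f,K(\alpha)\eta)=0$ for every $\eta\in\mathcal{Y}$, and hence $L_\alpha$ vanishes on $\mathcal{N}$. Consequently $\varphi$ descends to a genuine inner product on $\mathcal{S}_K/\mathcal{N}$ and every $L_\alpha$ descends to a well-defined linear map there, satisfying the reproducing identity $\langle f,K(\alpha)\xi\rangle=\langle L_\alpha f,\xi\rangle_\mathcal{Y}$ by construction. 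Take $\mathcal{H}$ to be the abstract completion of $\mathcal{S}_K/\mathcal{N}$; then $\overline{\mathcal{S}_K}=\mathcal{H}$ in the sense required by the statement.

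The main obstacle, which I expect to be the subtlest step, is extending each $L_\alpha$ continuously from $\mathcal{S}_K/\mathcal{N}$ to all of $\mathcal{H}$. From the reproducing identity I can estimate
\[
\|L_\alpha f\|_\mathcal{Y}=\sup_{\|\eta\|_\mathcal{Y}\le 1}|\langle f,K(\alpha)\eta\rangle_\mathcal{H}|\le\|f\|_\mathcal{H}\sup_{\|\eta\|_\mathcal{Y}\le 1}\|K(\alpha)\eta\|_\mathcal{H},
\]
and since $\|K(\alpha)\eta\|_\mathcal{H}^2=\langle L_\alpha K(\alpha)\eta,\eta\rangle_\mathcal{Y}$, the continuity of $L_\alpha$ reduces to boundedness of the auxiliary operator $T_\alpha:=L_\alpha K(\alpha):\mathcal{Y}\to\mathcal{Y}$. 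Now $T_\alpha$ is defined on all of $\mathcal{Y}$, linear, and, by the Hermitian identity specialised to $\alpha=\beta$, symmetric; the Hellinger--Toeplitz theorem therefore forces $T_\alpha$ to be bounded. Extending $L_\alpha$ and the reproducing identity by continuity to $\mathcal{H}$, norm compatibility now follows at once: if $L_\alpha f=0$ for every $\alpha$, then $\langle f,K(\alpha)\xi\rangle_\mathcal{H}=0$ for every $(\alpha,\xi)$, forcing $f=0$ by the density of $\mathcal{S}_K/\mathcal{N}$ in $\mathcal{H}$. For uniqueness, in any other ORKHS $\mathcal{H}'$ satisfying the conclusion, the reproducing identity in $\mathcal{H}'$ dictates $\langle K(\alpha)\xi,K(\beta)\eta\rangle_{\mathcal{H}'}=\langle L_\beta K(\alpha)\xi,\eta\rangle_\mathcal{Y}=\varphi(K(\alpha)\xi,K(\beta)\eta)$, so the inner products on $\mathcal{H}$ and $\mathcal{H}'$ agree on $\mathcal{S}_K$, and density in both spaces gives $\mathcal{H}'=\mathcal{H}$.
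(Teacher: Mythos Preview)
Your argument is correct and follows the same Moore--Aronszajn skeleton as the paper: define the sesquilinear form on $\mathcal{S}_K$, quotient by its null space, complete, extend the $L_\alpha$, and deduce uniqueness from density. Your null space $\mathcal{N}=\{f:\varphi(f,f)=0\}$ coincides with the paper's equivalence relation (the paper declares $f_1\sim f_2$ iff $L_\alpha f_1=L_\alpha f_2$ for all $\alpha$), so the two constructions are literally the same object.

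The one genuine difference is in proving that each $L_\alpha$ is bounded. The paper argues via the principle of uniform boundedness: the family $\{\widetilde{K(\alpha)\xi}:\|\xi\|_{\mathcal{Y}}\le 1\}$ is weakly bounded in $\mathcal{H}$ because $\sup_{\|\xi\|\le 1}|\langle G,\widetilde{K(\alpha)\xi}\rangle_{\mathcal{H}}|=\|L_\alpha(G)\|_{\mathcal{Y}}<\infty$ for each $G$, hence norm-bounded. You instead isolate the self-map $T_\alpha=L_\alpha K(\alpha):\mathcal{Y}\to\mathcal{Y}$, observe it is everywhere defined and symmetric (from your Hermitian identity with $\alpha=\beta$), and invoke Hellinger--Toeplitz. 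Both arguments rest ultimately on the closed graph/Baire machinery, but your packaging is a little more direct: it yields $\|K(\alpha)\eta\|_{\mathcal{H}}^2=\langle T_\alpha\eta,\eta\rangle_{\mathcal{Y}}\le\|T_\alpha\|\,\|\eta\|_{\mathcal{Y}}^2$ in one stroke, and it sidesteps the paper's intermediate step of first defining $L_\alpha(F)\in\mathcal{Y}$ for general $F\in\mathcal{H}$ via a weak-limit argument before the boundedness is in hand. The trade-off is that the paper's route stays entirely inside $\mathcal{H}$, while yours requires recognising that the auxiliary operator $T_\alpha$ lives on the \emph{complete} space $\mathcal{Y}$, which is exactly what makes Hellinger--Toeplitz applicable.
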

\begin{proof}
We specifically construct the ORKHS and verify that it has the desired properties. We first construct a vector space
based upon the given operator reproducing kernel. To this end, we define equivalent classes in the linear span $\mathcal{S}_{K}$. Specifically, two functions $f_1,f_2\in\mathcal{S}_{K}$ are said to be equivalent provided that $L_{\alpha}(f_1)=L_{\alpha}(f_2)$ holds for all $\alpha\in\Lambda$. Accordingly, they induce a partition of $\mathcal{S}_{K}$ by which $\mathcal{S}_{K}$ is partitioned into a collection of disjoint equivalent classes. For a function $f\in\mathcal{S}_{K}$, we denote by $\tilde{f}$ the class equivalent to $f$ and introduce the vector space $\mathcal{H}_0:=\{\tilde{f}:
f\in\mathcal{S}_{K}\}$.

We then equip the vector space $\mathcal{H}_0$ with an inner product so that it becomes an inner product space. For this purpose, we define a bilinear mapping $\langle \cdot,\cdot\rangle_{\mathcal{H}_0}:\mathcal{H}_0\times \mathcal{H}_0\rightarrow \mathbb{C}$ for $f:=\sum_{j\in\mathbb{N}_n}K(\alpha_j)\xi_j$ and $g:=\sum_{k\in\mathbb{N}_m}K(\beta_k)\eta_k$ by
\begin{equation}\label{inner-product}
\langle \tilde{f},\tilde{g}\rangle_{\mathcal{H}_0}
:=\sum_{j\in\mathbb{N}_n}\sum_{k\in\mathbb{N}_m}\langle
L_{\beta_k}(K(\alpha_j)\xi_j),\eta_k\rangle_{\mathcal{Y}}.
\end{equation}
From the representation
$$
\langle \tilde{f},\tilde{g}\rangle_{\mathcal{H}_0}
=\sum_{k\in\mathbb{N}_m}\left\langle L_{\beta_k}
\left(\sum_{j\in\mathbb{N}_n}K(\alpha_j)\xi_j\right),
\eta_k\right\rangle_{\mathcal{Y}}
=\sum_{k\in\mathbb{N}_m}\langle L_{\beta_k}(f),
\eta_k\rangle_{\mathcal{Y}},
$$
we observe that the bilinear mapping $\langle \cdot,\cdot\rangle_{\mathcal{H}_0}$ is independent of the choice of the equivalent class representation of elements in $\mathcal{H}_0$ for the first variable. Likewise, we may show that the bilinear mapping
$\langle \cdot,\cdot\rangle_{\mathcal{H}_0}$ is also independent of the choice of the equivalent class representation of elements in $\mathcal{H}_0$ for the second variable. Thus, we conclude that the bilinear mapping (\ref{inner-product}) is well-defined.

We next verify that the bilinear mapping
$\langle \cdot,\cdot\rangle_{\mathcal{H}_0}$ is indeed an inner product on $\mathcal{H}_0$. It follows that for any $f:=\sum_{j\in\mathbb{N}_n}K(\alpha_j)\xi_j$ and $g:=\sum_{k\in\mathbb{N}_m}K(\beta_k)\eta_k$, there holds
\begin{equation*}
\langle \tilde{f},\tilde{g}\rangle_{\mathcal{H}_0}
=\sum_{j\in\mathbb{N}_n}\sum_{k\in\mathbb{N}_m}
\langle L_{\beta_k}(K(\alpha_j)\xi_j),
\eta_k\rangle_{\mathcal{Y}}
=\overline{\sum_{j\in\mathbb{N}_n}\sum_{k\in\mathbb{N}_m}
\langle L_{\alpha_j}(K(\beta_k)\eta_k),\xi_j
\rangle_{\mathcal{Y}}}=\overline{\langle\tilde{g},
\tilde{f}\rangle_{\mathcal{H}_0}}.
\end{equation*}
Moreover, inequality (\ref{Operator-Kernel}) leads directly to
\begin{equation*}
\langle \tilde{f},\tilde{f}\rangle_{\mathcal{H}_0}
=\sum_{j\in\mathbb{N}_n}\sum_{k\in\mathbb{N}_m}\langle
L_{\alpha_k}(K(\alpha_j)\xi_j),\xi_k\rangle_{\mathcal{Y}}
\geq 0.
\end{equation*}
Hence, the bilinear mapping $\langle\cdot,\cdot\rangle_{\mathcal{H}_0}$ is a semi-inner product on $\mathcal{H}_0$. It remains to verify that if $\langle \tilde{f},\tilde{f}
\rangle_{\mathcal{H}_0}=0$ then $\tilde{f}=0$. Suppose that $f:=\sum_{j\in\mathbb{N}_n}K(\alpha_j)\xi_j$ satisfies $\langle\tilde{f},\tilde{f}
\rangle_{\mathcal{H}_0}=0$. By the Cauchy-Schwarz inequality we have for any $\alpha\in\Lambda,\xi\in\mathcal{Y}$ that
$$
|\langle \tilde{f},\widetilde{K(\alpha)\xi}
\rangle_{\mathcal{H}_0}|\leq
\langle \tilde{f},\tilde{f}\rangle_{\mathcal{H}_0}
\langle\widetilde{K(\alpha)\xi},
\widetilde{K(\alpha)\xi}\rangle_{\mathcal{H}_0}=0.
$$
Combining this with (\ref{inner-product}), we have for
any $\alpha\in\Lambda,\xi\in\mathcal{Y}$ that
$$
\langle L_{\alpha}(f),\xi\rangle_{\mathcal{Y}}
=\sum_{j\in\mathbb{N}_n}\langle L_{\alpha}(K(\alpha_j)\xi_j),\xi\rangle
=\langle \tilde{f},\widetilde{K(\alpha)\xi}
\rangle_{\mathcal{H}_0}=0,
$$
which implies that $L_{\alpha}(f)=0$ holds for all $\alpha\in\Lambda$. The definition of the equivalent classes in  $\mathcal{S}_{K}$ implies that $\tilde{f}=0$. Consequently, we have established that $\mathcal{H}_0$ is an inner product space endowed with the inner product $\langle \cdot,\cdot\rangle_{\mathcal{H}_0}$.

Let $\mathcal{H}$ be the completion of $\mathcal{H}_0$ upon the inner product
$\langle \cdot,\cdot\rangle_{\mathcal{H}_0}.$ We finally show that $\mathcal{H}$ is an ORKHS. For each $\tilde{f}\in\mathcal{H}_0$ and each $\alpha\in \Lambda$, we set $L_{\alpha}(\tilde{f}):=L_{\alpha}(f)$.
Then, for any $\alpha\in\Lambda, \xi\in\mathcal{Y}$ and any $\tilde{f}\in\mathcal{H}_0$,
$\langle L_{\alpha}(\tilde{f}),\xi\rangle_{\mathcal{Y}}
=\langle \tilde{f},\widetilde{K(\alpha)\xi}
\rangle_{\mathcal{H}_0}$. This implies that the linear functional $\langle L_{\alpha}(\cdot), \xi\rangle_{\mathcal{Y}}$ is continuous on $\mathcal{H}_0$. For any $F\in \mathcal{H}$, there exists a sequence $\{\tilde{f}_n:\ n\in\mathbb{N}\}\subset \mathcal{H}_0$ converging to $F$ as $n\rightarrow \infty$. By the continuity of the linear functional
$\langle L_{\alpha}(\cdot), \xi\rangle_{\mathcal{Y}}$, we obtain that $\langle L_{\alpha}(\tilde{f}_n), \xi\rangle_{\mathcal{Y}}, n\in\mathbb{N},$ is a Cauchy sequence. We then define $L_{\alpha}(F)$ as an element in $\mathcal{Y}$ such that $\langle L_{\alpha}(F), \xi\rangle_{\mathcal{Y}}=\lim_{n\rightarrow \infty}\langle L_{\alpha}(\tilde{f}_n),\xi\rangle_{\mathcal{Y}}$ for any $\xi\in\mathcal{Y}$. That is, the linear operators $L_{\alpha}$, $\alpha\in \Lambda,$ can be extended to the Hilbert space $\mathcal{H}$. Moreover, it follows that
$$
\langle L_{\alpha}(F),\xi\rangle_{\mathcal{Y}}
=\lim_{n\rightarrow \infty}\langle L_{\alpha}(\tilde{f}_n),\xi\rangle_{\mathcal{Y}}
=\lim_{n\rightarrow \infty}\langle \tilde{f}_n,\widetilde{K(\alpha)\xi}
\rangle_{\mathcal{H}_0}
=\langle F, \widetilde{K(\alpha)\xi}\rangle_{\mathcal{H}}.
$$
This together with the density $\overline{\mathcal{H}_0} =\mathcal{H}$ yields that if $L_{\alpha}(F)=0$ holds for all $\alpha\in \Lambda$ then $\|F\|_{\mathcal{H}}=0$.
This shows that the norm of $\mathcal{H}$ is compatible with $\mathcal{L}$. To show for each $\alpha\in\Lambda,$ $L_{\alpha}$ is continuous from $\mathcal{H}$ to $\mathcal{Y}$, we note that
$$
\|L_{\alpha}(F)\|_{\mathcal{Y}}
=\sup_{\|\xi\|_{\mathcal{Y}}\leq1}|
\langle L_{\alpha}(F),\xi\rangle_{\mathcal{Y}}|
\leq\|F\|_{\mathcal{H}}\sup_{\|\xi\|_{\mathcal{Y}}\leq1}
\|\widetilde{K(\alpha)\xi}\|_{\mathcal{H}}.
$$
For each $G\in\mathcal{H},$ there holds
$$
\sup_{\|\xi\|_{\mathcal{Y}}\leq1}|\langle G, \widetilde{K(\alpha)\xi}\rangle_{\mathcal{H}}|
=\sup_{\|\xi\|_{\mathcal{Y}}\leq1}|\langle L_{\alpha}(G),
\xi\rangle_{\mathcal{Y}}|=\|L_{\alpha}(G)\|_{\mathcal{Y}}.
$$
By the principle of uniform boundedness, we have that the subset $\{\widetilde{K(\alpha)\xi}:\xi\in\mathcal{Y},
\|\xi\|_{\mathcal{Y}}\leq1\}$ of $\mathcal{H}$ is bounded, which leads to the existence of a positive constant $c$ such that $\|L_{\alpha}(F)\|_{\mathcal{Y}}\leq c\|F\|_{\mathcal{H}}$. Consequently, we conclude that $\mathcal{H}$ is an ORKHS with respect to $\mathcal{L}$. We abuse the notation by denoting the equivalent class of $K(\alpha)\xi$ by $K(\alpha)\xi$.  Then we have  that $\overline{\mathcal{S}_{K}}=\mathcal{H}$
and for each $F\in\mathcal{H}$, each $\alpha\in \Lambda$
and each $\xi\in\mathcal{Y}$,
$\langle L_{\alpha}(F),\xi\rangle_{\mathcal{Y}}=\langle F,K(\alpha)\xi\rangle_\mathcal{H}$. In addition, by the density, we also have the uniqueness of $\mathcal{H}$.
\end{proof}

As a direct consequence, we identifies a functional reproducing kernel with an FRKHS in the following result.

\begin{cor}
Suppose that $K$ is an operator from $\Lambda$ to $\mathcal{Z}$. If $K$ is a functional reproducing kernel with respect to a family $\mathcal{F}:=\{L_{\alpha}:
\ \alpha\in \Lambda\}$ of linear functionals on $\mathcal{S}_{K}$, then there exists a unique FRKHS $\mathcal{H}$ with respect to $\mathcal{F}$ such that $\overline{\mathcal{S}_{K}}=\mathcal{H}$ and for each $f\in\mathcal{H}$ and each $\alpha\in \Lambda$, $L_{\alpha}(f)=\langle f,K(\alpha)\rangle_\mathcal{H}$.
\end{cor}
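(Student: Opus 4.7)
The plan is to derive this corollary as a direct specialization of the preceding theorem to the scalar-valued setting $\mathcal{Y}=\mathbb{C}$. Since the remark immediately following Definition \ref{Alter-Defn} already identifies a functional reproducing kernel with an operator reproducing kernel in the case $\mathcal{Y}=\mathbb{C}$, the hypothesis of the corollary gives us exactly what we need to apply the preceding theorem, and almost no additional work should be required.

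First, I would note that $\mathcal{L}(\mathbb{C},\mathcal{Z})$ is canonically identified with $\mathcal{Z}$ via $T\leftrightarrow T(1)$, so the given operator $K:\Lambda\to\mathcal{Z}$ corresponds to an operator from $\Lambda$ to $\mathcal{L}(\mathbb{C},\mathcal{Z})$, and the spanning set $\mathcal{S}_K=\span\{K(\alpha):\alpha\in\Lambda\}$ matches the definition $\span\{K(\alpha)\xi:\alpha\in\Lambda,\xi\in\mathcal{Y}\}$ under this identification. Second, I would observe that the positive semi-definiteness condition \eqref{Operator-Kernel} with $\mathcal{Y}=\mathbb{C}$ is precisely the hermitian positive semi-definiteness of every principal Gram-type matrix $[L_{\alpha_k}(K(\alpha_j)):j,k\in\mathbb{N}_n]$, which is the defining property of a functional reproducing kernel in the scalar case.

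With these identifications, I would invoke the preceding theorem to obtain a unique ORKHS $\mathcal{H}$ with respect to $\mathcal{F}$ such that $\overline{\mathcal{S}_K}=\mathcal{H}$ and such that, for every $f\in\mathcal{H}$, every $\alpha\in\Lambda$, and every $\xi\in\mathbb{C}$,
\begin{equation*}
\langle L_\alpha(f),\xi\rangle_{\mathbb{C}}=\langle f,K(\alpha)\xi\rangle_{\mathcal{H}}.
\end{equation*}
Taking $\xi=1$ reduces this identity to $L_\alpha(f)=\langle f,K(\alpha)\rangle_{\mathcal{H}}$, which is the desired reproducing property. Finally, since $\mathcal{Y}=\mathbb{C}$, an ORKHS with respect to $\mathcal{F}$ is exactly an FRKHS with respect to $\mathcal{F}$ by the definition given right after Definition \ref{def_ORKHS}, so $\mathcal{H}$ is the unique FRKHS with the required properties.

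There is no genuine obstacle here; the only subtlety worth mentioning is making the identification $\mathcal{L}(\mathbb{C},\mathcal{Z})\cong\mathcal{Z}$ explicit so that both the hypothesis and the conclusion of the preceding theorem read cleanly in the scalar case. Because uniqueness of $\mathcal{H}$ is already part of the preceding theorem via the density $\overline{\mathcal{S}_K}=\mathcal{H}$ and the reproducing property, no separate uniqueness argument is needed.
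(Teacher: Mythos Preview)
Your proposal is correct and matches the paper's approach exactly: the paper presents this corollary with no proof, stating only that it is a ``direct consequence'' of the preceding theorem, which is precisely the specialization to $\mathcal{Y}=\mathbb{C}$ that you carry out. Your explicit handling of the identification $\mathcal{L}(\mathbb{C},\mathcal{Z})\cong\mathcal{Z}$ and the reduction via $\xi=1$ simply fills in the routine details the paper leaves implicit.
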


Vector-valued RKHSs are special cases of ORKHSs. We shall show that there exists an isometric isomorphism between an ORKHS and a vector-valued RKHS. Suppose that $\mathcal{H}$ is an ORKHS with respect to a family $\mathcal{L}:=\{L_\alpha: \alpha\in \Lambda\}$ of linear operators form $\mathcal{H}$ to $\mathcal{Y}$ and $K$ is the corresponding operator reproducing kernel. Associated with $K$, we introduce a function $\mathcal{K}:\Lambda\times \Lambda\rightarrow \mathcal{B}(\mathcal{Y},\mathcal{Y})$ by
\begin{equation}\label{kernel_representation}
\mathcal{K}(\alpha,\beta)\xi:=L_{\beta}(K(\alpha)\xi),\ \alpha,\beta\in \Lambda,\ \xi\in\mathcal{Y}.
\end{equation}
We next show that the function so defined is a  reproducing kernel on $\Lambda$. For this purpose, we review the notion of feature maps in the theory of RKHSs. It is known that every reproducing kernel has a feature map representation, which is the reason that the reproducing kernels can be used to measure the similarity of any two inputs in machine learning. Specifically, $\mathcal{K}:\mathcal{X}\times\mathcal{X}\rightarrow \mathcal{B}(\mathcal{Y},\mathcal{Y})$ is a reproducing kernel on an input set $\mathcal{X}$ if and only if there exist a Hilbert space $W$ and a mapping $\Phi:\mathcal{X}\rightarrow \mathcal{B}(\mathcal{Y},W)$ such that
$$
\mathcal{K}(x,y)=\Phi(y)^{*}\Phi(x),\  x,y\in \mathcal{X}.
$$
If $\overline{\span}\{\Phi(x)\xi:\ x\in \mathcal{X},\xi\in\mathcal{Y}\}=W$, the vector-valued RKHS of $\mathcal{K}$ can be determined by
\begin{eqnarray}\label{vector-feature_representation_HK}
\mathcal{H}:=\{\Phi(\cdot)^{*}w:\ w\in W\}
\end{eqnarray}
with the inner product
$
\big\langle\Phi(\cdot)^{*}w,\Phi(\cdot)^{*}v
\big\rangle_{\mathcal{H}}:=\langle w,v\rangle_W.
$

\begin{prop}\label{isometric-isomorphism}
Let $\mathcal{H}$ be an ORKHS with respect to a family $\mathcal{L}=\{L_{\alpha}:\ \alpha\in\Lambda\}$ of linear operators from $\mathcal{H}$ to $\mathcal{Y}$ and $K$ the operator reproducing kernel for $\mathcal{H}$. If $\mathcal{K}$ is the function on $\Lambda\times \Lambda$ defined as in \eqref{kernel_representation}, then $\mathcal{K}$ is a reproducing kernel on $\Lambda$ and the vector-valued RKHS of $\mathcal{K}$ is determined by
\begin{equation}\label{classicalRKHS1}
\mathcal{H}_{\mathcal{K}}:=\{L_{\alpha}(f):
\ \alpha\in\Lambda,\ f\in \mathcal{H}\}
\end{equation}
with the inner product defined for $\widetilde{f}(\alpha):=L_{\alpha}(f)$ and
$\widetilde{g}(\alpha):=L_{\alpha}(g),
\ \alpha\in \Lambda$, by
\begin{equation}\label{classical_inner_product1}
\langle\tilde{f},\tilde{g}\rangle_{\mathcal{H}_{
\mathcal{K}}}:=\langle f,g\rangle_{\mathcal{H}}.
\end{equation}
\end{prop}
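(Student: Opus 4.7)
The plan is to exhibit a feature map for $\mathcal{K}$ and then invoke the feature-map representation \eqref{vector-feature_representation_HK} of vector-valued RKHSs recalled just before the proposition. The natural candidate is to take the feature Hilbert space to be $\mathcal{H}$ itself and to define $\Phi:\Lambda\to\mathcal{B}(\mathcal{Y},\mathcal{H})$ by $\Phi(\alpha):=K(\alpha)$. By construction $\Phi(\alpha)\in\mathcal{B}(\mathcal{Y},\mathcal{H})$, so its adjoint $\Phi(\alpha)^{*}\in\mathcal{B}(\mathcal{H},\mathcal{Y})$ makes sense.

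First I would identify $\Phi(\beta)^{*}$ with $L_{\beta}$. Indeed, from the reproducing property \eqref{reproducing_property} we have $\langle K(\beta)\eta,f\rangle_{\mathcal{H}}=\overline{\langle L_{\beta}(f),\eta\rangle_{\mathcal{Y}}}$ for all $f\in\mathcal{H}$ and $\eta\in\mathcal{Y}$, which is exactly the statement $K(\beta)^{*}=L_{\beta}$. Consequently, for every $\alpha,\beta\in\Lambda$ and $\xi\in\mathcal{Y}$,
\[
\Phi(\beta)^{*}\Phi(\alpha)\xi = L_{\beta}(K(\alpha)\xi) = \mathcal{K}(\alpha,\beta)\xi,
\]
which is precisely the feature-map factorization. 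Moreover, by Theorem \ref{property}(2) the set $\{\Phi(\alpha)\xi:\alpha\in\Lambda,\ \xi\in\mathcal{Y}\}=\mathcal{S}_{K}$ is dense in $\mathcal{H}$. This establishes that $\mathcal{K}$ is a reproducing kernel on $\Lambda$.

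Next I would apply the representation \eqref{vector-feature_representation_HK} to read off the vector-valued RKHS of $\mathcal{K}$. With $W=\mathcal{H}$ and $\Phi(\alpha)^{*}=L_{\alpha}$, that formula gives
\[
\mathcal{H}_{\mathcal{K}}=\{\Phi(\cdot)^{*}f:f\in\mathcal{H}\}=\{L_{(\cdot)}(f):f\in\mathcal{H}\},
\]
which is exactly the set in \eqref{classicalRKHS1} once we write $\widetilde{f}(\alpha):=L_{\alpha}(f)$. The inner product inherited from \eqref{vector-feature_representation_HK} is $\langle\Phi(\cdot)^{*}f,\Phi(\cdot)^{*}g\rangle_{\mathcal{H}_{\mathcal{K}}}=\langle f,g\rangle_{\mathcal{H}}$, which is \eqref{classical_inner_product1}.

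The only subtle point is the well-definedness of \eqref{classical_inner_product1}: one must check that the element $f\in\mathcal{H}$ is uniquely determined by the function $\widetilde{f}$. But this is immediate from the compatibility of the norm of $\mathcal{H}$ with $\mathcal{L}$: if $L_{\alpha}(f)=L_{\alpha}(g)$ for all $\alpha\in\Lambda$, then $L_{\alpha}(f-g)=0$ for every $\alpha$, hence $\|f-g\|_{\mathcal{H}}=0$. Thus the map $f\mapsto\widetilde{f}$ is injective and the inner product in \eqref{classical_inner_product1} is unambiguous, making $f\mapsto\widetilde{f}$ an isometric isomorphism from $\mathcal{H}$ onto $\mathcal{H}_{\mathcal{K}}$. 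No step is genuinely hard here; the only place that requires any care is keeping the roles of $K(\alpha)$ and its adjoint $L_{\alpha}$ straight when verifying $\mathcal{K}(\alpha,\beta)=\Phi(\beta)^{*}\Phi(\alpha)$.
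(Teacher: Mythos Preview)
Your proof is correct and follows essentially the same route as the paper: both take the feature space to be $\mathcal{H}$ with $\Phi(\alpha)=K(\alpha)$, verify $\mathcal{K}(\alpha,\beta)=\Phi(\beta)^{*}\Phi(\alpha)$ via the reproducing property, note the density from Theorem~\ref{property}(2), and then read off $\mathcal{H}_{\mathcal{K}}$ from the feature-map representation \eqref{vector-feature_representation_HK}. Your explicit remark that $K(\beta)^{*}=L_{\beta}$ and your check of well-definedness of the inner product are slightly more direct than the paper's presentation, but the underlying argument is the same.
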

\begin{proof}
We prove the desired result by defining the feature map representation of $\mathcal{K}$. We define $\Phi:\Lambda\rightarrow \mathcal{B}(\mathcal{Y},
\mathcal{H})$ for $\alpha\in\Lambda$ by $\Phi(\alpha):=K(\alpha)$. By the definition of $\mathcal{K}$, we have for each $\alpha,\beta\in\Lambda$ that
$$
\langle \mathcal{K}(\alpha,\beta)\xi,
\eta\rangle_{\mathcal{Y}}
=\langle L_{\beta}(K(\alpha)\xi),
\eta\rangle_{\mathcal{Y}},
\  \xi,\eta\in\mathcal{Y}.
$$
This together with the reproducing property (\ref{reproducing_property}) leads to
$$
\langle \mathcal{K}(\alpha,\beta)\xi,
\eta\rangle_{\mathcal{Y}}
=\langle K(\alpha)\xi,K(\beta)\eta\rangle_{\mathcal{H}},
\ \xi,\eta\in\mathcal{Y}.
$$
According to the definition of $\Phi$, we then get for any $\xi,\eta\in\mathcal{Y}$ that
$$
\langle \mathcal{K}(\alpha,\beta)\xi,
\eta\rangle_{\mathcal{Y}}
=\langle \Phi(\alpha)\xi,\Phi(\beta)\eta \rangle_{\mathcal{H}}
=\langle \Phi(\beta)^{*}\Phi(\alpha)\xi,
\eta\rangle_{\mathcal{Y}}.
$$
This leads to $\mathcal{K}(\alpha,\beta)
=\Phi(\beta)^{*}\Phi(\alpha).$ Hence, we have that $\mathcal{K}$ is a reproducing kernel on $\Lambda$. Moreover, $\Phi$ and $\mathcal{H}$ are, respectively,
the feature map and the feature space of $\mathcal{K}$, and there holds the density
$$
\overline{\span}\{\Phi(\alpha)\xi:\ \alpha\in \Lambda,\xi\in\mathcal{Y}\}=\mathcal{H}.
$$
By the feature map representation (\ref{vector-feature_representation_HK}) of the vector-valued RKHS, we have for each $\tilde{f}$ in the vector-valued RKHS of $\mathcal{K}$ that there exists $f\in\mathcal{H}$ such that
$$
\langle\tilde{f}(\alpha),\xi\rangle_{\mathcal{Y}}
=\langle\Phi(\alpha)^{*}f,\xi\rangle_{\mathcal{Y}}
=\langle f,K(\alpha)\xi\rangle_{\mathcal{H}},
\ \alpha\in \Lambda,
$$
and there holds $\langle\tilde{f},\tilde{g}\rangle
=\langle f,g\rangle_{\mathcal{H}},$ for $\tilde{g}(\alpha)=\Phi(\alpha)^{*}g, \alpha\in \Lambda$.
This together with the reproducing property (\ref{reproducing_property}) leads to the representation of $\tilde{f}$ as $\tilde{f}(\alpha)=L_{\alpha}(f),\ \alpha\in\Lambda$. This proves the desired result.
\end{proof}

Proposition \ref{isometric-isomorphism} reveals an isometric isomorphism between the ORKHS $\mathcal{H}$
and $\mathcal{H}_{\mathcal{K}}$, which is established in the next theorem.

\begin{thm}\label{isometric-isomorphism1}
Suppose that $\mathcal{H}$ is an ORKHS with respect to a family $\mathcal{L}=\{L_{\alpha}:\ \alpha\in\Lambda\}$ of linear operators from $\mathcal{H}$ to $\mathcal{Y}$ and $K$ is the operator reproducing kernel for $\mathcal{H}$. If $\mathcal{K}$ is the reproducing kernel on $\Lambda$ defined as in  \eqref{kernel_representation}, then there is an isometric isomorphism between $\mathcal{H}$ and the RKHS $\mathcal{H}_{\mathcal{K}}$ of $\mathcal{K}$.
\end{thm}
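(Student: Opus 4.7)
The plan is to exhibit the isometric isomorphism explicitly as the map $T : \mathcal{H} \to \mathcal{H}_{\mathcal{K}}$ defined by $T(f) := \tilde{f}$, where $\tilde{f}(\alpha) := L_\alpha(f)$ for $\alpha \in \Lambda$. Most of the heavy lifting has already been done in Proposition \ref{isometric-isomorphism}, which identifies $\mathcal{H}_{\mathcal{K}}$ as the vector-valued RKHS of $\mathcal{K}$ and equips it with the inner product \eqref{classical_inner_product1}. What remains is to verify that $T$ is a well-defined, linear, bijective, inner-product-preserving map between the two Hilbert spaces.

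Linearity of $T$ follows at once from the linearity of each $L_\alpha$, and surjectivity is immediate from the very definition \eqref{classicalRKHS1} of $\mathcal{H}_{\mathcal{K}}$. For injectivity, suppose $T(f_1) = T(f_2)$, so that $L_\alpha(f_1 - f_2) = 0$ for every $\alpha \in \Lambda$. Since $\mathcal{H}$ is an ORKHS with respect to $\mathcal{L}$, Definition \ref{def_ORKHS} says its norm is compatible with $\mathcal{L}$, forcing $\|f_1 - f_2\|_{\mathcal{H}} = 0$ and hence $f_1 = f_2$. Notice that this same argument also ensures that the assignment $\tilde{f} \mapsto f$ is unambiguous, which is precisely what is needed to confirm that the right-hand side of \eqref{classical_inner_product1} is well-defined; without norm compatibility, two distinct $f_1, f_2 \in \mathcal{H}$ could produce the same $\tilde{f}$ and yield inconsistent values of $\langle \tilde{f}, \tilde{g}\rangle_{\mathcal{H}_{\mathcal{K}}}$.

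The isometry property then drops out directly from the prescription \eqref{classical_inner_product1}: for every $f, g \in \mathcal{H}$,
$$
\langle T(f), T(g)\rangle_{\mathcal{H}_{\mathcal{K}}}
= \langle \tilde{f}, \tilde{g}\rangle_{\mathcal{H}_{\mathcal{K}}}
= \langle f, g\rangle_{\mathcal{H}}.
$$
In particular $T$ preserves norms, which gives a second, independent proof of injectivity. The completeness of $\mathcal{H}_{\mathcal{K}}$ as a Hilbert space is then inherited automatically, since it is the isometric image of the complete space $\mathcal{H}$ under the surjection $T$.

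I do not anticipate a genuine obstacle here: the theorem is essentially a bookkeeping consequence of Proposition \ref{isometric-isomorphism}, with the one conceptual point being the use of the norm compatibility condition both to guarantee that $T$ is injective and to justify that the inner product on $\mathcal{H}_{\mathcal{K}}$ is well-defined. Accordingly, the proof can be kept to a few lines once the map $T$ is introduced and the relevant bits of Proposition \ref{isometric-isomorphism} are cited.
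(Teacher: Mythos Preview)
Your proposal is correct and follows essentially the same approach as the paper: both define the map $\mathcal{T}f(\alpha):=L_\alpha(f)$, invoke the description \eqref{classicalRKHS1} of $\mathcal{H}_{\mathcal{K}}$ from Proposition~\ref{isometric-isomorphism} for surjectivity, and use \eqref{classical_inner_product1} for the isometry. Your version is simply more explicit about injectivity and the well-definedness of the inner product via norm compatibility, points the paper leaves implicit.
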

\begin{proof}
We introduce the operator $\mathcal{T}:\mathcal{H}
\rightarrow \mathcal{H}_{\mathcal{K}}$ for each $f\in \mathcal{H}$ by $(\mathcal{T}f)(\alpha):=L_{\alpha}(f),
\ \alpha\in \Lambda$. The representation (\ref{classicalRKHS1}) leads to that $\mathcal{T}$
is a surjection. Moreover, it follows from (\ref{classical_inner_product1}) that there holds $\|\mathcal{T}f\|_{\mathcal{H}_{\mathcal{K}}}
=\|f\|_{\mathcal{H}}$, which yields that $\mathcal{T}$
is an isometric isomorphism between $\mathcal{H}$ and $\mathcal{H}_{\mathcal{K}}$.
\end{proof}

When Theorem \ref{isometric-isomorphism1} is specialized to an FRKHS and a scalar-valued RKHS, it gives an isometric isomorphism between an FRKHS and a scalar-valued RKHS. We state this result in the next corollary.

\begin{cor}\label{FRKHS_isometric-isomorphism}
Suppose that $\mathcal{H}$ is an FRKHS with respect to a family $\mathcal{F}=\{L_{\alpha}:\ \alpha\in \Lambda\}$ of linear functionals and $K$ is the functional reproducing kernel for $\mathcal{H}$. If $\mathcal{K}:\Lambda\times \Lambda\rightarrow\mathbb{C}$ is defined by $\mathcal{K}(\alpha,\beta):=L_{\beta}(K(\alpha))$, $\alpha,\beta\in \Lambda$, then $\mathcal{K}$ is a reproducing kernel on $\Lambda$ and the RKHS of $\mathcal{K}$ is determined by
\begin{equation*}\label{classicalRKHS}
\mathcal{H}_{\mathcal{K}}:=\{L_{\alpha}(f):
\ \alpha\in\Lambda,\ f\in \mathcal{H}\}
\end{equation*}
with the inner product defined for $\widetilde{f}(\alpha):=L_{\alpha}(f)$ and
$\widetilde{g}(\alpha):=L_{\alpha}(g),
\ \alpha\in \Lambda$, by
\begin{equation*}\label{classical_inner_product}
\langle\tilde{f},\tilde{g}\rangle_{\mathcal{H}_{
\mathcal{K}}}:=\langle f,g\rangle_{\mathcal{H}}.
\end{equation*}
Moreover, there is an isometric isomorphism between $\mathcal{H}$ and $\mathcal{H}_{\mathcal{K}}$.
\end{cor}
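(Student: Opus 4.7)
The plan is to obtain this corollary as a direct specialization of Theorem~\ref{isometric-isomorphism1} (and the preceding Proposition~\ref{isometric-isomorphism}) to the scalar case $\mathcal{Y}=\mathbb{C}$. First I would observe that when $\mathcal{Y}=\mathbb{C}$, a bounded linear operator from $\mathbb{C}$ to $\mathcal{H}$ is completely determined by its value at $1\in\mathbb{C}$, so the operator reproducing kernel $K\colon\Lambda\to\mathcal{B}(\mathbb{C},\mathcal{H})$ of the ORKHS formulation is naturally identified with an element of $\mathcal{H}$ via $K(\alpha)\leftrightarrow K(\alpha)\,1$. Under this identification the reproducing property $\langle L_{\alpha}(f),\xi\rangle_{\mathcal{Y}}=\langle f,K(\alpha)\xi\rangle_{\mathcal{H}}$ collapses (taking $\xi=1$) to the scalar reproducing property $L_{\alpha}(f)=\langle f,K(\alpha)\rangle_{\mathcal{H}}$ used in the definition of FRKHSs.

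Next I would verify that the function $\mathcal{K}\colon\Lambda\times\Lambda\to\mathbb{C}$ of the corollary is exactly the scalar incarnation of the operator $\mathcal{K}\colon\Lambda\times\Lambda\to\mathcal{B}(\mathcal{Y},\mathcal{Y})$ defined in \eqref{kernel_representation}. Indeed, specializing $\mathcal{K}(\alpha,\beta)\xi:=L_{\beta}(K(\alpha)\xi)$ with $\xi=1\in\mathbb{C}$ yields $\mathcal{K}(\alpha,\beta)=L_{\beta}(K(\alpha))$, matching the corollary's definition. Since Proposition~\ref{isometric-isomorphism} has already shown that the object defined in \eqref{kernel_representation} is a reproducing kernel on $\Lambda$ whose associated RKHS admits the description \eqref{classicalRKHS1} with the inner product \eqref{classical_inner_product1}, the corresponding statements for the scalar $\mathcal{K}$ and its RKHS $\mathcal{H}_{\mathcal{K}}$ follow immediately.

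Finally, I would invoke Theorem~\ref{isometric-isomorphism1} applied to the present FRKHS $\mathcal{H}$ (viewed as an ORKHS with $\mathcal{Y}=\mathbb{C}$) to conclude that the map $\mathcal{T}\colon\mathcal{H}\to\mathcal{H}_{\mathcal{K}}$ defined by $(\mathcal{T}f)(\alpha):=L_{\alpha}(f)$ is an isometric isomorphism. No new ideas or estimates beyond the translation of notation between the operator and scalar settings are required; the main (and only) thing to check carefully is the identification of $K(\alpha)\in\mathcal{B}(\mathbb{C},\mathcal{H})$ with an element of $\mathcal{H}$, so that both the definition of $\mathcal{K}$ and the reproducing property in the FRKHS setting line up with those in the ORKHS setting. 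Since this identification is canonical and has already been used implicitly after Definition~\ref{def_ORKHS} and in Corollary~\ref{FRKHS_property}, the proof reduces to a short sentence citing Theorem~\ref{isometric-isomorphism1}, with no substantive obstacle.
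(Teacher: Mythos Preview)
Your proposal is correct and matches the paper's approach exactly: the paper presents this corollary without proof, introducing it as the specialization of Theorem~\ref{isometric-isomorphism1} (together with Proposition~\ref{isometric-isomorphism}) to the scalar case $\mathcal{Y}=\mathbb{C}$. Your explicit verification of the identification $K(\alpha)\leftrightarrow K(\alpha)\,1$ and the resulting match between \eqref{kernel_representation} and the scalar definition of $\mathcal{K}$ is precisely the notational check the paper leaves implicit.
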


Even though an ORKHS is isometrically isomorphic to a usual vector-valued RKHS, one cannot reduce trivially the study of ORKHSs to that of vector-valued RKHSs. Through the  isometric isomorphism procedure, the new input space  $\Lambda$ may not have any useful structure. Moreover, we observe from the above discussion that the functions in the resulting RKHS and the corresponding reproducing kernel are obtained by taking the linear operators on the original ones. The use of general operators such as the integral operators will bring difficulties to analyzing the new RKHS and its reproducing kernel.

To close this section, we present the characterization of an operator reproducing kernel in terms of its feature map and feature space.
\begin{thm}\label{feature_representation_KL}
An operator $K:\Lambda\to\mathcal{L}(\mathcal{Y},
\mathcal{Z})$ is an operator reproducing kernel with respect to a family $\mathcal{L}=\{L_{\alpha}:
\ \alpha\in\Lambda\}$ of linear operators from $\mathcal{S}_{K}\subseteq\mathcal{Z}$ to $\mathcal{Y}$ if and only if there exists a Hilbert space $W$ and a mapping $\Phi:\Lambda\rightarrow \mathcal{B}(\mathcal{Y},W)$ such that
\begin{equation}\label{feature_representation_KL1}
L_{\beta}(K(\alpha)\xi)=\Phi(\beta)^{*}\Phi(\alpha)\xi,
\ \alpha,\beta\in \Lambda,\xi\in\mathcal{Y}.
\end{equation}
\end{thm}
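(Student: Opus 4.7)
The plan is to handle the two directions separately and independently, with the forward direction relying on the ORKHS that was constructed from $K$ in the previous theorem, and the backward direction being a short direct verification of the positivity condition in Definition \ref{Alter-Defn}.

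For the forward direction, suppose $K$ is an operator reproducing kernel with respect to $\mathcal{L}$. By the theorem associating an ORKHS $\mathcal{H}$ with the kernel $K$, we obtain a Hilbert space $\mathcal{H}$ in which the reproducing property $\langle L_{\alpha}(f),\xi\rangle_{\mathcal{Y}}=\langle f, K(\alpha)\xi\rangle_{\mathcal{H}}$ holds for all $f\in\mathcal{H}$, $\alpha\in\Lambda$, $\xi\in\mathcal{Y}$. I would then take $W:=\mathcal{H}$ and set $\Phi(\alpha):=K(\alpha)\in\mathcal{B}(\mathcal{Y},\mathcal{H})$. For any $\alpha,\beta\in\Lambda$ and $\xi,\eta\in\mathcal{Y}$, applying the reproducing property to $f=K(\alpha)\xi\in\mathcal{H}$ gives
\[
\langle L_{\beta}(K(\alpha)\xi),\eta\rangle_{\mathcal{Y}}
=\langle K(\alpha)\xi, K(\beta)\eta\rangle_{\mathcal{H}}
=\langle \Phi(\beta)^{*}\Phi(\alpha)\xi,\eta\rangle_{\mathcal{Y}}.
\]
Since $\eta$ is arbitrary, this yields \eqref{feature_representation_KL1}. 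This argument essentially repeats what was already executed inside the proof of Proposition \ref{isometric-isomorphism}, so no genuinely new technology is needed.

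For the backward direction, assume a Hilbert space $W$ and a map $\Phi:\Lambda\to\mathcal{B}(\mathcal{Y},W)$ satisfy \eqref{feature_representation_KL1}. I need to verify the defining inequality \eqref{Operator-Kernel} of Definition \ref{Alter-Defn}. Fix $n\in\mathbb{N}$, $\{\alpha_j:j\in\mathbb{N}_n\}\subseteq\Lambda$ and $\{\xi_j:j\in\mathbb{N}_n\}\subseteq\mathcal{Y}$. Substituting \eqref{feature_representation_KL1} and using the adjoint relation in $W$ gives
\[
\sum_{j\in\mathbb{N}_n}\sum_{k\in\mathbb{N}_n}
\langle L_{\alpha_k}(K(\alpha_j)\xi_j),\xi_k\rangle_{\mathcal{Y}}
=\sum_{j\in\mathbb{N}_n}\sum_{k\in\mathbb{N}_n}
\langle \Phi(\alpha_j)\xi_j,\Phi(\alpha_k)\xi_k\rangle_{W}
=\Bigl\|\sum_{j\in\mathbb{N}_n}\Phi(\alpha_j)\xi_j\Bigr\|_{W}^{2}\geq 0,
\]
which is the required positivity. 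Hence $K$ is an operator reproducing kernel with respect to $\mathcal{L}$.

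There is not really a difficult step here: the forward direction is powered by the construction of the ORKHS from $K$ established earlier, and the backward direction is a one-line positivity computation. The only minor subtlety to be careful about is keeping the operator-theoretic setup straight, namely that $\Phi(\alpha)\in\mathcal{B}(\mathcal{Y},W)$ so that $\Phi(\beta)^{*}\Phi(\alpha)\in\mathcal{B}(\mathcal{Y},\mathcal{Y})$ matches the codomain of $L_{\beta}\circ K(\alpha)$, and that in the forward direction the boundedness $K(\alpha)\in\mathcal{B}(\mathcal{Y},\mathcal{H})$ is already guaranteed by Theorem \ref{property}(1) applied to the ORKHS produced from $K$.
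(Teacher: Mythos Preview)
Your proof is correct and follows essentially the same approach as the paper: the forward direction takes $W=\mathcal{H}$ and $\Phi=K$ using the ORKHS built from $K$, and the backward direction is the same one-line positivity computation rewriting the double sum as the squared norm of $\sum_{j}\Phi(\alpha_j)\xi_j$ in $W$.
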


\begin{proof}
If $K$ is an operator reproducing kernel, we obtain (\ref{feature_representation_KL1}) by choosing
$W$ as the ORKHS $\mathcal{H}$ of $K$ and $\Phi$ as the operator $K$.

Conversely, we assume that (\ref{feature_representation_KL1}) holds for any $\alpha,\beta\in \Lambda$ and any $\xi\in\mathcal{Y}$. Hence, for all $n\in\mathbb{N}$ and $\alpha_j\in \Lambda,\xi_j\in\mathcal{Y},\  j\in \mathbb{N}_n$,
we get that
\begin{equation*}
\sum_{j\in\mathbb{N}_n}\sum_{k\in\mathbb{N}_n}
\langle L_{\alpha_k}(K(\alpha_j)\xi_j),
\xi_k\rangle_{\mathcal{Y}}
=\left\langle \sum_{j\in\mathbb{N}_n}\Phi(\alpha_j)\xi_j,
\sum_{k\in\mathbb{N}_n}\Phi(\alpha_k)\xi_k
\right\rangle_W\geq 0.
\end{equation*}
This together with Definition \ref{Alter-Defn} yields
that $K$ is an operator reproducing kernel with respect
to $\mathcal{L}$.
\end{proof}

As in the theory of RKHSs, a Hilbert space $W$ and a map  $\Phi:\Lambda\rightarrow \mathcal{B}(\mathcal{Y},W)$ satisfying (\ref{feature_representation_KL1}) are called, respectively, the feature space and the feature map of the operator reproducing kernel $K$.

In the scalar case, the feature map representation of a reproducing kernel $\mathcal{K}$ on $\mathcal{X}$ has the form
$$
\mathcal{K}(x,y)=\langle \Phi(x),\Phi(y)\rangle_W,
\ \ x,y\in \mathcal{X},
$$
with $W$ being a Hilbert space and $\Phi$ a mapping from $\mathcal{X}$ to $W$. If there holds $\overline{\span}\{\Phi(x):\ x\in \mathcal{X}\}=W$,
the RKHS of $\mathcal{K}$ can be determined by
\begin{eqnarray*}
\mathcal{H}_{\mathcal{K}}
:=\{\langle w,\Phi(\cdot)\rangle_W:\ w\in W\}
\end{eqnarray*}
with the inner product
$$
\big\langle\langle w,\Phi(\cdot)\rangle_W,
\langle v,\Phi(\cdot)\rangle_W
\big\rangle_{\mathcal{H}_{\mathcal{K}}}
:=\langle w,v\rangle_W.
$$
Due to above feature map representation and Corollary \ref{FRKHS_isometric-isomorphism}, we have the following special result regarding the feature map representation of a functional reproducing kernel.

\begin{thm}\label{FRKHS_feature_representation_KL}
The operator $K:\Lambda\to\mathcal{Z}$ is a functional reproducing kernel with respect to a family $\mathcal{F}=\{L_{\alpha}:\ \alpha\in\Lambda\}$ of linear functionals on $\mathcal{S}_{K}\subseteq \mathcal{Z}$ if and only if there exists a Hilbert space $W$ and a mapping $\Phi:\Lambda\rightarrow W$ such that
\begin{equation}\label{FRKHS_feature_representation_KL1}
L_{\beta}(K(\alpha))=\langle \Phi(\alpha),
\Phi(\beta)\rangle_W,\ \alpha,\beta\in \Lambda.
\end{equation}
\end{thm}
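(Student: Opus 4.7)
The plan is to mirror the proof of Theorem \ref{feature_representation_KL}, specialized to the scalar case $\mathcal{Y}=\mathbb{C}$, since Definition \ref{Alter-Defn} in this case reduces precisely to the requirement that the matrix $\mathbf{F}_{\Lambda_n}:=[L_{\alpha_k}(K(\alpha_j)):j,k\in\mathbb{N}_n]$ be hermitian and positive semi-definite for every finite $\Lambda_n\subseteq\Lambda$. Both directions of the equivalence then become straightforward.

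For the necessity direction, I would suppose $K$ is a functional reproducing kernel with respect to $\mathcal{F}$. By the corollary identifying a functional reproducing kernel with a unique FRKHS, there exists an FRKHS $\mathcal{H}$ such that $\overline{\mathcal{S}_K}=\mathcal{H}$ and $L_\alpha(f)=\langle f,K(\alpha)\rangle_\mathcal{H}$ for all $f\in\mathcal{H}$ and $\alpha\in\Lambda$. Then I choose $W:=\mathcal{H}$ and define $\Phi:\Lambda\to W$ by $\Phi(\alpha):=K(\alpha)$. Applying the reproducing property with $f=K(\alpha)$ yields $L_\beta(K(\alpha))=\langle K(\alpha),K(\beta)\rangle_{\mathcal{H}}=\langle\Phi(\alpha),\Phi(\beta)\rangle_W$ for all $\alpha,\beta\in\Lambda$, establishing \eqref{FRKHS_feature_representation_KL1}.

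For the sufficiency direction, I would assume a Hilbert space $W$ and a mapping $\Phi:\Lambda\to W$ satisfying \eqref{FRKHS_feature_representation_KL1} are given. For any $n\in\mathbb{N}$, any finite set $\{\alpha_j:j\in\mathbb{N}_n\}\subseteq\Lambda$, and any scalars $\{c_j:j\in\mathbb{N}_n\}\subseteq\mathbb{C}$, the identity
$$
\sum_{j\in\mathbb{N}_n}\sum_{k\in\mathbb{N}_n}c_j\overline{c_k}L_{\alpha_k}(K(\alpha_j))
=\sum_{j\in\mathbb{N}_n}\sum_{k\in\mathbb{N}_n}c_j\overline{c_k}\langle\Phi(\alpha_j),\Phi(\alpha_k)\rangle_W
=\left\|\sum_{j\in\mathbb{N}_n}c_j\Phi(\alpha_j)\right\|_W^2\geq0
$$
shows that $\mathbf{F}_{\Lambda_n}$ is positive semi-definite, while the conjugate symmetry of the inner product on $W$ gives that $\mathbf{F}_{\Lambda_n}$ is hermitian. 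Thus the scalar form of condition \eqref{Operator-Kernel} in Definition \ref{Alter-Defn} is satisfied, and $K$ is a functional reproducing kernel with respect to $\mathcal{F}$.

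There is no real obstacle here: the whole statement is the $\mathcal{Y}=\mathbb{C}$ specialization of Theorem \ref{feature_representation_KL}, and both directions reduce to routine one-line computations once the correct $W$ and $\Phi$ are chosen in the forward direction and the Gram-matrix identity is written down in the converse. The only subtlety worth flagging explicitly is that in the scalar setting the operator $K(\alpha)\in\mathcal{L}(\mathcal{Y},\mathcal{Z})$ collapses to an element $K(\alpha)\in\mathcal{Z}$, so $\Phi(\alpha)$ is likewise taken as an element of $W$ rather than an element of $\mathcal{B}(\mathcal{Y},W)$, which is why \eqref{FRKHS_feature_representation_KL1} takes the inner-product form $\langle\Phi(\alpha),\Phi(\beta)\rangle_W$ rather than the composition $\Phi(\beta)^{*}\Phi(\alpha)$.
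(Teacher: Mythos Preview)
Your proof is correct and follows exactly the approach the paper intends: the theorem is stated as a direct scalar specialization of Theorem \ref{feature_representation_KL} (the paper offers no separate proof, merely remarking that it follows from the preceding feature-map representation and Corollary \ref{FRKHS_isometric-isomorphism}), and your argument reproduces that proof with $\mathcal{Y}=\mathbb{C}$. Your explicit note on why $\Phi(\alpha)$ becomes an element of $W$ and the composition $\Phi(\beta)^*\Phi(\alpha)$ collapses to the inner product $\langle\Phi(\alpha),\Phi(\beta)\rangle_W$ is precisely the point the paper leaves implicit.
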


\section{Perfect ORKHSs}

In this section, we study a special interesting class of ORKHSs (of functions from $\mathcal{X}$ to a Hilbert space $\mathcal{Y}$) with respect to two different families of linear operators, one of which is the family of the standard point-evaluation operators. We shall characterize the operator reproducing kernels of this type and study their universality.

For two vector spaces $V_1$ and $V_2$ we say two families of linear operators from $V_1$ to $V_2$ are not equivalent if each linear operator in one family can not be represented by any finite linear combination of linear operators in the other family. Throughout this paper we denote by $\mathcal{L}_0$ the family of point-evaluation operators.

\begin{defn}\label{perfect}
Let a Hilbert space $\mathcal{H}$ of functions from $\mathcal{X}$ to a Hilbert space $\mathcal{Y}$ be an
ORKHS with respect to a family $\mathcal{L}:=\{L_{\alpha}:\alpha\in\Lambda\}$ of linear operators from $\mathcal{H}$ to $\mathcal{Y}$, which is not equivalent to $\mathcal{L}_0$. We call $\mathcal{H}$ a {\it perfect} ORKHS with respect to $\mathcal{L}$ if it is also an ORKHS with respect to $\mathcal{L}_0$.
\end{defn}

In the scalar case when $\mathcal{Y}=\mathbb{C}$, if $\mathcal{H}$ is an FRKHS with respect to not only the family of point-evaluation functionals but also a family of linear functionals, not equivalent to the family of point-evaluation functionals, we shall call it a perfect FRKHS.

It follows from Theorem \ref{property} that a perfect ORKHS admits two operator reproducing kernels, which reproduce two different families of operators, one being $\mathcal{L}_0$. The following proposition gives the relation between the two operator reproducing kernels.

\begin{prop}
If $\mathcal{H}$ is a perfect ORKHS with respect to a family $\mathcal{L}:=\{L_{\alpha}:\alpha\in\Lambda\}$ of linear operators from $\mathcal{H}$ to $\mathcal{Y}$ and if $K$ and $\mathcal{K}$ are the corresponding operator reproducing kernels with respect to $\mathcal{L}$ and $\mathcal{L}_0$, respectively, then
\begin{equation}\label{relation-kernels}
\langle(K(\alpha)\xi)(x),\eta\rangle_{\mathcal{Y}}
=\langle \xi, L_{\alpha}(\mathcal{K}(x,\cdot)\eta)
\rangle_{\mathcal{Y}},\ x\in \mathcal{X},
\ \alpha\in\Lambda,\ \xi,\eta\in\mathcal{Y}.
\end{equation}
\end{prop}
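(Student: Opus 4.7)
The plan is to chain together the two reproducing properties, one for $\mathcal{K}$ (corresponding to point evaluation) and one for $K$ (corresponding to $\mathcal{L}$), by applying each to the element produced by the other.

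First I would fix $\alpha\in\Lambda$, $x\in\mathcal{X}$ and $\xi,\eta\in\mathcal{Y}$, and set $f := K(\alpha)\xi$. Since $\mathcal{H}$ is an ORKHS with respect to $\mathcal{L}$, Theorem~\ref{property} guarantees $f\in\mathcal{H}$, so the point-evaluation reproducing property supplied by $\mathcal{K}$ (as the operator reproducing kernel of $\mathcal{H}$ viewed as an ORKHS with respect to $\mathcal{L}_0$) applies to $f$ and yields
\begin{equation*}
\langle (K(\alpha)\xi)(x),\eta\rangle_{\mathcal{Y}}
= \langle K(\alpha)\xi,\mathcal{K}(x,\cdot)\eta\rangle_{\mathcal{H}}.
\end{equation*}

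Next I would flip the inner product by conjugation and then apply the reproducing property for $K$ (equation \eqref{reproducing_property}) to the element $g := \mathcal{K}(x,\cdot)\eta\in\mathcal{H}$, with test vector $\xi\in\mathcal{Y}$. This gives
\begin{equation*}
\langle K(\alpha)\xi,\mathcal{K}(x,\cdot)\eta\rangle_{\mathcal{H}}
= \overline{\langle \mathcal{K}(x,\cdot)\eta, K(\alpha)\xi\rangle_{\mathcal{H}}}
= \overline{\langle L_{\alpha}(\mathcal{K}(x,\cdot)\eta),\xi\rangle_{\mathcal{Y}}}
= \langle \xi, L_{\alpha}(\mathcal{K}(x,\cdot)\eta)\rangle_{\mathcal{Y}},
\end{equation*}
which combined with the first identity produces \eqref{relation-kernels}.

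There is essentially no obstacle: the argument is a direct two-line manipulation once one recognizes that $K(\alpha)\xi$ lies in $\mathcal{H}$ (so $\mathcal{K}$-reproduction applies to it) and, symmetrically, that $\mathcal{K}(x,\cdot)\eta$ lies in $\mathcal{H}$ (so $K$-reproduction applies to it). The only subtle point worth flagging is the consistent convention for conjugate-linearity of the inner product; this just accounts for why $\xi$ ends up in the first slot and $L_\alpha(\mathcal{K}(x,\cdot)\eta)$ in the second on the right-hand side of \eqref{relation-kernels}. Everything else is automatic from Theorem~\ref{property} and Definition~\ref{perfect}.
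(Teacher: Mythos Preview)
Your proof is correct and follows essentially the same approach as the paper: apply the $\mathcal{K}$-reproducing property to $K(\alpha)\xi$, apply the $K$-reproducing property to $\mathcal{K}(x,\cdot)\eta$, and combine the two identities via the common inner product $\langle K(\alpha)\xi,\mathcal{K}(x,\cdot)\eta\rangle_{\mathcal{H}}$. The paper presents the two substitutions side by side and then says the result follows, whereas you chain them with an explicit conjugation step, but the content is identical.
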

\begin{proof}
It follows from the reproducing properties of the two operator reproducing kernels that for each $f\in\mathcal{H}$ there hold
$$
\langle L_{\alpha}(f),\xi\rangle_{\mathcal{Y}}
=\langle f,K(\alpha)\xi\rangle_{\mathcal{H}},
\ \ \alpha\in\Lambda,\ \xi\in\mathcal{Y}
$$
and
$$
\langle f(x),\eta\rangle_{\mathcal{Y}}
=\langle f,\mathcal{K}(x,\cdot)\eta\rangle_{\mathcal{H}},
\ \ x\in \mathcal{X},\ \eta\in\mathcal{Y}.
$$
In particular, in the first equation we choose
$$
f:=\mathcal{K}(x,\cdot)\eta,\ \ \mbox{for}\ \ x\in \mathcal{X},\ \eta\in\mathcal{Y}
$$
and we obtain that
$$
\langle L_{\alpha}(\mathcal{K}(x,\cdot)\eta),
\xi\rangle_{\mathcal{Y}}
=\langle \mathcal{K}(x,\cdot)\eta,
K(\alpha)\xi\rangle_{\mathcal{H}},
\ \ x\in \mathcal{X},\ \alpha\in\Lambda,
\ \xi,\eta\in\mathcal{Y}.
$$
Likewise, in the second equation we choose
$$
f:=K(\alpha)\xi,\ \ \mbox{for}\ \ \alpha\in\Lambda,\ \xi\in\mathcal{Y}
$$
and we obtain that
$$
\langle (K(\alpha)\xi)(x),\eta\rangle_{\mathcal{Y}}
=\langle K(\alpha)\xi,\mathcal{K}(x,\cdot)
\eta\rangle_{\mathcal{H}},\ \ x\in \mathcal{X},
\ \alpha\in\Lambda,\ \xi,\eta\in\mathcal{Y}.
$$
The desired result of this proposition follows directly from the above two equations.
\end{proof}

In the special case when $\mathcal{H}$ is a perfect FRKHS with respect to the family $\mathcal{F}$ of linear functionals, equation (\ref{relation-kernels}) reduces to
\begin{equation*}\label{relation-kernels1}
K(\alpha)(x)=\overline{L_{\alpha}(\mathcal{K}(x,\cdot))},
\ x\in \mathcal{X},\ \alpha\in\Lambda,
\end{equation*}
where $K$ is the functional reproducing kernel with respect to $\mathcal{F}$ and $\mathcal{K}$ is the classical reproducing kernel on $\mathcal{X}$.

We next represent the operator reproducing kernel $K$ for a perfect ORKHS $\mathcal{H}$ by the feature map. For this purpose, we let $W$ and $\Phi:\mathcal{X}\rightarrow \mathcal{B}(\mathcal{Y},W)$ be the feature space and the feature map of $\mathcal{K}$, respectively. We assume that there holds the density condition
\begin{equation}\label{density_phi}
\overline{\span}\{\Phi(x)\xi:\ x\in \mathcal{X},
\xi\in\mathcal{Y}\}=W.
\end{equation}
Associated with the features $\Phi$ and $W$ of $\mathcal{K}$, we introduce a map $\Psi$ from $\Lambda$
to $\mathcal{B}(\mathcal{Y},W)$ as follows. Noting that for each $\alpha\in\Lambda$, the linear operator $\widetilde{L}_{\alpha}$ defined on $W$ for each
$u\in W$ by
$$
\widetilde{L}_{\alpha}(u):=L_{\alpha}(\Phi(\cdot)^{*}u)
$$
is continuous since there holds for each $u\in W$,
$$
\|\widetilde{L}_{\alpha}(u)\|_{\mathcal{Y}}
\leq \|L_{\alpha}\|_{\mathcal{B}(\mathcal{H},\mathcal{Y})}
\|\Phi(\cdot)^{*}u\|_{\mathcal{H}}
=\|L_{\alpha}\|_{\mathcal{B}
(\mathcal{H},\mathcal{Y})}\|u\|_W.
$$
This allows us to define $\Psi$ by
\begin{equation}\label{Psi}
\Psi(\alpha):=\widetilde{L}^{*}_{\alpha},
\ \ \alpha\in\Lambda.
\end{equation}
In terms of the maps $\Phi$ and $\Psi$, we can represent $K$ as in the following proposition.

\begin{prop}\label{feature_representation_perfect}
Suppose that $\mathcal{H}$ is a perfect ORKHS with respect to a family $\mathcal{L}:=\{L_{\alpha}:\alpha\in\Lambda\}$ of linear operators from $\mathcal{H}$ to $\mathcal{Y}$. Let $K$ and $\mathcal{K}$ denote the operator reproducing kernels with respect to $\mathcal{L}$ and $\mathcal{L}_0$, respectively. If $W$ is the feature space of $\mathcal{K}$, $\Phi:\mathcal{X}\rightarrow \mathcal{B}(\mathcal{Y},W)$ is its feature map with the density condition \eqref{density_phi} and $\Psi:\Lambda\rightarrow\mathcal{B}(\mathcal{Y},W)$
is defined by \eqref{Psi}, then
$$
K(\alpha)=\Phi(\cdot)^{*}\Psi(\alpha),
\ \ \alpha\in\Lambda.
$$
Moreover, $W$ and $\Psi$ are, respectively, the feature space and the feature map of $K$, satisfying the density condition
\begin{equation}\label{density_psi}
\overline{\span}\{\Psi(\alpha)\xi:\ \alpha\in \Lambda,\xi\in\mathcal{Y}\}=W.
\end{equation}
\end{prop}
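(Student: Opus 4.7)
The plan is to exploit the feature representation $\mathcal{H}=\{\Phi(\cdot)^{*}w:w\in W\}$ together with the isometry $\|\Phi(\cdot)^{*}w\|_{\mathcal{H}}=\|w\|_{W}$; under the density \eqref{density_phi} this makes $w\mapsto\Phi(\cdot)^{*}w$ an isometric isomorphism from $W$ onto $\mathcal{H}$. Combined with the reproducing property of $K$, this should let me pin down $K(\alpha)\xi$ in $W$.

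First I would fix $\alpha\in\Lambda$ and $\xi\in\mathcal{Y}$ and use the isomorphism to write $K(\alpha)\xi=\Phi(\cdot)^{*}w_{\alpha,\xi}$ for a unique $w_{\alpha,\xi}\in W$. Testing the reproducing identity $\langle L_{\alpha}(f),\xi\rangle_{\mathcal{Y}}=\langle f,K(\alpha)\xi\rangle_{\mathcal{H}}$ against a generic element $f=\Phi(\cdot)^{*}w$ yields
\begin{equation*}
\langle w,w_{\alpha,\xi}\rangle_{W}=\langle L_{\alpha}(\Phi(\cdot)^{*}w),\xi\rangle_{\mathcal{Y}}=\langle\widetilde{L}_{\alpha}w,\xi\rangle_{\mathcal{Y}}=\langle w,\Psi(\alpha)\xi\rangle_{W},
\end{equation*}
where the middle equality is just the definition of $\widetilde L_\alpha$ and the last one is the definition \eqref{Psi} of $\Psi=\widetilde L_\alpha^{*}$. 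Since $w\in W$ is arbitrary, I conclude $w_{\alpha,\xi}=\Psi(\alpha)\xi$, hence $K(\alpha)=\Phi(\cdot)^{*}\Psi(\alpha)$.

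With this identity in hand, the feature-map property required by Theorem \ref{feature_representation_KL}, namely $L_{\beta}(K(\alpha)\xi)=\Psi(\beta)^{*}\Psi(\alpha)\xi$, follows immediately by applying $L_{\beta}$ to both sides of $K(\alpha)\xi=\Phi(\cdot)^{*}\Psi(\alpha)\xi$ and recognizing $L_{\beta}\circ\Phi(\cdot)^{*}=\widetilde{L}_{\beta}=\Psi(\beta)^{*}$. For the density \eqref{density_psi} I would show the orthogonal complement is trivial: if $w\in W$ satisfies $\langle w,\Psi(\alpha)\xi\rangle_{W}=0$ for every $\alpha\in\Lambda$ and $\xi\in\mathcal{Y}$, unwrapping the adjoint gives $L_{\alpha}(\Phi(\cdot)^{*}w)=0$ for all $\alpha\in\Lambda$. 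The norm compatibility of $\mathcal{H}$ with $\mathcal{L}$ (the very hypothesis that $\mathcal{H}$ is an ORKHS with respect to $\mathcal{L}$) then forces $\Phi(\cdot)^{*}w=0$ in $\mathcal{H}$, and the isometry makes $w=0$.

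The only point requiring real care is the role of the density assumption \eqref{density_phi}: it is what promotes $w\mapsto\Phi(\cdot)^{*}w$ from a partial isometry to an isometric isomorphism, so that the $w_{\alpha,\xi}$ picked above is uniquely defined in $W$ and the passage from $\|\Phi(\cdot)^{*}w\|_{\mathcal{H}}=0$ to $w=0$ in the density step is legitimate. Everything else reduces to the two reproducing properties of $K$ and $\mathcal{K}$ and routine adjoint manipulations.
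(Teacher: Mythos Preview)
Your proof is correct and follows essentially the same approach as the paper. The only organizational difference is that the paper routes the first step through the previously established relation \eqref{relation-kernels} and computes $(K(\alpha)\xi)(x)$ pointwise in $x$, whereas you work globally via the isometric isomorphism $w\mapsto\Phi(\cdot)^{*}w$ and the reproducing property of $K$; the feature-map verification and the density argument \eqref{density_psi} are identical to the paper's.
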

\begin{proof}
It follows from the feature map representation $\mathcal{K}(x,y)=\Phi(y)^{*}\Phi(x),\ x,y\in\mathcal{X},$ that
$$
L_{\alpha}(\mathcal{K}(x,\cdot)\eta)
=L_{\alpha}(\Phi(\cdot)^{*}\Phi(x)\eta)
=\widetilde{L}_{\alpha}(\Phi(x)\eta),
\ x\in \mathcal{X},\ \alpha\in\Lambda,
\ \eta\in\mathcal{Y}.
$$
Substituting the above equation into (\ref{relation-kernels}), we obtain that
$$
\langle(K(\alpha)\xi)(x), \eta\rangle_{\mathcal{Y}}
=\langle \xi, \widetilde{L}_{\alpha}
(\Phi(x)\eta)\rangle_{\mathcal{Y}}.
$$
Then by the definition of $\Psi$ we get that
$$
\langle(K(\alpha)\xi)(x),\eta\rangle_{\mathcal{Y}}
=\langle \Psi(\alpha)\xi,\Phi(x)\eta\rangle_W
=\langle \Phi(x)^{*}\Psi(\alpha)
\xi,\eta\rangle_{\mathcal{Y}},
$$
which implies $K(\alpha)=\Phi(\cdot)^{*}\Psi(\alpha), \alpha\in\Lambda.$

Furthermore, for any $\alpha,\beta\in\Lambda$, and any $\xi,\eta\in\mathcal{Y}$, we have that
$$
\langle L_{\beta}(K(\alpha)\xi),\eta\rangle_{\mathcal{Y}}
=\langle\Psi(\alpha)\xi,\Psi(\beta)\eta\rangle_W
=\langle\Psi(\beta)^{*}\Psi(\alpha)\xi,
\eta\rangle_{\mathcal{Y}},
$$
which yields $L_{\beta}(K(\alpha)\xi)
=\Psi(\beta)^{*}\Psi(\alpha)\xi$. We conclude from this that $W$ and $\Psi$ are, respectively, the feature space and the feature map of $K$.

To verify the density condition (\ref{density_psi}),
it suffices to show that if $u\in W$ satisfies
$\langle u,\Psi(\alpha)\xi\rangle_W=0$ for all
$\alpha\in \Lambda$ and all $\xi\in\mathcal{Y}$, then $u=0$. Since there holds for all $\alpha\in \Lambda$
and all $\xi\in\mathcal{Y}$,
$$
\langle\widetilde{L}_{\alpha}(u),\xi\rangle_{\mathcal{Y}}
=\langle u,\Psi(\alpha)\xi\rangle_W,
$$
we have that $\widetilde{L}_{\alpha}(u)=0$
for all $\alpha\in\Lambda$. By the definition $\widetilde{L}_{\alpha}$, we get that $L_{\alpha}(\Phi(\cdot)^{*}u)=0$ for all $\alpha\in\Lambda$. Since the norm of $\mathcal{H}$
is compatible with $\mathcal{L}$, we observe that $\|\Phi(\cdot)^{*}u\|_{\mathcal{H}}=0$, which is equivalent to $\|u\|_W=0$.
\end{proof}

Motivated by the representation of an operator
reproducing kernel for a perfect ORKHS in Proposition \ref{feature_representation_perfect}, we now turn to characterizing the operator reproducing kernels of this type, which is useful in constructing operator reproducing kernels.

\begin{thm}\label{L_RKHS_Thm}
An operator $K:\Lambda\rightarrow\mathcal{B}
(\mathcal{Y},\mathcal{H})$ is an operator reproducing kernel for a perfect ORKHS $\mathcal{H}$ with respect to a family $\mathcal{L}=\{L_{\alpha}:\alpha\in\Lambda\}$ of linear operators from $\mathcal{H}$ to $\mathcal{Y}$ if and only if there exist a Hilbert space $W$ and two mappings $\Phi:\mathcal{X}\rightarrow \mathcal{B}(\mathcal{Y},W)$ with the density condition \eqref{density_phi}
and $\Psi:\Lambda\rightarrow \mathcal{B}(\mathcal{Y},W)$ with the density condition \eqref{density_psi} such that
\begin{equation}\label{kernel-perfectRKHS}
K(\alpha)=\Phi(\cdot)^{*}\Psi(\alpha),
\ \ \alpha\in\Lambda.
\end{equation}
Moreover, the perfect ORKHS of $K$ has the form $\mathcal{H}:=\{\Phi(\cdot)^{*}u:u\in W\}$ with
the inner product
$$
\langle \Phi(\cdot)^{*}u,\Phi(\cdot)^{*}v
\rangle_{\mathcal{H}}:=\langle u,v\rangle_W.
$$
\end{thm}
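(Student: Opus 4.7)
The plan is to treat the two implications separately. The necessity direction follows almost entirely from Proposition~\ref{feature_representation_perfect}, since a perfect ORKHS $\mathcal{H}$ already comes equipped with a classical vector-valued reproducing kernel $\mathcal{K}$ for $\mathcal{L}_0$; choosing $W$ and $\Phi$ as its feature space and feature map (with the density~\eqref{density_phi} achievable by restricting to the closed linear span of the ranges of the $\Phi(x)$) and defining $\Psi$ by~\eqref{Psi}, Proposition~\ref{feature_representation_perfect} delivers the factorization~\eqref{kernel-perfectRKHS} together with the density~\eqref{density_psi}. The representation $\mathcal{H}=\{\Phi(\cdot)^{*}u:u\in W\}$ with the stated inner product is then just the feature-space realization~\eqref{vector-feature_representation_HK} of the vector-valued RKHS of $\mathcal{K}$.

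For the sufficiency direction, the plan is to construct the ORKHS and its two families of operators directly from the data $(W,\Phi,\Psi)$. First I would set $\mathcal{K}(x,y):=\Phi(y)^{*}\Phi(x)$ and let $\mathcal{H}:=\{\Phi(\cdot)^{*}u:u\in W\}$ carry the inner product $\langle\Phi(\cdot)^{*}u,\Phi(\cdot)^{*}v\rangle_{\mathcal{H}}:=\langle u,v\rangle_{W}$. The density~\eqref{density_phi} makes the correspondence $u\mapsto\Phi(\cdot)^{*}u$ an isometric isomorphism from $W$ onto $\mathcal{H}$, so $\mathcal{H}$ is a Hilbert space, and by the standard feature-map construction it is the vector-valued RKHS of $\mathcal{K}$; in particular $\mathcal{H}$ is an ORKHS with respect to $\mathcal{L}_{0}$.

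Next I would introduce the candidate family $\mathcal{L}=\{L_{\alpha}:\alpha\in\Lambda\}$ by setting $L_{\alpha}(\Phi(\cdot)^{*}u):=\Psi(\alpha)^{*}u$; well-definedness is immediate from the isometric isomorphism, and continuity is the bound $\|L_{\alpha}(f)\|_{\mathcal{Y}}\le\|\Psi(\alpha)\|_{\mathcal{B}(\mathcal{Y},W)}\|f\|_{\mathcal{H}}$. Compatibility of the norm of $\mathcal{H}$ with $\mathcal{L}$ is exactly where the density~\eqref{density_psi} enters: $L_{\alpha}(f)=0$ for every $\alpha$ forces $\langle u,\Psi(\alpha)\xi\rangle_{W}=0$ for all $\alpha\in\Lambda$ and $\xi\in\mathcal{Y}$, hence $u=0$ and $f=0$. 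A short computation verifies the reproducing identity $\langle L_{\alpha}(f),\xi\rangle_{\mathcal{Y}}=\langle u,\Psi(\alpha)\xi\rangle_{W}=\langle\Phi(\cdot)^{*}u,\Phi(\cdot)^{*}\Psi(\alpha)\xi\rangle_{\mathcal{H}}=\langle f,K(\alpha)\xi\rangle_{\mathcal{H}}$ with $K(\alpha)=\Phi(\cdot)^{*}\Psi(\alpha)$, so by the uniqueness statement of Theorem~\ref{property}, $K$ is the operator reproducing kernel of $\mathcal{H}$ with respect to $\mathcal{L}$, and $\mathcal{H}$ is a perfect ORKHS in the sense of Definition~\ref{perfect}.

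The main subtlety, and the step that requires the most care, is the bookkeeping around the two density conditions: \eqref{density_phi} is needed so that $u\mapsto\Phi(\cdot)^{*}u$ is injective (hence $L_{\alpha}$ is well-defined and $\mathcal{H}$ is a bona fide Hilbert space under the stipulated inner product), while \eqref{density_psi} is precisely what is needed to ensure norm compatibility of $\mathcal{H}$ with the family $\mathcal{L}$; without either density condition the construction either fails to produce a Hilbert space or produces one on which $\mathcal{L}$ has a nontrivial null space. All other verifications are routine once these two roles are identified.
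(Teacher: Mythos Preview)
Your proposal is correct and follows essentially the same approach as the paper's proof: necessity via Proposition~\ref{feature_representation_perfect} (choosing $W,\Phi$ as the feature data of the classical kernel $\mathcal{K}$ and $\Psi$ via~\eqref{Psi}), and sufficiency by building $\mathcal{H}=\{\Phi(\cdot)^{*}u:u\in W\}$ from the feature map, defining $L_{\alpha}(\Phi(\cdot)^{*}u):=\Psi(\alpha)^{*}u$, and using~\eqref{density_phi} for well-definedness and~\eqref{density_psi} for norm compatibility before verifying the reproducing identity. Your identification of the distinct roles played by the two density conditions matches the paper's argument exactly.
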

\begin{proof}
Suppose that $K:\Lambda\rightarrow\mathcal{B}(\mathcal{Y},\mathcal{H})$ is an operator reproducing kernel for a perfect ORKHS $\mathcal{H}$ with respect to a family  $\mathcal{L}=\{L_{\alpha}:\alpha\in\Lambda\}$ of linear operators from $\mathcal{H}$ to $\mathcal{Y}$. According to the definition of the perfect ORKHS, $\mathcal{H}$ is a reproducing kernel Hilbert space with its reproducing kernel $\mathcal{K}$. By theory of the classical reproducing kernel Hilbert space, we can choose a Hilbert space $W$ as the feature space of the reproducing kernel $\mathcal{K}$ and a mapping $\Phi:\mathcal{X}\rightarrow \mathcal{B}(\mathcal{Y},W)$ which satisfying the density condition \eqref{density_phi} as its feature map. We then define the map $\Psi$ according to (\ref{Psi}). Hence, by Proposition \ref{feature_representation_perfect}, we get the representation of $K$ as in  (\ref{kernel-perfectRKHS}).

Conversely, we suppose that $W$ is a Hilbert space, $\Phi:\mathcal{X}\rightarrow \mathcal{B}(\mathcal{Y},W)$ satisfies the density condition \eqref{density_phi} and $\Psi:\Lambda\rightarrow \mathcal{B}(\mathcal{Y},W)$ satisfies the density condition \eqref{density_psi}, and $K$ has the form \eqref{kernel-perfectRKHS}.
Associated with $W$ and $\Phi$ we introduce a space $\mathcal{H}:=\{\Phi(\cdot)^{*}u:u\in W\}$ of functions from $\mathcal{X}$ to $\mathcal{Y}$ and define a
bilinear mapping on $\mathcal{H}$ by
$$
\langle \Phi(\cdot)^{*}u,\Phi(\cdot)^{*}
v\rangle_{\mathcal{H}}:=\langle u,v\rangle_W.
$$
By the theory of classical RKHSs, we conclude that $\mathcal{H}$ is an RKHS. We shall show that $\mathcal{H}$ is a perfect ORKHS with respect to some family of linear operators and the operator $K$ is exactly the operator reproducing kernel for $\mathcal{H}$.

We first introduce a family of linear operators
from  $\mathcal{H}$ to $\mathcal{Y}$. For each $\alpha\in\Lambda$ we define operator $L_{\alpha}$
on $\mathcal{H}$ by
$$
L_{\alpha}(\Phi(\cdot)^{*}u):=\Psi(\alpha)^{*}u,\ u\in W.
$$
It is clear that $L_{\alpha}$ is linear. It follows that
$$
\|L_{\alpha}(\Phi(\cdot)^{*}u)\|_{\mathcal{Y}}
\leq\|\Psi(\alpha)\|_{\mathcal{B}(\mathcal{Y},W)}\|u\|_W
=\|\Psi(\alpha)\|_{\mathcal{B}(\mathcal{Y},W)}
\|\Phi(\cdot)^{*}u\|_{\mathcal{H}},
$$
which ensures that $L_{\alpha}$ is continuous
on $\mathcal{H}$. Set $\mathcal{L}:=\{L_{\alpha}:\alpha\in\Lambda\}$.
Then $\mathcal{L}$ is a family of continuous linear operators. We next verify that the norm of $\mathcal{H}$ is compatible with $\mathcal{L}$. Suppose that $f:=\Phi(\cdot)^{*}u\in\mathcal{H}$ satisfies $L_{\alpha}(f)=0$ for all $\alpha\in\Lambda$.
By the definition of operators $L_{\alpha}$,
we have for all $\alpha\in\Lambda$ and all $\xi\in\mathcal{Y}$ that
$$
\langle u, \Psi(\alpha)\xi\rangle_W
=\langle \Psi(\alpha)^{*}u, \xi\rangle_{\mathcal{Y}}
=\langle L_{\alpha}(\Phi(\cdot)^{*}u),
\xi\rangle_{\mathcal{Y}}=0.
$$
This together with the density condition
(\ref{density_psi}) leads to $\|u\|_{W}=0$
and then $\|f\|_{\mathcal{H}}=0$. That is, the norm of $\mathcal{H}$ is compatible with $\mathcal{L}$.
We then conclude that $\mathcal{H}$ is a perfect ORKHS with respect to $\mathcal{L}$.

It remains to prove that the operator $K$ with the form (\ref{kernel-perfectRKHS}) is the operator reproducing kernel for $\mathcal{H}$. It follows from
$$
\|K(\alpha)\xi\|_{\mathcal{H}}
=\|\Psi(\alpha)\xi\|_W\leq\|\Psi(\alpha)
\|_{\mathcal{B}(\mathcal{Y},W)}\|\xi\|_{\mathcal{Y}},
\ \xi\in\mathcal{Y},
$$
that $K(\alpha)\in\mathcal{B}(\mathcal{Y},\mathcal{H})$. For each $\alpha\in\Lambda,\xi\in\mathcal{Y}$ and each $f:=\Phi(\cdot)^{*}u\in\mathcal{H}$, there holds
\begin{equation*}
\langle f,K(\alpha)\xi\rangle_{\mathcal{H}}
=\langle\Phi(\cdot)^{*}u,\Phi(\cdot)^{*}
\Psi(\alpha)\xi\rangle_{\mathcal{H}}
=\langle u,\Psi(\alpha)\xi\rangle_W
=\langle L_{\alpha}(f),\xi\rangle_{\mathcal{Y}}.
\end{equation*}
This ensures that $K$ is the operator reproducing kernel for $\mathcal{H}$.
\end{proof}

We remark on the representation (\ref{kernel-perfectRKHS}) of an operator reproducing kernel for a perfect ORKHS. For a general operator reproducing kernel, the feature map can only give the representation (\ref{feature_representation_KL1}) for the values of the operators $L_{\beta}$, $\beta\in \Lambda$, at the kernel sections $K(\alpha)\xi, \alpha\in\Lambda, \xi\in\mathcal{Y}$. Due to the special structure of the perfect ORKHS, we can give a more precise representation (\ref{kernel-perfectRKHS}) for its operator reproducing kernel.

We next present a specific example to illustrate the representation (\ref{kernel-perfectRKHS}) of the
operator reproducing kernel for a perfect ORKHS.
To this end, we recall the RKHS $\mathcal{B}_{\Delta}
(\mathbb{R}^d,\mathbb{C}^{n})$. Proposition
\ref{Paley} shows that the space $\mathcal{B}_{\Delta}
(\mathbb{R}^d,\mathbb{C}^{n})$ is also an ORKHS with respect to the family $\mathcal{L}$ of the integral operators defined as in (\ref{example_integration_functional1}). Hence, it is a perfect ORKHS. It was pointed out in section 3 that the operator reproducing kernel for $\mathcal{B}_{\Delta}
(\mathbb{R}^d,\mathbb{C}^{n})$ has the form
\begin{equation}\label{example-feature-representation}
K(x)\xi=\mathscr{P}_{\mathcal{B}_{\Delta}}(u(\cdot-x)\xi),
\ \ x\in \mathbb{R}^d, \ \ \xi\in\mathbb{C}^{n}.
\end{equation}
To give the feature representation (\ref{kernel-perfectRKHS}) of $K$, we first describe
the choice of the space $W$ as the feature space of
the kernel $\mathcal{K}$ defined as in (\ref{sinc})
and the mapping $\Phi$ as its feature map. Let $W:=L^2(I_{\Delta},\mathbb{C}^n)$. We define $\Phi:\mathbb{R}^d\to\mathcal{B}(\mathbb{C}^n,W)$
for each $x\in\mathbb{R}^d$ by
$$
\Phi(x):=\frac{1}{(2\pi)^{d/2}}
{\rm diag}(e^{i(x,\cdot)}:j\in\mathbb{N}_{n}).
$$
Then the adjoint of $\Phi(x)$ can be represented
for each $w:=[w_j:j\in\mathbb{N}_n]\in W$ as
$$
\Phi(x)^{*}w=\frac{1}{(2\pi)^{d/2}}
[\langle w_j,e^{i(x,\cdot)}\rangle_{L^2(I_{\Delta})}:
j\in\mathbb{N}_{n}].
$$
Associated with $W$ and $\Phi$, the kernel $\mathcal{K}$ having the form (\ref{sinc}) may be rewritten as
$$
\mathcal{K}(x,y)=\Phi(y)^{*}\Phi(x),
\ \ x,y\in\mathbb{R}^d.
$$
This shows that $W$ and $\Phi$ are, respectively, the feature space and the feature map of $\mathcal{K}$.
We next present the other map $\Psi$ needed for the construction of $K$ in terms of the function $u$
appearing in (\ref{example_integration_functional1}). Specifically, for each $x\in\mathbb{R}^d$, we set
$$
\Psi(x):=(2\pi)^{d/2}{\rm diag}
(\check{u}e^{i(x,\cdot)}:j\in\mathbb{N}_{n}).
$$
By equation (\ref{example-feature-representation}),
we have for each $x,y\in\mathbb{R}^d$ and each $\xi\in\mathbb{C}^n$ that
\begin{eqnarray*}
(K(x)\xi)(y)=(\hat{u}e^{-i(x,\cdot)}\xi
\chi_{I_{\Delta}})^{\vee}(y)
=\langle \check{u}e^{i(x,\cdot)},e^{i(y,\cdot)}
\rangle_{L^2(I_{\Delta})}\xi.
\end{eqnarray*}
This together with the definition of $\Phi(y)^{*}$ and $\Psi(x)$ leads to $(K(x)\xi)(y)=\Phi(y)^{*}\Psi(x)\xi.$
That is, $K(x)=\Phi(\cdot)^{*}\Psi(x)$.

The following corollary gives a characterization of a functional reproducing kernel for a perfect FRKHS.

\begin{cor}
An operator $K:\Lambda\to\mathcal{H}$ is a functional reproducing kernel for a perfect FRKHS $\mathcal{H}$
with respect to a family  $\mathcal{F}=\{L_{\alpha}:\alpha\in\Lambda\}$ of linear functionals on $\mathcal{H}$ if and only if there
exist a Hilbert space $W$ and two mappings $\Phi:\mathcal{X}\rightarrow W$ with $\overline{\span}\{\Phi(x):x\in \mathcal{X}\}=W$ and $\Psi:\Lambda\rightarrow W$ with $\overline{\span}\{\Psi(\alpha):\alpha\in \Lambda\}=W$ such that
\begin{equation}\label{FRKHS_kernel-perfectRKHS}
K(\alpha)=\langle\Psi(\alpha),\Phi(\cdot)\rangle_W,
\ \ \alpha\in\Lambda.
\end{equation}
Moreover, the perfect FRKHS of $K$ has the form $\mathcal{H}:=\{\langle u,\Phi(\cdot)\rangle_W:u\in W\}$ with the inner product
$$
\langle \langle u,\Phi(\cdot)\rangle_W,\langle v,\Phi(\cdot)\rangle_W\rangle_{\mathcal{H}}
:=\langle u,v\rangle_W.
$$
\end{cor}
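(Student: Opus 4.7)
My plan is to obtain this corollary as a direct specialization of Theorem \ref{L_RKHS_Thm} to the scalar case $\mathcal{Y}=\mathbb{C}$, exploiting the natural identification $\mathcal{B}(\mathbb{C},W)\cong W$ given by $T\mapsto T(1)$. Under this identification, any mapping $\Phi:\mathcal{X}\rightarrow \mathcal{B}(\mathbb{C},W)$ becomes a mapping $\widetilde\Phi:\mathcal{X}\rightarrow W$ via $\widetilde\Phi(x):=\Phi(x)(1)$, and by Riesz representation the adjoint $\Phi(x)^{*}:W\rightarrow\mathbb{C}$ satisfies $\Phi(x)^{*}u=\langle u,\widetilde\Phi(x)\rangle_{W}$ for all $u\in W$. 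The same identification is applied to $\Psi$, producing $\widetilde\Psi:\Lambda\rightarrow W$ with $\widetilde\Psi(\alpha):=\Psi(\alpha)(1)$. With this translation in hand, the density conditions (\ref{density_phi}) and (\ref{density_psi}) become, respectively, $\overline{\span}\{\widetilde\Phi(x):x\in\mathcal{X}\}=W$ and $\overline{\span}\{\widetilde\Psi(\alpha):\alpha\in\Lambda\}=W$, since scalar multiples by $\xi\in\mathbb{C}$ are absorbed into the linear span.

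For the forward direction, I would assume $K:\Lambda\to\mathcal{H}$ is a functional reproducing kernel for a perfect FRKHS $\mathcal{H}$ with respect to $\mathcal{F}$, regard $\mathcal{H}$ as a perfect ORKHS with $\mathcal{Y}=\mathbb{C}$, and apply Theorem \ref{L_RKHS_Thm} to obtain $W$, $\Phi$, and $\Psi$ with $K(\alpha)=\Phi(\cdot)^{*}\Psi(\alpha)$. Evaluating at a fixed vector in $\mathbb{C}$ and using the above identifications produces the formula
\begin{equation*}
K(\alpha)(x)=\Phi(x)^{*}\Psi(\alpha)(1)=\langle\widetilde\Psi(\alpha),\widetilde\Phi(x)\rangle_{W},\qquad \alpha\in\Lambda,\ x\in\mathcal{X},
\end{equation*}
which is exactly \eqref{FRKHS_kernel-perfectRKHS} after dropping the tildes. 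For the converse, given $W$, $\Phi:\mathcal{X}\to W$ and $\Psi:\Lambda\to W$ satisfying the stated density conditions and \eqref{FRKHS_kernel-perfectRKHS}, I would define $\Phi_0:\mathcal{X}\to\mathcal{B}(\mathbb{C},W)$ and $\Psi_0:\Lambda\to\mathcal{B}(\mathbb{C},W)$ by $\Phi_0(x)c:=c\Phi(x)$ and $\Psi_0(\alpha)c:=c\Psi(\alpha)$, verify that $\Phi_0,\Psi_0$ satisfy the density conditions of Theorem \ref{L_RKHS_Thm}, and then apply that theorem to conclude that $K$ is a functional reproducing kernel for a perfect FRKHS.

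The description of the space and its inner product comes by the same identification: the set $\{\Phi(\cdot)^{*}u:u\in W\}$ in Theorem \ref{L_RKHS_Thm} becomes $\{\langle u,\Phi(\cdot)\rangle_{W}:u\in W\}$, and the inner product formula is transferred verbatim since it only depends on $u,v\in W$. I expect no substantive obstacle; the only delicate point is keeping the identification $\mathcal{B}(\mathbb{C},W)\cong W$ and the antilinear character of Riesz representation consistent with the ordering of arguments in the inner product $\langle\Psi(\alpha),\Phi(\cdot)\rangle_{W}$ used in \eqref{FRKHS_kernel-perfectRKHS}. Apart from that bookkeeping, the corollary is essentially the scalar shadow of Theorem \ref{L_RKHS_Thm} and no new idea is required.
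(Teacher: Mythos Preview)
Your proposal is correct and is precisely the approach the paper intends: the corollary is stated immediately after Theorem \ref{L_RKHS_Thm} with no separate proof, being simply its specialization to the scalar case $\mathcal{Y}=\mathbb{C}$ via the identification $\mathcal{B}(\mathbb{C},W)\cong W$. Your careful bookkeeping of the Riesz identification and the ordering of inner-product arguments is exactly the only detail that needs checking, and you handle it correctly.
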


Associated with an operator reproducing kernel in the form (\ref{kernel-perfectRKHS}) for a perfect ORKHS, there are two reproducing kernels
$$
\mathcal{K}(x,y)=\Phi(y)^{*}\Phi(x),\ x,y\in\mathcal{X}
\ \ \mbox{and}\ \ \widetilde{\mathcal{K}}(\alpha,\beta)
=\Psi(\beta)^{*}\Psi(\alpha),\ \alpha,\beta\in\Lambda.
$$
As a direct consequence of Theorems \ref{isometric-isomorphism1} and \ref{L_RKHS_Thm}, there exists an isometric isomorphism between the RKHSs of $\mathcal{K}$ and $\widetilde{\mathcal{K}}$. We present this result below.

\begin{thm}\label{K_L_K}
Suppose that $W$ is a Hilbert space and two mappings $\Phi:\mathcal{X}\rightarrow \mathcal{B}(\mathcal{Y},W)$
and $\Psi:\Lambda\rightarrow \mathcal{B}(\mathcal{Y},W)$ satisfy the density conditions \eqref{density_phi} and \eqref{density_psi}, respectively.
If the mapping $K$ has the form \eqref{kernel-perfectRKHS} and two mappings $\mathcal{K}$ and $\widetilde{\mathcal{K}}$ are defined as above, then $K$ and $\mathcal{K}$ are the operator reproducing kernel and the reproducing kernel, respectively, for a perfect ORKHS $\mathcal{H}$ and $\widetilde{\mathcal{K}}$ is a reproducing kernel for an RKHS $\mathcal{\widetilde{H}}$. Moreover, there is an isometric isomorphism between $\mathcal{H}$ and $\mathcal{\widetilde{H}}$.
\end{thm}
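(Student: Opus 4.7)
The plan is to reduce everything to Theorem~\ref{L_RKHS_Thm} and Theorem~\ref{isometric-isomorphism1} (via Proposition~\ref{isometric-isomorphism}), both of which are already in hand. First I would apply the converse direction of Theorem~\ref{L_RKHS_Thm} to the pair $(\Phi,\Psi)$. This immediately produces a perfect ORKHS $\mathcal{H}=\{\Phi(\cdot)^{*}u:u\in W\}$, endowed with the inner product $\langle\Phi(\cdot)^{*}u,\Phi(\cdot)^{*}v\rangle_{\mathcal{H}}=\langle u,v\rangle_{W}$, for which $K=\Phi(\cdot)^{*}\Psi(\cdot)$ is the operator reproducing kernel with respect to the family $\mathcal{L}=\{L_{\alpha}:\alpha\in\Lambda\}$ of continuous linear operators determined there by $L_{\alpha}(\Phi(\cdot)^{*}u):=\Psi(\alpha)^{*}u$. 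Because $\mathcal{H}$ is perfect it is in particular a classical RKHS of $\mathcal{Y}$-valued functions, and a one-line check using $\langle f(x),\xi\rangle_{\mathcal{Y}}=\langle u,\Phi(x)\xi\rangle_{W}=\langle \Phi(\cdot)^{*}u,\Phi(\cdot)^{*}\Phi(x)\xi\rangle_{\mathcal{H}}$ for $f=\Phi(\cdot)^{*}u$ identifies $\mathcal{K}(x,y)=\Phi(y)^{*}\Phi(x)$ as its scalar reproducing kernel.

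Next I would identify $\widetilde{\mathcal{K}}$ with the reproducing kernel on $\Lambda$ built in Proposition~\ref{isometric-isomorphism} from $K$ and $\mathcal{L}$. Using the formula $L_{\beta}(\Phi(\cdot)^{*}u)=\Psi(\beta)^{*}u$ from the first step, a direct computation gives
$$
L_{\beta}(K(\alpha)\xi)=L_{\beta}\bigl(\Phi(\cdot)^{*}\Psi(\alpha)\xi\bigr)=\Psi(\beta)^{*}\Psi(\alpha)\xi=\widetilde{\mathcal{K}}(\alpha,\beta)\xi,
$$
so $\widetilde{\mathcal{K}}$ is exactly the kernel defined in \eqref{kernel_representation}. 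Proposition~\ref{isometric-isomorphism} then gives that $\widetilde{\mathcal{K}}$ is a reproducing kernel on $\Lambda$, and furnishes an explicit realization of its vector-valued RKHS, namely $\widetilde{\mathcal{H}}=\{\widetilde{f}:\widetilde{f}(\alpha)=L_{\alpha}(f),\ f\in\mathcal{H}\}$ with inner product $\langle \widetilde{f},\widetilde{g}\rangle_{\widetilde{\mathcal{H}}}=\langle f,g\rangle_{\mathcal{H}}$.

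Finally I would invoke Theorem~\ref{isometric-isomorphism1}, whose conclusion is verbatim that the map $\mathcal{T}:\mathcal{H}\to\widetilde{\mathcal{H}}$ sending $f$ to $\alpha\mapsto L_{\alpha}(f)$ is an isometric isomorphism, which is the assertion of the theorem. As a sanity check independent of Theorem~\ref{isometric-isomorphism1}, one may see the isomorphism more concretely: the density conditions \eqref{density_phi} and \eqref{density_psi} make $u\mapsto\Phi(\cdot)^{*}u$ and $u\mapsto\Psi(\cdot)^{*}u$ isometric bijections from $W$ onto $\mathcal{H}$ and onto the RKHS of $\widetilde{\mathcal{K}}$, respectively, and their composition realizes the same $\mathcal{T}$. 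I do not expect any serious obstacle here; the only step that is not completely formal is the kernel bookkeeping $L_{\beta}(K(\alpha)\xi)=\Psi(\beta)^{*}\Psi(\alpha)\xi$ needed to match $\widetilde{\mathcal{K}}$ with the abstract kernel of Proposition~\ref{isometric-isomorphism}, and that reduces to the single display above.
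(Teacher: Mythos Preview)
Your proposal is correct and follows essentially the same route as the paper: apply Theorem~\ref{L_RKHS_Thm} to obtain the perfect ORKHS $\mathcal{H}$ with $L_{\alpha}(\Phi(\cdot)^{*}u)=\Psi(\alpha)^{*}u$, verify the key identity $L_{\beta}(K(\alpha)\xi)=\Psi(\beta)^{*}\Psi(\alpha)\xi=\widetilde{\mathcal{K}}(\alpha,\beta)\xi$, and then invoke Theorem~\ref{isometric-isomorphism1} for the isometric isomorphism. The paper's proof is slightly terser (it cites the feature-map representation of $\widetilde{\mathcal{K}}$ directly rather than going through Proposition~\ref{isometric-isomorphism}), but the substance is identical.
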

\begin{proof}
It is known from Theorem \ref{L_RKHS_Thm} that $K$
is an operator reproducing kernel for a perfect ORKHS $\mathcal{H}$ with respect $\mathcal{L}=\{L_{\alpha}:\ \alpha\in\Lambda\}$
defined by
$$
L_{\alpha}(\Phi(\cdot)^{*}u):=\Psi(\alpha)^{*}u,\ u\in W.
$$
Moreover, $\mathcal{K}$ is the corresponding reproducing kernel for $\mathcal{H}$ with respect to the family of the  point-evaluation operators. The feature map representation of $\widetilde{\mathcal{K}}$ leads directly to that  $\widetilde{\mathcal{K}}$ is a reproducing kernel on $\Lambda$ for the RKHS  $\mathcal{\widetilde{H}}:=\{\Psi(\cdot)^{*}u:u\in W\}$.
By definitions of $K, \widetilde{\mathcal{K}}$ and $L_{\beta}$, we get for all $\alpha,\beta\in\Lambda$ and all $\xi\in\mathcal{Y}$  that
$$
\widetilde{\mathcal{K}}(\alpha,\beta)\xi
=\Psi(\beta)^{*}\Psi(\alpha)\xi
=L_{\beta}(\Phi(\cdot)^{*}\Psi(\alpha)\xi)
=L_{\beta}(K(\alpha)\xi).
$$
We then conclude from Theorem \ref{isometric-isomorphism1} that there is an isometric isomorphism between $\mathcal{H}$ and $\mathcal{\widetilde{H}}$.
\end{proof}

To close this section, we consider universality of an operator reproducing kernel for a perfect ORKHS.
Universality of classical reproducing kernels has attracted much attention in theory of machine learning \cite{CMPY,MXZ,PMRRV,St}. This important property ensures that a continuous target function can be uniformly approximated on a compact subset of the input space by the linear span of kernel sections. It is crucial for the consistence of the kernel-based learning algorithms, which generally use the representer theorem to learn a target function in the linear span of kernel sections. For learning a function from its operator values, we also have similar representer theorems, which will be presented in section 7. Below, we discuss universality of an operator reproducing kernel for a perfect ORKHS.

Let Hilbert space $\mathcal{H}$ of $\mathcal{Y}$-valued functions on a metric space $\mathcal{X}$ is a perfect ORKHS with respect to a family $\mathcal{L}$ of linear operators from $\mathcal{H}$ to $\mathcal{Y}$. Suppose that the operator reproducing kernel $K$ for $\mathcal{H}$ satisfies that for each $\alpha\in\Lambda$ and each $\xi\in\mathcal{Y}$, $K(\alpha)\xi$ is continuous on $\mathcal{X}$. We denote by $C(\mathcal{Z},\mathcal{Y})$ the Banach space of continuous $\mathcal{Y}$-valued functions on compact subset $\mathcal{Z}$ of $\mathcal{X}$ with the norm defined by $\|f\|_{C(\mathcal{Z},\mathcal{Y})}
:=\sup_{x\in\mathcal{Z}}\|f(x)\|_{\mathcal{Y}}$.
We say that $K$ is universal if for each compact
subset $\mathcal{Z}$ of $\mathcal{X}$,
\begin{equation*}\label{universal}
\overline{\span}\{K(\alpha)\xi:
\alpha\in\Lambda,\ \xi\in\mathcal{Y}\}
=C(\mathcal{Z},\mathcal{Y}),
\end{equation*}
where for each $\alpha\in\Lambda$ and each $\xi\in\mathcal{Y}$, $K(\alpha)\xi$ is taken as a $\mathcal{Y}$-valued function on $\mathcal{Z}$ and the closure is taken in the maximum norm. To characterize universality of the operator reproducing kernel having
the form (\ref{kernel-perfectRKHS}), we first recall a convergence property of functions in an RKHS. That is,
if a sequence $f_n$ converges to $f$ in an RKHS, then $f_n$, as a sequence of functions on $\mathcal{X}$, converges pointwisely to $f$. As a direct consequence,
we have the following result concerning the density in $\mathcal{H}$.
\begin{lemma}\label{density-H-C}
Let $\mathcal{H}$ be an RKHS of continuous $\mathcal{Y}$-valued functions on a metric space $\mathcal{X}$ and $\mathcal{M}$ a subset of $\mathcal{H}$. If $\mathcal{M}$ is dense in $\mathcal{H}$, then for any compact subset $\mathcal{Z}$ of $\mathcal{X}$ there holds $\overline{\mathcal{M}}=\overline{\mathcal{H}}$, where the closure is taken in the maximum norm of $C(\mathcal{Z},\mathcal{Y})$.
\end{lemma}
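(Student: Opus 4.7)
The plan is to reduce the claim to the single analytic fact that norm-convergence in an RKHS of continuous vector-valued functions implies uniform convergence on every compact subset of $\mathcal{X}$. Once this is proved, the lemma follows by a short diagonal-style argument: the inclusion $\overline{\mathcal{M}}\subseteq \overline{\mathcal{H}}$ in $C(\mathcal{Z},\mathcal{Y})$ is automatic from $\mathcal{M}\subseteq\mathcal{H}$, and for the reverse inclusion I would take an arbitrary element of $\overline{\mathcal{H}}$, approximate it uniformly on $\mathcal{Z}$ by some $g_n\in\mathcal{H}$, then use density of $\mathcal{M}$ in $\mathcal{H}$ to pick $h_n\in \mathcal{M}$ with $\|h_n-g_n\|_\mathcal{H}<1/n$, and finally invoke the analytic fact to convert the $\mathcal{H}$-norm smallness into $C(\mathcal{Z},\mathcal{Y})$-norm smallness.

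The core step is therefore: given a compact $\mathcal{Z}\subseteq\mathcal{X}$, exhibit a constant $C_\mathcal{Z}$ such that
\[
\sup_{x\in\mathcal{Z}}\|f(x)\|_{\mathcal{Y}}\le C_\mathcal{Z}\|f\|_{\mathcal{H}},\qquad f\in\mathcal{H}.
\]
To get this I would use the reproducing property of the vector-valued RKHS $\mathcal{H}$ with reproducing kernel $\mathcal{K}$. For each $x\in\mathcal{X}$ the point-evaluation operator $L_x:\mathcal{H}\to\mathcal{Y}$ is bounded, since for any $\xi\in\mathcal{Y}$,
\[
|\langle f(x),\xi\rangle_{\mathcal{Y}}|=|\langle f,\mathcal{K}(x,\cdot)\xi\rangle_\mathcal{H}|\le\|f\|_\mathcal{H}\,\langle\mathcal{K}(x,x)\xi,\xi\rangle_{\mathcal{Y}}^{1/2},
\]
so $\|L_x\|_{\mathcal{B}(\mathcal{H},\mathcal{Y})}\le\|\mathcal{K}(x,x)\|_{\mathcal{B}(\mathcal{Y},\mathcal{Y})}^{1/2}$. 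Now the hypothesis that $\mathcal{H}$ consists of continuous functions means that for each fixed $f\in\mathcal{H}$ the map $x\mapsto \|L_x f\|_\mathcal{Y}=\|f(x)\|_\mathcal{Y}$ is continuous, hence bounded on the compact set $\mathcal{Z}$. The family $\{L_x:x\in\mathcal{Z}\}\subset\mathcal{B}(\mathcal{H},\mathcal{Y})$ is therefore pointwise bounded, and the Banach--Steinhaus theorem yields the desired uniform bound $C_\mathcal{Z}:=\sup_{x\in\mathcal{Z}}\|L_x\|_{\mathcal{B}(\mathcal{H},\mathcal{Y})}<\infty$.

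With $C_\mathcal{Z}$ in hand, everything clicks together: applying it to $h_n-g_n$ gives $\|h_n-g_n\|_{C(\mathcal{Z},\mathcal{Y})}\le C_\mathcal{Z}/n\to 0$, so combined with the uniform convergence of $g_n$ to the target element of $\overline{\mathcal{H}}$, the triangle inequality produces $h_n\in\mathcal{M}$ converging in sup-norm on $\mathcal{Z}$, proving $\overline{\mathcal{H}}\subseteq\overline{\mathcal{M}}$.

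The main obstacle, in my view, is really just justifying the uniform bound $C_\mathcal{Z}$: one might be tempted to argue directly that $x\mapsto\|\mathcal{K}(x,x)\|$ is continuous and hence bounded on $\mathcal{Z}$, but continuity of this map is not explicitly assumed, and in the operator-valued setting continuity of the sections $\mathcal{K}(x,\cdot)\xi$ does not trivially imply continuity of $x\mapsto\|\mathcal{K}(x,x)\|^{1/2}$ as an operator norm. The Banach--Steinhaus route sidesteps this issue by trading pointwise information (which we do have, via continuity of each $f\in\mathcal{H}$) for uniform information, and that is why I would take that route rather than a direct estimate.
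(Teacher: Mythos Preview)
Your proof is correct and shares the paper's core idea: norm-convergence in the RKHS $\mathcal{H}$ implies uniform convergence on compacta, and this forces $\mathcal{M}$ and $\mathcal{H}$ to have the same $C(\mathcal{Z},\mathcal{Y})$-closure. Two points of comparison are worth noting.

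First, the paper simply \emph{asserts} the uniform-convergence-on-compacts property (it recalls only pointwise convergence before the lemma, then invokes uniform convergence in the proof without justification). Your Banach--Steinhaus argument actually closes this gap, and your observation that one cannot simply cite continuity of $x\mapsto\|\mathcal{K}(x,x)\|^{1/2}$ in the operator-valued setting is well taken; the uniform boundedness route is the clean way to get $C_{\mathcal{Z}}$ from the hypotheses as stated.

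Second, your two-step diagonal approximation (pick $g_n\in\mathcal{H}$ close in sup-norm, then $h_n\in\mathcal{M}$ close to $g_n$ in $\mathcal{H}$-norm) is slightly more elaborate than necessary. The paper argues more directly: it suffices to show $\mathcal{H}\subseteq\overline{\mathcal{M}}$ in $C(\mathcal{Z},\mathcal{Y})$, since then $\overline{\mathcal{H}}\subseteq\overline{\overline{\mathcal{M}}}=\overline{\mathcal{M}}$. For any $f\in\mathcal{H}$, density of $\mathcal{M}$ in the $\mathcal{H}$-norm gives $f_n\in\mathcal{M}$ with $\|f_n-f\|_{\mathcal{H}}\to 0$, and your bound $\|\cdot\|_{C(\mathcal{Z},\mathcal{Y})}\le C_{\mathcal{Z}}\|\cdot\|_{\mathcal{H}}$ finishes it in one step. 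This avoids introducing the intermediate sequence $g_n$ altogether.
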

\begin{proof}
It suffices to prove that for any compact subset $\mathcal{Z}$ there holds $\mathcal{H}\subseteq \overline{\mathcal{M}},$ where the closure is taken
in the maximum norm of $C(\mathcal{Z},\mathcal{Y})$.
Let $f$ be any element in $\mathcal{H}$. By the density
of $\mathcal{M}$ in Hilbert space $\mathcal{H}$, there exists a sequence $f_n$ converging to $f$. Due to the convergence property in RKHSs, we have for any compact subset $\mathcal{Z}$ of $\mathcal{X}$ that $f_n$
converges to $f$ in $C(\mathcal{Z},\mathcal{Y})$. According to the arbitrary of $f$ in $\mathcal{H}$,
we get $\mathcal{H}\subseteq \overline{\mathcal{M}}.$
\end{proof}
By the above lemma, we give a characterization of universality of the operator reproducing kernel with
the form (\ref{kernel-perfectRKHS}) as follows.
\begin{thm}\label{universal-characterize}
Let $\mathcal{X}$ be a metric space, $\Lambda$ a set and $W,\mathcal{Y}$ both Hilbert spaces. Suppose that $\Phi:\mathcal{X}\rightarrow \mathcal{B}(\mathcal{Y},W)$ is continuous and satisfies the density condition \eqref{density_phi} and $\Psi:\Lambda\rightarrow \mathcal{B}(\mathcal{Y},W)$ satisfies the density condition \eqref{density_psi}. Then the operator reproducing kernel $K$ defined by \eqref{kernel-perfectRKHS} is universal if and
only if there holds for any compact subset
$\mathcal{Z}$ of $\mathcal{X}$,
\begin{equation}\label{universal1}
\overline{\span}\{\Phi(\cdot)^{*}u:
u\in W\}=C(\mathcal{Z},\mathcal{Y}).
\end{equation}
\end{thm}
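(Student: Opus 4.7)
The plan is to sandwich the span $\overline{\operatorname{span}}\{K(\alpha)\xi\}$ between the span $\overline{\operatorname{span}}\{\Phi(\cdot)^{*}u\}$ by invoking the perfect ORKHS structure from Theorem \ref{L_RKHS_Thm} together with Lemma \ref{density-H-C}, so that both closures (in the maximum norm of $C(\mathcal{Z},\mathcal{Y})$) collapse to the same object.

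First, I would verify that every element of $\mathcal{H}:=\{\Phi(\cdot)^{*}u:u\in W\}$ is a continuous $\mathcal{Y}$-valued function on $\mathcal{X}$. Since $\Phi:\mathcal{X}\to\mathcal{B}(\mathcal{Y},W)$ is continuous and adjunction is an isometry on $\mathcal{B}$, if $x_n\to x$ then $\Phi(x_n)^{*}u\to\Phi(x)^{*}u$ in $\mathcal{Y}$ for every $u\in W$. By Theorem \ref{L_RKHS_Thm}, $K$ is the operator reproducing kernel of the perfect ORKHS $\mathcal{H}$, and by Theorem \ref{property}(2) the set $\mathcal{S}_{K}:=\operatorname{span}\{K(\alpha)\xi:\alpha\in\Lambda,\xi\in\mathcal{Y}\}$ is dense in $\mathcal{H}$ with respect to the $\mathcal{H}$-norm.

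Next I would apply Lemma \ref{density-H-C} with $\mathcal{M}=\mathcal{S}_{K}$. This yields, for every compact subset $\mathcal{Z}\subseteq\mathcal{X}$, the identity
\[
\overline{\mathcal{S}_{K}}=\overline{\mathcal{H}} \quad \text{in } C(\mathcal{Z},\mathcal{Y}),
\]
where both closures are taken in the maximum norm. This is the bridge between Hilbert-space density and uniform density on compacta, and is really the only nontrivial step; it is precisely what the preceding lemma is designed to handle, so no obstacle arises.

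The equivalence then follows by a direct sandwich argument. Each element of $\mathcal{S}_{K}$ has the form $K(\alpha)\xi=\Phi(\cdot)^{*}(\Psi(\alpha)\xi)$ with $\Psi(\alpha)\xi\in W$, so $\mathcal{S}_{K}\subseteq\mathcal{H}$ and hence $\overline{\mathcal{S}_{K}}\subseteq\overline{\mathcal{H}}$ in $C(\mathcal{Z},\mathcal{Y})$. Combined with the reverse inclusion just established, we obtain $\overline{\mathcal{S}_{K}}=\overline{\mathcal{H}}$ in $C(\mathcal{Z},\mathcal{Y})$. Therefore $\overline{\mathcal{S}_{K}}=C(\mathcal{Z},\mathcal{Y})$ for every compact $\mathcal{Z}$ (universality of $K$) if and only if $\overline{\mathcal{H}}=C(\mathcal{Z},\mathcal{Y})$ for every compact $\mathcal{Z}$, which is exactly condition \eqref{universal1}. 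The density hypothesis \eqref{density_psi} on $\Psi$ is used implicitly through Theorem \ref{L_RKHS_Thm} to guarantee that $\mathcal{S}_{K}$ is indeed dense in the perfect ORKHS $\mathcal{H}$, closing the argument.
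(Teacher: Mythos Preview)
Your proof is correct and follows essentially the same route as the paper: identify $\mathcal{H}=\{\Phi(\cdot)^{*}u:u\in W\}$ as the perfect ORKHS via Theorem~\ref{L_RKHS_Thm}, note that continuity of $\Phi$ makes every element of $\mathcal{H}$ continuous, and then apply Lemma~\ref{density-H-C} to the $\mathcal{H}$-dense subset $\mathcal{S}_K$ to conclude $\overline{\mathcal{S}_K}=\overline{\mathcal{H}}$ in $C(\mathcal{Z},\mathcal{Y})$. The only cosmetic difference is that the paper invokes the density condition~\eqref{density_psi} directly to get $\mathcal{S}_K$ dense in $\mathcal{H}$, whereas you cite Theorem~\ref{property}(2); these are equivalent, and your ``sandwich'' remark is harmless but unnecessary since Lemma~\ref{density-H-C} already delivers equality outright.
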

\begin{proof}
It follows from Theorem \ref{L_RKHS_Thm} that the perfect ORKHS of the operator reproducing kernel $K$ defined by   \eqref{kernel-perfectRKHS} has the form $\mathcal{H}:=\{\Phi(\cdot)^{*}u:u\in W\}$ with the inner product $\langle \Phi(\cdot)^{*}u,\Phi(\cdot)^{*}v
\rangle_{\mathcal{H}}:=\langle u,v\rangle_W.$ Since $\Phi$ is continuous, the functions in $\mathcal{H}$ are all continuous. Let $\mathcal{M}$ denote the linear span $\span\{\Phi(\cdot)^{*}\Psi(\alpha)\xi:\alpha\in \Lambda,
\xi\in\mathcal{Y}\}$. By the density condition \eqref{density_psi}, we have that $\mathcal{M}$
is dense in $\mathcal{H}$. Then according to Lemma
\ref{density-H-C}, we get for any compact subset $\mathcal{Z}$ of $\mathcal{X}$ that
\begin{equation*}
\overline{\span}\{\Phi(\cdot)^{*}\Psi(\alpha)\xi:
\xi\in\mathcal{Y}\}=\overline{\span}\{\Phi(\cdot)^{*}u: u\in W\},
\end{equation*}
where the closure is taken in the maximum norm of $C(\mathcal{Z},\mathcal{Y})$. The above equation together with \eqref{kernel-perfectRKHS} leads to
\begin{equation*}
\overline{\span}\{K(\alpha)\xi: \xi\in\mathcal{Y}\}
=\overline{\span}\{\Phi(\cdot)^{*}u: u\in W\},
\end{equation*}
which completes the proof.
\end{proof}

Besides the operator reproducing kernel $K$, the
perfect ORKHS also enjoys a classical reproducing
kernel on $\mathcal{X}$ with the form $\mathcal{K}(x,y)=\Phi(y)^{*}\Phi(x)$, $x,y\in \mathcal{X}$. The characterization of universality
of classical reproducing kernels in \cite{CMPY,MXZ}
shows that $\mathcal{K}$ is universal if and only if (\ref{universal1}) holds for any compact subset $\mathcal{Z}$ of $\mathcal{X}$. Hence, Theorem  \ref{universal-characterize} tells us that for a
perfect ORKHS the universality of its two kernels
is identical.

\section{Perfect ORKHSs with respect to Integral Operators}

Local average values of a function are often used as observed information in data analysis. For this reason,
we investigate in this section the perfect ORKHS with respect to a family of integral operators.

We first introduce the integral operators to be considered in this section. Let $(\mathcal{X},\mu)$ be a measure space and $\mathcal{Y}$ a Hilbert space. Suppose that $\Lambda$ is an index set and $\{\lambda_{\alpha}:
\ \alpha\in \Lambda\}$ is a family of mappings from $\mathcal{X}$ to $\mathcal{B}(\mathcal{Y},\mathcal{Y})$. Associated with these mappings, we define the family $\mathcal{L}:=\{L_{\alpha}:\alpha\in\Lambda\}$ of
integral operators for a function
$f$ from $\mathcal{X}$ to $\mathcal{Y}$ by
\begin{equation}\label{integral-operator}
L_{\alpha}(f):=\int_{\mathcal{X}}\lambda_{\alpha}(x)f(x)
d\mu(x),\ \alpha\in\Lambda.
\end{equation}

We now describe a construction of perfect ORKHSs with respect to $\mathcal{L}$ defined by \eqref{integral-operator} following Theorem \ref{L_RKHS_Thm}. To this end, we suppose that $W$ is a Hilbert space and a mapping $\Phi:\mathcal{X}\rightarrow \mathcal{B}(\mathcal{Y},W)$ satisfies the density condition \eqref{density_phi}. The perfect ORKHSs will
be constructed through a subspace of $W$. For each $\alpha\in\Lambda$ and each $\xi\in\mathcal{Y}$, we introduce a function from $\mathcal{X}$ to $W$ by letting
\begin{equation}\label{psi}
\psi_{\alpha,\xi}(x):=\Phi(x)\lambda_{\alpha}(x)^{*}\xi,
\ x\in\mathcal{X}.
\end{equation}
If the functions $\psi_{\alpha,\xi}$, $\alpha\in\Lambda$, $\xi\in\mathcal{Y}$, are all Bochner integrable, then we define a closed subspace of $W$ by
\begin{equation}\label{W}
\widetilde{W}:=\overline{\span}\left\{\int_{\mathcal{X}}
\psi_{\alpha,\xi}(x)d\mu(x):
\ \alpha\in\Lambda,\xi\in\mathcal{Y}\right\}.
\end{equation}
We let $\mathcal{P}_{\widetilde{W}}$ denote the orthogonal projection from $W$ to $\widetilde{W}$. We then define the mapping $\widetilde{\Phi}$ by
$$
\widetilde{\Phi}(x):=\mathcal{P}_{\widetilde{W}}\Phi(x),
\ \ x\in \mathcal{X}.
$$
The next lemma establishes the density condition involving the mapping $\widetilde{\Phi}:\mathcal{X}\rightarrow \mathcal{B}(\mathcal{Y},\widetilde{W})$ in the Hilbert space $\widetilde{W}$.

\begin{lemma}\label{Projection}
Let $W$ be a Hilbert space and $\Phi:\mathcal{X}\rightarrow\mathcal{B}(\mathcal{Y},W)$
a mapping satisfying the density condition \eqref{density_phi}. If $\psi_{\alpha,\xi}$, $\alpha\in\Lambda$, $\xi\in\mathcal{Y}$, defined
by \eqref{psi}, are all Bochner integrable and $\widetilde{W}$ and $\widetilde{\Phi}$ are defined
as above, then there holds the density condition
\begin{equation}\label{density_phi1}
\overline{\span}\{\widetilde{\Phi}(x)\xi:
x\in \mathcal{X},\xi\in\mathcal{Y}\}=\widetilde{W}.
\end{equation}
\end{lemma}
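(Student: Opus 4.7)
The plan is to prove the density condition \eqref{density_phi1} by a standard orthogonal-complement argument inside the Hilbert space $\widetilde{W}$. That is, I would show that if $w\in\widetilde{W}$ is orthogonal to every element of the form $\widetilde{\Phi}(x)\xi$, then $w=0$. Once that is established, the usual fact that a closed subspace whose orthogonal complement (in the ambient Hilbert space $\widetilde{W}$) is trivial must coincide with $\widetilde{W}$ gives the claim. Note that each $\widetilde{\Phi}(x)\xi=\mathcal{P}_{\widetilde{W}}\Phi(x)\xi$ already lies in $\widetilde{W}$, so the span on the left of \eqref{density_phi1} is automatically contained in $\widetilde{W}$, and only the reverse inclusion requires work.

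The key step is to move the projection $\mathcal{P}_{\widetilde{W}}$ across the inner product. Concretely, suppose $w\in\widetilde{W}$ satisfies
\[
\langle w,\widetilde{\Phi}(x)\xi\rangle_W=0,
\quad\text{for all}\ x\in\mathcal{X},\ \xi\in\mathcal{Y}.
\]
Since $\mathcal{P}_{\widetilde{W}}$ is an orthogonal projection, it is self-adjoint on $W$, so
\[
\langle w,\mathcal{P}_{\widetilde{W}}\Phi(x)\xi\rangle_W
=\langle \mathcal{P}_{\widetilde{W}}w,\Phi(x)\xi\rangle_W.
\]
Because $w\in\widetilde{W}$, we have $\mathcal{P}_{\widetilde{W}}w=w$, and hence $\langle w,\Phi(x)\xi\rangle_W=0$ for every $x\in\mathcal{X}$ and every $\xi\in\mathcal{Y}$. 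Invoking the hypothesized density condition \eqref{density_phi} on $\Phi$ in the ambient space $W$, this forces $w=0$.

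To finish, observe that the set $V:=\overline{\span}\{\widetilde{\Phi}(x)\xi:x\in\mathcal{X},\xi\in\mathcal{Y}\}$ is a closed subspace of $\widetilde{W}$, and the computation above shows that its orthogonal complement in $\widetilde{W}$ is trivial; therefore $V=\widetilde{W}$, which is exactly \eqref{density_phi1}. I do not expect any real obstacle: the hypotheses of Bochner integrability of the $\psi_{\alpha,\xi}$ and the explicit form of $\widetilde{W}$ in \eqref{W} serve only to make $\widetilde{W}$ and $\widetilde{\Phi}$ well-defined, and are not actually needed for the density argument itself — what does all the work is the combination of self-adjointness of the orthogonal projection with the original density condition on $\Phi$.
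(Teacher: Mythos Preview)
Your proof is correct and follows essentially the same approach as the paper: both arguments use the self-adjointness of the orthogonal projection $\mathcal{P}_{\widetilde{W}}$ together with $\mathcal{P}_{\widetilde{W}}u=u$ for $u\in\widetilde{W}$ to obtain $\langle\widetilde{\Phi}(x)\xi,u\rangle_{\widetilde{W}}=\langle\Phi(x)\xi,u\rangle_{W}$, and then conclude via the density condition \eqref{density_phi}. Your observation that the Bochner integrability hypothesis is only needed to make $\widetilde{W}$ well-defined, and not for the density argument itself, is also correct.
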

\begin{proof}
By the definition of $\widetilde{\Phi}$, we have for each $u\in \widetilde{W}$ that
$$
\langle\widetilde{\Phi}(x)\xi,u\rangle_{\widetilde{W}}
=\langle\Phi(x)\xi,u\rangle_{W},
\ x\in \mathcal{X},\xi\in\mathcal{Y}.
$$
This together with the density condition \eqref{density_phi} of $\Phi$ leads to (\ref{density_phi1}).
\end{proof}

We describe in the next theorem the construction of
the perfect ORKHS with respect to $\mathcal{L}$.

\begin{thm}\label{integral-perfectORKHS}
Let $W$ be a Hilbert space and $\Phi:\mathcal{X}\rightarrow\mathcal{B}(\mathcal{Y},W)$
a mapping satisfying the density condition \eqref{density_phi}. If a collection of mappings $\lambda_{\alpha}:\mathcal{X}\to
\mathcal{B}(\mathcal{Y},\mathcal{Y})$,
 $\alpha\in \Lambda$ satisfies
\begin{equation}\label{condition}
\int_{\mathcal{X}}\|\Phi(x)\|_{\mathcal{B}(\mathcal{Y},W)}
\|\lambda_{\alpha}(x)\|_{\mathcal{B}(\mathcal{Y},\mathcal{Y})}
d\mu(x)<\infty,\ \  \mbox{for all}\ \ \alpha\in\Lambda,
\end{equation}
then the space
\begin{equation}\label{ORKHS-integral-operator}
\widetilde{\mathcal{H}}:=\left\{\Phi(\cdot)^{*}u:
u\in\widetilde{W}\right\}
\end{equation}
with the inner product
\begin{equation*}\label{inner-product-integral-operator}
\langle\Phi(\cdot)^{*}u, \Phi(\cdot)^{*}v
\rangle_{\widetilde{\mathcal{H}}}
:=\langle u,v\rangle_{\widetilde{W}},
\end{equation*}
is a perfect ORKHS with respect to
$\mathcal{L}:=\{L_{\alpha}:\alpha\in\Lambda\}$
defined as in \eqref{integral-operator} and its
operator reproducing kernel $K$ has the form
\begin{equation}\label{L_reproducing_kernel_integral}
K(\alpha)\xi=\Phi(\cdot)^{*}\int_{\mathcal{X}}
\psi_{\alpha,\xi}(x)d\mu(x),\ \alpha\in\Lambda,
\ \xi\in \mathcal{Y}.
\end{equation}
\end{thm}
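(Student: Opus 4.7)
The plan is to reduce the statement to Theorem \ref{L_RKHS_Thm} by constructing the right feature data in the subspace $\widetilde{W}$ of $W$, and then to verify that the family of continuous linear operators produced by that theorem coincides with the integral operators $\mathcal{L}$.

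First I would check that hypothesis \eqref{condition} guarantees that $\psi_{\alpha,\xi}$ is Bochner integrable for every $\alpha\in\Lambda$ and $\xi\in\mathcal{Y}$, so that the subspace $\widetilde{W}$ defined in \eqref{W} makes sense. Indeed
$$
\int_{\mathcal{X}}\|\psi_{\alpha,\xi}(x)\|_W\,d\mu(x)
\le\|\xi\|_{\mathcal{Y}}\int_{\mathcal{X}}\|\Phi(x)\|_{\mathcal{B}(\mathcal{Y},W)}\|\lambda_{\alpha}(x)\|_{\mathcal{B}(\mathcal{Y},\mathcal{Y})}\,d\mu(x)<\infty.
$$
This also yields that the assignment $\widetilde{\Psi}(\alpha)\xi:=\int_{\mathcal{X}}\psi_{\alpha,\xi}(x)\,d\mu(x)$ is linear and bounded from $\mathcal{Y}$ to $\widetilde{W}$; the range lies in $\widetilde{W}$ by the very definition of $\widetilde{W}$, and its span is dense in $\widetilde{W}$, which will be the required density condition \eqref{density_psi} for $\widetilde{\Psi}$.

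Next, set $\widetilde{\Phi}(x):=\mathcal{P}_{\widetilde{W}}\Phi(x)$ as already introduced before Lemma \ref{Projection}. That lemma gives the density condition \eqref{density_phi} for $\widetilde{\Phi}$ inside $\widetilde{W}$. Because $\mathcal{P}_{\widetilde{W}}$ is self-adjoint and equals the identity on $\widetilde{W}$, one has $\widetilde{\Phi}(x)^{*}u=\Phi(x)^{*}\mathcal{P}_{\widetilde{W}}u=\Phi(x)^{*}u$ for every $u\in\widetilde{W}$, so the space $\widetilde{\mathcal{H}}$ in \eqref{ORKHS-integral-operator} coincides with $\{\widetilde{\Phi}(\cdot)^{*}u:u\in\widetilde{W}\}$ with the matching inner product. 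Applying Theorem \ref{L_RKHS_Thm} to the triple $(\widetilde{W},\widetilde{\Phi},\widetilde{\Psi})$ then shows that $\widetilde{\mathcal{H}}$ is a perfect ORKHS whose operator reproducing kernel is $K(\alpha)=\widetilde{\Phi}(\cdot)^{*}\widetilde{\Psi}(\alpha)=\Phi(\cdot)^{*}\widetilde{\Psi}(\alpha)$, which is exactly the formula \eqref{L_reproducing_kernel_integral}.

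It remains to identify the family of continuous linear operators produced by Theorem \ref{L_RKHS_Thm} with the prescribed integral operators $\mathcal{L}=\{L_{\alpha}:\alpha\in\Lambda\}$ from \eqref{integral-operator}; this is the main technical step. For $f=\Phi(\cdot)^{*}u\in\widetilde{\mathcal{H}}$ with $u\in\widetilde{W}$, the operator $\widetilde{L}_{\alpha}$ constructed in the proof of Theorem \ref{L_RKHS_Thm} acts by $\widetilde{L}_{\alpha}(f)=\widetilde{\Psi}(\alpha)^{*}u$. Pairing with an arbitrary $\xi\in\mathcal{Y}$ and using the defining property of the Bochner integral, one computes
$$
\langle\widetilde{L}_{\alpha}(f),\xi\rangle_{\mathcal{Y}}
=\langle u,\widetilde{\Psi}(\alpha)\xi\rangle_{\widetilde{W}}
=\int_{\mathcal{X}}\langle u,\Phi(x)\lambda_{\alpha}(x)^{*}\xi\rangle_{W}\,d\mu(x)
=\int_{\mathcal{X}}\langle\lambda_{\alpha}(x)f(x),\xi\rangle_{\mathcal{Y}}\,d\mu(x),
$$
which by the definition of the Bochner integral equals $\langle L_{\alpha}(f),\xi\rangle_{\mathcal{Y}}$. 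Since $\xi$ is arbitrary, $\widetilde{L}_{\alpha}=L_{\alpha}$, finishing the proof. The principal obstacle I anticipate is justifying the interchange of the Bochner integral with the inner product at the last step (and checking that $\lambda_{\alpha}(\cdot)f(\cdot)$ is itself Bochner integrable so that $L_{\alpha}(f)$ is defined); both hinge on hypothesis \eqref{condition} together with the bound $\|f(x)\|_{\mathcal{Y}}\le\|\Phi(x)\|_{\mathcal{B}(\mathcal{Y},W)}\|u\|_{W}$.
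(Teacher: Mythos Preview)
Your proposal is correct and follows essentially the same route as the paper: verify Bochner integrability of $\psi_{\alpha,\xi}$ from \eqref{condition}, define $\Psi(\alpha)\xi$ as the resulting integral, apply Lemma~\ref{Projection} and Theorem~\ref{L_RKHS_Thm} to the triple $(\widetilde{W},\widetilde{\Phi},\Psi)$, use $\widetilde{\Phi}(x)^{*}=\Phi(x)^{*}$ on $\widetilde{W}$ to recover \eqref{ORKHS-integral-operator} and \eqref{L_reproducing_kernel_integral}, and finally identify the abstract operators $\Psi(\alpha)^{*}u$ with the integral operators \eqref{integral-operator} via the same inner-product computation. Your closing remark about checking that $\lambda_{\alpha}(\cdot)f(\cdot)$ is Bochner integrable is a point the paper's proof leaves implicit, and your justification via $\|f(x)\|_{\mathcal{Y}}\le\|\Phi(x)\|_{\mathcal{B}(\mathcal{Y},W)}\|u\|_{W}$ together with \eqref{condition} is exactly right.
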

\begin{proof}
We prove this theorem by employing Theorem \ref{L_RKHS_Thm}. Specifically, we verify the hypothesis of Theorem \ref{L_RKHS_Thm}. For this purpose, we introduce a mapping $\Psi:\Lambda\rightarrow \mathcal{B}(\mathcal{Y},\widetilde{W})$ which satisfies the density condition \eqref{density_psi} with $W$ being replaced by $\widetilde{W}$. It follows for each $\alpha\in\Lambda$ and each $\xi\in\mathcal{Y}$ that
\begin{equation}\label{Bochner-integral1}
\int_{\mathcal{X}}\|\psi_{\alpha,\xi}\|_{W}d\mu(x)
\leq\|\xi\|_{\mathcal{Y}}\int_{\mathcal{X}}\|\Phi(x)
\|_{\mathcal{B}(\mathcal{Y},W)}
\|\lambda_{\alpha}(x)\|_{\mathcal{B}(\mathcal{Y},
\mathcal{Y})}d\mu(x).
\end{equation}
This together with (\ref{condition}) leads to  $\psi_{\alpha,\xi}$ is Bochner integrable. This allows
us to define the operator $\Psi:\Lambda\rightarrow
\mathcal{L}(\mathcal{Y},\widetilde{W})$ by
$$
\Psi(\alpha)\xi:=\int_{\mathcal{X}}\psi_{\alpha,\xi}(x)
d\mu(x),\ \ \alpha\in\Lambda,\ \ \xi\in\mathcal{Y}.
$$
By the definition of $\psi_{\alpha,\xi}$, the linearity
of $\Psi(\alpha)$ is clear. Combining equation (\ref{Bochner-integral1}) with the fact
$$
\|\Psi(\alpha)\xi\|_{\widetilde{W}}\leq\int_{\mathcal{X}}
\|\psi_{\alpha,\xi}\|_{W}d\mu(x),
$$
we conclude that $\Psi(\alpha)\in\mathcal{B}(\mathcal{Y},\widetilde{W})$.
Observing from the definition of $\Psi$, we obtain easily its density condition. By Lemma \ref{Projection}, the hypotheses of Theorem \ref{L_RKHS_Thm} are satisfied.
Consequently, by identifying for each $\alpha\in\Lambda$,
$$
L_{\alpha}(\widetilde{\Phi}(\cdot)^{*}u)
=\Psi(\alpha)^{*}u,\ u\in \widetilde{W},
$$
we conclude that the space
$\widetilde{\mathcal{H}}:=\left\{\widetilde{\Phi}
(\cdot)^{*}u:u\in\widetilde{W}\right\}$
with the inner product
$\langle\widetilde{\Phi}(\cdot)^{*}u, \widetilde{\Phi}(\cdot)^{*}v\rangle_{\widetilde{\mathcal{H}}}
:=\langle u,v\rangle_{\widetilde{W}}$
is a perfect ORKHS with respect to $\mathcal{L}:=\{L_{\alpha}:\alpha\in\Lambda\}$ and that the corresponding operator reproducing kernel
has the form $K(\alpha)=\widetilde{\Phi}(\cdot)^{*}
\Psi(\alpha), \alpha\in\Lambda$.

We next show that the perfect ORKHS $\widetilde{\mathcal{H}}$ has the form \eqref{ORKHS-integral-operator} and the operator reproducing kernel $K$ has the form (\ref{L_reproducing_kernel_integral}). Noting that
$$
\widetilde{\Phi}(x)^{*}
=\Phi(x)^{*}\mathcal{P}_{\widetilde{W}}^{*}
=\Phi(x)^{*}, \ \ \mbox{for all} \ \ x\in \mathcal{X},
$$
we can represent $\widetilde{\mathcal{H}}$ as in (\ref{ORKHS-integral-operator}) and $K$ as $K(\alpha)=\Phi(\cdot)^{*}\Psi(\alpha),\alpha\in\Lambda$. This together with the definition of $\Psi$ leads to the form (\ref{L_reproducing_kernel_integral}).
It suffices to rewrite $L_{\alpha},\alpha\in\Lambda,$
in the form (\ref{integral-operator}). It follows for
each $\alpha\in\Lambda$ and each
$f=\Phi(\cdot)^{*}u\in \widetilde{\mathcal{H}}$ that
$$
\langle L_{\alpha}(f),\xi\rangle_{\mathcal{Y}}
=\langle\Psi(\alpha)^{*}u,\xi\rangle_{\mathcal{Y}}
=\langle u,\Psi(\alpha)\xi\rangle_{W},
\ \xi\in\mathcal{Y}.
$$
Hence, by the definition of $\Psi$ and $\psi_{\alpha,\xi}$ we have that
\begin{eqnarray*}
\langle L_{\alpha}(f),\xi\rangle_{\mathcal{Y}}
=\int_{\mathcal{X}}\langle u,
\Phi(x)\lambda_{\alpha}(x)^{*}\xi\rangle_{W}d\mu(x)
=\left\langle\int_{\mathcal{X}}\lambda_{\alpha}(x)f(x)d\mu(x),
\xi\right\rangle_{\mathcal{Y}},\ \xi\in\mathcal{Y},
\end{eqnarray*}
which yields that $L_{\alpha}$ has the form (\ref{integral-operator}) and thus, completes the proof.
\end{proof}

Associated with the Hilbert space $W$ and the mapping $\Phi:\mathcal{X}\rightarrow \mathcal{B}(\mathcal{Y},W)$ in Theorem \ref{integral-perfectORKHS}, the space $\mathcal{H}:=\left\{\Phi(\cdot)^{*}u: u\in W\right\}$,
with the inner product $\langle\Phi(\cdot)^{*}u, \Phi(\cdot)^{*}v\rangle_{\mathcal{H}}
:=\langle u,v\rangle_{W}$, is an RKHS on $\mathcal{X}$.
The perfect ORKHS $\widetilde{\mathcal{H}} $ with the
form (\ref{ORKHS-integral-operator}) is a closed
subspace of $\mathcal{H}$. Observing from the representation of $\widetilde{\mathcal{H}} $,
we obtain the following result.
\begin{cor}\label{density-W}
Let $W$ be a Hilbert space and $\Phi:\mathcal{X}\rightarrow \mathcal{B}(\mathcal{Y},W)$  a mapping satisfying the density condition \eqref{density_phi}. If a collection of mappings $\lambda_{\alpha}:\mathcal{X}\to
\mathcal{B}(\mathcal{Y},\mathcal{Y})$,
$\alpha\in \Lambda,$ satisfies \eqref{condition} and $\widetilde{W}$ is defined by \eqref{W}, then the RKHS
$\mathcal{H}:=\left\{\Phi(\cdot)^{*}u:u\in W\right\}$
with the inner product $\langle\Phi(\cdot)^{*}u,
\Phi(\cdot)^{*}v\rangle_{\mathcal{H}}
:=\langle u,v\rangle_{W}$, is a perfect ORKHS with
respect to $\mathcal{L}:=\{L_{\alpha}:\alpha\in\Lambda\}$ defined as in \eqref{integral-operator} if and only if there holds $\widetilde{W}=W.$
\end{cor}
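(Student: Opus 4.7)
The plan is to leverage Theorem \ref{integral-perfectORKHS} directly. That theorem shows that the subspace $\widetilde{\mathcal{H}}=\{\Phi(\cdot)^{*}u:u\in\widetilde{W}\}$, sitting inside $\mathcal{H}$, is already a perfect ORKHS with respect to $\mathcal{L}$. Since the inner product on $\mathcal{H}$ restricts to that on $\widetilde{\mathcal{H}}$, the coincidence $\widetilde{\mathcal{H}}=\mathcal{H}$ is equivalent (via the isometry $u\mapsto \Phi(\cdot)^{*}u$, which is well-defined precisely when $\Phi$ satisfies the density condition \eqref{density_phi}) to $\widetilde{W}=W$. Hence the real content of the corollary is to show that $\mathcal{H}$ inherits the perfect ORKHS property only when none of $W$ is lost by passing to $\widetilde{W}$.

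For the sufficiency direction, I would simply observe that if $\widetilde{W}=W$ then $\widetilde{\mathcal{H}}=\mathcal{H}$ as inner product spaces, and Theorem \ref{integral-perfectORKHS} furnishes the perfect ORKHS structure of $\mathcal{H}$ with respect to $\mathcal{L}$.

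For the necessity direction, suppose $\mathcal{H}$ is a perfect ORKHS with respect to $\mathcal{L}$. The key computation is to re-express the action of each $L_{\alpha}$ on a generic element $f=\Phi(\cdot)^{*}u\in\mathcal{H}$. Using the definition \eqref{integral-operator}, Fubini-type manipulation (justified by \eqref{condition} exactly as in \eqref{Bochner-integral1}) and the defining property of the Bochner integral, I would derive, for any $\xi\in\mathcal{Y}$, the identity
$$
\langle L_{\alpha}(f),\xi\rangle_{\mathcal{Y}}
=\int_{\mathcal{X}}\langle u,\psi_{\alpha,\xi}(x)\rangle_W d\mu(x)
=\left\langle u,\int_{\mathcal{X}}\psi_{\alpha,\xi}(x)\,d\mu(x)\right\rangle_W.
$$
Consequently $L_{\alpha}(f)=0$ for every $\alpha\in\Lambda$ if and only if $u$ is orthogonal in $W$ to every generator of $\widetilde{W}$ in \eqref{W}, that is, $u\in\widetilde{W}^{\perp}$.

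Since $\|f\|_{\mathcal{H}}=\|u\|_{W}$, the norm-compatibility of $\mathcal{H}$ with $\mathcal{L}$ (which is required for $\mathcal{H}$ to be an ORKHS with respect to $\mathcal{L}$) forces $\widetilde{W}^{\perp}=\{0\}$, and because $\widetilde{W}$ is closed in $W$ by construction, this gives $\widetilde{W}=W$, completing the argument. The main (mild) obstacle is the bookkeeping in the Fubini step to move $u$ outside the integral, but condition \eqref{condition} guarantees the Bochner integrability needed and the rest is routine.
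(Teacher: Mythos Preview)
Your proposal is correct and follows essentially the same route as the paper: both invoke Theorem \ref{integral-perfectORKHS} for the sufficiency direction, and for necessity both compute the pairing $\langle L_{\alpha}(\Phi(\cdot)^{*}u),\xi\rangle_{\mathcal{Y}}=\langle u,\int_{\mathcal{X}}\psi_{\alpha,\xi}(x)\,d\mu(x)\rangle_W$ and then use norm-compatibility of $\mathcal{H}$ with $\mathcal{L}$ to force $\widetilde{W}^{\perp}=\{0\}$. The only cosmetic difference is that the paper runs the computation starting from the inner product $h_{\alpha,\xi}(u)$ and arriving at $\langle L_{\alpha}(\Phi(\cdot)^{*}u),\xi\rangle_{\mathcal{Y}}$, while you go in the opposite direction.
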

\begin{proof}
It follows from Theorem \ref{integral-perfectORKHS} that the closed subspace $\widetilde{\mathcal{H}}$, defined
by (\ref{ORKHS-integral-operator}), of $\mathcal{H}$ is
a perfect ORKHS with respect to $\mathcal{L}$. If $\widetilde{W}=W,$ then we have $\mathcal{H}=\widetilde{\mathcal{H}}$, which shows that $\mathcal{H}$ is just the perfect ORKHS. Conversely, we suppose that $\mathcal{H}$ is a perfect ORKHS with
respect to $\mathcal{L}$. For each $u\in W$, we set
\begin{equation*}\label{u-psi}
h_{\alpha,\xi}(u):=\langle u,\int_{\mathcal{X}}
\psi_{\alpha,\xi}(x)d\mu(x)\rangle_{W},
\ \alpha\in\Lambda,\xi\in\mathcal{Y}.
\end{equation*}
It suffices to verify that if $u\in W$ satisfies
$h_{\alpha,\xi}(u)=0$ for any $\alpha\in\Lambda,
\xi\in\mathcal{Y}$, then $u=0$. By the definition
of $\psi_{\alpha,\xi}$, we have for each $\alpha\in\Lambda,\xi\in\mathcal{Y}$ that
$$
h_{\alpha,\xi}(u)=\int_{\mathcal{X}}\langle u,\Phi(x)\lambda_{\alpha}(x)^{*}\xi\rangle_{W}d\mu(x)
=\langle\int_{\mathcal{X}}\lambda_{\alpha}(x)
\Phi(x)^{*}ud\mu(x),\xi\rangle_{\mathcal{Y}}.
$$
This together with the definition of $L_{\alpha},\alpha\in\Lambda,$ leads to
$h_{\alpha,\xi}(u)=\langle L_{\alpha}(\Phi(x)^{*}u)
,\xi\rangle_{\mathcal{Y}}.$ Since the norm of $\mathcal{H}$ is compatible with $\mathcal{L}$,
we have that $h_{\alpha,\xi}(u)=0$ holds for any $\alpha\in\Lambda,\xi\in\mathcal{Y}$ if and only if $\|\Phi(x)^{*}u\|_{\mathcal{H}}=0$, which is
equivalent to $u=0$.
\end{proof}

We now turn to presenting two specific examples of perfect
ORKHSs with respect to the family of integral operators
defined by (\ref{integral-operator}). These perfect ORKHSs
come from widely used RKHSs.

The first perfect ORKHS is constructed based on the
translation invariant RKHSs. Let $\mathcal{Y}$ be a
Hilbert space. A reproducing kernel $\mathcal{K}:
\mathbb{R}^d\times\mathbb{R}^d\to
\mathcal{B}(\mathcal{Y},\mathcal{Y})$
is said to be translation invariant if for all
$a\in\mathbb{R}^d$,  $\mathcal{K}(x-a,y-a)=\mathcal{K}(x,y)$,
for $x,y\in\mathbb{R}^d$. In the scalar-valued case,
a well-known characterization of translation invariant
kernels was established in \cite{B59}, which states that
every continuous translation invariant kernel on
$\mathbb{R}^d$ must be the Fourier transform of a finite
nonnegative Borel measure on $\mathbb{R}^d$, and vice versa. The characterization was generalized to the operator-valued case in \cite{F}. We consider a matrix-valued translation invariant reproducing kernel, for which the positive semi-definite matrix-valued Borel measure of bounded variation on $\mathbb{R}^d$ has the Radon-Nikodym property with respect to the Lebesgue measure. Specifically, we denote by $\mathbf{M}_n$ the space of $n\times n$ matrices endowed with the spectral norm and $\mathbf{M}_n^{+}$ its subspace of positive semi-definite matrices. Let $\varphi$ be a Bochner integrable function from $\mathbb{R}^d$ to
$\mathbf{M}_n^{+}$. The translation invariant kernel
$\mathcal{K}:\mathbb{R}^d\times\mathbb{R}^d\to
\mathbf{M}_n$ to be considered has the special form
\begin{equation}\label{translation_invariant_rk_phi}
\mathcal{K}(x,y)=\int_{\mathbb{R}^d}e^{i(x-y,t)}\varphi(t)dt,
\ \ x,y\in\mathbb{R}^d.
\end{equation}
We shall make use of Theorem \ref{integral-perfectORKHS} to construct a perfect ORKHS, which is a closed subspace of the translation invariant RKHS of $\mathcal{K}$. To this end, we first present the feature space and the feature map of $\mathcal{K}$. Associated with the $\mathbf{M}_n^{+}$-valued Bochner integrable function $\varphi$ on $\mathbb{R}^d$, we denote by $L^2_{\varphi}(\mathbb{R}^d,\mathbb{C}^n)$
the space of $\mathbb{C}^n$-valued Lebesgue measurable
functions $f$ on $\mathbb{R}^d$ satisfying $\varphi^{1/2}f$ belongs to $L^2(\mathbb{R}^d,\mathbb{C}^n)$. The space
$L^2_{\varphi}(\mathbb{R}^d,\mathbb{C}^n)$ is a
Hilbert space endowed with the inner product
$$
\langle f,g\rangle_{L^2_{\varphi}(\mathbb{R}^d,\mathbb{C}^n)}
:=\int_{\mathbb{R}^d}\langle\varphi(t)f(t),
g(t)\rangle_{\mathbb{C}^n}dt,
\ \ f,g\in L^2_{\varphi}(\mathbb{R}^d,\mathbb{C}^n).
$$
We choose $W:=L^2_{\varphi}(\mathbb{R}^d,\mathbb{C}^n)$
and $\Phi(x):=e^{i(x,\cdot)},\ x\in\mathbb{R}^d$.
We need to verify that $\Phi(x), x\in\mathbb{R}^d,$
belong to $\mathcal{B}(\mathbb{C}^n,W)$ and satisfy
the density condition \eqref{density_phi} with
$\mathcal{Y}=\mathbb{C}^n$. It follows for each
$x\in\mathbb{R}^d$ and each $\xi\in\mathbb{C}^n$ that
$$
\|\Phi(x)\xi\|^2_{W}
=\int_{\mathbb{R}^d}\langle\varphi(t)e^{i(x,t)}\xi,
e^{i(x,t)}\xi\rangle_{\mathbb{C}^n}dt
=\int_{\mathbb{R}^d}\langle\varphi(t)\xi,
\xi\rangle_{\mathbb{C}^n}dt
\leq\|\xi\|^2_{\mathbb{C}^n}\int_{\mathbb{R}^d}
\|\varphi(t)\|_{\mathbf{M}_n}dt.
$$
This together with the Bochner integrability of $\varphi$
yields that $\Phi(x)\xi\in W$ and $\Phi(x)$ is continuous
from $\mathbb{C}^n$ to $W$ with the norm satisfying
\begin{equation}\label{phinorm}
\|\Phi(x)\|_{\mathcal{B}(\mathbb{C}^n,W)}
\leq\|\varphi\|_{L^1(\mathbb{R}^d,\mathbf{M}_n)}^{1/2}.
\end{equation}
Moreover, it is clear that there holds the density condition (\ref{density_phi}) with $\mathcal{Y}=\mathbb{C}^n$. To describe the integral operators in this case, we introduce a sequence $u_{\alpha}=[u_{\alpha,j}:j\in\mathbb{N}_n],
\ \alpha\in\Lambda,$ of $\mathbb{C}^n$-valued Bochner
integrable functions on $\mathbb{R}^d$ and choose
\begin{equation}\label{lambda-alpha}
\lambda_{\alpha}(x):={\rm diag}(\overline{u_{\alpha,j}
(x)}:j\in\mathbb{N}_{n}),\ x\in\mathbb{R}^d,\ \alpha\in\Lambda.
\end{equation}
Then the integral operator (\ref{integral-operator})
reduces to
\begin{equation}\label{integral-operator1}
L_{\alpha}(f):=\int_{\mathbb{R}^d}f(x)\circ
\overline{u_{\alpha}(x)}dx,\ \ \alpha\in\Lambda.
\end{equation}
Here, $u\circ v$ denotes the Hadamard product of two
$\mathbb{C}^{n}$-valued functions $u$ and $v$,
which has been defined in (\ref{example_integration_functional}).
Associated with the Bochner integrable functions
$\varphi:\mathbb{R}^d\to \mathbf{M}_n^{+}$ and
$u_{\alpha}:\mathbb{R}^d\to\mathbb{C}^n,\ \alpha\in\Lambda,$ we define the inner product space
\begin{equation}\label{example-ORKHS1}
\mathcal{H}:=\left\{(\varphi(\cdot)^{*}u)^{\wedge}:
u\in\overline{\span}\left\{\check{u}_{\alpha}\circ\xi:
\ \alpha\in\Lambda,\xi\in\mathbb{C}^n\right\}\right\},
\end{equation}
with the inner product
\begin{equation}\label{example-inner-product}
\langle f,g \rangle_{\mathcal{H}}
:=\int_{\mathbb{R}^d}\langle\varphi(t)u(t),
v(t)\rangle_{\mathbb{C}^n}dt
\end{equation}
for $f=(\varphi(\cdot)^{*}u)^{\wedge}$
and $g=(\varphi(\cdot)^{*}v)^{\wedge}$.
The closure in \eqref{example-ORKHS1} is taken in $W$.

As a consequence of Theorem \ref{integral-perfectORKHS},
we get in the next theorem a construction of perfect
ORKHSs with respect to a family of integral operators
defined as in (\ref{integral-operator1}).

\begin{thm}\label{translation_invariant_ORKHS}
If $\varphi$ is a Bochner integrable function from
$\mathbb{R}^d$ to $\mathbf{M}_n^{+}$ and
$u_{\alpha}, \ \alpha\in\Lambda,$ is a family of $\mathbb{C}^n$-valued Bochner integrable functions, then the space $\mathcal{H}$ defined by \eqref{example-ORKHS1}
with the inner product \eqref{example-inner-product}
is a perfect ORKHS with respect to
$\mathcal{L}:=\{L_{\alpha}:\alpha\in\Lambda\}$
defined as in \eqref{integral-operator1} and
the operator reproducing kernel is determined by
$K(\alpha)\xi=(2\pi)^d[\varphi(\cdot)^{*}
(\check{u}_{\alpha}\circ\xi)]^{\wedge}$.
\end{thm}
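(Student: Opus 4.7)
My plan is to derive the theorem as a direct application of Theorem \ref{integral-perfectORKHS} with $W = L^2_\varphi(\mathbb{R}^d,\mathbb{C}^n)$, $\Phi(x) := e^{i(x,\cdot)}$, and $\lambda_\alpha$ given by \eqref{lambda-alpha}. The paragraph preceding the theorem already checked that $\Phi(x)\in\mathcal{B}(\mathbb{C}^n,W)$ satisfies the bound \eqref{phinorm} and the density condition \eqref{density_phi}, so there is nothing further to do for the feature map itself.

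The first real step is to verify the integrability condition \eqref{condition}. Since $\lambda_\alpha(x)$ is the diagonal matrix whose entries are $\overline{u_{\alpha,j}(x)}$, its spectral norm equals $\max_{j\in\mathbb{N}_n}|u_{\alpha,j}(x)|$, which is bounded by $\|u_\alpha(x)\|_{\mathbb{C}^n}$. Combined with \eqref{phinorm}, this gives
\[
\int_{\mathbb{R}^d}\|\Phi(x)\|_{\mathcal{B}(\mathbb{C}^n,W)}\|\lambda_\alpha(x)\|_{\mathcal{B}(\mathbb{C}^n,\mathbb{C}^n)}\,dx
\le \|\varphi\|_{L^1(\mathbb{R}^d,\mathbf{M}_n)}^{1/2}\int_{\mathbb{R}^d}\|u_\alpha(x)\|_{\mathbb{C}^n}\,dx,
\]
which is finite by the Bochner integrability of $\varphi$ and $u_\alpha$. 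Hence the hypothesis of Theorem \ref{integral-perfectORKHS} is met.

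The core calculation is to identify the subspace $\widetilde{W}$ in \eqref{W}. Note that $\lambda_\alpha(x)^*=\mathrm{diag}(u_{\alpha,j}(x):j\in\mathbb{N}_n)$, so $\lambda_\alpha(x)^*\xi=u_\alpha(x)\circ\xi$. Consequently, as an element of $W$, $\psi_{\alpha,\xi}(x)$ is the function $t\mapsto e^{i(x,t)}\,u_\alpha(x)\circ\xi$. Exchanging the order of integration via the definition of the Bochner integral and recognizing the inverse Fourier transform $\check{u}_\alpha(t)=(2\pi)^{-d}\int_{\mathbb{R}^d}u_\alpha(x)e^{i(x,t)}\,dx$, one obtains
\[
\int_{\mathbb{R}^d}\psi_{\alpha,\xi}(x)\,dx
=(2\pi)^d\,\check{u}_\alpha\circ\xi\quad\text{in }W.
\]
Therefore $\widetilde{W}=\overline{\mathrm{span}}\{\check{u}_\alpha\circ\xi:\alpha\in\Lambda,\,\xi\in\mathbb{C}^n\}$, where the closure is taken in $W$.

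To finish, I just translate the output of Theorem \ref{integral-perfectORKHS} into the notation of the statement. For any $u\in W$, the self-adjointness of $\varphi(t)$ (a consequence of $\varphi(t)\in\mathbf{M}_n^+$) gives $\Phi(\cdot)^*u=\int e^{-i(\cdot,t)}\varphi(t)u(t)\,dt=(\varphi(\cdot)^*u)^{\wedge}$, so
\[
\widetilde{\mathcal{H}}=\{\Phi(\cdot)^*u: u\in\widetilde{W}\}
\]
coincides with the space $\mathcal{H}$ in \eqref{example-ORKHS1}, and the inner product $\langle\Phi(\cdot)^*u,\Phi(\cdot)^*v\rangle_{\widetilde{\mathcal{H}}}=\langle u,v\rangle_W$ becomes \eqref{example-inner-product}. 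The operator reproducing kernel provided by Theorem \ref{integral-perfectORKHS} is
\[
K(\alpha)\xi=\Phi(\cdot)^*\!\!\int_{\mathbb{R}^d}\psi_{\alpha,\xi}(x)\,dx
=(2\pi)^d\bigl[\varphi(\cdot)^*(\check{u}_\alpha\circ\xi)\bigr]^{\wedge},
\]
which is the claimed formula. The only delicate point is the interchange of integration that identifies $\int\psi_{\alpha,\xi}\,dx$ with $(2\pi)^d\check{u}_\alpha\circ\xi$ as an element of $W$; this is the main technical hurdle and is justified by pairing against an arbitrary $v\in W$ and applying Fubini together with the integrability bound established above.
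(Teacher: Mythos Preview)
Your proof is correct and follows essentially the same route as the paper: both verify condition \eqref{condition} via the bound $\|\lambda_\alpha(x)\|_{\mathbf{M}_n}\le\|u_\alpha(x)\|_{\mathbb{C}^n}$ together with \eqref{phinorm}, invoke Theorem \ref{integral-perfectORKHS}, compute $\int_{\mathbb{R}^d}\psi_{\alpha,\xi}(x)\,dx=(2\pi)^d\check{u}_\alpha\circ\xi$ to identify $\widetilde{W}$, and then use $\Phi(x)^{*}u=(\varphi(\cdot)^{*}u)^{\wedge}(x)$ to translate the output into the explicit descriptions \eqref{example-ORKHS1}--\eqref{example-inner-product} and the kernel formula. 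Your remark about justifying the interchange of integration by pairing with a test element is a welcome clarification that the paper leaves implicit.
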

\begin{proof}
In order to use the construction in Theorem
\ref{integral-perfectORKHS}, we first verify condition (\ref{condition}) with $\lambda_{\alpha}(x),
\ \alpha\in\Lambda,$ being defined by \eqref{lambda-alpha}.
According to inequality (\ref{phinorm}), we have
for each $\alpha\in\Lambda$ that
\begin{equation}\label{condition-verify}
\int_{\mathbb{R}^d}\|\Phi(x)\|_{\mathcal{B}(\mathbb{C}^n,W)}
\|\lambda_{\alpha}(x)\|_{\mathbf{M}_n}dx
\leq\|\varphi\|_{L^1(\mathbb{R}^d,\mathbf{M}_n)}^{1/2}
\int_{\mathbb{R}^d}\|\lambda_{\alpha}(x)\|_{\mathbf{M}_n}dx.
\end{equation}
By the definition of $\lambda_{\alpha}$, we get
$$
\int_{\mathbb{R}^d}\|\lambda_{\alpha}(x)\|_{\mathbf{M}_n}dx
\leq\|u_{\alpha}\|_{L^1(\mathbb{R}^d,\mathbb{C}^n)}.
$$
Substituting the above inequality into
(\ref{condition-verify}) and by the Bochner
integrability of $u_{\alpha}$, we have that
\begin{equation*}
\int_{\mathbb{R}^d}\|\Phi(x)\|_{\mathcal{B}(\mathbb{C}^n,W)}
\|\lambda_{\alpha}(x)\|_{\mathbf{M}_n}dx
\leq\|\varphi\|_{L^1(\mathbb{R}^d,\mathbf{M}_n)}^{1/2}
\|u_{\alpha}\|_{L^1(\mathbb{R}^d,\mathbb{C}^n)}
<\infty.
\end{equation*}
Hence, by Theorem \ref{integral-perfectORKHS} we obtain
a perfect ORKHS $\widetilde{\mathcal{H}}$ with the form
(\ref{ORKHS-integral-operator}) and the corresponding
operator reproducing kernel $K$ with the form
(\ref{L_reproducing_kernel_integral}).

We next turn to representing $\widetilde{\mathcal{H}}$
and $K$ by making use of the functions $\varphi$ and
$u_{\alpha}, \alpha\in \Lambda$. It follows for each
$\alpha\in\Lambda$ and each $\xi\in\mathbb{C}^n$ that
$$
\int_{\mathbb{R}^d}\Phi(x)\lambda_{\alpha}(x)^{*}\xi dx
=\int_{\mathbb{R}^d}\Phi(x)(u_{\alpha}(x)\circ\xi)dx
=(2\pi)^d\check{u}_{\alpha}\circ\xi.
$$
Then the closed subspace $\widetilde{W}$ in
(\ref{ORKHS-integral-operator}) of $W$ can be
represented as
\begin{equation}\label{concrete-W}
\widetilde{W}=\overline{\span}\left\{\check{u}_{\alpha}
\circ\xi:\ \alpha\in\Lambda, \xi\in\mathbb{C}^n
\right\}.
\end{equation}
There holds for each $x\in\mathbb{R}^d$ and each
$u\in \widetilde{W}$ that
$$
\langle\Phi(x)\xi,u\rangle_{\widetilde{W}}
=\int_{\mathbb{R}^d}\langle\varphi(t)e^{i(x,t)}\xi,
u(t)\rangle_{\mathbb{C}^n}dt
=\left\langle\xi,\int_{\mathbb{R}^d}e^{-i(x,t)}
\varphi(t)^{*}u(t)dt\right\rangle_{\mathbb{C}^n},
\ \xi\in\mathbb{C}^n,
$$
which leads to
$$
\Phi(x)^{*}u
=\int_{\mathbb{R}^d}e^{-i(x,t)}\varphi(t)^{*}u(t) dt
=(\varphi(\cdot)^{*}u)^{\wedge}(x).
$$
Substituting the representation (\ref{concrete-W})
of $\widetilde{W}$ and the above equation into
(\ref{ORKHS-integral-operator}), we have that
$\widetilde{\mathcal{H}}=\mathcal{H}$ and the
inner product of $\widetilde{\mathcal{H}}$ can
be represented as in (\ref{example-inner-product}).
Moreover, the operator reproducing kernel $K$ can be
represented as
$$
K(\alpha)\xi:=\Phi(\cdot)^{*}
\int_{\mathbb{R}^d}\Phi(x)\lambda_{\alpha}(x)^{*}\xi dx
=(2\pi)^d[\varphi(\cdot)^{*}(\check{u}_{\alpha}\circ\xi)
]^{\wedge},\ \alpha\in\Lambda,\ \xi\in \mathbb{C}^n.
$$
\end{proof}

The widely used Gaussian kernel on $\mathbb{R}^d$ is a translation invariant kernel with $\varphi$ being the Gaussian function. A matrix-valued Gaussian kernel on $\mathbb{R}^d$ discussed in \cite{MP05} is determined by
\begin{equation}\label{matrix-valued-Gaussian-kernel}
\mathcal{K}(x,y):=\sum_{j\in\mathbb{N}_m}e^{-\sigma_j\|x-y\|^2}
\mathbf{A}_j,
\end{equation}
where $\sigma_j\geq0,\ j\in\mathbb{N}_m$ and $\{\mathbf{A}_j:j\in\mathbb{N}_m\}$ is a sequence of real matrices in $\mathbf{M}_n^{+}$. Here, we denote by $\|\cdot\|$ the norm on $\mathbb{R}^d$ induced by the standard inner product. By letting
$$
\varphi(t):=\sum_{j\in\mathbb{N}_m}
\frac{1}{(2\sigma_j)^{d/2}}e^{-\frac{1}{4\sigma_j}\|t\|^2}
\mathbf{A}_j,
$$
we can represent $\mathcal{K}$ in the form (\ref{translation_invariant_rk_phi}). Combining Corollary \ref{density-W} with Theorem \ref{translation_invariant_ORKHS}, we conclude that the RKHS of the Gaussian kernel (\ref{matrix-valued-Gaussian-kernel}) is  a perfect ORKHS with respect to
$\mathcal{L}:=\{L_{\alpha}:\alpha\in\Lambda\}$
defined as in \eqref{integral-operator1} in terms of a family of $\mathbb{C}^n$-valued Bochner integrable functions $u_{\alpha}, \alpha\in\Lambda, $ if the linear span of $\left\{\check{u}_{\alpha}\circ\xi:
\ \alpha\in\Lambda,\xi\in\mathbb{C}^n\right\}$ is dense in $L^2_{\varphi}(\mathbb{R}^d,\mathbb{C}^n)$. Accordingly,
the operator reproducing kernel is given by
$$
(K(\alpha)\xi)(x)=\overline{L_{\alpha}(\mathcal{K}(x,\cdot)
\overline{\xi})}.
$$

We next present a perfect FRKHS, which is a closed subspace of a shift-invariant space. To this end, we
first recall the shift-invariant spaces, which have
been extensively studied in the areas of wavelet
theory \cite{D,GLT}, approximation theory
\cite{DDR,J} and sampling \cite{AG,AST,H07}. Given
$\phi\in L^2(\mathbb{R}^d)$ that satisfies
\begin{equation}\label{riesz_basis}
m\leq\sum_{j\in\mathbb{Z}^d}|\hat{\phi}(\xi+2\pi j)|^2
\leq M,\ \ \mbox{for almost every} \ \xi,
\end{equation}
for some $m,M>0$, we construct the shift-invariant
space by
$$
V^2(\phi):=\left\{\sum_{k\in\mathbb{Z}^d}c_k\phi(\cdot-k):
\ \{c_k:k\in\mathbb{Z}^d\}\in l^2(\mathbb{Z}^d)\right\}.
$$
Here, $l^2(\mathbb{Z}^d)$ is the Hilbert space of
square-summable sequences on $\mathbb{Z}^d$.
The function $\phi$ is called the generator of the
shift-invariant space $V^2(\phi)$. Clearly, $V^2(\phi)$
is a closed subspace of $L^2(\mathbb{R}^d)$ and
condition (\ref{riesz_basis}) ensures that the sequence
$\phi(\cdot-k),\ k\in\mathbb{Z}^d,$ constitutes
a Riesz basis of $V^2(\phi)$. To guarantee that
$V^2(\phi)$ is an RKHS, additional conditions need to be
imposed on the generator $\phi$. Following \cite{AG},
we first introduce a weight function $\omega$ on
$\mathbb{R}^d$, which is assumed to be continuous,
symmetric and positive, and to satisfy the condition
$0<\omega(x+y)\leq\omega(x)\omega(y)$, for any
$x,y\in\mathbb{R}^d$. Moreover, we assume that $\omega$
satisfies the growth condition
$$
\sum_{n=1}^{\infty}\frac{\log \omega(nk)}{n^2}<\infty,
\ \ \mbox{for all}\ \ k\in\mathbb{Z}^d.
$$
We also recall the Wiener amalgam spaces $W(L_w^p),
1\leq p\leq\infty$. For $1\leq p<\infty$, a measurable
function $f$ is said to belong to $W(L_w^p)$, if it satisfies
$$
\|f\|_{W(L_w^p)}^p:=\sum_{k\in\mathbb{Z}^d}
{\rm ess \ sup}\{|f(x+k)|^p\omega(k)^p:x\in[0,1]^d\}
<\infty.
$$
For $p=\infty$, a measurable function $f$ is said to belong to $W(L_w^{\infty})$ if it satisfies
$$
\|f\|_{W(L_w^{\infty})}:=\sup_{k\in\mathbb{Z}^d}
\{{\rm ess \  sup}\{|f(x+k)|\omega(k):x\in[0,1]^d\}\}
<\infty.
$$
We denote by $W_0(L_w^p)$ the subspace of continuous functions in $W(L_w^p)$. It has been proved in \cite{AG} that if $\phi\in W_0(L_w^1)$, the space $V^2(\phi)$ is an RKHS with the reproducing kernel
\begin{equation}\label{shift_invariant_rk}
\mathcal{K}(x,y)=\sum_{k\in\mathbb{Z}^d}\overline{\phi(x-k)}
\widetilde{\phi}(y-k),\ \ x,y\in\mathbb{R}^d,
\end{equation}
where $\widetilde{\phi}\in V^2(\phi)$ satisfies the condition that the sequence $\widetilde{\phi}(\cdot-k)$,
$k\in\mathbb{Z}^d$, is the dual Riesz basis of
$\phi(\cdot-k)$, $k\in\mathbb{Z}^d.$ To see the
reproducing property, for each
$$
f:=\sum_{j\in\mathbb{Z}^d}c_j\phi(\cdot-j)\in V^2(\phi),
$$
we note that
$$
\langle f,\mathcal{K}(x,\cdot)\rangle_{L^2(\mathbb{R}^d)}
=\sum_{j\in\mathbb{Z}^d}\sum_{k\in\mathbb{Z}^d}
c_j\phi(x-k)\langle\phi(\cdot-j),\widetilde{\phi}(\cdot-k)
\rangle_{L^2(\mathbb{R}^d)}.
$$
Due to the biorthogonality relation
$$
\langle\phi(\cdot-j),\widetilde{\phi}(\cdot-k)
\rangle_{L^2(\mathbb{R}^d)}=\delta_{j,k},
\ j,k\in\mathbb{Z}^d,
$$
the above equation reduces to
$$
\langle f,\mathcal{K}(x,\cdot)\rangle_{L^2(\mathbb{R}^d)}
=\sum_{j\in\mathbb{Z}^d}c_j\phi(x-j)=f(x).
$$
In order to use the construction
described in Theorem \ref{integral-perfectORKHS},
we choose a natural feature map representation for
the reproducing kernel $\mathcal{K}$. That is,
$W:=V^2(\phi)$ and $\Phi(x):=\mathcal{K}(x,\cdot),
\ x\in\mathbb{R}^d$. Moreover, we introduce a family
of measurable functions $u_{\alpha}, \alpha\in\Lambda$,
on $\mathbb{R}^d$ satisfying for each $\alpha\in\Lambda$,
\begin{equation}\label{condition_shift_invariant}
\int_{\mathbb{R}^d}|u_{\alpha}(x)|
\left(\sum_{k\in\mathbb{Z}^d}|\phi(x-k)|^2\right)^{1/2}dx
<\infty.
\end{equation}
By choosing
$\lambda_{\alpha}(x):=\overline{u_{\alpha}(x)}$, the integral operator in (\ref{integral-operator}) reduces to
\begin{equation}\label{integral-operator2}
L_{\alpha}(f):=\int_{\mathbb{R}^d}
f(x)\overline{u_{\alpha}(x)}dx,
\ \ \alpha\in\Lambda.
\end{equation}
Associated with $u_{\alpha}, \alpha\in\Lambda$, we define
the closed subspace of $V^2(\phi)$ as
\begin{equation}\label{phi_H_K_L}
\mathcal{H}:=\overline{\span}\left\{\sum_{k\in\mathbb{Z}^d}
\left(\int_{\mathbb{R}^d}u_{\alpha}(t)\overline{\phi(t-k)}dt
\right)\tilde{\phi}(\cdot-k):\alpha\in\Lambda\right\}.
\end{equation}

We construct in the next theorem a perfect FRKHS with respect to a family of integral functionals defined as in
(\ref{integral-operator2}).
\begin{thm}
If $\phi\in W_0(L_w^1)$ satisfies \eqref{riesz_basis}
and a family of measurable functions
$u_{\alpha}, \alpha\in\Lambda,$ on $\mathbb{R}^d$ satisfies \eqref{condition_shift_invariant},
then the closed subspace $\mathcal{H}$ defined by
\eqref{phi_H_K_L} of $V^2(\phi)$ is a perfect FRKHS with respect to a family
$\mathcal{F}:=\{\mathcal{L}_{\alpha}: \alpha\in\Lambda\}$ of integral functionals defined as in \eqref{integral-operator2}
and the corresponding functional reproducing kernel is
determined by
\begin{equation}\label{phi_K_L}
K(\alpha)(x):=\sum_{k\in\mathbb{Z}^d}
\left(\int_{\mathbb{R}^d}u_{\alpha}(t)\overline{\phi(t-k)}dt
\right)\tilde{\phi}(x-k),
\ \alpha\in\Lambda,\ x\in\mathbb{R}^d.
\end{equation}
\end{thm}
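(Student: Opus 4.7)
The plan is to derive this theorem as a direct application of Theorem \ref{integral-perfectORKHS} specialized to the scalar case $\mathcal{Y}=\mathbb{C}$. I set $W:=V^2(\phi)$, $\Phi(x):=\mathcal{K}(x,\cdot)$ with $\mathcal{K}$ given by \eqref{shift_invariant_rk}, and $\lambda_\alpha(x):=\overline{u_\alpha(x)}$ so that the integral operators in \eqref{integral-operator} reduce to the integral functionals \eqref{integral-operator2}. Since $V^2(\phi)$ is a classical RKHS with reproducing kernel $\mathcal{K}$, the kernel sections $\mathcal{K}(x,\cdot)=\Phi(x)(1)$ span a dense subspace, which is exactly the density condition \eqref{density_phi}.

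Next I would verify the integrability hypothesis \eqref{condition}. For each $x\in\mathbb{R}^d$, the operator norm satisfies $\|\Phi(x)\|_{\mathcal{B}(\mathbb{C},W)}=\|\mathcal{K}(x,\cdot)\|_{V^2(\phi)}=\mathcal{K}(x,x)^{1/2}$. Using the representation \eqref{shift_invariant_rk} together with the Riesz basis property \eqref{riesz_basis} of $\phi(\cdot-k),\ k\in\mathbb{Z}^d$, and the biorthogonality with $\tilde\phi(\cdot-k)$, I would show that $\mathcal{K}(x,x)^{1/2}$ is comparable (up to a constant depending on the Riesz bounds $m,M$) to $\bigl(\sum_{k\in\mathbb{Z}^d}|\phi(x-k)|^2\bigr)^{1/2}$. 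Since $\|\lambda_\alpha(x)\|=|u_\alpha(x)|$, condition \eqref{condition_shift_invariant} then yields \eqref{condition} directly.

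With the hypotheses of Theorem \ref{integral-perfectORKHS} in force, the next step is to identify the closed subspace $\widetilde{W}$ from \eqref{W} and the functional reproducing kernel. A short computation gives, for $\xi=1$,
\[
\psi_{\alpha,1}(x)=u_\alpha(x)\,\mathcal{K}(x,\cdot)
=\sum_{k\in\mathbb{Z}^d}u_\alpha(x)\overline{\phi(x-k)}\,\tilde\phi(\cdot-k),
\]
so that integrating against $x$ and interchanging sum and integral (justified by \eqref{condition_shift_invariant} and the Riesz-basis bounds) produces exactly the generating functions of \eqref{phi_H_K_L}. Hence $\widetilde{W}$ equals the closed subspace $\mathcal{H}$ defined in \eqref{phi_H_K_L}. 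Because $\Phi(x)^{*}u=\langle u,\mathcal{K}(x,\cdot)\rangle_{V^2(\phi)}=u(x)$ for $u\in V^2(\phi)$ by the reproducing property, the abstract perfect ORKHS $\widetilde{\mathcal{H}}=\{\Phi(\cdot)^{*}u:u\in\widetilde{W}\}$ coincides as a set of functions with $\mathcal{H}$, and its inner product reduces to the $V^2(\phi)$ inner product restricted to $\mathcal{H}$.

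Finally, Theorem \ref{integral-perfectORKHS} asserts that $K(\alpha)=\Phi(\cdot)^{*}\Psi(\alpha)$ where $\Psi(\alpha)=\int_{\mathbb{R}^d}\psi_{\alpha,1}(x)\,dx$. Applying the reproducing property of $\mathcal{K}$ once more gives $K(\alpha)(x)=\Psi(\alpha)(x)$, which is precisely formula \eqref{phi_K_L}. I expect the main technical obstacle to be the careful justification of condition \eqref{condition}, since it requires translating the abstract operator-norm bound into the concrete pointwise bound $\mathcal{K}(x,x)^{1/2}\lesssim\bigl(\sum_k|\phi(x-k)|^2\bigr)^{1/2}$ using the Riesz bounds of the generator; everything else reduces to Fubini-type interchanges and a direct application of the general theorem.
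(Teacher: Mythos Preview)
Your proposal is correct and follows essentially the same route as the paper: choose $W=V^2(\phi)$, $\Phi(x)=\mathcal{K}(x,\cdot)$, $\lambda_\alpha=\overline{u_\alpha}$, verify \eqref{condition}, and read off $\widetilde W=\mathcal{H}$ and $K(\alpha)$ from Theorem~\ref{integral-perfectORKHS}.

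The only noteworthy difference is in how the key pointwise bound $\mathcal{K}(x,x)^{1/2}\lesssim\bigl(\sum_{k}|\phi(x-k)|^2\bigr)^{1/2}$ is obtained. The paper writes $\widetilde\phi=\sum_k b_k\phi(\cdot-k)$, so that $\{\widetilde\phi(x-k)\}_k=\{\phi(x-k)\}_k\ast b$, and then invokes a Wiener-type lemma from \cite{AG} (this is where the hypothesis $\phi\in W_0(L_w^1)$ enters) to conclude $b\in l^1(\mathbb{Z}^d)$; Young's inequality then gives $\|\{\widetilde\phi(x-k)\}\|_{l^2}\le\|b\|_{l^1}\|\{\phi(x-k)\}\|_{l^2}$, and Cauchy--Schwarz finishes the estimate. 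Your suggestion to use only the Riesz bounds \eqref{riesz_basis} can also be made to work: since $\widehat b(\xi)=\bigl(\sum_j|\widehat\phi(\xi+2\pi j)|^2\bigr)^{-1}$ is bounded by $1/m$, Parseval gives directly $\|\{\widetilde\phi(x-k)\}\|_{l^2}\le m^{-1}\|\{\phi(x-k)\}\|_{l^2}$, which is arguably cleaner and avoids the appeal to \cite{AG}. Either way, one still needs $\phi\in W_0(L_w^1)$ to ensure $\{\phi(x-k)\}_k\in l^2(\mathbb{Z}^d)$ for every $x$, so the amalgam hypothesis is not dispensable.
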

\begin{proof}
To apply Theorem \ref{integral-perfectORKHS} in the case that $\mathcal{Y}=\mathbb{C}$, we need to verify for each
$\alpha\in\Lambda$ that $u_{\alpha}\Phi$, as a $W$-valued
function, is Bochner integrable. By the reproducing property, we obtain for any $x\in\mathbb{R}^d$ that
\begin{equation*}
\|\Phi(x)\|_W^2=\mathcal{K}(x,x)=\sum_{k\in\mathbb{Z}^d}
\overline{\phi(x-k)}\widetilde{\phi}(x-k),
\end{equation*}
which leads to for each $\alpha\in\Lambda$,
\begin{equation}\label{equality1}
\int_{\mathbb{R}^d}\|u_{\alpha}(x)\Phi(x)\|_Wdx
=\int_{\mathbb{R}^d}|u_{\alpha}(x)|
\left(\sum_{k\in\mathbb{Z}^d}\overline{\phi(x-k)}
\widetilde{\phi}(x-k)\right)^{1/2}dx.
\end{equation}
For each $x\in\mathbb{R}^d$, we introduce two
sequences $a:=\{a_k:k\in\mathbb{Z}^d\}$ and
$\tilde{a}:=\{\tilde{a}_k:k\in\mathbb{Z}^d\}$ with
$a_k:=\phi(x+k)$ and $\tilde{a}_k:=\tilde{\phi}(x+k)$.
Note that the dual generator $\tilde{\phi}\in V^2(\phi)$
has the expansion
$\tilde{\phi}=\sum_{k\in\mathbb{Z}^d}b_k\phi(\cdot-k),$ where the coefficients $b_k, k\in\mathbb{Z}^d,$ are the Fourier coefficients of the $2\pi$-periodic function
$$
b(\xi)=\left(\sum_{j\in\mathbb{Z}^d}|\hat{\phi}
(\xi+2\pi j)|^2\right)^{-1},\ \xi\in [-\pi,\pi]^d.
$$
Set $b:=\{b_k:k\in\mathbb{Z}^d\}$. It follows for each
$k\in\mathbb{Z}^d$ that
$$
\tilde{a}_k
=\sum_{l\in\mathbb{Z}^d}b_l\phi(x+k-l)
=\sum_{l\in\mathbb{Z}^d}b_la_{k-l}
=(a\ast b)_k.
$$
Since $\phi\in W_0(L_w^1)$, we get that
$a\in l^2(\mathbb{Z}^d)$. By Lemmas 2.8 and 2.11 in \cite{AG}, we also have that
$b\in l^1(\mathbb{Z}^d):=\left\{\{c_k:k\in\mathbb{Z}^d\}:
\sum_{k\in\mathbb{Z}^d}|c_k|<+\infty\right\}$.
Hence, the above equation leads to
$\tilde{a}\in l^2(\mathbb{Z}^d)$ and
$$
\|\tilde{a}\|_{l^2(\mathbb{Z}^d)}\leq
\|a\|_{l^2(\mathbb{Z}^d)}\|b\|_{l^1(\mathbb{Z}^d)}.
$$
Then by (\ref{equality1}), we obtain that
$$
\int_{\mathbb{R}^d}\|u_{\alpha}(x)\Phi(x)\|_Wdx
\leq\|b\|_{l^1(\mathbb{Z}^d)}^{1/2}
\int_{\mathbb{R}^d}|u_{\alpha}(x)|\left(\sum_{k\in
\mathbb{Z}^d}|\phi(x-k)|^2\right)^{1/2}dx
<+\infty,
$$
which together with \eqref{condition_shift_invariant} shows that the function $u_{\alpha}\Phi$ is Bochner integrable.

By Theorem \ref{integral-perfectORKHS} in the case that
$\mathcal{Y}=\mathbb{C}$, we get the perfect FRKHS as a closed subspace of $V^2(\phi)$
\begin{equation*}
\widetilde{\mathcal{H}}
:=\overline{\span}\left\{\int_{\mathbb{R}^d}
u_{\alpha}(x)\Phi(x)dx:\alpha\in\Lambda\right\}
\end{equation*}
with the functional reproducing kernel
$$
K(\alpha):=\int_{\mathbb{R}^d}u_{\alpha}(x)\Phi(x)dx.
$$
According to (\ref{shift_invariant_rk}) we get
$\widetilde{\mathcal{H}}=\mathcal{H}$ and $K$
represented as in (\ref{phi_K_L}).
\end{proof}

To end this example, we shall give a sufficient and
necessary condition which ensures that the FRKHS
$\mathcal{H}$ appearing in (\ref{phi_H_K_L}) is
identical to $V^2(\phi)$, that is,
\begin{equation}\label{density_V2}
\overline{\span}\left\{\sum_{k\in\mathbb{Z}^d}
\left(\int_{\mathbb{R}^d}u_{\alpha}(t)\overline{\phi(t-k)}dt
\right)\tilde{\phi}(\cdot-k):\alpha\in\Lambda\right\}
=V^2(\phi).
\end{equation}
For this purpose, we introduce a sequence of functions by
letting for each $\alpha\in\Lambda$
$$
g_\alpha(\xi):=\sum_{l\in\mathbb{Z}^d}\widehat{u_{\alpha}}
(\xi+2l\pi)\overline{\hat{\phi}(\xi+2l\pi)},
\ \xi\in[-\pi,\pi]^d.
$$
\begin{thm}
If $\phi\in W_0(L_w^1)$ satisfies \eqref{riesz_basis}
and for each $\alpha\in \Lambda$,
$u_{\alpha}\in L^2(\mathbb{R}^d)$ satisfies
\eqref{condition_shift_invariant}, then the density condition \eqref{density_V2} holds if and only if there holds
\begin{equation}\label{density_V2_1}
\overline{\span}\{g_\alpha:\ \alpha\in\Lambda\}
=L^2([-\pi,\pi]^d).
\end{equation}
\end{thm}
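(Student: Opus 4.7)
The plan is to identify $\mathcal{H}$ with a closed subspace of $L^2([-\pi,\pi]^d)$ via two natural isomorphisms and observe that under this identification the spanning set $\{K(\alpha):\alpha\in\Lambda\}$ is mapped precisely to $\{g_\alpha:\alpha\in\Lambda\}$ (up to a fixed nonzero multiplicative constant). The equivalence will then follow immediately.

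First, I would record the coefficients of $K(\alpha)$ with respect to the dual Riesz basis $\tilde{\phi}(\cdot-k)$, $k\in\mathbb{Z}^d$, of $V^2(\phi)$. By (\ref{phi_K_L}), these are
$$
c_k^{\alpha}:=\int_{\mathbb{R}^d}u_{\alpha}(t)\overline{\phi(t-k)}\,dt
=\langle u_{\alpha},\phi(\cdot-k)\rangle_{L^2(\mathbb{R}^d)}.
$$
Applying Plancherel, using that $\widehat{\phi(\cdot-k)}(\xi)=e^{-i(k,\xi)}\hat{\phi}(\xi)$, and periodizing the integral over $\mathbb{R}^d$ into a sum of integrals over $[-\pi,\pi]^d$ yields
$$
c_k^{\alpha}=\frac{1}{(2\pi)^d}\int_{[-\pi,\pi]^d}g_{\alpha}(\xi)e^{i(k,\xi)}\,d\xi,
$$
so $c_k^{\alpha}$ is (up to normalization) the $k$-th Fourier coefficient of $g_{\alpha}$.

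Second, I would check that $g_{\alpha}\in L^2([-\pi,\pi]^d)$. Since $u_{\alpha}\in L^2(\mathbb{R}^d)$, Plancherel gives $\widehat{u_{\alpha}}\in L^2(\mathbb{R}^d)$, and the Riesz basis condition (\ref{riesz_basis}) provides $\sum_{l\in\mathbb{Z}^d}|\hat{\phi}(\xi+2\pi l)|^2\leq M$ almost everywhere. Cauchy--Schwarz then yields
$$
|g_{\alpha}(\xi)|^2\leq M\sum_{l\in\mathbb{Z}^d}|\widehat{u_{\alpha}}(\xi+2\pi l)|^2,
$$
which is integrable on $[-\pi,\pi]^d$ since its integral equals $(2\pi)^{-d}M\|\widehat{u_{\alpha}}\|_{L^2(\mathbb{R}^d)}^2$ up to constants. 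Hence the assignment $\alpha\mapsto g_{\alpha}$ really lands in $L^2([-\pi,\pi]^d)$, and the Fourier-coefficient identity above combined with Parseval shows $\{c_k^{\alpha}:k\in\mathbb{Z}^d\}\in \ell^2(\mathbb{Z}^d)$ with
$$
\|\{c_k^{\alpha}\}\|_{\ell^2(\mathbb{Z}^d)}
=(2\pi)^{-d/2}\|g_{\alpha}\|_{L^2([-\pi,\pi]^d)}.
$$

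Finally, I would assemble the two topological isomorphisms. The map $\mathcal{R}:V^2(\phi)\to \ell^2(\mathbb{Z}^d)$ sending $f=\sum_{k}d_k\tilde{\phi}(\cdot-k)$ to its coordinate sequence $\{d_k\}$ is a topological isomorphism because $\{\tilde{\phi}(\cdot-k)\}_{k\in\mathbb{Z}^d}$ is a Riesz basis of $V^2(\phi)$. The Fourier-series map $\mathcal{F}:\ell^2(\mathbb{Z}^d)\to L^2([-\pi,\pi]^d)$ sending $\{d_k\}$ to $\sum_k d_k e^{i(k,\cdot)}$ is an (isometric) isomorphism. The composition $\mathcal{F}\circ\mathcal{R}$ sends $K(\alpha)$ to a nonzero constant multiple of $g_{\alpha}$, and therefore, by continuity, maps $\mathcal{H}=\overline{\span}\{K(\alpha):\alpha\in\Lambda\}$ onto $\overline{\span}\{g_{\alpha}:\alpha\in\Lambda\}$, while mapping $V^2(\phi)$ onto $L^2([-\pi,\pi]^d)$. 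Thus $\mathcal{H}=V^2(\phi)$ if and only if $\overline{\span}\{g_{\alpha}:\alpha\in\Lambda\}=L^2([-\pi,\pi]^d)$.

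The only delicate point is the interchange of sum and integral needed in the periodization step (i.e.\ justifying $\int_{\mathbb{R}^d}\widehat{u_{\alpha}}(\xi)\overline{\hat{\phi}(\xi)}e^{i(k,\xi)}d\xi=\int_{[-\pi,\pi]^d}g_{\alpha}(\xi)e^{i(k,\xi)}d\xi$); this is handled by the Cauchy--Schwarz bound above, which shows $\widehat{u_{\alpha}}\overline{\hat{\phi}}\in L^1(\mathbb{R}^d)$ (using $\hat{\phi}\in L^2$ from $\phi\in L^2$), so Fubini applies.
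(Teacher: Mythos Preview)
Your proof is correct and follows essentially the same approach as the paper's: both compute the coefficients $c_k^{\alpha}=\int_{\mathbb{R}^d}u_{\alpha}(t)\overline{\phi(t-k)}\,dt$ via Plancherel and periodization, identify them as the Fourier coefficients of $g_{\alpha}$, and then pass through the Riesz-basis isomorphism $V^2(\phi)\cong\ell^2(\mathbb{Z}^d)$ and the Fourier isomorphism $\ell^2(\mathbb{Z}^d)\cong L^2([-\pi,\pi]^d)$ to conclude. Your write-up is slightly more explicit than the paper's in verifying $g_{\alpha}\in L^2([-\pi,\pi]^d)$ and in justifying the Fubini step, but the argument is the same.
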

\begin{proof}
By the definition of $g_\alpha, \alpha\in\Lambda,$ we have for each $\alpha\in\Lambda$ and $k\in\mathbb{Z}^d$ that
\begin{eqnarray*}
\int_{\mathbb{R}^d}u_{\alpha}(x)\overline{\phi(x-k)}dx
&=&\int_{\mathbb{R}^d}\widehat{u_{\alpha}}(\xi)
\overline{\hat{\phi}(\xi)}e^{i(k,\xi)}d\xi\\
&=&\sum_{l\in\mathbb{Z}^d}\int_{[-\pi,\pi]^d}
\widehat{u_{\alpha}}(\xi+2l\pi)
\overline{\hat{\phi}(\xi+2l\pi)}e^{i(k,\xi)}d\xi\\
&=&\int_{[-\pi,\pi]^d}g_\alpha(\xi)e^{i(k,\xi)}d\xi
=c_k(g_\alpha),
\end{eqnarray*}
where we denote by $c_k(h)$ the $k$-th Fourier coefficients of $h\in L^2([-\pi,\pi]^d)$. According to the representation of the functions in $V^2(\phi)$, we note that the density condition (\ref{density_V2}) holds if and only if there holds
$$
\overline{\span}\{\{c_k(g_\alpha):k\in\mathbb{Z}^d\}:
\alpha\in\Lambda\}=l^2(\mathbb{Z}^d),
$$
which is equivalent to (\ref{density_V2_1}).
\end{proof}

A functional reproducing kernel $K$ for a perfect FRKHS
$\mathcal{H}$ can be represented by the classical
reproducing kernel $\mathcal{K}$ for $\mathcal{H}$ by
\eqref{relation-kernels}. Especially with respect to the
the family $\mathcal{F}$ of the integral functionals
\begin{equation}\label{integralfun}
L_{\alpha}(f):=\int_{\mathcal{X}}f(x)\overline{u_{\alpha}(x)}
d\mu(x),\ \alpha\in\Lambda,
\end{equation}
the functional reproducing kernel $K$ has the form
\begin{equation}\label{phi_suffi_nessi}
K(\alpha)(x):=\int_{\mathcal{X}}\mathcal{K}(t,x)
u_{\alpha}(t)d\mu(t).
\end{equation}
To end this section, we shall show that under a somewhat
stronger hypothesis, the operator $K$ with the form
(\ref{phi_suffi_nessi}) is a functional reproducing kernel
if and only if $\mathcal{K}$ is a classical reproducing
kernel on $\mathcal{X}$.

\begin{prop}\label{sufficient_necessary_1}
Let $\mathcal{X}$ be a compact set, $\mu$ a finite Borel
measure on $\mathcal{X}$ and $\mathcal{K}:\mathcal{X}
\times\mathcal{X}\rightarrow \mathbb{C}$ a continuous function. If $\{u_{\alpha}:\alpha\in\Lambda\}$
satisfies for each $\alpha\in \Lambda$,
$u_{\alpha}\in L^1(\mathcal{X},\mu)$ and
$\overline{\span}\{u_{\alpha}:\alpha\in\Lambda\}
=L^1(\mathcal{X},\mu)$, then the operator $K$ defined by
\eqref{phi_suffi_nessi} is a functional reproducing kernel
with respect to a family $\mathcal{F}$ of linear functionals $L_{\alpha}, \alpha\in\Lambda$ defined by
\eqref{integralfun} if and only if $\mathcal{K}$ is a
reproducing kernel on $\mathcal{X}$.
\end{prop}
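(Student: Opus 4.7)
The plan is to reduce both implications to the equivalent characterization supplied by Definition \ref{Alter-Defn}: $K$ is a functional reproducing kernel with respect to $\mathcal{F}$ if and only if for every $n\in\mathbb{N}$ and every $\{\alpha_j:j\in\mathbb{N}_n\}\subseteq\Lambda$, the matrix $[L_{\alpha_k}(K(\alpha_j)):j,k\in\mathbb{N}_n]$ is hermitian and positive semi-definite. Since $\mathcal{K}$ is bounded on the compact $\mathcal{X}\times\mathcal{X}$, each $K(\alpha)$ is continuous on $\mathcal{X}$ by dominated convergence, and $L_\alpha$ is well-defined on $\mathcal{S}_K$; Fubini's theorem then yields the key identity
$$L_{\alpha_k}(K(\alpha_j))=\int_{\mathcal{X}}\!\!\int_{\mathcal{X}}\mathcal{K}(t,x)\,u_{\alpha_j}(t)\,\overline{u_{\alpha_k}(x)}\,d\mu(t)\,d\mu(x),$$
which will be the engine of both directions.

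For the ``if'' direction, assuming $\mathcal{K}$ is a reproducing kernel on $\mathcal{X}$, I would invoke Theorem \ref{FRKHS_feature_representation_KL} to write $\mathcal{K}(t,x)=\langle\Phi(t),\Phi(x)\rangle_W$ for some Hilbert space $W$ and a mapping $\Phi:\mathcal{X}\to W$ which, via the canonical choice $\Phi(x)=\mathcal{K}(\cdot,x)$, is continuous with $\sup_{t\in\mathcal{X}}\|\Phi(t)\|_W<\infty$ by compactness of $\mathcal{X}$. Consequently $g\,\Phi$ is Bochner integrable for every $g\in L^1(\mathcal{X},\mu)$. Substituting the feature factorization into the key identity and interchanging the double integral with the inner product produces
$$\sum_{j,k\in\mathbb{N}_n}c_j\,\overline{c_k}\,L_{\alpha_k}(K(\alpha_j))=\Big\|\int_{\mathcal{X}}\Phi(t)\,g(t)\,d\mu(t)\Big\|_W^2\ge 0,$$
where $g:=\sum_{j\in\mathbb{N}_n}c_j u_{\alpha_j}$, delivering positive semi-definiteness. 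The hermitian property follows at once from $\mathcal{K}(t,x)=\overline{\mathcal{K}(x,t)}$ by swapping $t$ and $x$ in the double integral.

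For the ``only if'' direction, I would use the key identity together with Definition \ref{Alter-Defn} to obtain that, for every finite linear combination $g\in\span\{u_{\alpha}:\alpha\in\Lambda\}$,
$$\int_{\mathcal{X}}\!\!\int_{\mathcal{X}}\mathcal{K}(t,x)\,g(t)\,\overline{g(x)}\,d\mu(t)\,d\mu(x)\ge 0,$$
and the analogous identity that the double integral of $[\mathcal{K}(t,x)-\overline{\mathcal{K}(x,t)}]\,g(t)\,\overline{g(x)}$ vanishes. Because $\mathcal{K}$ is bounded, both bilinear forms in $g$ are continuous on $L^1(\mathcal{X},\mu)$, and the density hypothesis $\overline{\span}\{u_\alpha:\alpha\in\Lambda\}=L^1(\mathcal{X},\mu)$ extends both relations to all $g\in L^1(\mathcal{X},\mu)$. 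To convert these integral inequalities into pointwise statements on $\mathcal{X}\times\mathcal{X}$, I would fix $y_1,\dots,y_n\in\mathcal{X}$ and plug in the normalized indicators $g_\varepsilon:=\sum_j c_j\,\mu(B_\varepsilon(y_j))^{-1}\chi_{B_\varepsilon(y_j)}\in L^1(\mathcal{X},\mu)$; uniform continuity of $\mathcal{K}$ on the compact product makes the double integral converge to $\sum_{j,k}c_j\,\overline{c_k}\,\mathcal{K}(y_j,y_k)$ as $\varepsilon\to 0^+$, and the same passage disposes of the hermitian identity, so $\mathcal{K}$ is hermitian positive semi-definite on $\mathcal{X}\times\mathcal{X}$ and hence a reproducing kernel.

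The main obstacle I anticipate is this last approximation step: it implicitly requires $\mathcal{X}$ to be a compact metric space and $\supp\mu=\mathcal{X}$, so that $\mu(B_\varepsilon(y))>0$ for every $y\in\mathcal{X}$ and every sufficiently small $\varepsilon$. These are evidently the ``somewhat stronger hypothesis'' referred to in the paragraph preceding the proposition, and I would state them explicitly at the outset of the proof. Everything else reduces to Fubini, uniform continuity of $\mathcal{K}$, and the feature-map machinery of Theorem \ref{FRKHS_feature_representation_KL}.
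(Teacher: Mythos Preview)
Your strategy is correct in both directions, and the ``if'' part is essentially the paper's argument unpacked: the paper invokes Theorem~\ref{integral-perfectORKHS} with $W=\mathcal{H}$, $\Phi(x)=\mathcal{K}(x,\cdot)$, $\lambda_\alpha(x)=\overline{u_\alpha(x)}$; compactness and continuity of $\mathcal{K}$ bound $\|\Phi(x)\|_W=\sqrt{\mathcal{K}(x,x)}$ uniformly, so condition~(\ref{condition}) reduces to $u_\alpha\in L^1$, and the theorem hands back $K$ in the form~(\ref{phi_suffi_nessi}) as a functional reproducing kernel. Your direct feature-map computation is the same content done by hand; the paper's route is shorter only because the machinery is already built.

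The genuine difference is in the ``only if'' direction. You and the paper both arrive at
\[
\int_{\mathcal{X}}\int_{\mathcal{X}} g(s)\,\mathcal{K}(s,t)\,\overline{g(t)}\,d\mu(s)\,d\mu(t)\ge 0
\quad\text{for all }g\in L^1(\mathcal{X},\mu),
\]
by positive semi-definiteness of $[L_{\alpha_k}(K(\alpha_j))]$ on $\span\{u_\alpha\}$, boundedness of $\mathcal{K}$, and density. At this point the paper simply cites Bochner~\cite{B41}: under the stated hypotheses this integral positivity is equivalent to $\mathcal{K}$ being a reproducing kernel, and the proof ends. Your normalized-ball-indicator approximation rederives that classical step from scratch, which is perfectly valid but, as you correctly flag, forces a metric on $\mathcal{X}$ and $\supp\mu=\mathcal{X}$. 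So the trade-off is self-containment versus extra hypotheses; the paper buys brevity by outsourcing the passage from integral to pointwise positivity.

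One correction: the phrase ``somewhat stronger hypothesis'' in the paragraph preceding the proposition does \emph{not} refer to unstated metric or full-support assumptions. It is contrasting the proposition's own standing hypotheses (compact $\mathcal{X}$, continuous $\mathcal{K}$, density of the $u_\alpha$ in $L^1$) with the weaker ambient setup of Section~5. The extra assumptions you need are genuinely yours, introduced by choosing to reprove Bochner's characterization rather than cite it.
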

\begin{proof}
We suppose that $\mathcal{K}$ is a reproducing kernel
on $\mathcal{X}$ and $\mathcal{H}$ is the RKHS of
$\mathcal{K}$. To show that $K$, defined by
(\ref{phi_suffi_nessi}), is a functional reproducing kernel, we first introduce the feature space $W:=\mathcal{H}$ and the feature map $\Phi(x):=\mathcal{K}(x,\cdot), x\in\mathcal{X}$
of $\mathcal{K}$ and set $\lambda_{\alpha}(x):=\overline{u_{\alpha}(x)}, x\in\mathcal{X},\alpha\in\Lambda$.
We then verify the assumption (\ref{condition}) in Theorem
\ref{integral-perfectORKHS}. Based on the above notations,
we have for each $\alpha\in\Lambda$ that
$$
\int_{\mathcal{X}}|\lambda_{\alpha}(x)|
\|\Phi(x)\|_{\mathcal{H}}d\mu(x)
=\int_{\mathcal{X}}|u_{\alpha}(x)|
\sqrt{\mathcal{K}(x,x)}d\mu(x)
\leq\max_{x\in \mathcal{X}}\{\sqrt{\mathcal{K}(x,x)}\}
\int_{\mathcal{X}}|u_{\alpha}(x)|d\mu(x)
<+\infty.
$$
Hence, by Theorem \ref{integral-perfectORKHS}, we get
a functional reproducing kernel
\begin{equation*}
K(\alpha)(x)=\left\langle\int_{\mathcal{X}}u_{\alpha}(t)
\Phi(t)d\mu(t),\Phi(x)\right\rangle_{W},
\ \alpha\in\Lambda, \ x\in \mathcal{X},
\end{equation*}
with respect to a family $\mathcal{F}$ of linear functionals $L_{\alpha}, \alpha\in\Lambda$ defined by \eqref{integralfun}. By the definition of $\Phi$ and the reproducing property of $\mathcal{K}$, we conclude that $K$ has the form (\ref{phi_suffi_nessi}).

Conversely, we suppose that $K$ with the form
(\ref{phi_suffi_nessi}) is a functional reproducing
kernel with respect to $\mathcal{F}$ and show that
$\mathcal{K}$ is a classical reproducing kernel on
$\mathcal{X}$. We note from \cite{B41} that $\mathcal{K}$
is a classical reproducing kernel on $\mathcal{X}$ if and only if there holds for all $g\in L^1(\mathcal{X},\mu)$,
\begin{equation}\label{ifonlyif}
\int_{\mathcal{X}}\int_{\mathcal{X}}
g(s)\mathcal{K}(s,t)\overline{g(t)}d\mu(s)d\mu(t)
\geq 0.
\end{equation}
Hence, it suffices to verify that (\ref{ifonlyif}) holds true for all $g\in L^1(\mathcal{X},\mu)$. Observing
from the definition of the functional reproducing kernel
and the integral functionals, we have for any
$u=\sum_{j\in\mathbb{N}_n}c_ju_{\alpha_j}
\in\span\{u_{\alpha}:\alpha\in \Lambda\}$ that
$$
\int_{\mathcal{X}}\int_{\mathcal{X}}
u(s)\mathcal{K}(s,t)\overline{u(t)}d\mu(s)d\mu(t)
=\sum_{j\in\mathbb{N}_n}\sum_{k\in\mathbb{N}_n}
c_j\overline{c_k}L_{\alpha_k}(K(\alpha_j)).
$$
Since $K$ is a functional reproducing kernel with respect to $\mathcal{F}$, we get by the above equation that
$$
\int_{\mathcal{X}}\int_{\mathcal{X}}
u(s)\mathcal{K}(s,t)\overline{u(t)}d\mu(s)d\mu(t)
\geq 0
$$
for any $u\in\span\{u_{\alpha}:\alpha\in\Lambda\}$.
By the density of $u_{\alpha},\alpha\in\Lambda,$ in
$L^1(\mathcal{X},\mu)$, there exists for each
$g\in L^1(\mathcal{X},\mu)$ a sequence
$u_n\in \span\{u_{\alpha}:\alpha\in\Lambda\}$ such that
$$
\lim_{n\rightarrow \infty}
\|u_n-g\|_{L^1(\mathcal{X},\mu)}=0.
$$
By the triangular inequality there holds
\begin{eqnarray*}
&&\left|\int_{\mathcal{X}}\int_{\mathcal{X}}
\mathcal{K}(s,t)(g(s)\overline{g(t)}-u_n(s)
\overline{u_n(t)})d\mu(s)d\mu(t)\right|\\
&\leq&\max_{s,t\in X}\{|\mathcal{K}(s,t)|\}
\left(\|g\|_{L^1(\mathcal{X},\mu)}+
\|u_n\|_{L^1(\mathcal{X},\mu)}\right)
\|u_n-g\|_{L^1(\mathcal{X},\mu)},
\end{eqnarray*}
which implies that
$$
\int_{\mathcal{X}}\int_{\mathcal{X}}
g(s)\mathcal{K}(s,t)\overline{g(t)}d\mu(s)d\mu(t)
=\lim_{n\rightarrow\infty}\int_{\mathcal{X}}
\int_{\mathcal{X}}u_n(s)\mathcal{K}(s,t)
\overline{u_n(t)}d\mu(s)d\mu(t).
$$
This together with the inequalities
$$
\int_{\mathcal{X}}\int_{\mathcal{X}}
u_n(s)\mathcal{K}(s,t)\overline{u_n(t)}d\mu(s)d\mu(t)
\geq 0, \ \ \mbox{for all} \ \ n,
$$
ensures that (\ref{ifonlyif}) holds for each
$g\in L^1(\mathcal{X},\mu)$. Therefore, we have that
$\mathcal{K}$ is a reproducing kernel on $\mathcal{X}$.
\end{proof}

\section{Sampling and Reconstruction in FRKHSs}

We describe in this section a reconstruction of an element in an FRKHS from its functional values. This study is motivated by the average sampling, which aims at recovering a function from its local average values represented as integral functional values. As pointed out in the classical sampling theory in RKHSs, the local average functionals used in the reconstruction should be continuous to ensure the stability of the sampling process. In practice, only finite local average functionals are used to establish a reconstruction of a function. However, to obtain a more precise approximation of the target function, more local average functionals are demanded. Hence, the average sampling should be considered in a space which ensures the continuity of the local average functionals with respect to all the points in the domain. Such a space is just an FRKHS with respect to the family of all the local average functionals. We establish the sampling theorem in the context of general FRKHSs and present explicit examples concerning the Paley-Wiener spaces. The average sampling has been widely studied in classical RKHSs, such as the Paley-Wiener spaces and the shift-invariant spaces without the availability of FRKHSs \cite{A,S,SZ}. We point out that these spaces are FRKHSs with respect to the local average functionals and the existing sampling theorems are special cases of the main results to be presented in this section.

The general sampling and reconstruction problem in an ORKHS may be reformulated as it in a corresponding FRKHS.
Let $\mathcal{H}$ be an FRKHS with respect to a family
$\mathcal{F}:=\{L_{\alpha}:\alpha\in\Lambda\}$ of
linear functionals on $\mathcal{H}$. We shall
consider the complete reconstruction of an
element $f\in\mathcal{H}$ from its sampled data
$\{L_{\alpha_j}(f):j\in\mathbb{J}\}$, where
$\mathbb{J}$ is a countable index set and
$\{\alpha_j:j\in\mathbb{J}\}\subseteq \Lambda$.
The general sampling and reconstruction problem in
an ORKHS $\mathcal{H}$ with respect to a family
$\mathcal{L}$ of linear operators
$L_{\alpha},\ \alpha\in\Lambda,$ from $\mathcal{H}$ to $\mathcal{Y}$, where the sampled data have the form
$\{\langle L_{\alpha_j}(f),\xi_j\rangle_{\mathcal{Y}}:
j\in\mathbb{J}\},$ with $\{\alpha_j:j\in\mathbb{J}\}\subseteq\Lambda$ and $\{\xi_j:j\in\mathbb{J}\}\subseteq\mathcal{Y}$,
can be reformulated as the sampling and
reconstruction problem in an FRKHS. In fact, if we introduce the family $\mathcal{F}$ of linear functionals $\widetilde{L}_{(\alpha,\xi)},\ (\alpha,\xi)
\in\Lambda\times\mathcal{Y},$ as
$$
\widetilde{L}_{(\alpha,\xi)}(f)
=\langle L_{\alpha}(f),\xi\rangle_{\mathcal{Y}},
\ (\alpha,\xi)\in\Lambda\times\mathcal{Y},
$$
space $\mathcal{H}$ is also an FRKHS with respect to $\mathcal{F}$. From this observation, the above
sampling problem in an ORKHS can also be interpreted
as reconstructing an element $f$ in an FRKHS $\mathcal{H}$ from its functionals values
$\widetilde{L}_{(\alpha_j,\xi_j)}(f), \ j\in\mathbb{J}$. Hence, we shall consider sampling and reconstruction in FRKHSs only.

Complete reconstruction of a function requires the availability of a frame or a Riesz basis of a Hilbert space. We review the concept of frames and Riesz bases of a Hilbert space according to \cite{D,DS}. Let $\mathcal{H}$ be a separable Hilbert space with the inner product $\langle\cdot,\cdot\rangle_{\mathcal{H}}$ and let $\mathbb{J}$ be a countable index set. A sequence
$\{f_j: j\in\mathbb{J}\}\subset\mathcal{H}$ is called a frame of $\mathcal{H}$ if there exist constants $0<A\leq B<+\infty$ such that
$$
A\|f\|_{\mathcal{H}}
\leq \left(\sum_{j\in\mathbb{J}}
|\langle f,f_j\rangle_{\mathcal{H}}|^2\right)^{1/2}
\leq B\|f\|_{\mathcal{H}},
\ \mbox{for all}\ f\in\mathcal{H}.
$$
For a frame $\{f_j: j\in\mathbb{J}\}$ of
$\mathcal{H}$, there exists a dual frame
$\{g_j: j\in\mathbb{J}\}\subset\mathcal{H}$, for which
\begin{equation}\label{reconstruction-formula-H}
f=\sum_{j\in\mathbb{J}}\langle f,
g_j\rangle_{\mathcal{H}}f_j,
\ \mbox{for each}\ f\in\mathcal{H}.
\end{equation}
A frame $\{f_j: j\in\mathbb{J}\}$ is a Riesz basis of $\mathcal{H}$ if it is minimal, that is,
$$
f_j\notin\overline{\span}\{f_k:k\in\mathbb{J},k\neq j\},
\ \mbox{for each}\ j\in\mathbb{J}.
$$
If $\{f_j: j\in\mathbb{J}\}$ is a Riesz basis of $\mathcal{H}$, there exists a unique sequence
$\{g_j: j\in\mathbb{J}\}\subset \mathcal{H}$ satisfying (\ref{reconstruction-formula-H}) and we call it the dual Riesz basis of the original Riesz basis. Moreover,
the dual Riesz basis $\{g_j: j\in\mathbb{J}\}$
satisfies the biorthogonal condition that
$\langle f_j,g_k\rangle_{\mathcal{H}}=\delta_{jk}$,
for all $j,k\in\mathbb{J}$, where $\delta_{j,k}$ is the Kronecker delta notation.

We now consider reconstructing an element in an FRKHS
in terms of its sampled functional values.
Let $\mathcal{H}$ be an FRKHS with respect to a
family $\mathcal{F}$ of linear functionals
$L_{\alpha},\ \alpha\in \Lambda$ and $K$ the functional reproducing kernel for $\mathcal{H}$. As far as sampling is concerned, we assume that there exists a countable sequence $\alpha_j\in \Lambda, j\in\mathbb{J}$, such that
$\mathbb{K}:=\{K(\alpha_j):\ j\in\mathbb{J}\}$
constitutes a frame for $\mathcal{H}$. We need to
construct a dual frame of $\mathbb{K}$. To this end,
we define the frame operator
$T: \mathcal{H}\rightarrow \mathcal{H}$ for each
$f\in \mathcal{H}$ by
\begin{equation}\label{frame_operator}
Tf:=\sum_{j\in\mathbb{J}}\langle f,
K(\alpha_j)\rangle_{\mathcal{H}} K(\alpha_j).
\end{equation}
It is well-known that operator $T$ is bounded, invertible, self-adjoint and strictly positive on $\mathcal{H}$ and the sequence $\{G_j: j\in\mathbb{J}\}$ with $G_j:=T^{-1}(K(\alpha_j))$ is the dual frame of $\mathbb{K}$. We shall show that the dual frame
$\{G_j: j\in\mathbb{J}\}$ is also induced from a functional reproducing kernel. For this purpose,
we define operator $\widetilde{K}$ from $\Lambda$ to $\mathcal{H}$ by
\begin{equation}\label{dualkernel}
\widetilde{K}(\alpha):=T^{-1}(K(\alpha)),
\ \mbox{for each} \ \alpha\in \Lambda
\end{equation}
and introduce the space
$\widetilde{\mathcal{H}}:=\{f:f\in \mathcal{H}\}$
with the bilinear mapping
$\langle \cdot,\cdot\rangle_{\widetilde{\mathcal{H}}}$ defined by
\begin{equation}\label{dualinner_product}
\langle f,g\rangle_{\widetilde{\mathcal{H}}}
:=\langle Tf,g\rangle_{\mathcal{H}},
\ f,g\in\widetilde{\mathcal{H}}.
\end{equation}
Since operator $T$ is self-adjoint and strictly positive, we conclude that $\widetilde{\mathcal{H}}$ is an inner product space endowed with the inner product (\ref{dualinner_product}). The spaces $\mathcal{H}$ and $\widetilde{\mathcal{H}}$ have the same elements although their norms are different. The next lemma reveals the equivalence of their norms.

\begin{lemma}\label{norm-equivalence}
Let $\mathcal{H}$ be an FRKHS with respect to a
family $\mathcal{F}$ of linear functionals
$L_{\alpha},\ \alpha\in \Lambda$ and $K$ the
functional reproducing kernel for $\mathcal{H}$. If for a countable sequence $\alpha_j$, $j\in\mathbb{J}$, in $\Lambda$, the set $\mathbb{K}$ constitutes a frame for $\mathcal{H}$ and $T$ is the frame operator, then the norms of $\mathcal{H}$ and $\widetilde{\mathcal{H}}$, defined as above, are equivalent.
\end{lemma}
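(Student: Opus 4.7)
The plan is to show norm equivalence directly from the frame inequality by observing that $\|f\|_{\widetilde{\mathcal{H}}}^2 = \langle Tf, f\rangle_{\mathcal{H}}$ is exactly the middle quantity in the frame condition.

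First I would recall that since $\mathbb{K}=\{K(\alpha_j):j\in\mathbb{J}\}$ is a frame of $\mathcal{H}$, there exist frame bounds $0<A\leq B<+\infty$ such that for every $f\in\mathcal{H}$,
$$A^{2}\|f\|_{\mathcal{H}}^{2}\leq\sum_{j\in\mathbb{J}}|\langle f,K(\alpha_j)\rangle_{\mathcal{H}}|^{2}\leq B^{2}\|f\|_{\mathcal{H}}^{2}.$$
Next I would compute $\langle Tf,f\rangle_{\mathcal{H}}$ using the definition \eqref{frame_operator} of the frame operator: swapping the series with the inner product (which is legitimate since the series defining $Tf$ converges in $\mathcal{H}$), one obtains
$$\langle Tf,f\rangle_{\mathcal{H}}=\sum_{j\in\mathbb{J}}\langle f,K(\alpha_j)\rangle_{\mathcal{H}}\langle K(\alpha_j),f\rangle_{\mathcal{H}}=\sum_{j\in\mathbb{J}}|\langle f,K(\alpha_j)\rangle_{\mathcal{H}}|^{2}.$$

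With this identification in hand, I would then invoke the definition \eqref{dualinner_product} of the inner product on $\widetilde{\mathcal{H}}$ to conclude $\|f\|_{\widetilde{\mathcal{H}}}^{2}=\langle Tf,f\rangle_{\mathcal{H}}$, so the frame inequality immediately rewrites as
$$A\|f\|_{\mathcal{H}}\leq\|f\|_{\widetilde{\mathcal{H}}}\leq B\|f\|_{\mathcal{H}},\quad f\in\mathcal{H},$$
which is exactly the desired norm equivalence.

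There is essentially no hard step: the argument is a one-line rewriting of the frame bounds using the identity $\langle Tf,f\rangle_{\mathcal{H}}=\sum_{j}|\langle f,K(\alpha_j)\rangle_{\mathcal{H}}|^{2}$. The only small care needed is to justify the interchange of the sum and the inner product when expanding $\langle Tf,f\rangle_{\mathcal{H}}$, but this follows from the norm convergence of the series in $\mathcal{H}$ guaranteed by the frame property and the continuity of $\langle\cdot,f\rangle_{\mathcal{H}}$.
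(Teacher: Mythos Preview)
Your proof is correct and follows essentially the same approach as the paper: both argue that $\|f\|_{\widetilde{\mathcal{H}}}^2=\langle Tf,f\rangle_{\mathcal{H}}$ lies between $A\|f\|_{\mathcal{H}}^2$ and $B\|f\|_{\mathcal{H}}^2$ by the frame inequality. The only difference is that the paper simply cites this bound on $\langle Tf,f\rangle_{\mathcal{H}}$ from the literature, whereas you unpack the identity $\langle Tf,f\rangle_{\mathcal{H}}=\sum_{j\in\mathbb{J}}|\langle f,K(\alpha_j)\rangle_{\mathcal{H}}|^2$ explicitly.
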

\begin{proof}
It is known \cite{D} that there exist $A,B>0$ such that for any $f\in \mathcal{H}$,
$$
A\|f\|_{\mathcal{H}}^2
\leq\langle Tf,f\rangle_{\mathcal{H}}
\leq B\|f\|_{\mathcal{H}}^2.
$$
This together with the fact
$\langle f,f\rangle_{\widetilde{\mathcal{H}}}
=\langle Tf,f\rangle_{\mathcal{H}}$
yields the norm equivalence of $\mathcal{H}$ and $\widetilde{\mathcal{H}}$.
\end{proof}

It follows from Lemma \ref{norm-equivalence} that $\widetilde{\mathcal{H}}$ is a Hilbert space since $\mathcal{H}$ is a Hilbert space. The following proposition shows that $\widetilde{\mathcal{H}}$ is also an FRKHS with respect to $\mathcal{F}$ and has $\widetilde{K}$ as its functional reproducing kernel.

\begin{prop}\label{dualLkernel}
Let $\mathcal{H}$ be an FRKHS with respect to
a family $\mathcal{F}$ of linear functionals
$L_{\alpha}, \alpha\in \Lambda,$ and $K$ the
functional reproducing kernel for $\mathcal{H}$.
If for a countable sequence $\alpha_j$, $j\in\mathbb{J}$, in $\Lambda$, the set $\mathbb{K}$ constitutes a frame
for $\mathcal{H}$ and $T$ is the frame operator, then space $\widetilde{\mathcal{H}}$ is also an FRKHS with respect to $\mathcal{F}$ and $\widetilde{K}$ is the corresponding functional reproducing kernel for $\widetilde{\mathcal{H}}$.
\end{prop}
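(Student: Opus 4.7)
The plan is to verify in turn the two defining conditions of an FRKHS for $\widetilde{\mathcal{H}}$ with respect to $\mathcal{F}$, then to establish the reproducing identity for $\widetilde{K}$, and finally to invoke the uniqueness clause of Corollary~\ref{FRKHS_property} to conclude that $\widetilde{K}$ is the corresponding functional reproducing kernel. In contrast to the constructive arguments in earlier sections, here everything can be transferred from $\mathcal{H}$ via the operator $T$, so the proof should be short and structural.

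First, I would transfer the FRKHS structure from $\mathcal{H}$ to $\widetilde{\mathcal{H}}$ using Lemma~\ref{norm-equivalence}. Since the two norms are equivalent on the common underlying set, the compatibility of $\|\cdot\|_{\mathcal{H}}$ with $\mathcal{F}$ immediately forces the compatibility of $\|\cdot\|_{\widetilde{\mathcal{H}}}$, and the boundedness of each $L_{\alpha}$ on $\mathcal{H}$ yields its boundedness on $\widetilde{\mathcal{H}}$ after rescaling by the equivalence constant. Hence $\widetilde{\mathcal{H}}$ is an FRKHS with respect to $\mathcal{F}$.

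Second, I would verify the reproducing identity
\[
L_{\alpha}(f) = \langle f, \widetilde{K}(\alpha) \rangle_{\widetilde{\mathcal{H}}}, \quad f \in \widetilde{\mathcal{H}}, \; \alpha \in \Lambda.
\]
The computation uses the stated self-adjointness of the frame operator $T$, and hence of $T^{-1}$: starting from $\langle f, \widetilde{K}(\alpha)\rangle_{\widetilde{\mathcal{H}}} = \langle Tf, \widetilde{K}(\alpha)\rangle_{\mathcal{H}}$ by the definition \eqref{dualinner_product}, I substitute $\widetilde{K}(\alpha) = T^{-1}K(\alpha)$ from \eqref{dualkernel}, move $T^{-1}$ to the first slot via self-adjointness, cancel $T^{-1}T = I$, and finish by invoking the reproducing property of $K$ in $\mathcal{H}$ to obtain $L_{\alpha}(f)$.

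Finally, Corollary~\ref{FRKHS_property} guarantees a unique functional reproducing kernel for the FRKHS $\widetilde{\mathcal{H}}$; since $\widetilde{K}$ satisfies the required reproducing identity it must coincide with that kernel. The required density of $\mathrm{span}\{\widetilde{K}(\alpha):\alpha\in\Lambda\}$ in $\widetilde{\mathcal{H}}$ is then automatic, but one can also see it directly: $T^{-1}$ is a bounded bijection on $\mathcal{H}$, so it sends the dense set $\mathrm{span}\{K(\alpha):\alpha\in\Lambda\}$ onto $\mathrm{span}\{\widetilde{K}(\alpha):\alpha\in\Lambda\}$, which is therefore dense in $\mathcal{H}$ and, by the norm equivalence, in $\widetilde{\mathcal{H}}$ as well. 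There is no genuine obstacle in the argument; the only point requiring care is to track which inner product is in play at each step and to apply the self-adjointness of $T^{-1}$ at the correct moment in the chain of equalities.
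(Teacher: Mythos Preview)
Your proposal is correct and follows essentially the same route as the paper: use Lemma~\ref{norm-equivalence} to transfer compatibility and continuity to $\widetilde{\mathcal{H}}$, then verify the reproducing identity via the definition \eqref{dualinner_product} and the self-adjointness of $T$ (the paper moves $T$ to the second slot and uses $T\widetilde{K}(\alpha)=K(\alpha)$, whereas you substitute $\widetilde{K}(\alpha)=T^{-1}K(\alpha)$ and move $T^{-1}$, which is the same computation). Your additional remarks on uniqueness via Corollary~\ref{FRKHS_property} and on density are sound but not needed for the argument as stated.
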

\begin{proof}
By the norm equivalence of $\mathcal{H}$ and  $\widetilde{\mathcal{H}}$ established in Lemma \ref{norm-equivalence}, we can show that the norm of $\widetilde{\mathcal{H}}$ is compatible with $\mathcal{F}$ since so is the norm of $\mathcal{H}$. Combining the norm equivalence with the continuity of the functionals $L_{\alpha}, \alpha\in\Lambda,$ on $\mathcal{H}$, we also obtain that these functionals are continuous on $\widetilde{\mathcal{H}}$. The definition of the FRKHS ensures that $\widetilde{\mathcal{H}}$ is an FRKHS with respect to $\mathcal{F}$.

It remains to show that the operator $\widetilde{K}$ reproduces the functionals $L_{\alpha}(f)$, for $\alpha\in\Lambda$. By (\ref{dualinner_product}), the self-adjoint property of $T$ and the reproducing property of the kernel $K$, we get for each
$f\in \widetilde{\mathcal{H}}$ that
$$
\langle f,\widetilde{K}(\alpha) \rangle_{\widetilde{\mathcal{H}}}
=\langle Tf,\widetilde{K}(\alpha)\rangle_{\mathcal{H}}
=\langle f,T\widetilde{K}(\alpha)\rangle_{\mathcal{H}}
=\langle f,K(\alpha)\rangle_{\mathcal{H}}=L_{\alpha}(f),
\ \ \mbox{for all}\ \ \alpha\in\Lambda.
$$
Hence, $\widetilde{K}$ is the corresponding functional reproducing kernel for $\widetilde{\mathcal{H}}$.
\end{proof}

It is clear that
$G_j=\widetilde{K}(\alpha_j)$, $j\in\mathbb{J}.$ That is, $\widetilde{\mathbb{K}}:=\{\widetilde{K}(\alpha_j):
\ j\in\mathbb{J}\}$ is the dual frame of $\mathbb{K}$ in $\mathcal{H}$. We shall call operator $\widetilde{K}$ defined by (\ref{dualkernel}) the dual functional reproducing kernel of $K$ with respect to the set $\{\alpha_j: j\in\mathbb{J}\}$.

With the help of the dual functional reproducing kernel, we establish the complete reconstruction formula of an element in an FRKHS from its functional values.

\begin{thm}\label{complete-reconstruction-formula}
Let $\mathcal{H}$ be an FRKHS with respect to
the family $\mathcal{F}$ of linear functionals $L_{\alpha}$, $\alpha\in \Lambda$ and $K$ the
functional reproducing kernel for $\mathcal{H}$. If for a countable sequence $\alpha_j$, $j\in\mathbb{J}$, in $\Lambda$, the set $\mathbb{K}$ constitutes a frame for $\mathcal{H}$ and $\widetilde{K}$ is the dual functional reproducing kernel of $K$ with respect to the set $\{\alpha_j: j\in\mathbb{J}\}$, then $\widetilde{\mathbb{K}}$ is a dual frame of
$\mathbb{K}$ in $\mathcal{H}$ and there holds
for each $f\in\mathcal{H}$,
\begin{eqnarray}\label{reconstruction}
f=\sum_{j\in\mathbb{J}}L_{\alpha_j}(f)\widetilde{K}(\alpha_j),
\end{eqnarray}
where the reconstruction formula \eqref{reconstruction} holds also in $\widetilde{\mathcal{H}}$. Moreover, if $\mathbb{K}$ is a Riesz basis for $\mathcal{H}$, then $\widetilde{\mathbb{K}}$ is an orthonormal basis for the FRKHS $\widetilde{\mathcal{H}}$ of $\widetilde{K}$.
\end{thm}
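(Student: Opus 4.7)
The plan is to deduce the theorem from the frame-theoretic machinery already assembled in the discussion preceding it, together with Proposition~\ref{dualLkernel} and Lemma~\ref{norm-equivalence}. Almost every ingredient is in place; the task is to package them correctly.

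First, I would observe that by the definition \eqref{dualkernel}, $\widetilde{K}(\alpha_j)=T^{-1}K(\alpha_j)$. The standard Hilbert space frame theory recalled just before the theorem (where the sequence $G_j=T^{-1}K(\alpha_j)$ was identified as the canonical dual frame of $\mathbb{K}$) therefore gives immediately that $\widetilde{\mathbb{K}}$ is a dual frame of $\mathbb{K}$ in $\mathcal{H}$. The reconstruction formula in $\mathcal{H}$ then follows by substituting the reproducing identity $\langle f,K(\alpha_j)\rangle_{\mathcal{H}}=L_{\alpha_j}(f)$ (Corollary~\ref{FRKHS_property}) into the standard dual-frame expansion $f=\sum_{j\in\mathbb{J}}\langle f,K(\alpha_j)\rangle_{\mathcal{H}}\,\widetilde{K}(\alpha_j)$.

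To transfer the reconstruction formula to $\widetilde{\mathcal{H}}$, I would invoke Lemma~\ref{norm-equivalence}: since $\mathcal{H}$ and $\widetilde{\mathcal{H}}$ coincide as sets with equivalent norms, convergence of the partial sums in $\mathcal{H}$ forces convergence to the same limit in $\widetilde{\mathcal{H}}$. Consequently \eqref{reconstruction} is valid in $\widetilde{\mathcal{H}}$ as well.

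For the final statement, boundedness and invertibility of $T$ (hence of $T^{-1}$) imply that $\widetilde{\mathbb{K}}=T^{-1}\mathbb{K}$ is itself a Riesz basis of $\mathcal{H}$ whenever $\mathbb{K}$ is, and therefore a Riesz basis of $\widetilde{\mathcal{H}}$ by the norm equivalence. The orthonormality in $\widetilde{\mathcal{H}}$ reduces to the direct calculation
\[
\langle\widetilde{K}(\alpha_j),\widetilde{K}(\alpha_k)\rangle_{\widetilde{\mathcal{H}}}
=\langle T\widetilde{K}(\alpha_j),\widetilde{K}(\alpha_k)\rangle_{\mathcal{H}}
=\langle K(\alpha_j),\widetilde{K}(\alpha_k)\rangle_{\mathcal{H}}
=\delta_{jk},
\]
where I used the definition \eqref{dualinner_product} of the inner product on $\widetilde{\mathcal{H}}$, the identity $T\widetilde{K}(\alpha_j)=K(\alpha_j)$ coming from \eqref{dualkernel}, and the biorthogonality between a Riesz basis and its unique dual in $\mathcal{H}$. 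A Riesz basis that is at the same time orthonormal is an orthonormal basis, which closes the argument.

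There is no genuine obstacle here; the theorem is essentially a tidy packaging of Proposition~\ref{dualLkernel}, Lemma~\ref{norm-equivalence}, and classical Hilbert space frame theory. The only point requiring mild care is that the biorthogonality used in the Riesz basis case is the one in $\mathcal{H}$ (under whose inner product $T$ and the dual of $\mathbb{K}$ were defined), not in $\widetilde{\mathcal{H}}$, which is precisely why the intermediate step $\langle T\widetilde{K}(\alpha_j),\widetilde{K}(\alpha_k)\rangle_{\mathcal{H}}=\langle K(\alpha_j),\widetilde{K}(\alpha_k)\rangle_{\mathcal{H}}$ is the pivot.
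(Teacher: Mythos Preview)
Your proposal is correct and follows essentially the same route as the paper: identify $\widetilde{\mathbb{K}}$ as the canonical dual frame via $\widetilde{K}(\alpha_j)=T^{-1}K(\alpha_j)$, plug the reproducing property into the dual-frame expansion, invoke Lemma~\ref{norm-equivalence} to pass to $\widetilde{\mathcal{H}}$, and for the Riesz basis case carry out exactly the same chain $\langle\widetilde{K}(\alpha_j),\widetilde{K}(\alpha_k)\rangle_{\widetilde{\mathcal{H}}}=\langle T\widetilde{K}(\alpha_j),\widetilde{K}(\alpha_k)\rangle_{\mathcal{H}}=\langle K(\alpha_j),\widetilde{K}(\alpha_k)\rangle_{\mathcal{H}}=\delta_{jk}$. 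The only cosmetic difference is that you explicitly note $\widetilde{\mathbb{K}}$ is a Riesz basis before concluding it is an orthonormal basis, whereas the paper leaves completeness implicit in the already-established reconstruction formula.
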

\begin{proof}
It follows from the definition of the dual functional reproducing kernel that $\widetilde{\mathbb{K}}$ is a dual frame of $\mathbb{K}$ in $\mathcal{H}$. Then, by   (\ref{reconstruction-formula-H}) we have that
$$
f=\sum_{j\in\mathbb{J}}\langle f,K(\alpha_j)
\rangle_{\mathcal{H}}\widetilde{K}(\alpha_j).
$$
This together with the reproducing property $L_{\alpha_j}(f)=\langle f,K(\alpha_j)
\rangle_{\mathcal{H}}$, $j\in\mathbb{J},$ leads to
the reconstruction formula (\ref{reconstruction}).
Lemma \ref{norm-equivalence} guarantees that the norms of $\mathcal{H}$ and $\widetilde{\mathcal{H}}$ are equivalent. Moreover, the two spaces have the same elements. Therefore, the reconstruction formula \eqref{reconstruction} holds also in $\widetilde{\mathcal{H}}$.

In the case when $\mathbb{K}$ is a Riesz basis for $\mathcal{H}$, we verify the orthonormality of $\widetilde{\mathbb{K}}$ by using the biorthogonality
of Reisz basis $\mathbb{K}$ and its dual $\widetilde{\mathbb{K}}$. Indeed, for each $j,k\in\mathbb{J}$, we have that
$$
\langle\widetilde{K}(\alpha_j),
\widetilde{K}(\alpha_k)\rangle_{\widetilde{\mathcal{H}}}
=\langle T\widetilde{K}(\alpha_j),
\widetilde{K}(\alpha_k)\rangle_{\mathcal{H}}
=\langle K(\alpha_j), \widetilde{K}(\alpha_k)\rangle_{\mathcal{H}}
=\delta_{j,k}.
$$
\end{proof}

According to Theorem \ref{complete-reconstruction-formula}, we have to find the frame in the form $\{K(\alpha_j): j\in\mathbb{J}\}$ for an FRKHS in order to establish the complete reconstruction formula. It is helpful to have a characterization of such a frame in terms of the feature map representation (\ref{FRKHS_feature_representation_KL1}) of the corresponding functional reproducing kernel.

\begin{thm}\label{feature_Riesz}
Let $\Phi$ be a map from $\Lambda$ into a
Hilbert space $W$ satisfying
$\overline{\span}\{\Phi(\alpha):\alpha\in\Lambda\}=W.$
If $K$ is a functional reproducing kernel for an FRKHS $\mathcal{H}$, defined by \eqref{FRKHS_feature_representation_KL1},
then for a countable sequence $\alpha_j, j\in\mathbb{J},$
in $\Lambda$, the set $\mathbb{K}$ constitutes a
frame for $\mathcal{H}$ if and only if
$\{\Phi(\alpha_j): j\in\mathbb{J}\}$ forms a frame of $W$.
\end{thm}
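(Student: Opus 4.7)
The plan is to reduce the equivalence to the classical fact that a surjective linear isometry between Hilbert spaces carries frames to frames. The main step will be to construct an isometric isomorphism $\mathcal{T}:W\rightarrow\mathcal{H}$ that sends $\Phi(\alpha)$ to $K(\alpha)$ for each $\alpha\in\Lambda$; once this is in place, the frame inequalities for $\{K(\alpha_j):j\in\mathbb{J}\}$ in $\mathcal{H}$ and for $\{\Phi(\alpha_j):j\in\mathbb{J}\}$ in $W$ become two formulations of the same inequalities.

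To build $\mathcal{T}$, I will combine the feature representation \eqref{FRKHS_feature_representation_KL1} with the reproducing property $L_{\beta}(K(\alpha))=\langle K(\alpha),K(\beta)\rangle_{\mathcal{H}}$ to obtain the Gram identity
$$
\langle K(\alpha),K(\beta)\rangle_{\mathcal{H}}
=\langle \Phi(\alpha),\Phi(\beta)\rangle_W,\ \ \alpha,\beta\in\Lambda.
$$
On the linear span $\span\{\Phi(\alpha):\alpha\in\Lambda\}$ I then define
$$
\mathcal{T}_0\left(\sum_{j\in\mathbb{N}_n} c_j\Phi(\alpha_j)\right)
:=\sum_{j\in\mathbb{N}_n} c_jK(\alpha_j).
$$
The Gram identity ensures simultaneously that $\mathcal{T}_0$ is well-defined (if the preimage has zero norm, so does the image) and that it preserves inner products. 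Invoking the density hypothesis $\overline{\span}\{\Phi(\alpha):\alpha\in\Lambda\}=W$ together with Corollary \ref{FRKHS_property}, which guarantees $\overline{\span}\{K(\alpha):\alpha\in\Lambda\}=\mathcal{H}$, I will extend $\mathcal{T}_0$ by continuity to a surjective isometry $\mathcal{T}:W\rightarrow\mathcal{H}$ satisfying $\mathcal{T}(\Phi(\alpha))=K(\alpha)$ for every $\alpha\in\Lambda$.

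Having $\mathcal{T}$, the equivalence becomes transparent: for an arbitrary $f\in\mathcal{H}$, writing $f=\mathcal{T}(u)$ with $u\in W$, the isometry yields $\langle f,K(\alpha_j)\rangle_{\mathcal{H}}=\langle u,\Phi(\alpha_j)\rangle_W$ for every $j\in\mathbb{J}$ and $\|f\|_{\mathcal{H}}=\|u\|_W$. Consequently, the existence of constants $0<A\leq B<+\infty$ realizing the frame bounds for $\mathbb{K}$ in $\mathcal{H}$ is equivalent, via the same constants, to the corresponding bounds for $\{\Phi(\alpha_j):j\in\mathbb{J}\}$ in $W$. The only step that requires any real care is the well-definedness of $\mathcal{T}_0$, and that is handled cleanly by the Gram identity; the rest is routine density and continuity.
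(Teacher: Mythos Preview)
Your proof is correct and follows essentially the same approach as the paper: both construct an isometric isomorphism between $W$ and $\mathcal{H}$ sending $\Phi(\alpha)$ to $K(\alpha)$, from which the frame equivalence is immediate. The only cosmetic difference is that the paper factors the isomorphism through the intermediate RKHS $\widetilde{\mathcal{H}}$ of the kernel $\mathcal{K}(\alpha,\beta):=L_\beta(K(\alpha))$ by invoking Corollary~\ref{FRKHS_isometric-isomorphism} and the standard feature-map description of an RKHS, whereas you build the isometry directly on $\span\{\Phi(\alpha)\}$ using the Gram identity $\langle K(\alpha),K(\beta)\rangle_{\mathcal{H}}=\langle\Phi(\alpha),\Phi(\beta)\rangle_W$ and extend by density; the underlying content is the same.
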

\begin{proof}
We prove this theorem by establishing an isometric isomorphism between $\mathcal{H}$ and $W$.
To this end, we let
$$
\mathcal{K}(\alpha,\beta):=L_{\beta}(K(\alpha)),
\ \ \alpha,\beta\in \Lambda.
$$
By Corollary \ref{FRKHS_isometric-isomorphism}, we have that $f\rightarrow L_{(\cdot)}(f)$ is an isometric isomorphism between the FRKHS $\mathcal{H}$ of $K$ and the RKHS $\widetilde{\mathcal{H}}$ of $\mathcal{K}$.
Equation \eqref{FRKHS_feature_representation_KL1} shows that $\Phi$ is the feature map of the kernel $\mathcal{K}$ and $W$ is the corresponding feature space. We then see that $\langle u,\Phi(\cdot)\rangle_W\rightarrow u$ is an isometric isomorphism from $\widetilde{\mathcal{H}}$ to $W$. We define the operator $T:\mathcal{H}\rightarrow W$
as follows: For each $f\in\mathcal{H}$, $Tf\in W$ satisfies
$$
\langle Tf, \Phi(\beta)\rangle_W=L_{\beta}(f),
\ \ \mbox{for all}\ \ \beta\in\Lambda.
$$
It follows that $T$ is an isometric isomorphism
from $\mathcal{H}$ to $W$. By noting that $T(K(\alpha))=\Phi(\alpha)$, we obtain immediately
the desired result of this theorem.
\end{proof}

In general, the reconstruction formula (\ref{reconstruction}) holds only in the norm of an FRKHS. However, when an FRKHS is perfect, the series in (\ref{reconstruction}) converges both pointwise and uniformly on any set where the reproducing kernel is bounded. This is due to the point-evaluation functionals are also continuous, when an FRKHS is perfect. Motivated by this observation, it is of practical importance to consider sampling and reconstruction in a perfect FRKHS. In this case, the frame
$\{K(\alpha_j): j\in\mathbb{J}\}$ has the following characterization.

\begin{thm}\label{feature_Riesz1}
Let $W$ be a Hilbert space, $\Phi:\mathcal{X}\rightarrow W$ satisfy $\overline{\span}\{\Phi(x):x\in \mathcal{X}\}=W$ and $\Psi:\Lambda\rightarrow W$ satisfy
$\overline{\span}\{\Psi(\alpha):\alpha\in \Lambda\}=W$.
If $K$ is a functional reproducing kernel for a perfect
FRKHS $\mathcal{H}$, defined as in
\eqref{FRKHS_kernel-perfectRKHS},
then for a countable sequence
$\alpha_j$, $j\in\mathbb{J}$, in $\Lambda$, the set $\mathbb{K}$ constitutes a frame for $\mathcal{H}$ if
and only if $\{\Psi(\alpha_j): j\in\mathbb{J}\}$ forms
a frame for $W$.
\end{thm}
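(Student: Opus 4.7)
The plan is to mimic the strategy used in the proof of Theorem \ref{feature_Riesz}: exhibit an explicit isometric isomorphism between $\mathcal{H}$ and $W$ that sends $K(\alpha)$ to $\Psi(\alpha)$, and then invoke the fact that a frame is preserved under unitary equivalence. Unlike the situation in Theorem \ref{feature_Riesz}, where one has to bridge $\mathcal{H}$ and $W$ via the scalar-valued RKHS of $\mathcal{K}(\alpha,\beta):=L_\beta(K(\alpha))$, here the characterization of a perfect FRKHS already hands us the isomorphism essentially for free.

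First, I would define $T:W\to\mathcal{H}$ by $Tu:=\langle u,\Phi(\cdot)\rangle_W$. The characterization of the perfect FRKHS of $K$ in (the scalar-case analogue of) Theorem \ref{L_RKHS_Thm} gives $\mathcal{H}=\{\langle u,\Phi(\cdot)\rangle_W:u\in W\}$ with inner product $\langle Tu,Tv\rangle_{\mathcal{H}}=\langle u,v\rangle_W$, so $T$ is surjective, linear, and isometric, hence a unitary isomorphism. Next, using the representation \eqref{FRKHS_kernel-perfectRKHS} of $K$, I would compute $T(\Psi(\alpha))=\langle \Psi(\alpha),\Phi(\cdot)\rangle_W=K(\alpha)$ for every $\alpha\in\Lambda$. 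Thus $T$ carries the family $\{\Psi(\alpha_j):j\in\mathbb{J}\}$ onto $\mathbb{K}=\{K(\alpha_j):j\in\mathbb{J}\}$.

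The conclusion is immediate from the general fact that if $T$ is an isometric isomorphism between Hilbert spaces, then $\{w_j\}$ is a frame for $W$ with bounds $A,B$ if and only if $\{Tw_j\}$ is a frame for $\mathcal{H}$ with the same bounds, because
\begin{equation*}
\sum_{j\in\mathbb{J}}|\langle Tu,Tw_j\rangle_{\mathcal{H}}|^2
=\sum_{j\in\mathbb{J}}|\langle u,w_j\rangle_{W}|^2,
\qquad \|Tu\|_{\mathcal{H}}=\|u\|_W.
\end{equation*}
Applying this with $w_j:=\Psi(\alpha_j)$ yields the equivalence claimed in the theorem.

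There is no real obstacle here; the density conditions on $\Phi$ and $\Psi$ are used only implicitly, to ensure that the perfect FRKHS $\mathcal{H}$ supplied by Theorem \ref{L_RKHS_Thm} is well-defined and that $T$ is a genuine unitary (surjectivity of $T$ follows from $W=\overline{\span}\{\Psi(\alpha)\}$ being already embedded in the construction, while density of $\Phi(\mathcal{X})$ is what makes the bilinear form $\langle u,v\rangle_W$ a well-defined norm on $\mathcal{H}$). The only point that deserves a little care is to note that the countable set $\{\alpha_j\}$ is the same on both sides, so the frame bounds transfer verbatim, and that frames and dual frames correspond under $T$ in the obvious way, recovering a formula consistent with Theorem \ref{complete-reconstruction-formula}.
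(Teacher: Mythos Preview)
Your proof is correct and follows essentially the same idea as the paper: exhibit a unitary isomorphism $\mathcal{H}\cong W$ under which $K(\alpha_j)\leftrightarrow\Psi(\alpha_j)$, and transfer the frame property. The only organizational difference is that the paper first checks $L_\beta(K(\alpha))=\langle\Psi(\alpha),\Psi(\beta)\rangle_W$, thereby identifying $\Psi$ as a feature map of $K$ in the sense of \eqref{FRKHS_feature_representation_KL1}, and then invokes Theorem~\ref{feature_Riesz}; you instead build the unitary $T:W\to\mathcal{H}$, $Tu=\langle u,\Phi(\cdot)\rangle_W$, directly from the perfect-FRKHS description, which is a bit more self-contained but amounts to the same argument.
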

\begin{proof}
In order to apply Theorem \ref{feature_Riesz},
we need to represent the functional reproducing kernel
$K$ as in \eqref{FRKHS_feature_representation_KL1}.
It is known that the linear functionals on the perfect
FRKHS $\mathcal{H}$ are determined by
$$
L_{\alpha}(\langle u,\Phi(\cdot)\rangle_W)
:=\langle u,\Psi(\alpha)\rangle_W,
\ u\in W,\ \alpha\in\Lambda.
$$
By the representation \eqref{FRKHS_kernel-perfectRKHS}
of $K$, we have that
$$
L_{\beta}(K(\alpha))
=\langle\Psi(\alpha), \Psi(\beta)\rangle_W,
\ \  \mbox{for all}\ \ \alpha,\beta\in\Lambda.
$$
That is, the mapping $\Psi: \Lambda\rightarrow W$
is the feature map of $K$, defined as in \eqref{FRKHS_kernel-perfectRKHS}. Noting that
$$
\overline{\span}\{\Psi(\alpha):\alpha \in\Lambda\}=W,
$$
the desired result of this theorem follows directly from Theorem \ref{feature_Riesz}.
\end{proof}

Based upon the discussion above, we conclude that the frame $\{K(\alpha_j):j\in\mathbb{J}\}$ for an FRKHS
$\mathcal{H}$ is important for establishing the complete
reconstruction formula of functions in $\mathcal{H}$. These desirable frames can be obtained by characterizing the frame $\{\Phi(\alpha_j):j\in\mathbb{J}\}$ for the
feature space $W$. In the light of this point, we shall
consider the complete reconstruction of functions in
two concrete FRKHSs.

Below, we consider reconstructing a vector-valued
function $f:\mathbb{R}\rightarrow\mathbb{C}^n$ from
its sampled data
$$
\langle f(x_j),\xi_j\rangle_{\mathbb{C}^n},
\ (x_j,\xi_j)\in \mathbb{R}\times \mathbb{C}^n,
\ j\in\mathbb{Z}.
$$
The functions to be reconstructed are taken from the
Paley-Wiener space of $\mathbb{C}^n$-valued functions on
$\mathbb{R}$
$$
\mathcal{B}_{\pi}(\mathbb{R},\mathbb{C}^n)
:=\left\{f\in L^2(\mathbb{R},\mathbb{C}^{n}):
\ {\rm supp}\hat f\subseteq [-\pi,\pi]\right\}
$$
with the inner product
$$
\langle f,g\rangle_{\mathcal{B}_{\pi}(\mathbb{R},\mathbb{C}^n)}
:=\sum_{j\in\mathbb{N}_n}\langle f_j,g_j
\rangle_{L^{2}(\mathbb{R})}, \ \ \mbox{for}\ f:=[f_j:j\in\mathbb{N}_n], \ g:=[g_j:j\in\mathbb{N}_n].
$$
Set $\Lambda:=\mathbb{R}\times \mathbb{C}^n$ and for each
$(x,\xi)\in\Lambda,$ introduce a functional on
$\mathcal{B}_{\pi}(\mathbb{R},\mathbb{C}^n)$ as
$$
L_{(x,\xi)}(f):=\langle f(x),\xi\rangle_{\mathbb{C}^n},
\ f\in\mathcal{B}_{\pi}(\mathbb{R},\mathbb{C}^n).
$$
It was pointed out that the space
$\mathcal{B}_{\pi}(\mathbb{R},\mathbb{C}^n)$ is an FRKHS
with respect to the family $\mathcal{F}$ of linear functionals $L_{(x,\xi)}, (x,\xi)\in\Lambda$. The functional reproducing kernel $K$ for $\mathcal{B}_{\pi}(\mathbb{R},\mathbb{C}^n)$ is given by
$$
K(x,\xi):={\rm diag} (\mathcal{K}(x,\cdot):j\in\mathbb{N}_n)\xi,
\ (x,\xi)\in \Lambda,
$$
where $\mathcal{K}$ is the sinc kernel on $\mathbb{R}$ defined by
\begin{equation}\label{sincR}
\mathcal{K}(x,y):=\frac{\sin \pi(x-y)}{\pi(x-y)},
\ x,y\in\mathbb{R}.
\end{equation}
Set $W:=L^2([-\pi,\pi],\mathbb{C}^n)$ and define
$\Phi:\Lambda\rightarrow W$ by
$$
\Phi(x,\xi):=\frac{1}{\sqrt{2\pi}}e^{ix(\cdot)}\xi,
\ \ (x,\xi)\in \Lambda.
$$
It follows from
$$
\mathcal{K}(x,y)
=\left\langle\frac{1}{\sqrt{2\pi}}e^{ix(\cdot)},
\frac{1}{\sqrt{2\pi}}e^{iy(\cdot)}\right
\rangle_{L^2([-\pi,\pi])},
\ \ x,y\in\mathbb{R},
$$
that for all $(x,\xi),(y,\eta)\in \Lambda$,
\begin{eqnarray*}
L_{(y,\eta)}(K(x,\xi))
=\langle K(x,\xi)(y),\eta\rangle_{\mathbb{C}^n}
=\sum_{j\in\mathbb{N}_n}\left\langle\frac{1}{\sqrt{2\pi}}
e^{ix(\cdot)}\xi_j,\frac{1}{\sqrt{2\pi}}
e^{iy(\cdot)}\eta_j\right\rangle_{_{L^2([-\pi,\pi])}}.
\end{eqnarray*}
This leads to
$$
L_{(y,\eta)}(K(x,\xi))
=\langle\Phi(x,\xi), \Phi(y,\eta)\rangle_{W},
\ \  \mbox{for all} \ \ (x,\xi),(y,\eta)\in\Lambda.
$$
Thus, we conclude that $W$ and $\Phi$ are the feature
space and the feature map of the functional reproducing
kernel $K$, respectively. Moreover, the density condition
$\overline{\span}\{\Phi(x,\xi):\ (x,\xi)\in\Lambda\}=W$ is
satisfied.

We now turn to considering reconstructing a function
$f\in\mathcal{B}_{\pi}(\mathbb{R}, \mathbb{C}^n)$
from its functional values $L_{(x_j,\xi_j)}(f),
(x_j,\xi_j)\in\Lambda, j\in\mathbb{Z}$. This will be done by finding the sampling set
$\{(x_j,\xi_j):j \in\mathbb{Z}\}\subseteq\Lambda$
such that $\{K(x_j,\xi_j): j\in\mathbb{J}\}$ forms
a frame for $\mathcal{B}_{\pi}(\mathbb{R}, \mathbb{C}^n)$. By Theorem \ref{feature_Riesz} such a frame can be characterized by the frame for $W$ with the form  $\Phi(x_j,\xi_j), j\in\mathbb{Z}$. We shall give a positive example by choosing the sampling set $\{(x_j,\xi_j):j\in\mathbb{Z}\}$ as follows.
We present each $j\in\mathbb{Z}$ as $j:=nm+l$, where $m\in\mathbb{Z}$ and $l\in\mathbb{Z}_n:=\{0,1,\dots,n-1\}$.
We then choose the sequence $x_j, j\in\mathbb{Z},$ in $\mathbb{R}$ such that for each $l\in\mathbb{Z}_n$,
$\{\frac{1}{\sqrt{2\pi}}e^{ix_{nm+l}(\cdot)}:m\in\mathbb{Z}\}$ is a frame for $L^2([-\pi,\pi])$. We introduce $n$ vectors
$\eta_{l}:=[\eta_{l,k}:k\in\mathbb{Z}_n], l\in\mathbb{Z}_n,$ satisfying that the matrix $\mathbf{U}:=[\eta_l:l\in\mathbb{Z}_n]$ is unitary. We then choose the sequence $\xi_j,j\in\mathbb{Z},$
in $\mathbb{C}^n$ as $\xi_{nm+l}:=\eta_{l}$ for all
$l\in\mathbb{Z}_n$ and all $m\in\mathbb{Z}$. Based on the above notations, we show below that the sequence $\Phi(x_j,\xi_j), j\in\mathbb{Z},$ constitutes a frame for $W$.

\begin{thm}\label{frame-vector}
If the sequence $\{(x_j,\xi_j):j\in\mathbb{Z}\}
\subseteq\Lambda$ is defined as above, then
$\Phi(x_j,\xi_j), j\in\mathbb{Z},$ constitutes a frame
for $W$. Moreover, if for each $l\in\mathbb{Z}_n$, the
sequence $\{\frac{1}{\sqrt{2\pi}}e^{ix_{nm+l}(\cdot)}:
m\in\mathbb{Z}\}$ is a Riesz basis for $L^2([-\pi,\pi])$,
then $\Phi(x_j,\xi_j), j\in\mathbb{Z},$ forms a
Riesz basis for $W$.
\end{thm}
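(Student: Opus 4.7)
The plan is to exploit the product structure of $W = L^2([-\pi,\pi],\mathbb{C}^n)$ together with the unitarity of $\mathbf{U}$ in order to decouple the indexing $j = nm+l$ into an outer index $l \in \mathbb{Z}_n$ (selecting a vector $\eta_l$ and a subsequence of sample points $\{x_{nm+l}\}_{m \in \mathbb{Z}}$) and an inner scalar frame/Riesz-basis condition on $L^2([-\pi,\pi])$. Concretely, for $f = [f_k : k \in \mathbb{Z}_n] \in W$ and any fixed $l$, I would first rewrite
$$
\langle f,\Phi(x_{nm+l},\eta_l)\rangle_W
 = \bigl\langle h_l,\tfrac{1}{\sqrt{2\pi}}e^{ix_{nm+l}(\cdot)}\bigr\rangle_{L^2([-\pi,\pi])},
\qquad \text{where } h_l := \sum_{k \in \mathbb{Z}_n}\overline{\eta_{l,k}}\,f_k \in L^2([-\pi,\pi]).
$$

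For the frame assertion, the plan is then as follows. First, for each $l$, apply the hypothesis that $\{\frac{1}{\sqrt{2\pi}}e^{ix_{nm+l}(\cdot)} : m \in \mathbb{Z}\}$ is a frame of $L^2([-\pi,\pi])$ with bounds $A_l,B_l$ to sandwich $\sum_m |\langle f,\Phi(x_{nm+l},\eta_l)\rangle_W|^2$ between $A_l\|h_l\|^2$ and $B_l\|h_l\|^2$. Summing over $l \in \mathbb{Z}_n$ (a finite sum, so $A := \min_l A_l$ and $B := \max_l B_l$ are admissible bounds), I would next identify $\sum_l \|h_l\|^2$ with $\|f\|_W^2$: for almost every $t \in [-\pi,\pi]$, the vector $\mathbf{v}(t) := [f_k(t)]_{k \in \mathbb{Z}_n}$ satisfies $\sum_l |\langle \mathbf{v}(t),\eta_l\rangle_{\mathbb{C}^n}|^2 = \|\mathbf{v}(t)\|_{\mathbb{C}^n}^2$ by Parseval's identity, because unitarity of $\mathbf{U}$ is exactly the statement that $\{\eta_l\}$ is an orthonormal basis of $\mathbb{C}^n$. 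Integrating this pointwise identity in $t$ yields $\sum_l \|h_l\|^2 = \|f\|_W^2$, giving the frame inequalities.

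For the Riesz basis conclusion, I would verify the equivalent norm characterization on $\ell^2$-sequences. Given $\{c_j\} \in \ell^2(\mathbb{Z})$, set $g_l(t) := \sum_{m \in \mathbb{Z}} c_{nm+l}\,\tfrac{1}{\sqrt{2\pi}}e^{ix_{nm+l}t}$ and observe
$$
\Bigl(\sum_{j \in \mathbb{Z}} c_j\,\Phi(x_j,\xi_j)\Bigr)(t) = \sum_{l \in \mathbb{Z}_n} g_l(t)\,\eta_l.
$$
Pointwise orthonormality of $\{\eta_l\}$ in $\mathbb{C}^n$ gives $\bigl\|\sum_j c_j\Phi(x_j,\xi_j)\bigr\|_W^2 = \sum_l \|g_l\|_{L^2([-\pi,\pi])}^2$, and the Riesz basis hypothesis for each scalar system $\{\tfrac{1}{\sqrt{2\pi}}e^{ix_{nm+l}(\cdot)} : m \in \mathbb{Z}\}$ then squeezes this sum between $A\sum_j|c_j|^2$ and $B\sum_j|c_j|^2$. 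Together with the completeness obtained from the frame part, this yields the Riesz basis property.

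The only real obstacle is the bookkeeping around the double index and making sure the scalar bounds $A_l,B_l$ can be taken uniform in $l$; since $l$ runs over the finite set $\mathbb{Z}_n$ this is automatic, and the rest of the argument is essentially a Fubini-type decoupling powered by the unitarity of $\mathbf{U}$.
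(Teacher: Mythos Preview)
Your argument is correct. The frame half is essentially the paper's proof verbatim: the same reindexing $j=nm+l$, the same auxiliary functions (your $h_l$ is the paper's $v_l$), the same use of the per-$l$ frame bounds followed by $A=\min_l A_l$, $B=\max_l B_l$, and the same unitarity-based identity $\sum_l\|h_l\|^2=\|f\|_W^2$.

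For the Riesz basis half your route differs from the paper's. The paper keeps the frame property already proved and then establishes $\ell^2$-independence: assuming $\sum_{j}c_j\Phi(x_j,\xi_j)=0$, it computes the inner products $\langle\Phi(x_j,\xi_j),\Phi(x_{j'},\xi_{j'})\rangle_W$, uses unitarity of $\mathbf{U}$ to reduce these to $\delta_{l,l'}$ times a scalar inner product, and then invokes the scalar Riesz-basis hypothesis to force all $c_j=0$. You instead verify the two-sided Riesz-sequence inequality directly: writing $\sum_j c_j\Phi(x_j,\xi_j)=\sum_l g_l\eta_l$ and using pointwise orthonormality of $\{\eta_l\}$ gives $\bigl\|\sum_j c_j\Phi(x_j,\xi_j)\bigr\|_W^2=\sum_l\|g_l\|_{L^2}^2$, after which the scalar Riesz-basis bounds on each $g_l$ finish the job. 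Your approach is a bit more direct and yields explicit Riesz bounds without the intermediate inner-product computation; the paper's approach avoids introducing the synthesis functions $g_l$ and works purely at the level of coefficients. Both are short and both rely on the same decoupling via unitarity of $\mathbf{U}$, so the difference is one of packaging rather than substance.
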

\begin{proof}
By the definition of frames, it suffices to verify that there exist positive constants $0<A\leq B<+\infty$ such that for all $u\in W$,
\begin{equation}\label{equivalence_A_B}
A\|u\|_{W}\leq \left(\sum_{j\in\mathbb{Z}}|
\langle u,\Phi(x_j,\xi_j)\rangle_{W}|^2\right)^{1/2}
\leq B\|u\|_{W},
\end{equation}
which implies $\Phi(x_j,\xi_j),j\in\mathbb{Z}$, is a frame for $W$. According to the description of the sequence
$\{(x_j,\xi_j): j\in\mathbb{Z}\}\subseteq\Lambda$
we have for any $u:=[u_k:k\in\mathbb{Z}_n]\in W$ that
\begin{eqnarray}\label{frame1}
\sum_{j\in\mathbb{Z}}\left|\langle u,
\Phi(x_j,\xi_j)\rangle_{W}\right|^2
=\sum_{l\in\mathbb{Z}_n}\sum_{m\in\mathbb{Z}}
\left|\left<\sum_{k\in\mathbb{Z}_n}\overline{\eta}_{l,k}u_k,
\frac{1}{\sqrt{2\pi}}e^{ix_{nm+l}(\cdot)}
\right>_{L^2([-\pi,\pi])}\right|^2.
\end{eqnarray}
For each $l\in\mathbb{Z}_n$, set
$\displaystyle{v_l:=\sum_{k\in\mathbb{Z}_n}
\overline{\eta}_{l,k}u_k}$. Since $\{\frac{1}{\sqrt{2\pi}}e^{ix_{nm+l}(\cdot)}:
m\in\mathbb{Z}\}$ is a frame for $L^2([-\pi,\pi])$,
there exist positive constants $0<A_l\leq B_l<+\infty$
such that
$$
A_l^2\|v_l\|_{L^2([-\pi,\pi])}^2
\leq\sum_{m\in\mathbb{Z}}\left|\left< v_l,
\frac{1}{\sqrt{2\pi}}e^{ix_{nm+l}(\cdot)}
\right>_{L^2([-\pi,\pi])}\right|^2
\leq B_l^2\|v_l\|_{L^2([-\pi,\pi])}^2.
$$
Substituting the above estimates for all $l\in\mathbb{Z}_n$ into (\ref{frame1}), we get that
\begin{eqnarray}\label{frame2}
A^2\sum_{l\in\mathbb{Z}_d}\|v_l\|_{L^2([-\pi,\pi])}^2
\leq\sum_{j\in\mathbb{Z}}\left|\left< u,
\Phi(x_j,\xi_j)\right>_{W}\right|^2\leq B^2
\sum_{l\in\mathbb{Z}_d}\|v_l\|_{L^2([-\pi,\pi])}^2,
\end{eqnarray}
where $A:=\min\{A_l:l\in\mathbb{Z}_n\}$ and
$B:=\max\{B_l:l\in\mathbb{Z}_n\}$. Note that there holds
\begin{eqnarray}\label{gn}
\sum_{l\in\mathbb{Z}_n}\|v_l\|_{L^2([-\pi,\pi])}^2
=\sum_{k\in\mathbb{Z}_n}\sum_{j\in\mathbb{Z}_n}
\left(\sum_{l\in\mathbb{Z}_n}
\overline{\eta}_{l,k}\eta_{l,j}\right)
\langle u_k, u_j\rangle_{L^2([-\pi,\pi])}.
\end{eqnarray}
Equation (\ref{gn}) with the unitary assumption on $\mathbf{U}$ leads to
$$
\sum_{l\in\mathbb{Z}_n}\|v_l\|_{L^2([-\pi,\pi])}^2
=\sum_{k\in\mathbb{Z}_n}\|u_k\|_{L^2([-\pi,\pi])}^2
=\|u\|^2_{W}.
$$
Hence, by (\ref{frame2}) we get the equivalent property \eqref{equivalence_A_B}.

We next prove that $\Phi(x_j,\xi_j),j\in\mathbb{Z},$
is also a Riesz basis for $W$ based on the hypothesis that
for each $l\in\mathbb{Z}_n$, $\{\frac{1}{\sqrt{2\pi}}
e^{ix_{nm+l}(\cdot)}:m\in\mathbb{Z}\}$ is a Riesz basis
for $L^2([-\pi,\pi])$. It suffices to show that if
there exist $\{c_j:j\in\mathbb{Z}\}\in l^2(\mathbb{Z})$ such that $\sum_{j\in\mathbb{Z}}c_j\Phi(x_j,\xi_j)=0$,
then $c_j=0,j\in\mathbb{Z}.$ It follows from
$\sum_{j\in\mathbb{Z}}c_j\Phi(x_j,\xi_j)=0$
that for all $j'\in\mathbb{Z}$,
\begin{eqnarray}\label{Riesz1}
\sum_{j\in\mathbb{Z}}c_j\langle\Phi(x_j,\xi_j),
\Phi(x_{j'},\xi_{j'})\rangle_{W}=0.
\end{eqnarray}
There holds for any $j:=nm+l$ and any $j':=nm'+l'$ that
$$
\langle\Phi(x_j,\xi_j),\Phi(x_{j'},\xi_{j'})\rangle_{W}
=\sum_{k\in\mathbb{Z}_n}\overline{\eta}_{l,k}\eta_{l',k}
\left\langle\frac{1}{\sqrt{2\pi}}e^{ix_{nm+l}(\cdot)},
\frac{1}{\sqrt{2\pi}}e^{ix_{nm'+l'}(\cdot)}
\right\rangle_{L^2([-\pi,\pi])}.
$$
Again, using the unitary property of ${\bf U}$, we obtain that
\begin{eqnarray}\label{Riesz2}
\langle\Phi(x_j,\xi_j),\Phi(x_{j'},\xi_{j'})\rangle_{W}
=\delta_{l,l'}\left\langle\frac{1}{\sqrt{2\pi}}
e^{ix_{nm+l}(\cdot)}, \frac{1}{\sqrt{2\pi}}
e^{ix_{nm'+l'}(\cdot)}\right\rangle_{L^2([-\pi,\pi])}.
\end{eqnarray}
Substituting (\ref{Riesz2}) into (\ref{Riesz1}), we obtain
for any $m'\in\mathbb{Z}$ and any $l'\in\mathbb{Z}_n$ that
\begin{eqnarray*}
\left\langle\sum_{m\in\mathbb{Z}}c_{nm+l'}
\frac{1}{\sqrt{2\pi}}e^{ix_{nm+l'}(\cdot)},
\frac{1}{\sqrt{2\pi}}e^{ix_{nm'+l'}(\cdot)}
\right\rangle_{L^2([-\pi,\pi])}=0.
\end{eqnarray*}
This together with the hypothesis that
$\{\frac{1}{\sqrt{2\pi}}e^{ix_{nm+l'}(\cdot)}:
m\in \mathbb{Z}\}$ is a Riesz basis for $L^2([-\pi,\pi])$
leads to
$$
\frac{1}{\sqrt{2\pi}}\sum_{m\in\mathbb{Z}}
c_{nm+l'}e^{ix_{nm+l'}(\cdot)}=0.
$$
Since for any $l'\in\mathbb{Z}_N$,
$\{\frac{1}{\sqrt{2\pi}}e^{ix_{nm+l'}(\cdot)}:
m\in\mathbb{Z}\}$ is a Riesz basis for $L^2([-\pi,\pi])$, we conclude that $c_{nm+l'}=0$, for all $m\in\mathbb{Z}$ and $l'\in\mathbb{Z}_n,$ which completes the proof.
\end{proof}

Sampling sets $\{(x_j,\xi_j): j\in\mathbb{Z}\}$ such that $\Phi(x_j,\xi_j),j\in\mathbb{Z},$ constitute a Riesz basis for $W$ were characterized in \cite{AI95,AI08} in terms of the generating matrix function. The authors considered the matrix Sturm-Liouville problems and applied the boundary control theory for hyperbolic dynamical systems in producing a wide class of matrix functions generating sampling sets. Such sampling sets can provide us the desired frames for
$\mathcal{B}_{\pi}(\mathbb{R}, \mathbb{C}^n)$
having the form $\{K(x_j,\xi_j): j\in\mathbb{Z}\}$.
By constructing the frame operator (\ref{frame_operator}) and obtaining the dual functional reproducing kernel $\widetilde{K}$ by (\ref{dualkernel}), we can
build the reconstruction formula of $f\in\mathcal{B}_{\pi}(\mathbb{R},\mathbb{C}^n)$ as
$$
f=\sum_{j\in\mathbb{Z}}\langle f(x_j),
\xi_j\rangle_{\mathbb{C}^n}\widetilde{K}(x_j,\xi_j).
$$

We next consider recovering a scalar-valued function
$f$ on $\mathbb{R}$ from the sampled data determined by  integral functionals. As a special case, the local average
sampling is devoted to reconstructing a function
$f$ from its local average values in the form
\begin{equation*}\label{averagefunctional}
L_{x_j}(f):=\int_{\mathbb{R}}f(t)u_{x_j}(t)dt,
\ x_j\in\mathbb{R},\  j\in\mathbb{Z},
\end{equation*}
where $u_x$, $x\in\mathbb{R}$, are nonnegative
functions that satisfy
\begin{equation}\label{average-functions}
\int_{\mathbb{R}}u_x(t)dt=1\ \mbox{and}\ \
\supp u_x\subseteq [x-\delta, x+\delta],
\end{equation}
with $\delta$ being a positive constant. We call $u_x$, $x\in\mathbb{R}$, the average functions. We shall restrict
ourselves to investigating the local average sampling
in the framework of FRKHSs. The FRKHS to be considered is
the Paley-Wiener space of scalar-valued functions on
$\mathbb{R}$
$$
\mathcal{B}_{\pi}:=\{f\in L^2(\mathbb{R}):
{\rm supp}\hat f\subseteq [-\pi,\pi]\}.
$$
The space $\mathcal{B}_{\pi}$ is a standard RKHS,
in which the ideal sampling and the local average
sampling are usually considered. The sinc kernel for
$\mathcal{B}_{\pi}$, defined by (\ref{sincR}), is a
translation invariant reproducing kernel defined by
(\ref{translation_invariant_rk_phi}) with
$\varphi:=\frac{1}{2\pi}\chi_{[-\pi,\pi]}$.
As pointed out in the last section, the space
$W:=L^2([-\pi,\pi])$ and the mapping
$\Phi:\mathbb{R}\rightarrow W$, defined by
$\Phi(x):=\frac{1}{\sqrt{2\pi}}e^{ix(\cdot)}$,
are the feature space and the feature map of the sinc kernel, respectively. It is clear that the function $\varphi$ and the average functions
$u_{x},\ x\in\mathbb{R},$ are all integrable. Hence, by Theorem \ref{translation_invariant_ORKHS} we obtain that the closed subspace
$$
\mathcal{H}:=\left\{f\in \mathcal{B}_{\pi}:
\check{f}\in\overline{\span}\left\{u_{x}^{\vee}
\chi_{[-\pi,\pi]}:\ x\in\mathbb{R}\right\}\right\}
$$
of $\mathcal{B}_{\pi}$ is a perfect FRKHS with respect to the family $\mathcal{F}$ of the local average functionals
$L_{x},x\in\mathbb{R}$. The closure in the representation of $\mathcal{H}$ is taken in $W$. Moreover, the functional
reproducing kernel is given by
$$
K(x)(y)=[u_{x}^{\vee}\chi_{[-\pi,\pi]}]^{\wedge}(y),
\ x,y\in\mathbb{R}.
$$
For each $x\in\mathbb{R}$, set
$$
\Psi(x):=\sqrt{2\pi}u_{x}^{\vee}\chi_{[-\pi,\pi]}.
$$
Then $K$ can be represented as in
\eqref{FRKHS_kernel-perfectRKHS}.

To relate our new theory to the existing average sampling theorems in the literature, we consider recovering functions in $\mathcal{B}_{\pi}$, rather than the closed subspace $\mathcal{H}$, from the local average values. Hence, we recall Corollary \ref{density-W}, which states that $\mathcal{B}_{\pi}$ itself is a perfect
FRKHS if and only if there holds the density
$\overline{\span}\{\Psi(x):x\in\mathbb{R}\}=W$.
We restrict ourselves to this special case. Since $\mathcal{B}_{\pi}$ is a perfect FRKHS, with the help of Theorem \ref{feature_Riesz1} we shall try to construct  the sampling set $\{x_j:j\in\mathbb{Z}\}$ such that the sequence $\Psi(x_j), j\in\mathbb{Z},$ constitutes a frame for $W$. The next theorem gives a sufficient condition for
$\Psi(x_j), j\in\mathbb{Z},$ to be a frame for $W$.
For any $u\in W$, we let $R_{u}$ and $I_{u}$ denote
its real part and imaginary part, respectively.

\begin{thm}\label{frame-Paley}
Let $\delta$ be a positive constant. Suppose that the
sequence of nonnegative functions $u_x,x\in\mathbb{R},$ satisfy condition \eqref{average-functions}.
If there exist positive constants $A,B$ depending only on
the sampling set $\{x_j:j\in\mathbb{Z}\}$ and
$\delta$ such that for any sequence $t_j, j\in\mathbb{Z},$ with $t_j\in [x_j-\delta,x_j+\delta]$,
$\{e^{it_j(\cdot)}:j\in\mathbb{Z}\}$ forms a frame for
$W$ with the frame bounds $A,B$, then
$\{\Psi(x_j): j\in\mathbb{Z}\}$ constitutes a
frame for $W$.
\end{thm}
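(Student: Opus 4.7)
The plan is to establish, for every $u\in W=L^{2}([-\pi,\pi])$, the two-sided frame inequality
$$\tfrac{A}{2\pi}\|u\|_{W}^{2}\;\le\;\sum_{j\in\mathbb{Z}}|\langle u,\Psi(x_{j})\rangle_{W}|^{2}\;\le\;\tfrac{B}{2\pi}\|u\|_{W}^{2}.$$
First I would unfold the definition $\Psi(x_{j})=\sqrt{2\pi}\,u_{x_{j}}^{\vee}\chi_{[-\pi,\pi]}$ by Fubini to obtain
$$\langle u,\Psi(x_{j})\rangle_{W}=\int_{\mathbb{R}}u_{x_{j}}(t)\,G_{u}(t)\,dt,\qquad G_{u}(t):=\Big\langle u,\tfrac{1}{\sqrt{2\pi}}e^{it(\cdot)}\Big\rangle_{W}.$$
Because $u_{x_{j}}\ge 0$ is supported in $[x_{j}-\delta,x_{j}+\delta]$ with $\int u_{x_{j}}=1$, and $G_{u}$ is smooth on $\mathbb{R}$, the integral mean value theorem applied separately to the real and imaginary parts of $G_{u}$ yields $u$-dependent points $s_{j}^{R},s_{j}^{I}\in[x_{j}-\delta,x_{j}+\delta]$ with
$$\langle u,\Psi(x_{j})\rangle_{W}=R_{G_{u}}(s_{j}^{R})+i\,I_{G_{u}}(s_{j}^{I}).$$

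The crucial step is a symmetrization that realizes $R_{G_{u}}$ and $I_{G_{u}}$ themselves as inner products against the exponentials. Set $\tilde{u}(\omega):=\overline{u(-\omega)}$ and
$$v:=\tfrac{u+\tilde{u}}{2},\qquad w:=\tfrac{u-\tilde{u}}{2i}.$$
A short change of variables shows $\tilde{v}=v$ and $\tilde{w}=w$, i.e.\ $v$ and $w$ are Hermitian-symmetric on $[-\pi,\pi]$; consequently $G_{v}$ and $G_{w}$ are real valued, and the same change of variables gives $G_{v}=R_{G_{u}}$ and $G_{w}=I_{G_{u}}$. Substituting back produces
$$|\langle u,\Psi(x_{j})\rangle_{W}|^{2}=\tfrac{1}{2\pi}\bigl|\langle v,e^{is_{j}^{R}(\cdot)}\rangle_{W}\bigr|^{2}+\tfrac{1}{2\pi}\bigl|\langle w,e^{is_{j}^{I}(\cdot)}\rangle_{W}\bigr|^{2}.$$

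Summing over $j$ and invoking the hypothesis on the two admissible perturbation sequences $\{s_{j}^{R}\}$ (tested against $v$) and $\{s_{j}^{I}\}$ (tested against $w$) yields
$$\tfrac{A}{2\pi}\bigl(\|v\|_{W}^{2}+\|w\|_{W}^{2}\bigr)\;\le\;\sum_{j}|\langle u,\Psi(x_{j})\rangle_{W}|^{2}\;\le\;\tfrac{B}{2\pi}\bigl(\|v\|_{W}^{2}+\|w\|_{W}^{2}\bigr).$$
Finally I would close the proof via the parallelogram identity applied to $u=v+iw$ and $\tilde{u}=v-iw$: since $\omega\mapsto\overline{u(-\omega)}$ is a unitary involution on $W$, $\|\tilde{u}\|_{W}=\|u\|_{W}$, and hence $\|v\|_{W}^{2}+\|w\|_{W}^{2}=\|u\|_{W}^{2}$, giving the desired frame bounds $A/(2\pi)$ and $B/(2\pi)$ for $\{\Psi(x_{j})\}$.

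The principal conceptual obstacle is identifying the symmetrization $u=v+iw$: without the representation $R_{G_{u}}=G_{v}$, $I_{G_{u}}=G_{w}$ with $v,w\in W$ Hermitian-symmetric and $\|v\|_{W}^{2}+\|w\|_{W}^{2}=\|u\|_{W}^{2}$, one would be forced into a perturbation-type estimate for $\langle u,\Psi(x_{j})\rangle_{W}-G_{u}(x_{j})$ controlled by a Bernstein inequality, which in turn would demand a smallness assumption on $\delta$ that is not present in the hypothesis. Once the decomposition is in place, the remainder of the argument is a direct pairing of the integral mean value theorem with the perturbation-robust frame bounds.
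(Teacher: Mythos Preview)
Your proof is correct and follows essentially the same route as the paper: your Hermitian symmetrization $v=(u+\tilde u)/2$, $w=(u-\tilde u)/(2i)$ with $\tilde u(\omega)=\overline{u(-\omega)}$ yields precisely the functions $R_{u_e}+iI_{u_o}$ and $I_{u_e}-iR_{u_o}$ that the paper introduces, and the remaining steps---mean value theorem on the real and imaginary parts, the uniform frame hypothesis on the perturbed exponential systems, and the norm identity $\|v\|_W^2+\|w\|_W^2=\|u\|_W^2$---coincide. The only cosmetic difference is that you obtain the norm identity via the parallelogram law for the unitary involution $u\mapsto\tilde u$, while the paper verifies it by a direct computation.
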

\begin{proof}
To show that $\Psi(x_j), j\in\mathbb{Z},$ constitutes a frame for $W,$ we need to estimate the quantities $Q_j(u):=\langle u,\Psi(x_j)\rangle_{W}$, $j\in\mathbb{Z},$ for any $u\in W$. It follows for any $u\in W$ that
\begin{eqnarray}\label{I}
|Q_j(u)|^2
=\left|\int_{-\pi}^{\pi}u(t)\left(\frac{1}{2\pi}
\int_{\mathbb{R}}u_{x_j}(s)e^{-ist}ds\right)dt\right|^2
=\frac{1}{(2\pi)^2}\left|\int_{\mathbb{R}}
\langle u, e^{is(\cdot)}\rangle_{W}u_{x_j}(s)ds\right|^2.
\end{eqnarray}
We decompose $u$ into the odd and the even parts.
That is, $u=u_{o}+u_{e}$, where
$$
u_{o}(x)=\frac{u(x)-u(-x)}{2},\ u_{e}(x)=\frac{u(x)+u(-x)}{2},\ x\in[-\pi,\pi].
$$
Substituting the decomposition $u=u_{o}+u_{e}$ into
\eqref{I}, we get that
\begin{equation}\label{I1}
|Q_j(u)|^2
=\frac{1}{(2\pi)^2}\Big{|}\int_{\mathbb{R}}F_u(s)u_{x_j}(s)ds
+i\int_{\mathbb{R}}G_u(s)u_{x_j}(s)ds\Big{|}^2,
\end{equation}
where
$$
F_u(s):=\langle R_{u_e}+iI_{u_o},
e^{is(\cdot)}\rangle_{W} \ \mbox{and}\
G_u(s):=\langle I_{u_e}-iR_{u_o}, e^{is(\cdot)}\rangle_{W}, \ \  s\in\mathbb{R}.
$$
Note that $F_u$ and $G_u$ are both real and continuous
functions. By the mean value theorem and the hypothesis
on $u_x,x\in\mathbb{R}$, there exist
$s_j,\tilde{s}_j \in [x_j-\delta,x_j+\delta]$ such that
$$
\int_{\mathbb{R}}F_u(s)u_{x_j}(s)ds=F_u(s_j)\ \mbox{and}\
\int_{\mathbb{R}}G_u(s)u_{x_j}(s)ds=G_u(\tilde{s}_j).
$$
Substituting the above equations into (\ref{I1}), we obtain that
\begin{eqnarray}\label{I2}
|Q_j(u)|^2=\frac{1}{(2\pi)^2}[F_u^2(s_j)+G_u^2(\tilde{s}_j)].
\end{eqnarray}
By the hypothesis of this theorem, both
$\{e^{is_j(\cdot)}:j\in\mathbb{Z}\}$ and
$\{e^{i\tilde{s}_j(\cdot)}:j\in\mathbb{Z}\}$ are
frames for $W$ with the frame bounds $A,B$. Hence, by the definition of $F_u$ and $G_u$, we get that
$$
A^2\|R_{u_e}+iI_{u_o}\|_{W}^2
\leq\sum_{j\in\mathbb{Z}}F_u^2(s_j)
\leq B^2\|R_{u_e}+iI_{u_o}\|_{W}^2
$$
and
$$
A^2\|I_{u_e}-iR_{u_o}\|_{W}^2
\leq\sum_{j\in\mathbb{Z}}G_u^2(\tilde{s}_j)
\leq B^2\|I_{u_e}-iR_{u_o}\|_{W}^2.
$$
By (\ref{I2}) and noting that
$$
\|u\|_{W}^2=\|R_{u_e}+iI_{u_o}\|_{W}^2+\|I_{u_e}-iR_{u_o}\|_{W}^2,
$$
we conclude that
$$
\frac{1}{2\pi}A\|u\|_{W}
\leq\left(\sum_{j\in\mathbb{Z}}|Q_j(u)|^2\right)^{1/2}
\leq\frac{1}{2\pi}B\|u\|_{W},
$$
which completes the proof.
\end{proof}

Observing from Theorem \ref{frame-Paley}, we have that
the frames for $W$ with the form
$\{e^{it_j(\cdot)}: j\in\mathbb{Z}\}$ is important for the study of the frames with the form
$\{\Psi(x_j): j\in\mathbb{Z}\}$. For a complete characterization of the former, one can see \cite{C,Y}.
By making use of the sampling set $\{x_j:j\in\mathbb{Z}\}$, we obtain the frame
$K(x_j),\ j\in\mathbb{Z},$ for $\mathcal{B}_{\pi}$.  Accordingly, we get the complete reconstruction formula for $f\in\mathcal{B}_{\pi}$ as
$$
f=\sum_{j\in\mathbb{Z}}\left(\int_{\mathbb{R}}f(t)u_{x_j}(t)
dt\right)\widetilde{K}(x_j),
$$
where $\widetilde{K}$ is the dual functional reproducing kernel of $K$ with respect to the set $\{x_j:j\in\mathbb{Z}\}$.

As applications of Theorem \ref{frame-Paley}, we present two specific examples for choosing the sampling set $\{x_j:j\in\mathbb{Z}\}$ and the average functions $u_{x},x\in\mathbb{R}$.
\begin{example}
{\rm Let $x_j=j, j\in\mathbb{Z},$ and $0<\delta<1/4$.
The well-known Kadec's $1/4$-theorem shows that for
any sequence $\{t_j:j\in\mathbb{Z}\}$ satisfying
$t_j\in[x_j-\delta,x_j+\delta], j\in\mathbb{Z},$
the sequence $\{e^{it_j(\cdot)}:j\in\mathbb{Z}\}$
forms a frame for $L^2([-\pi,\pi])$ with the bounds
$$
2\pi(\cos(\delta \pi)-\sin(\delta\pi))^2\ \mbox{and}\ 2\pi(2-\cos(\delta \pi)+\sin(\delta\pi))^2.
$$
That is, the hypothesis of Theorem \ref{frame-Paley} is satisfied. Hence, we have by Theorem \ref{frame-Paley}
that $\{\Psi(x_j): j\in\mathbb{Z}\}$ constitutes a
frame for $W$ and then we can establish the
complete reconstruction formula of functions
in $\mathcal{B}_{\pi}$ as in Theorem
\ref{complete-reconstruction-formula}. In fact, the
local average sampling theorem in this case was
established in \cite{SZ} for $\mathcal{B}_{\pi}$ without the concept of FRKHSs. A generalized Kadec's $1/4$-theorem can help us extend this result. Specifically, we suppose that the sampling set $\{x_j:j\in\mathbb{Z}\}$ satisfies that $\{e^{ix_j(\cdot)}: j\in\mathbb{Z}\}$ is a frame for $W$ with bounds $A,B$ and $0<\delta<1/4$ such that
\begin{equation*}\label{Kadec}
1-\cos(\delta\pi)+\sin(\delta\pi)<\sqrt{\frac{A}{B}}.
\end{equation*}
The generalized Kadec's $1/4$-theorem states that
for any $\{t_j:j\in\mathbb{Z}\}$ with
$t_j\in[x_j-\delta,x_j+\delta], j\in\mathbb{Z},$
$\{e^{it_j(\cdot)}:j\in\mathbb{Z}\}$ forms a frame for
$W$ with bounds
$$
A\left(1-\sqrt{\frac{B}{A}}(1-\cos(\delta\pi)
-\sin(\delta\pi))\right)^2\ \mbox{and}\
B(2-\cos(\delta \pi)+\sin(\delta\pi))^2.
$$
Hence, the sampling set $\{x_j:\ j\in\mathbb{Z}\}$ and
$\delta$ in this case also satisfy the hypothesis of
Theorem \ref{frame-Paley} and thus,
$\{\Psi(x_j): j\in\mathbb{Z}\}$ constitutes a frame for $W$.}
\end{example}

\begin{example}
{\rm We review an example considered in \cite{SZ} and reinterpret it by Theorem \ref{frame-Paley}. Specifically, we suppose that $\alpha, L$ are positive constants and $0<\epsilon<1$. It is known \cite{DS} that for any sampling set $\{x_j:\ j\in\mathbb{Z}\}$ such that
$|x_j-x_k|\geq\alpha, j\neq k$ and $|x_j-j\epsilon|\leq L$, the sequence $\{e^{ix_j(\cdot)}:j\in\mathbb{Z}\}$ forms a frame for $W$ with bounds $A, B$ depending only on
$\alpha, L, \epsilon.$ If we choose the sampling set
$\{x_j:j\in\mathbb{Z}\}$ satisfying the above condition
with $\alpha, L, \epsilon$ and $0<\delta<\alpha/2$, then
we can verify that for any sequence $t_j, j\in\mathbb{Z},$
with $t_j\in [x_j-\delta,x_j+\delta]$, there holds
$$
|t_j-t_k|\geq\alpha-2\delta, j\neq k,\ |t_j-j\epsilon|
\leq L+\delta.
$$
Hence, we conclude that
$\{e^{it_j(\cdot)}: j\in\mathbb{Z}\}$ forms a frame for $W$ with the bounds $A,B$ depending only on
$\alpha, L, \epsilon$ and $\delta$. That is, the hypothesis of Theorem \ref{frame-Paley} is satisfied. Hence, we get that $\{\Psi(x_j):j\in\mathbb{Z}\}$ constitutes a frame for $W$. The corresponding complete reconstruction formula of functions
in $\mathcal{B}_{\pi}$ can be built as in Theorem
\ref{complete-reconstruction-formula}. Such a local average sampling theorem was also established in \cite{SZ}.}
\end{example}

To end the discussion about the local average sampling in the Paley-Wiener space $\mathcal{B}_{\pi}$, we consider a special case that $u_x(t)=u(t-x),\ t\in\mathbb{R}$, with
$u\in L^1(\mathbb{R})$ and present a sufficient condition
for $\{\Psi(x_j): j\in\mathbb{J}\}$ to be a frame for $W$.

\begin{thm}
Let $u\in L^1(\mathbb{R})$ satisfy $|\check{u}(t)|\geq c$
for a positive constant $c$ and any $t\in[-\pi,\pi]$ and
$u_x:=u(\cdot-x),\ x\in\mathbb{R}$. If $\{e^{ix_j(\cdot)}:
j\in\mathbb{Z}\}$ is a frame for $W$, then $\{\Psi(x_j):
j\in\mathbb{Z}\}$ also constitutes a frame for $W$.
\end{thm}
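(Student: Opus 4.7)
The plan is to reduce the frame property for $\{\Psi(x_j):j\in\mathbb{Z}\}$ in $W=L^2([-\pi,\pi])$ to the assumed frame property for the complex exponentials $\{e^{ix_j(\cdot)}:j\in\mathbb{Z}\}$ via multiplication by a bounded, boundedly invertible symbol on $W$.

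First, I would compute $\Psi(x_j)$ explicitly. Using $u_x = u(\cdot - x)$ and the translation rule for the inverse Fourier transform (as defined in the paper with the $1/(2\pi)^d$ normalization), one gets $\check{u}_x(t) = e^{ixt}\check{u}(t)$, so
\begin{equation*}
\Psi(x_j)(t) = \sqrt{2\pi}\,e^{ix_j t}\,\check{u}(t)\,\chi_{[-\pi,\pi]}(t),\qquad t\in[-\pi,\pi].
\end{equation*}
Since $u\in L^1(\mathbb{R})$, $\check{u}$ is continuous and bounded with $\|\check{u}\|_\infty\leq \|u\|_{L^1}/(2\pi)$; combined with the hypothesis $|\check{u}(t)|\geq c>0$ on $[-\pi,\pi]$, this says the symbol $\sqrt{2\pi}\,\overline{\check{u}}$ is bounded and bounded away from zero on the sampling band.

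Next, I would introduce the multiplication operator $M:W\to W$ defined by $Mv := \sqrt{2\pi}\,\overline{\check{u}}\,v$. The bounds on $|\check{u}|$ yield
\begin{equation*}
2\pi c^2\|v\|_W^2 \;\leq\; \|Mv\|_W^2 \;\leq\; \tfrac{1}{2\pi}\|u\|_{L^1}^2\|v\|_W^2,
\end{equation*}
so $M$ is a topological isomorphism of $W$. A direct calculation from the explicit form of $\Psi(x_j)$ gives, for every $v\in W$,
\begin{equation*}
\langle v,\Psi(x_j)\rangle_W \;=\; \int_{-\pi}^{\pi} v(t)\,\overline{\sqrt{2\pi}\,e^{ix_j t}\check{u}(t)}\,dt \;=\; \langle Mv,\, e^{ix_j(\cdot)}\rangle_W.
\end{equation*}

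Finally, letting $A,B>0$ be the frame bounds of $\{e^{ix_j(\cdot)}:j\in\mathbb{Z}\}$ in $W$, substituting $Mv$ into the frame inequality and applying the two-sided estimate on $\|Mv\|_W$ yields
\begin{equation*}
2\pi c^2 A\,\|v\|_W^2 \;\leq\; \sum_{j\in\mathbb{Z}}|\langle v,\Psi(x_j)\rangle_W|^2 \;\leq\; \tfrac{1}{2\pi}B\|u\|_{L^1}^2\,\|v\|_W^2,
\end{equation*}
which is exactly the frame inequality for $\{\Psi(x_j):j\in\mathbb{Z}\}$ with explicit bounds. There is no substantive obstacle; the only things to handle with care are the Fourier normalization conventions fixed earlier in the paper and verifying that the pointwise lower bound $|\check{u}|\geq c$ translates into boundedness below of $M$, both of which are routine.
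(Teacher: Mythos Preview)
Your proof is correct and follows essentially the same route as the paper: both compute $\Psi(x_j)=\sqrt{2\pi}\,e^{ix_j(\cdot)}\check{u}\,\chi_{[-\pi,\pi]}$, rewrite $\langle v,\Psi(x_j)\rangle_W$ as $\langle \sqrt{2\pi}\,\overline{\check{u}}\,v,\;e^{ix_j(\cdot)}\rangle_W$, and then transfer the frame bounds via the two-sided estimate on multiplication by $\overline{\check{u}}$. The only cosmetic difference is that you package the multiplication as a bounded invertible operator $M$, whereas the paper writes the same inequalities directly.
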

\begin{proof}
It follows for any $x\in\mathbb{R}$ that
$u_{x}^{\vee}=e^{ix(\cdot)}\check{u}$. Then we have for any $w\in W$ that
$$
\sum_{j\in\mathbb{Z}}|\langle w,\Psi(x_j)\rangle_{W}|^2
=\sum_{j\in\mathbb{Z}}|\langle w \overline{\check{u}}
\chi_{[-\pi,\pi]},e^{ix_j(\cdot)}\rangle_{W}|^2.
$$
Since $\{e^{ix_j(\cdot)}:j\in\mathbb{Z}\}$ is a frame
for $W$, there exist $0<A\leq B<\infty$ such that
\begin{equation}\label{equality11}
A^2\|w \overline{\check{u}}\chi_{[-\pi,\pi]}\|_{W}^2
\leq \sum_{j\in\mathbb{Z}}|\langle w,
\Psi(x_j)\rangle_{W}|^2
\leq B^2\|w \overline{\check{u}}\chi_{[-\pi,\pi]}\|_{W}^2.
\end{equation}
By the assumptions on function $u$, we have that
$$
c\|w\|_{W}
\leq \|w\overline{\check{u}}\chi_{[-\pi,\pi]}\|_{W}
\leq \frac{1}{2\pi}\|u\|_{L^1(\mathbb{R})}\|w\|_{W}.
$$
Substituting the above estimate into (\ref{equality11}),
we obtain for any $w\in W$ that
\begin{equation*}
A'\|w\|_{W}
\leq \left(\sum_{j\in\mathbb{Z}}|\langle w,\Psi(x_j)
\rangle_{W}|^2\right)^{1/2}
\leq B'\|w\|_{W},
\end{equation*}
with $A':=Ac$ and  $B':=\frac{B}{2\pi}\|u\|_{L^1(\mathbb{R})}$.
That is, the sequence $\{\Psi(x_j): j\in\mathbb{Z}\}$
constitutes a frame for $W$.
\end{proof}

To close this section, we remark that
taking into account the continuity of the sampling process, FRKHSs are the right spaces for reconstruction using functional-valued data. The complete reconstruction formula in such spaces can be obtained by constructing the frames in terms of the corresponding functional reproducing kernels. Characterizing the desired frames by features provides us a convenient approach for finding them. The specific examples discussed in this section include not only the existing sampling theorems concerning the functional-valued data but also new ones. Hence, we point out that FRKHSs indeed provide an ideal framework for systematically study sampling and reconstruction with respect to functional-valued data.

\section{Regularized Learning in ORKHSs}

Regularization, a widely used approach in solving an ill-posed problem of learning the unknown target function from finite samples, has been a focus of attention in the field of machine learning \cite{CS,MP05,SS,SC,V}. We study in this section the regularized learning from finite operator-valued data in the framework of ORKHSs. The resulting representer theorem shows that as far as the operator-valued data are concerned, learning in ORKHSs has the advantages over RKHSs.

We begin with recalling the regularized learning schemes for learning a function in an RKHS from finite point-evaluation data. Suppose that $\mathcal{X}$ is the input space and the output space $\mathcal{Y}$ is a Hilbert space. Set $\mathcal{Y}^m:=\mathcal{Y}\times\cdots\times\mathcal{Y}$ ($m$ times). Let $Q:\mathcal{Y}^m\times\mathcal{Y}^m
\to\mathbb{R}_{+}:=[0,+\infty]$ be a prescribed loss function, $\phi:\mathbb{R}_{+}\to\mathbb{R}_{+}$ a regularizer and $\lambda$ a positive regularization parameter. The regularized learning of a function in a $\mathcal{Y}$-valued RKHS $\mathcal{H}$ on $\mathcal{X}$
with the reproducing kernel $\mathcal{K}$ from its finite samples $\{(x_j,\xi_j):j\in\mathbb{N}_m\}
\subset\mathcal{X}\times\mathcal{Y}$ may be formulated as the following minimization problem:
\begin{equation}\label{regularization}
\inf_{f\in\mathcal{H}}\left\{Q(f(\mathbf{x}), \mathbf{\xi})+\lambda\phi(\|f\|_{\mathcal{H}})\right\},
\end{equation}
where $\mathbf{x}:=[x_j:j\in\mathbb{N}_m], f(\mathbf{x}):=[f(x_j):j\in\mathbb{N}_m]$ and $\mathbf{\xi}:=[\xi_j:j\in\mathbb{N}_m]$.
Many popular learning schemes such as regularization networks and support vector machines correspond to the minimization problem (\ref{regularization}) with different choices of $Q$
and $\phi$. Specifically, set $\phi(t):=t^2,
\ t\in\mathbb{R}_{+}$. If the loss function $Q$ is
chosen as
$$
Q(f(\mathbf{x}), \mathbf{\xi})
:=\sum_{j\in\mathbb{N}_m}\|f(x_j)-\xi_j\|_{\mathcal{Y}}^2,
$$
the corresponding algorithm is called the regularization networks. We next choose the loss function as
$$
Q(f(\mathbf{x}), \mathbf{\xi}):=\sum_{j\in\mathbb{N}_m}
\max(\|f(x_j)-\xi_j\|_{\mathcal{Y}}-\varepsilon,0),
$$
where $\varepsilon$ is a positive constant. In this case, the algorithm coincides with the support vector machine regression.

The success of the regularized learning lies on the well-known representer theorem. This remarkable
result shows that for certain choice of loss functions and regularizers the solution to the minimization problem (\ref{regularization}) can be represented by a linear combination of the kernel sections $\mathcal{K}(x_j,\cdot),\ j\in\mathbb{N}_m$. Then the original minimization problem in a potentially infinite-dimensional Hilbert space can be converted into one about finitely many coefficients emerging in the linear combination of the kernel sections. Moreover, since the reproducing kernel is used to measure the similarity between inputs, the representer theorem provides us with a reasonable output by making use of the input similarities.

The representer theorem for the regularization networks in the scalar case dates from \cite{KW} and was generalized for non-quadratic loss functions and nondecreasing regularizers \cite{AMP,CO,SHS}. As the multi-task learning and learning in Banach spaces received considerable attention recently, the representer theorems in vector-valued RKHSs and RKBSs were also established \cite{MP05,ZXZ,ZZ12,ZZ13}. We state  in the following theorem a general result for the solution to the minimization problem (\ref{regularization}). To this end, we suppose that the loss function $Q$ is continuous with respect to each of its first $m$ variables under the weak topology on $\mathcal{Y}$ and convex on $\mathcal{Y}^m$. Moreover, the regularizer $\phi$ is assumed to be continuous, nondecreasing, strictly convex and satisfy $\displaystyle{\lim_{t\to+\infty}\phi(t)=+\infty}$.

\begin{thm}\label{representer-theorem}
Let $\mathcal{H}$ be a $\mathcal{Y}$-valued RKHS on $\mathcal{X}$ with the reproducing kernel $\mathcal{K}$. If $Q$ and $\phi$ satisfy the above hypothesis then there exists a unique minimizer $f_0$ of \eqref{regularization} and it has the form
\begin{equation}\label{representer-theorem-formula}
f_0=\sum_{j\in\mathbb{N}_m}\mathcal{K}(x_j,\cdot)\eta_j
\end{equation}
for some sequence $\eta_j,j\in\mathbb{N}_m,$ in
$\mathcal{Y}$.
\end{thm}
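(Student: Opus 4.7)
The plan is to prove existence, uniqueness, and the representation separately, using coercivity, strict convexity, and an orthogonal decomposition tailored to the vector-valued reproducing property. Set $J(f):=Q(f(\mathbf{x}),\mathbf{\xi})+\lambda\phi(\|f\|_{\mathcal{H}})$.

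For existence, I would pick a minimizing sequence $\{f_n\}\subset\mathcal{H}$. Since $\phi$ is nondecreasing with $\phi(t)\to+\infty$, the sequence $\{\|f_n\|_{\mathcal{H}}\}$ is forced to be bounded, so along a subsequence $f_n\rightharpoonup f^*$ weakly in $\mathcal{H}$. The reproducing property $\langle f_n,\mathcal{K}(x_j,\cdot)\eta\rangle_{\mathcal{H}}=\langle f_n(x_j),\eta\rangle_{\mathcal{Y}}$ gives $f_n(x_j)\rightharpoonup f^*(x_j)$ weakly in $\mathcal{Y}$ for each $j\in\mathbb{N}_m$. The assumed weak separate continuity and joint convexity of $Q$ then yield weak lower semicontinuity of $Q(\cdot,\mathbf{\xi})$ on $\mathcal{Y}^m$, while weak lower semicontinuity of $\|\cdot\|_{\mathcal{H}}$ combined with continuity and monotonicity of $\phi$ yields weak lower semicontinuity of $\phi(\|\cdot\|_{\mathcal{H}})$; summing gives $J(f^*)\le\liminf_n J(f_n)=\inf J$. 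For uniqueness, I would check that $J$ is strictly convex: a $\phi$ that is nondecreasing and strictly convex must be strictly increasing, and the Hilbert-space norm is strictly convex by the parallelogram law, so a short case split (on whether $\|f\|_{\mathcal{H}}=\|g\|_{\mathcal{H}}$) shows that $f\mapsto\phi(\|f\|_{\mathcal{H}})$ is strictly convex; adding the convex $Q$-term preserves this.

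For the structural step, let $\mathcal{M}$ be the closure in $\mathcal{H}$ of $\mathrm{span}\{\mathcal{K}(x_j,\cdot)\eta:j\in\mathbb{N}_m,\ \eta\in\mathcal{Y}\}$, and decompose $f^*=h+h^\perp$ with $h\in\mathcal{M}$ and $h^\perp\in\mathcal{M}^\perp$. The vector-valued reproducing property gives
\[
\langle h^\perp(x_j),\eta\rangle_{\mathcal{Y}}=\langle h^\perp,\mathcal{K}(x_j,\cdot)\eta\rangle_{\mathcal{H}}=0,\quad j\in\mathbb{N}_m,\ \eta\in\mathcal{Y},
\]
so $h^\perp(x_j)=0$ and $f^*(x_j)=h(x_j)$. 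The loss term is therefore unchanged, while the Pythagorean identity $\|f^*\|_{\mathcal{H}}^2=\|h\|_{\mathcal{H}}^2+\|h^\perp\|_{\mathcal{H}}^2$ together with the strict monotonicity of $\phi$ forces $J(h)\le J(f^*)$, with equality only when $h^\perp=0$. Uniqueness then yields $f^*\in\mathcal{M}$.

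The main obstacle is upgrading $f^*\in\mathcal{M}$ (the closure) to the explicit finite expansion $f^*=\sum_{j\in\mathbb{N}_m}\mathcal{K}(x_j,\cdot)\eta_j$. I would handle this through the bounded parametrization $T:\mathcal{Y}^m\to\mathcal{H}$, $T(\eta_1,\ldots,\eta_m):=\sum_j\mathcal{K}(x_j,\cdot)\eta_j$, and the pulled-back functional $\widetilde J:=J\circ T$ on $\mathcal{Y}^m$. Running the existence/uniqueness argument on $\widetilde J$ restricted to $(\ker T)^\perp$ produces a unique minimizer $\eta^*$; density of the range of $T$ in $\mathcal{M}$ together with continuity of $J$ in $\|\cdot\|_{\mathcal{H}}$ then gives $J(T\eta^*)=\inf_{\mathcal{M}}J=\inf_{\mathcal{H}}J$, so by the uniqueness in the first step $f^*=T\eta^*$, delivering the desired $\eta_j=\eta^*_j$. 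The delicate point is the coercivity of $\widetilde J$ on $(\ker T)^\perp$, which follows from $T$ being bounded below there via the structure of the block-Gram operator $T^*T$ with entries $\mathcal{K}(x_k,x_j)$; when $\mathcal{Y}$ is finite-dimensional this is automatic, and in general it reduces to the closed-range property of the evaluation operator $E(f):=(f(x_1),\ldots,f(x_m))$.
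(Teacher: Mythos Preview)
The paper does not prove this theorem; it is stated without proof as a known result, with the surrounding discussion pointing to \cite{KW,CO,SHS,AMP,MP05,ZXZ,ZZ12,ZZ13} for the various versions, and the subsequent ORKHS representer theorem is then reduced to it via the isometric isomorphism with a vector-valued RKHS. So there is no argument in the paper to compare yours against.

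Your existence and uniqueness steps, and the orthogonal-decomposition step yielding $f^{*}\in\mathcal{M}:=\overline{\operatorname{ran}T}$ with $T(\eta_{1},\dots,\eta_{m})=\sum_{j}\mathcal{K}(x_{j},\cdot)\eta_{j}$, are the standard ones and are sound. The genuine gap is precisely the one you flag at the end: upgrading $f^{*}\in\overline{\operatorname{ran}T}$ to $f^{*}\in\operatorname{ran}T$. Your proposed route through coercivity of $\widetilde{J}$ on $(\ker T)^{\perp}$ is equivalent to $T$ having closed range, and this can fail when $\mathcal{Y}$ is infinite-dimensional. Already for $m=1$, if $\mathcal{K}(x_{1},x_{1})$ is an injective compact positive operator on $\mathcal{Y}$, then $\|T\eta\|_{\mathcal{H}}^{2}=\langle\mathcal{K}(x_{1},x_{1})\eta,\eta\rangle_{\mathcal{Y}}$, so $T$ is not bounded below on $(\ker T)^{\perp}=\mathcal{Y}$ and your minimizing-sequence argument for $\widetilde{J}$ does not go through; invoking ``the closed-range property of $E$'' at that point is assuming what has to be shown. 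The usual way around this in the cited literature is not projection but the first-order optimality condition: at the minimizer, $0\in\partial(Q\circ E)(f^{*})+\lambda\,\partial\bigl[\phi(\|\cdot\|_{\mathcal{H}})\bigr](f^{*})$; the convex chain rule gives $\partial(Q\circ E)(f^{*})\subset E^{*}\,\partial Q(Ef^{*},\xi)=\operatorname{ran}T$, while the regularizer contributes a strictly positive multiple of $f^{*}$ (a nondecreasing strictly convex $\phi$ is strictly increasing, so $0\notin\partial\phi(t)$ for $t>0$). This forces $f^{*}\in\operatorname{ran}T$ directly and produces the coefficients $\eta_{j}$ without any closed-range assumption.
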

Substituting the representation (\ref{representer-theorem-formula}) of the minimizer into (\ref{regularization}), the original minimization problem in an RKHS is converted into the minimization problem about the parameters $\eta_j,j\in\mathbb{N}_m.$ Especially in some occasions, with the help of the representer theorem, we can convert the minimization problem into a system of equations about the parameters. For the regularization networks, if $\mathcal{K}(x_j,\cdot),j\in\mathbb{N}_m,$
are linearly independent in $\mathcal{H}$, then
the parameters $\eta_j,j\in\mathbb{N}_m,$ in (\ref{representer-theorem-formula}) satisfy
the linear equations
\begin{equation*}\label{linearsystem}
\sum_{k\in\mathbb{N}_m}\mathcal{K}(x_k,x_j)\eta_k
+\lambda\eta_j=\xi_j,\ j\in\mathbb{N}_m.
\end{equation*}

As the point-evaluation data are not readily available in practical applications, it is desired to study the regularized learning from finite samples which are in general the linear functional values or linear operator values. Most of the past work mainly focused on such learning problems in RKHSs and RKBSs, where the point-evaluation functionals are continuous \cite{V,YC,ZZ12}. A basic assumption of these work is that the finite linear functionals are continuous on a prescribed space, which guarantees the stability of the finite sampling process. Under this hypothesis the representer theorem was obtained. Specifically, in RKHSs, one can represent the solution to the regularization problem by a linear combination of the dual elements of the given linear functionals.

In general, learning a function in a usual RKHS from its non-point-evaluation data is not appropriate. Two reasons account for this consideration. First of all, although the representer theorem still exists for the regularized learning from functional-valued data in an RKHS, it represents the solution by making use of the dual elements of the continuous linear functionals other than the reproducing kernel. This will bring difficulties to the computation of the solution. Secondly, when only finite linear functionals values are used to learn a target element, the underlying RKHS may ensure the continuity of the finite linear functionals. However, to improve the accuracy of approximation, more functional values should be used in the regularized learning scheme. An RKHS may not have the ability to guarantee the continuity of a family of linear functionals. The disadvantages of RKHSs lead us to consider regularized learning from operator-valued data in the ORKHSs setting.

Suppose that $\mathcal{H}$ is an ORKHS with respect to the set $\mathcal{L}:=\{L_{\alpha}:\alpha\in \Lambda\}$ of
linear operators from $\mathcal{H}$ to $\mathcal{Y}$.
Let $\Lambda_m:=[\alpha_j:j\in\mathbb{N}_m]\in \Lambda^m$, $\mathbf{\xi}:=[\xi_j:j\in\mathbb{N}_m]\in\mathcal{Y}^m$ and for each $f\in\mathcal{H}$ set $L_{\Lambda_m}(f):=[L_{\alpha_j}(f): j\in\mathbb{N}_m]$. We consider the regularized learning algorithm with respect to the values of linear operators as
\begin{equation}\label{general-regularization}
\inf_{f\in\mathcal{H}}\left\{Q(L_{\Lambda_m}(f), \mathbf{\xi})+\lambda\phi(\|f\|_{\mathcal{H}})\right\},
\end{equation}
where $Q,\phi$ and $\lambda$ are defined as in (\ref{regularization}). The following theorem gives the existence and the uniqueness of the minimizer of (\ref{general-regularization}) and the corresponding representer theorem. We note that the theorem can be proved by the similar arguments to Theorem \ref{representer-theorem}. However, we will give an alternative proof by making use of the isometric isomorphism between $\mathcal{H}$ and a vector-valued RKHS.

\begin{thm}\label{representer_theorem_ORKHS}
Suppose that $\mathcal{H}$ is an ORKHS with respect to
the set $\mathcal{L}:=\{L_{\alpha}:\alpha\in \Lambda\}$
of linear operators from $\mathcal{H}$ to $\mathcal{Y}$ and $K$ is the operator reproducing kernel for $\mathcal{H}$. If the loss function $Q$ and the regularizer $\phi$ satisfy the hypothesis in Theorem \ref{representer-theorem} then there exists a unique minimizer $f_0$ of \eqref{general-regularization} and there exist $\eta_j\in\mathcal{Y},\ j\in\mathbb{N}_m,$ such that
\begin{equation}\label{representer_theorem_ORKHS-formula}
f_0=\sum_{j\in\mathbb{N}_m}K(\alpha_j)\eta_j.
\end{equation}
\end{thm}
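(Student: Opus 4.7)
The plan is to convert the regularization problem in the ORKHS $\mathcal{H}$ into an equivalent regularization problem in a vector-valued RKHS via the isometric isomorphism established in Theorem \ref{isometric-isomorphism1}, apply the classical representer theorem (Theorem \ref{representer-theorem}) there, and then translate the conclusion back to $\mathcal{H}$ by the inverse isomorphism.

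More concretely, let $\mathcal{K}$ be the reproducing kernel on $\Lambda$ defined by \eqref{kernel_representation}, and let $\mathcal{T}:\mathcal{H}\to\mathcal{H}_{\mathcal{K}}$ be the isometric isomorphism of Theorem \ref{isometric-isomorphism1}, given by $(\mathcal{T}f)(\alpha)=L_\alpha(f)$. Setting $\tilde{f}:=\mathcal{T}f$, the hypotheses give
$$
L_{\Lambda_m}(f)=\tilde{f}(\Lambda_m):=[\tilde{f}(\alpha_j):j\in\mathbb{N}_m],\quad \|f\|_{\mathcal{H}}=\|\tilde{f}\|_{\mathcal{H}_{\mathcal{K}}},
$$
so problem \eqref{general-regularization} is equivalent to
$$
\inf_{\tilde{f}\in\mathcal{H}_{\mathcal{K}}}\left\{Q(\tilde{f}(\Lambda_m),\xi)+\lambda\phi(\|\tilde{f}\|_{\mathcal{H}_{\mathcal{K}}})\right\}.
$$
This is exactly the minimization problem \eqref{regularization} for the $\mathcal{Y}$-valued RKHS $\mathcal{H}_{\mathcal{K}}$ on the input set $\Lambda$ with sample inputs $\alpha_j$. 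Because the loss $Q$ and regularizer $\phi$ are unchanged, the assumptions of Theorem \ref{representer-theorem} carry over verbatim. Applying that theorem yields a unique minimizer $\tilde{f}_0\in\mathcal{H}_{\mathcal{K}}$ of the form
$$
\tilde{f}_0=\sum_{j\in\mathbb{N}_m}\mathcal{K}(\alpha_j,\cdot)\eta_j
$$
for some $\eta_j\in\mathcal{Y}$.

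The final step is to pull this representation back through $\mathcal{T}^{-1}$. The identification is immediate from \eqref{kernel_representation}: for every $\beta\in\Lambda$,
$$
(\mathcal{T}(K(\alpha_j)\eta_j))(\beta)=L_\beta(K(\alpha_j)\eta_j)=\mathcal{K}(\alpha_j,\beta)\eta_j,
$$
so $\mathcal{T}(K(\alpha_j)\eta_j)=\mathcal{K}(\alpha_j,\cdot)\eta_j$. Hence $f_0:=\mathcal{T}^{-1}\tilde{f}_0=\sum_{j\in\mathbb{N}_m}K(\alpha_j)\eta_j$ is the unique minimizer of \eqref{general-regularization} in $\mathcal{H}$, giving exactly \eqref{representer_theorem_ORKHS-formula}. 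Uniqueness transfers via the isometry: if $f_0'$ were another minimizer, then $\mathcal{T}f_0'$ would be a second minimizer in $\mathcal{H}_{\mathcal{K}}$, contradicting Theorem \ref{representer-theorem}.

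The only subtlety I foresee is bookkeeping around the equivalence of the two minimization problems, namely checking that the assumed continuity/convexity of $Q$ and the nondecreasing/strict convexity/coercivity of $\phi$ translate transparently to the $\mathcal{H}_{\mathcal{K}}$ side so that Theorem \ref{representer-theorem} genuinely applies. Since the isometry preserves norms and the loss only sees $L_{\Lambda_m}(f)=\tilde{f}(\Lambda_m)$, this transfer is routine and requires no new hypothesis. Thus the entire argument reduces, essentially for free, to invoking Theorem \ref{representer-theorem} and using the defining relation \eqref{kernel_representation} to identify $\mathcal{T}^{-1}(\mathcal{K}(\alpha,\cdot)\eta)$ with $K(\alpha)\eta$.
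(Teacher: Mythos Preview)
Your proposal is correct and follows essentially the same approach as the paper: transfer the problem to the vector-valued RKHS $\mathcal{H}_{\mathcal{K}}$ via the isometric isomorphism of Theorem~\ref{isometric-isomorphism1}, apply Theorem~\ref{representer-theorem} there, and pull the representation $\tilde{f}_0=\sum_j\mathcal{K}(\alpha_j,\cdot)\eta_j$ back using the identity $L_\beta(K(\alpha_j)\eta_j)=\mathcal{K}(\alpha_j,\beta)\eta_j$. The paper's proof is slightly more implicit about the inverse map (it simply reads off $f_0$ from the equality $\tilde{f}_0(\beta)=L_\beta\bigl(\sum_j K(\alpha_j)\eta_j\bigr)$), but the logic is identical.
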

\begin{proof}
We introduce the operator $\mathcal{K}:\Lambda\times \Lambda\rightarrow \mathcal{B}(\mathcal{Y},\mathcal{Y})$ by
\begin{equation}\label{kernel_representation1}
\mathcal{K}(\alpha,\beta)\xi:=L_{\beta}(K(\alpha)\xi),
\ \alpha,\beta\in \Lambda,\xi\in\mathcal{Y}.
\end{equation}
Then by Theorem \ref{isometric-isomorphism1} we have that $\mathcal{K}$ is a reproducing kernel on $\Lambda$ and its vector-valued RKHS $\mathcal{H}_{\mathcal{K}}$, composed by functions with the form $\tilde{f}(\alpha):=
L_{\alpha}(f), \alpha\in\Lambda, f\in\mathcal{H},$
is isometrically isomorphic to $\mathcal{H}.$ For
each $\tilde{f}\in\mathcal{H}_{\mathcal{K}}$ set
$\tilde{f}(\Lambda_m):=[L_{\alpha_j}(f):j\in\mathbb{N}_m]$.
Due to the isometrically isomorphic between $\mathcal{H}$ and $\mathcal{H}_{\mathcal{K}}$ we have that $f_0$ is the minimizer of (\ref{general-regularization})
if and only if $\tilde{f}_0$ with
$\tilde{f}_0(\alpha):=L_{\alpha}(f_0),\ \alpha\in\Lambda$, is the minimizer of
\begin{equation*}
\inf_{f\in\mathcal{H}_{\mathcal{K}}}
\left\{Q(\tilde{f}(\Lambda_m),\mathbf{\xi})
+\lambda\phi(\|f\|_{\mathcal{H}_{K}})\right\}.
\end{equation*}
By Theorem \ref{representer-theorem} we get that the unique minimizer $\tilde{f}_0$ of the above optimization problem has the form
$$
\widetilde{f}_0=\sum_{j\in\mathbb{N}_m}
\mathcal{K}(\alpha_j,\cdot)\eta_j
$$
for some $\eta_j\in\mathcal{Y}, j\in\mathbb{N}_m.$ Hence, we conclude that there exists a unique minimizer of \eqref{general-regularization}. Moreover, since there holds for each $\beta\in\Lambda$,
$$
\widetilde{f}_0(\beta)
=\sum_{j\in\mathbb{N}_m}\mathcal{K}(\alpha_j,\beta)\eta_j
=\sum_{j\in\mathbb{N}_m}L_{\beta}(K(\alpha_j)\eta_j)
=L_{\beta}\left(\sum_{j\in\mathbb{N}_m}
K(\alpha_j)\eta_j\right),
$$
we get the representation of $f_0$ as in (\ref{representer_theorem_ORKHS-formula}).
\end{proof}

As a special case, we consider the regularization networks (\ref{general-regularization}), where the regularizer $\phi$ is chosen as $\phi(t):=t^2,\ t\in\mathbb{R}_{+}$ and the loss function $Q$ is chosen as
$$
Q(L_{\Lambda_m}(f), \mathbf{\xi})
:=\sum_{j\in\mathbb{N}_m}\|L_{\alpha_j}(f)
-\xi_j\|_{\mathcal{Y}}^2.
$$
It is clear that the loss function $Q$ and the regularizer $\phi$ satisfies the hypothesis in Theorem \ref{representer_theorem_ORKHS}. We then conclude by Theorem \ref{representer_theorem_ORKHS} that
there exists a unique minimizer $f_0$ of the regularization networks. Let $\mathcal{K}$ be the reproducing kernel defined by (\ref{kernel_representation1}). Observing from the
proof of Theorem \ref{representer_theorem_ORKHS},
we have that if $K(\alpha_j), j\in\mathbb{N}_m$ are linearly independent in $\mathcal{H}$, then the
parameters $\eta_j,j\in\mathbb{N}_m$, in (\ref{representer-theorem-formula}) satisfy the
linear equations
\begin{equation*}
\sum_{k\in\mathbb{N}_m}\mathcal{K}(\alpha_k,\alpha_j)\eta_k
+\lambda\eta_j=\xi_j,\ j\in\mathbb{N}_m.
\end{equation*}
By the definition of $\mathcal{K}$, the above equations are equivalent to
\begin{equation}\label{linearsystem1}
\sum_{k\in\mathbb{N}_m}L_{\alpha_j}(K(\alpha_k)\eta_k)
+\lambda\eta_j=\xi_j,\ j\in\mathbb{N}_m.
\end{equation}

Observing from representation (\ref{representer_theorem_ORKHS-formula}), we note that for the regularized learning from operator-valued data in an ORKHS, the target element can be obtained by the operator reproducing kernel in a desired manner. That is, it can be viewed as a linear combination of the kernel sections. As shown in the system of equations (\ref{linearsystem1}), the finite coefficients in the representation (\ref{representer_theorem_ORKHS-formula}) can be obtained by solving a minimization problem in a finite-dimensional space, which reduces to a linear system in some special cases. Furthermore, the desired result holds for all the finite set of linear operators in the family $\mathcal{L}:=\{L_{\alpha}:\alpha\in \Lambda\}$. All these facts show that ORKHSs and operator reproducing kernels provide a right framework for investigating learning from operator-valued data.

\section{Stability of Numerical Reconstruction Algorithms}

When we use a numerical algorithm to reconstruct an element from non-point-evaluation data (its operator values), stability of the algorithm is crucial.
In this section, we study stability of a numerical reconstruction algorithm using operator values in an ORKHS. As will be pointed out, the continuity of linear operators, used to obtain the non-point-evaluation data, on an ORKHS is necessary for the algorithm to
be stable.

Numerical reconstruction algorithms are often established for understanding a target element. There are two stages in the reconstruction. The first one is sampling used to obtain a finite set of functional-valued or operator-valued data processed digitally on a computer. The second one is the reconstruction of an approximation of the target element from the resulting sampled data. With the number of the sampled data increasing, the numerical reconstruction algorithm is expected to provide a more and more accurate  approximation for the target element. As an admissible numerical reconstruction algorithm, it needs to be stable. Such a property will ensure that the effect of the noise, emerging in both sampling and reconstruction, are not amplified through the underlying numerical reconstruction algorithm.

We now define the notion of the stability. Suppose that $\mathcal{H}$ is an ORKHS with respect to a family $\mathcal{L}:=\{L_{\alpha}:\alpha\in\Lambda\}$
of linear operators from $\mathcal{H}$ to a Hilbert
space $\mathcal{Y}$ and   $\widetilde{\Lambda}:=\{\alpha_j:j\in\mathbb{J}\}$ is a subset of $\Lambda$ with $\mathbb{J}$ being a countable index set. With respect to each finite set $\mathbb{J}_m$ of $m$ elements in $\mathbb{J}$, we set   $\widetilde{\Lambda}_m:=\{\alpha_j:j\in\mathbb{J}_m\}$ and define the sampling operator $\mathcal{I}_{\widetilde{\Lambda}_m}:\mathcal{H}\to \mathcal{Y}^m$ by
\begin{equation}\label{Sampling-operator-finite}
\mathcal{I}_{\widetilde{\Lambda}_m}(f)
:=[L_{\alpha_j}(f):j\in\mathbb{J}_m].
\end{equation}
We denote by $\mathcal{A}_m$ the reconstruction operator from $\mathcal{Y}^m$ to $\mathcal{H}$. The sampling operator and the reconstruction operator together describe a numerical reconstruction algorithm. Namely, $\mathcal{A}_m(\mathcal{I}_{\widetilde{\Lambda}_m}(f))$ describes a numerical algorithm for constructing an approximation of $f$. We say the numerical reconstruction algorithm is stable if there exists a positive constant $c$ such that for all $f\in\mathcal{H}$, all $m\in\mathbb{N}$ and all finite subset $\widetilde{\Lambda}_m$ of $\widetilde{\Lambda}$,
$$
\|\mathcal{A}_m(\mathcal{I}_{\widetilde{\Lambda}_m}(f))
\|_{\mathcal{H}}\leq c\|f\|_{\mathcal{H}}.
$$

We present in the following theorem a sufficient condition for the stability of a numerical reconstruction algorithm. To this end, we define the Hilbert space of square-summable sequence in $\mathcal{Y}$ on $\mathbb{J}$ by
$$
l^2(\mathbb{J},\mathcal{Y})
:=\left\{\{\xi_j:j\in\mathbb{J}\}:\sum_{j\in\mathbb{J}}
\|\xi_j\|^2_{\mathcal{Y}}<+\infty\right\}.
$$
With respect to $\widetilde{\Lambda}$ we define the sampling operator $\mathcal{I}_{\widetilde{\Lambda}}:\mathcal{H}
\rightarrow l^2(\mathbb{J},\mathcal{Y})$ for $f\in\mathcal{H}$ by $\mathcal{I}_{\widetilde{\Lambda}}(f)
:=\{L_{\alpha_j}(f):j\in\mathbb{J}\}$.
\begin{thm}\label{stable}
Let $\mathcal{H}$ be an ORKHS with respect to a family $\mathcal{L}:=\{L_{\alpha}:\alpha\in\Lambda\}$ of linear operators from $\mathcal{H}$ to a Hilbert
space $\mathcal{Y}$. Let $\widetilde{\Lambda}:=\{\alpha_j:j\in\mathbb{J}\}$ be a countable subset  of $\Lambda$. Suppose that the operator $\mathcal{I}_{\widetilde{\Lambda}}$ is continuous on $\mathcal{H}$ and the operators $\mathcal{I}_{\widetilde{\Lambda}_m}$, $\mathcal{A}_m$
are defined as above. If there exist a positive
constant $c$ such that for all $f\in\mathcal{H}$,
all $m\in\mathbb{N}$ and all finite subset $\widetilde{\Lambda}_m$ of $\widetilde{\Lambda}$,
$$
\|\mathcal{A}_m(\mathcal{I}_{\widetilde{\Lambda}_m}(f))
\|_{\mathcal{H}}\leq c
\|\mathcal{I}_{\widetilde{\Lambda}_m}(f)\|_{\mathcal{Y}^m},
$$
then the numerical reconstruction algorithm is stable.
\end{thm}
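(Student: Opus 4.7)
The plan is to bound $\|\mathcal{I}_{\widetilde{\Lambda}_m}(f)\|_{\mathcal{Y}^m}$ uniformly in the finite subset $\widetilde{\Lambda}_m$ and in $m$, by using the continuity of the full sampling operator $\mathcal{I}_{\widetilde{\Lambda}}:\mathcal{H}\to l^2(\mathbb{J},\mathcal{Y})$. Once that uniform bound is in hand, the desired stability estimate is immediate by chaining it with the hypothesized bound on $\mathcal{A}_m$.

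First, I would invoke the continuity of $\mathcal{I}_{\widetilde{\Lambda}}$ to extract a constant $M>0$ such that
$$
\|\mathcal{I}_{\widetilde{\Lambda}}(f)\|_{l^2(\mathbb{J},\mathcal{Y})}
=\Bigl(\sum_{j\in\mathbb{J}}\|L_{\alpha_j}(f)\|_{\mathcal{Y}}^2\Bigr)^{1/2}
\leq M\|f\|_{\mathcal{H}},\ \ \mbox{for all}\ f\in\mathcal{H}.
$$
Next, for an arbitrary finite index set $\mathbb{J}_m\subseteq\mathbb{J}$ with associated $\widetilde{\Lambda}_m=\{\alpha_j:j\in\mathbb{J}_m\}$, the partial sum is trivially dominated by the full series, so by definition of $\mathcal{I}_{\widetilde{\Lambda}_m}$ in \eqref{Sampling-operator-finite} there holds
$$
\|\mathcal{I}_{\widetilde{\Lambda}_m}(f)\|_{\mathcal{Y}^m}^2
=\sum_{j\in\mathbb{J}_m}\|L_{\alpha_j}(f)\|_{\mathcal{Y}}^2
\leq\sum_{j\in\mathbb{J}}\|L_{\alpha_j}(f)\|_{\mathcal{Y}}^2
\leq M^2\|f\|_{\mathcal{H}}^2,
$$
which is the uniform bound sought, with the constant $M$ independent of both $m$ and the particular choice of $\widetilde{\Lambda}_m$.

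Finally, combining this uniform bound with the hypothesized inequality $\|\mathcal{A}_m(\mathcal{I}_{\widetilde{\Lambda}_m}(f))\|_{\mathcal{H}}\leq c\|\mathcal{I}_{\widetilde{\Lambda}_m}(f)\|_{\mathcal{Y}^m}$ yields
$$
\|\mathcal{A}_m(\mathcal{I}_{\widetilde{\Lambda}_m}(f))\|_{\mathcal{H}}
\leq cM\|f\|_{\mathcal{H}},
$$
uniformly in $f\in\mathcal{H}$, in $m\in\mathbb{N}$, and in the finite subset $\widetilde{\Lambda}_m\subseteq\widetilde{\Lambda}$. Setting the stability constant to $cM$ completes the proof. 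There is essentially no obstacle here: the argument is a two-line monotone-convergence-style majorization together with the assumed continuity, and the only thing one must be careful about is emphasizing that the continuity of $\mathcal{I}_{\widetilde{\Lambda}}$ (which requires $\mathcal{H}$ to be an ORKHS with respect to a family containing all of $\mathcal{L}|_{\widetilde{\Lambda}}$) is precisely what makes the uniform-in-$m$ estimate possible, underscoring the role of the ORKHS framework.
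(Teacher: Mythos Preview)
Your proof is correct and follows essentially the same approach as the paper's own proof: both use the continuity of $\mathcal{I}_{\widetilde{\Lambda}}$ to extract a bound, majorize the finite partial sum $\|\mathcal{I}_{\widetilde{\Lambda}_m}(f)\|_{\mathcal{Y}^m}$ by the full $l^2(\mathbb{J},\mathcal{Y})$-norm, and then chain with the hypothesized inequality to obtain the stability constant $cM$ (the paper writes $cc'$).
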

\begin{proof}
It follows from the continuity of operator  $\mathcal{I}_{\widetilde{\Lambda}}$ that there exists
a positive constant $c'$ such that for all $f\in\mathcal{H}$,
\begin{equation}\label{Sampling-operator-continuity}
\|\mathcal{I}_{\widetilde{\Lambda}}(f)
\|_{l^2(\mathbb{J},\mathcal{Y})}
\leq c'\|f\|_{\mathcal{H}}.
\end{equation}
By the definition of sampling operators and the norm of the space $l^2(\mathbb{J},\mathcal{Y})$, we get for all $f\in\mathcal{H}$, all $m\in\mathbb{N}$ and all finite subset $\widetilde{\Lambda}_m$ that
$$
\|\mathcal{I}_{\widetilde{\Lambda}_m}(f)
\|_{\mathcal{Y}^m}=\left(\sum_{j\in\mathbb{J}_m}
\|L_{\alpha_j}(f)\|_{\mathcal{Y}}^2\right)^{1/2}
\leq \|\mathcal{I}_{\widetilde{\Lambda}}(f)
\|_{l^2(\mathbb{J},\mathcal{Y})}.
$$
This combined with (\ref{Sampling-operator-continuity}) leads to the estimate
$$
\|\mathcal{I}_{\widetilde{\Lambda}_m}(f)
\|_{\mathcal{Y}^m}\leq c'\|f\|_{\mathcal{H}}.
$$
Substituting the above inequality into the inequality in the assumption, we have for all $f\in\mathcal{H}$, all $m\in\mathbb{N}$ and all finite subset $\widetilde{\Lambda}_m$ that
$$
\|\mathcal{A}_m(\mathcal{I}_{\widetilde{\Lambda}_m}(f))
\|_{\mathcal{H}}\leq cc'\|f\|_{\mathcal{H}}.
$$
This together with the definition of the stability of the numerical reconstruction algorithm proves the desired result.
\end{proof}

According to Theorem \ref{stable}, to ensure the stability of the numerical reconstruction algorithm, we need both the family $\{\mathcal{I}_{\widetilde{\Lambda}_m}:
\widetilde{\Lambda}_m\subseteq\widetilde{\Lambda}\}$ of sampling operators and the family
$\{\mathcal{A}_m: m\in\mathbb{N}\}$
of reconstruction operators to be uniformly bounded. We note that the continuity of the sampling operators on an ORKHS is necessary for the uniform boundedness of the family of the sampling operators. For the reconstruction operators, we shall show below that two classes of commonly used operators are uniformly bounded.

The first numerical reconstruction algorithm that we consider is the truncated reconstruction of elements in an FRKHS from a countable set of functional-valued data discussed in section 6. Let $\mathcal{H}$ be an FRKHS with respect to a family $\mathcal{L}$ of linear functionals
$\{L_{\alpha}:\alpha\in\Lambda\}$ and $K$ the functional reproducing kernel for $\mathcal{H}$. Suppose that  $\widetilde{\Lambda}:=\{\alpha_j: j\in\mathbb{J}\}$ is a countable set of $\Lambda$ such that
$\{K(\alpha_j): j\in\mathbb{J}\}$ constitutes a Riesz basis for $\mathcal{H}$. We denote by $\widetilde{K}$ the dual functional reproducing kernel of $K$ with respect to the set $\{\alpha_j: j\in\mathbb{J}\}$. Since only finite sampled data are used in practice, we consider the reconstruction operator $\mathcal{A}_m$ from $\mathbb{C}^m$ to $\mathcal{H}$ defined by
$$
\mathcal{A}_m(\mathcal{I}_{\widetilde{\Lambda}_m}(f))
:=\sum_{j\in\mathbb{J}_m}L_{\alpha_j}(f)
\widetilde{K}(\alpha_j),
$$
with $\mathbb{J}_m$ being a finite subset of $\mathbb{J}$ and $\widetilde{\Lambda}_m:=\{\alpha_j:j\in\mathbb{J}_m\}$.
The stability of the resulting numerical reconstruction algorithm for this case is established below.

\begin{thm}
Let $\mathcal{H}$ be an FRKHS with respect to
the family $\mathcal{F}$ of linear functionals $L_{\alpha}$, $\alpha\in\Lambda$ and $K$ the
functional reproducing kernel for $\mathcal{H}$. If for a countable subset $\widetilde{\Lambda}:=\{\alpha_j: j\in\mathbb{J}\}$ of $\Lambda$, the set $\{K(\alpha_j):  j\in\mathbb{J}\}$ constitutes a Riesz basis for $\mathcal{H}$, then the numerical reconstruction algorithm defined as above is stable.
\end{thm}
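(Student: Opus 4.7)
The plan is to verify the hypotheses of Theorem 8.1, which reduces the task to two items: (i) the continuity of the full sampling operator $\mathcal{I}_{\widetilde{\Lambda}}:\mathcal{H}\to l^2(\mathbb{J},\mathbb{C})$, and (ii) a uniform bound $\|\mathcal{A}_m(\mathcal{I}_{\widetilde{\Lambda}_m}(f))\|_{\mathcal{H}}\leq c\|\mathcal{I}_{\widetilde{\Lambda}_m}(f)\|_{\mathbb{C}^m}$ with $c$ independent of $m$ and of the chosen finite subset $\widetilde{\Lambda}_m\subseteq\widetilde{\Lambda}$. Once both are in place, Theorem 8.1 immediately yields the stability.

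For (i), I would use that a Riesz basis is in particular a frame. Since $\{K(\alpha_j):j\in\mathbb{J}\}$ is a Riesz basis for $\mathcal{H}$, there is an upper frame bound $B>0$ with $\sum_{j\in\mathbb{J}}|\langle f,K(\alpha_j)\rangle_{\mathcal{H}}|^2\leq B\|f\|_{\mathcal{H}}^2$ for every $f\in\mathcal{H}$. The reproducing property \eqref{reproducing_property} gives $L_{\alpha_j}(f)=\langle f,K(\alpha_j)\rangle_{\mathcal{H}}$, so $\|\mathcal{I}_{\widetilde{\Lambda}}(f)\|_{l^2(\mathbb{J},\mathbb{C})}^2\leq B\|f\|_{\mathcal{H}}^2$, which is precisely the continuity of $\mathcal{I}_{\widetilde{\Lambda}}$.

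For (ii), I would exploit the dual space $\widetilde{\mathcal{H}}$ introduced in Section 6. By Theorem 6.4, since $\{K(\alpha_j):j\in\mathbb{J}\}$ is a Riesz basis for $\mathcal{H}$, the dual family $\{\widetilde{K}(\alpha_j):j\in\mathbb{J}\}$ is an \emph{orthonormal} basis of $\widetilde{\mathcal{H}}$. Consequently, for any finite subset $\mathbb{J}_m\subseteq\mathbb{J}$ and any $f\in\mathcal{H}$,
$$
\|\mathcal{A}_m(\mathcal{I}_{\widetilde{\Lambda}_m}(f))\|_{\widetilde{\mathcal{H}}}^2
=\Big\|\sum_{j\in\mathbb{J}_m}L_{\alpha_j}(f)\widetilde{K}(\alpha_j)\Big\|_{\widetilde{\mathcal{H}}}^2
=\sum_{j\in\mathbb{J}_m}|L_{\alpha_j}(f)|^2
=\|\mathcal{I}_{\widetilde{\Lambda}_m}(f)\|_{\mathbb{C}^m}^2.
$$
Then I would invoke Lemma 6.1, which asserts that the norms of $\mathcal{H}$ and $\widetilde{\mathcal{H}}$ are equivalent: there exists $c>0$ such that $\|g\|_{\mathcal{H}}\leq c\|g\|_{\widetilde{\mathcal{H}}}$ for all $g$ in the common underlying space. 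Applying this with $g=\mathcal{A}_m(\mathcal{I}_{\widetilde{\Lambda}_m}(f))$ yields $\|\mathcal{A}_m(\mathcal{I}_{\widetilde{\Lambda}_m}(f))\|_{\mathcal{H}}\leq c\|\mathcal{I}_{\widetilde{\Lambda}_m}(f)\|_{\mathbb{C}^m}$, with $c$ depending only on the frame bounds and not on $m$ or on the choice of $\mathbb{J}_m$.

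There is no serious obstacle here; the proof is essentially bookkeeping that stitches Theorem 6.4, Lemma 6.1, and Theorem 8.1 together. The only mild point that deserves care is confirming that the upper-bound constant extracted from the norm equivalence (which is a statement about the completed spaces) can be used for every finite truncation uniformly --- but this is automatic because the equivalence constant $c$ is independent of which element of $\widetilde{\mathcal{H}}$ we plug in. With (i) and (ii) both established, Theorem 8.1 delivers the stability of the numerical reconstruction algorithm.
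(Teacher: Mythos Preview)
Your argument is correct and follows the same overall scheme as the paper: verify the two hypotheses of Theorem~\ref{stable}. Part~(i) is handled identically. For part~(ii), you take a slightly different route than the paper. The paper works entirely in $\mathcal{H}$: it applies the \emph{lower} frame inequality $A\|g\|_{\mathcal{H}}\le\bigl(\sum_{k\in\mathbb{J}}|\langle g,K(\alpha_k)\rangle_{\mathcal{H}}|^2\bigr)^{1/2}$ to $g=\mathcal{A}_m(\mathcal{I}_{\widetilde{\Lambda}_m}(f))$ and then uses the biorthogonality $\langle\widetilde{K}(\alpha_j),K(\alpha_k)\rangle_{\mathcal{H}}=\delta_{j,k}$ directly to collapse the sum, obtaining the constant $1/A$. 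You instead pass through the auxiliary space $\widetilde{\mathcal{H}}$, using that $\{\widetilde{K}(\alpha_j)\}$ is orthonormal there (Theorem~\ref{complete-reconstruction-formula}) and then invoking the norm equivalence of Lemma~\ref{norm-equivalence}. Since the orthonormality in $\widetilde{\mathcal{H}}$ is itself proved via biorthogonality, and the norm-equivalence constant comes from the same frame bounds, the two arguments are essentially repackagings of one another; yours is a touch more conceptual, the paper's a touch more direct.
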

\begin{proof}
We prove this result by employing Theorem \ref{stable} with $\mathcal{Y}:=\mathbb{C}$. To this end, we verify the hypothesis of Theorem \ref{stable} for this special case.
Since $\{K(\alpha_j): j\in\mathbb{J}\}$ constitutes a Riesz basis for $\mathcal{H}$, there exist positive constants $A$ and $B$ such that
\begin{equation}\label{inequalityAB}
A\|f\|_{\mathcal{H}}\leq \left(\sum_{j\in\mathbb{J}}
|\langle f,K(\alpha_j)\rangle_{\mathcal{H}}|^2
\right)^{1/2}\leq B\|f\|_{\mathcal{H}},
\ \mbox{for all}\ f\in\mathcal{H}.
\end{equation}
By the first inequality in \eqref{inequalityAB} and the definition of
$\mathcal{A}_m$, we have that
$$
\|\mathcal{A}_m(\mathcal{I}_{\widetilde{\Lambda}_m}(f))
\|_{\mathcal{H}}\leq \frac{1}{A}
\left(\sum_{k\in\mathbb{J}}
\sum_{j\in\mathbb{J}_m}|L_{\alpha_j}(f)|^2
\left|\left< \widetilde{K}(\alpha_j),K(\alpha_k)
\right>_{\mathcal{H}}\right|^2\right)^{1/2}.
$$
Using the biorthogonal condition of the sequence pair  $\{\widetilde{K}(\alpha_j), K(\alpha_j): j\in\mathbb{J}\}$ in the right hand side of the inequality above yields that for all $f\in\mathcal{H}$, all $m\in\mathbb{N}$ and all finite subset $\widetilde{\Lambda}_m$,
$$
\|\mathcal{A}_m(\mathcal{I}_{\widetilde{\Lambda}_m}(f))
\|_{\mathcal{H}}\leq  \frac{1}{A} \left(\sum_{j\in\mathbb{J}_m}|L_{\alpha_j}(f)|^2\right)^{1/2}
=\frac{1}{A}\|\mathcal{I}_{\widetilde{\Lambda}_m}(f)
\|_{\mathbb{C}^m}.
$$
Employing the reproducing property of $K$ in the second inequality in (\ref{inequalityAB}), we get that
$$
\|\mathcal{I}_{\widetilde{\Lambda}}(f)
\|_{l^2(\mathbb{J}, \mathbb{C})}\leq B\|f\|_{\mathcal{H}},
\ \mbox{for all}\ f\in\mathcal{H},
$$
yielding the continuity of operator $\mathcal{I}_{\widetilde{\Lambda}}$.
By Theorem \ref{stable}, we conclude the desired result.
\end{proof}

We next consider the regularization network, discussed in section 7, which infers an approximation of a target element by solving the optimization problem
\begin{equation}\label{regularization-network}
\inf_{g\in\mathcal{H}}\left\{\sum_{j\in\mathbb{N}_m}
\|L_{\alpha_j}(g)-\xi_j\|_{\mathcal{Y}}^2
+\lambda\|g\|_{\mathcal{H}}^2\right\}.
\end{equation}
Suppose that $\mathcal{H}$ is an ORKHS with respect to a family $\mathcal{L}$ of linear operators
$\{L_{\alpha}:\alpha\in\Lambda\}$ from $\mathcal{H}$ to $\mathcal{Y}$ and $K$ is the operator reproducing kernel for $\mathcal{H}$. Let $\widetilde{\Lambda}:=\{\alpha_j:j\in\mathbb{J}\}$ be a countable subset of $\Lambda$. For each finite subset $\widetilde{\Lambda}_m:=\{\alpha_j:j\in\mathbb{J}_m\}$, the sampling operator $\mathcal{I}_{\widetilde{\Lambda}_m}:\mathcal{H}
\rightarrow \mathcal{Y}^m$ is defined as in (\ref{Sampling-operator-finite}). For each $f\in\mathcal{H}$, we let $f_{\widetilde{\Lambda}_m}$ be the unique minimizer of (\ref{regularization-network}) with $[\xi_j:j\in\mathbb{J}_m]
:=\mathcal{I}_{\widetilde{\Lambda}_m}(f).$ Accordingly,  the reconstruction operator
$\mathcal{A}_m:\mathcal{Y}^m\rightarrow\mathcal{H}$
is defined by
$$
\mathcal{A}_m(\mathcal{I}_{\widetilde{\Lambda}_m}(f))
:=f_{\widetilde{\Lambda}_m}.
$$

The next result concerns the stability of the corresponding numerical reconstruction algorithm.
To this end, we introduce another operator by
considering the following regularization problem
\begin{equation}\label{Tikhonov-regularization}
\inf_{g\in\mathcal{H}}\left\{\sum_{j\in\mathbb{J}}
\|L_{\alpha_j}(g)-\xi_j\|^2_{\mathcal{Y}}+
\lambda\|g\|_{\mathcal{H}}^2\right\}.
\end{equation}
It follows from theory of the Tikhonov regularization \cite{K} that if the operator $\mathcal{I}_{\widetilde{\Lambda}}:\mathcal{H}
\rightarrow l^2(\mathbb{J},\mathcal{Y})$ is continuous on $\mathcal{H}$, there exists for any $\xi:=\{\xi_j:j\in\mathbb{J}\}\in l^2(\mathbb{J},\mathcal{Y})$ a unique minimizer $g_0$ of the optimization problem (\ref{Tikhonov-regularization}). Furthermore, the minimizer $g_0$ is given by the unique solution of the equation
$\lambda g_0+\mathcal{I}_{\widetilde{\Lambda}}^*
\mathcal{I}_{\widetilde{\Lambda}}(g_0)=
\mathcal{I}_{\widetilde{\Lambda}}^*(\xi)$. We define the operator $\mathcal{A}_{\widetilde{\Lambda}}:
l^2(\mathbb{J},\mathcal{Y})\rightarrow\mathcal{H}$ by
\begin{equation*}\label{Tikhonov-regularization-operator}
\mathcal{A}_{\widetilde{\Lambda}}
:=(\lambda \mathcal{I}+\mathcal{I}_{\widetilde{\Lambda}}^*
\mathcal{I}_{\widetilde{\Lambda}})^{-1}
\mathcal{I}_{\widetilde{\Lambda}}^*,
\end{equation*}
where $\mathcal{I}$ denotes the identity operator on $\mathcal{H}$.
\begin{thm}
Suppose that $\mathcal{H}$ is an ORKHS with respect to
the set $\mathcal{L}:=\{L_{\alpha}:\alpha\in \Lambda\}$
of linear operators from $\mathcal{H}$ to $\mathcal{Y}$ and $K$ is the operator reproducing kernel for $\mathcal{H}$. If the countable subset $\widetilde{\Lambda}:=\{\alpha_j:j\in\mathbb{J}\}$ of $\Lambda$ is chosen to satisfy the condition that there exists a positive constant $c$ such that for all $\xi:=\{\xi_j:j\in\mathbb{J}\}\in l^2(\mathbb{J},\mathcal{Y})$ with $\|\xi\|_{l^2(\mathbb{J},\mathcal{Y})}=1$,
\begin{equation}\label{inequality-c}
\left\|\sum_{j\in\mathbb{J}}K(\alpha_j)\xi_j\right
\|_{\mathcal{H}}\leq c,
\end{equation}
then the corresponding numerical reconstruction algorithm defined as above is stable.
\end{thm}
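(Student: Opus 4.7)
The plan is to verify the two hypotheses of Theorem \ref{stable} with $\mathcal{I}_{\widetilde{\Lambda}_m}$ and $\mathcal{A}_m$ as defined just above the theorem. Thus, I need to show (i) that the infinite sampling operator $\mathcal{I}_{\widetilde{\Lambda}}:\mathcal{H}\to l^2(\mathbb{J},\mathcal{Y})$ is continuous, and (ii) that there is a constant $c_0>0$, independent of $m$, $f$ and the chosen finite subset $\widetilde{\Lambda}_m$, with $\|\mathcal{A}_m(\mathcal{I}_{\widetilde{\Lambda}_m}(f))\|_{\mathcal{H}}\le c_0\,\|\mathcal{I}_{\widetilde{\Lambda}_m}(f)\|_{\mathcal{Y}^m}$. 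Once both are established, Theorem \ref{stable} delivers the stability of the numerical reconstruction algorithm at once.

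For (i) I would argue by duality using the reproducing property \eqref{reproducing_property}. For any $\eta=\{\eta_j:j\in\mathbb{J}\}\in l^2(\mathbb{J},\mathcal{Y})$ with $\|\eta\|_{l^2(\mathbb{J},\mathcal{Y})}=1$, the reproducing property gives, for every finite $\mathbb{J}_m\subseteq\mathbb{J}$,
$$
\sum_{j\in\mathbb{J}_m}\langle L_{\alpha_j}(f),\eta_j\rangle_{\mathcal{Y}}
=\left\langle f,\sum_{j\in\mathbb{J}_m}K(\alpha_j)\eta_j\right\rangle_{\mathcal{H}}.
$$
Hypothesis \eqref{inequality-c} applied to the tail sequences shows that the partial sums $\sum_{j\in\mathbb{J}_m}K(\alpha_j)\eta_j$ form a Cauchy net in $\mathcal{H}$, hence converge to some element whose norm is bounded by $c$. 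Passing to the limit and using the Cauchy-Schwarz inequality yields
$$
\left|\sum_{j\in\mathbb{J}}\langle L_{\alpha_j}(f),\eta_j\rangle_{\mathcal{Y}}\right|
\le c\,\|f\|_{\mathcal{H}}.
$$
Taking the supremum over unit vectors $\eta$ and invoking the duality between $l^2(\mathbb{J},\mathcal{Y})$ and itself gives $\|\mathcal{I}_{\widetilde{\Lambda}}(f)\|_{l^2(\mathbb{J},\mathcal{Y})}\le c\,\|f\|_{\mathcal{H}}$, which is exactly the continuity needed in Theorem \ref{stable}.

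For (ii) I would exploit the optimality of $f_{\widetilde{\Lambda}_m}:=\mathcal{A}_m(\mathcal{I}_{\widetilde{\Lambda}_m}(f))$ in problem \eqref{regularization-network} with data $\xi_j:=L_{\alpha_j}(f)$. Testing against the trivial candidate $g=0$ yields
$$
\lambda\|f_{\widetilde{\Lambda}_m}\|_{\mathcal{H}}^{2}
\le\sum_{j\in\mathbb{J}_m}\|L_{\alpha_j}(f_{\widetilde{\Lambda}_m})-\xi_j\|_{\mathcal{Y}}^{2}
+\lambda\|f_{\widetilde{\Lambda}_m}\|_{\mathcal{H}}^{2}
\le\sum_{j\in\mathbb{J}_m}\|\xi_j\|_{\mathcal{Y}}^{2}
=\|\mathcal{I}_{\widetilde{\Lambda}_m}(f)\|_{\mathcal{Y}^m}^{2},
$$
so $\|\mathcal{A}_m(\mathcal{I}_{\widetilde{\Lambda}_m}(f))\|_{\mathcal{H}}\le \lambda^{-1/2}\|\mathcal{I}_{\widetilde{\Lambda}_m}(f)\|_{\mathcal{Y}^m}$, with a constant independent of $m$, $f$ and $\widetilde{\Lambda}_m$. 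Combining (i) and (ii) through Theorem \ref{stable} completes the proof.

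The only technical subtlety I anticipate is the well-definedness step in part (i): hypothesis \eqref{inequality-c} is stated for full sequences in $l^2(\mathbb{J},\mathcal{Y})$, so one must first apply it to finite truncations to obtain a Cauchy criterion, then use completeness of $\mathcal{H}$ to pass to the limit and justify the interchange of the inner product with the infinite sum. Apart from that completeness argument, everything reduces to the reproducing property and the variational inequality for the regularization minimizer, both of which are already at hand.
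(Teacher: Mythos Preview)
Your proof is correct. Part (i) follows the same duality-plus-reproducing-property route as the paper, with your version being slightly more careful about the passage from finite partial sums to the infinite sum; the paper simply writes the supremum over $l^2(\mathbb{J},\mathcal{Y})$ directly.

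Part (ii), however, is genuinely different from the paper and considerably more elementary. The paper does not compare the minimizer with $g=0$. Instead it introduces the infinite Tikhonov operator $\mathcal{A}_{\widetilde{\Lambda}}=(\lambda\mathcal{I}+\mathcal{I}_{\widetilde{\Lambda}}^{*}\mathcal{I}_{\widetilde{\Lambda}})^{-1}\mathcal{I}_{\widetilde{\Lambda}}^{*}$, appeals to the Lax--Milgram theorem for the boundedness of $(\lambda\mathcal{I}+\mathcal{I}_{\widetilde{\Lambda}}^{*}\mathcal{I}_{\widetilde{\Lambda}})^{-1}$, and then reduces each finite problem to the infinite one by selecting an auxiliary element $\tilde f\in\mathcal{H}$ whose operator values agree with those of $f$ on $\mathbb{J}_m$ and vanish elsewhere. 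Your variational comparison with $g=0$ yields the explicit constant $\lambda^{-1/2}$ in a single line, bypasses the Lax--Milgram machinery, and avoids the need to produce such an interpolating $\tilde f$ (whose existence the paper does not justify). The paper's approach, on the other hand, ties the finite algorithm to the limiting Tikhonov operator, which may be conceptually useful in other contexts, but for the bare stability estimate your argument is both shorter and cleaner.
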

\begin{proof}
Again, we prove this result by employing Theorem \ref{stable}. It follows for each $f\in\mathcal{H}$ that
$$
\|\mathcal{I}_{\widetilde{\Lambda}}(f)\|_{l^2(\mathbb{J}, \mathcal{Y})}=\sup_{\|\xi\|_{l^2(\mathbb{J}, \mathcal{Y})}=1}\left|\langle
\mathcal{I}_{\widetilde{\Lambda}}(f),
\xi\rangle_{l^2(\mathbb{J},\mathcal{Y})}\right|
=\sup_{\|\xi\|_{l^2(\mathbb{J}, \mathcal{Y})}=1}\left|\sum_{j\in\mathbb{J}}\langle
L_{\alpha_j}(f),\xi_j\rangle_{\mathcal{Y}}\right|.
$$
Using the reproducing property of $K$, we get that
$$
\|\mathcal{I}_{\widetilde{\Lambda}}(f)\|_{l^2(\mathbb{J}, \mathcal{Y})}=\sup_{\|\xi\|_{l^2(\mathbb{J}, \mathcal{Y})}=1}\left|\langle f,
\sum_{j\in\mathbb{J}}K(\alpha_j)\xi_j
\rangle_{\mathcal{H}}\right|.
$$
This together with inequality (\ref{inequality-c}) implies that for all $f\in\mathcal{H}$,
$$
\|\mathcal{I}_{\widetilde{\Lambda}}(f)\|_{l^2(\mathbb{J}, \mathcal{Y})}\leq c\|f\|_{\mathcal{H}}.
$$
That is, the operator $\mathcal{I}_{\widetilde{\Lambda}}$ is continuous on $\mathcal{H}$.

It follows for each $g\in\mathcal{H}$ that
$$
\langle\lambda g+\mathcal{I}_{\widetilde{\Lambda}}^*
\mathcal{I}_{\widetilde{\Lambda}}(g),
g\rangle_{\mathcal{H}}=\lambda\|g\|_{\mathcal{H}}^2+
\|\mathcal{I}_{\widetilde{\Lambda}}(g)\|_{\mathcal{H}}^2
\geq\lambda\|g\|_{\mathcal{H}}^2.
$$
By the Lax-Milgram theorem \cite{Yo}, we have that $(\lambda \mathcal{I}+\mathcal{I}_{\widetilde{\Lambda}}^*
\mathcal{I}_{\widetilde{\Lambda}})^{-1}$ is continuous on $\mathcal{H}$. According to the definition of $\mathcal{A}_{\widetilde{\Lambda}}$, there exists a positive constant $c'$ such that for all $f\in\mathcal{H}$,
\begin{equation}\label{Tikhonov-regularization-inequality}
\|\mathcal{A}_{\widetilde{\Lambda}}
(\mathcal{I}_{\widetilde{\Lambda}}(f))
\|_{\mathcal{H}}\leq c'
\|\mathcal{I}_{\widetilde{\Lambda}}(f)
\|_{l^2(\mathbb{J},\mathcal{Y})}.
\end{equation}
For each finite subset $\widetilde{\Lambda}_m$ and each $f\in\mathcal{H}$, we choose $\tilde{f}\in\mathcal{H}$ to satisfy that $L_{\alpha_j}(\tilde{f})=L_{\alpha_j}(f)$ for $j\in\mathbb{J}_m$ and $L_{\alpha_j}(\tilde{f})=0$ for $j\notin\mathbb{J}_m$. By the definition of $\mathcal{A}_m$ and $\mathcal{A}_{\widetilde{\Lambda}}$, we get that  $\mathcal{A}_m(\mathcal{I}_{\widetilde{\Lambda}_m}(f))
=\mathcal{A}_{\widetilde{\Lambda}}
(\mathcal{I}_{\widetilde{\Lambda}}(\tilde{f})).$
This combined with (\ref{Tikhonov-regularization-inequality}) leads to that there holds for all $f\in\mathcal{H}$, all $m\in\mathbb{N}$ and all finite subset $\widetilde{\Lambda}_m$,
$$
\|\mathcal{A}_m(\mathcal{I}_{\widetilde{\Lambda}_m}(f))
\|_{\mathcal{H}}=\|\mathcal{A}_{\widetilde{\Lambda}}
(\mathcal{I}_{\widetilde{\Lambda}}(\tilde{f}))
\|_{\mathcal{H}}\leq c'
\|\mathcal{I}_{\widetilde{\Lambda}}(\tilde{f})
\|_{l^2(\mathbb{J},\mathcal{Y})}=c'
\|\mathcal{I}_{\widetilde{\Lambda}_m}(f)
\|_{\mathcal{Y}^m}.
$$
Hence, the stability of the corresponding numerical reconstruction algorithm follows directly from Theorem \ref{stable}.
\end{proof}

\section{Conclusion}

We have introduced the notion of the ORKHS and established the bijective correspondence between an ORKHS and its operator reproducing kernel. Particular attention has been paid to the perfect ORKHS, which relates to two families of continuous linear operators with one being the family of the point-evaluation operators. The characterization and the specific examples for the perfect ORKHSs with respect to the integral operators have been provided. To demonstrate the usefulness of ORKHSs, we have discussed in the ORKHSs setting the sampling theorems and the regularized learning schemes from non-point-evaluation data. We have considered stability of numerical reconstruction algorithms in ORKHSs. This work provides 
an appropriate mathematical foundation for kernel methods when non-point-evaluation data are used.

\bigskip

\noindent{\bf Acknowledgment:} This research is supported in part by the United States National Science Foundation under grant DMS-152233, by Guangdong Provincial Government of China through the ``Computational Science Innovative Research Team'' program and by the Natural Science Foundation of China under grants 11471013 and 11301208.

\end{document}